\documentclass[a4paper,10pt]{amsart}
\usepackage{amsmath}
\usepackage{amsthm}
\usepackage{amssymb}
\usepackage{amsfonts}
\usepackage{mathtools}
\usepackage{graphicx}
\usepackage{tikz-cd}
\usepackage{bbm}
\usepackage[margin=1.25in]{geometry}

\usepackage{graphicx}
\usepackage[labelformat=simple]{subcaption}

\captionsetup[subfigure]{labelfont=rm}

\usepackage[mathscr]{euscript}

\usepackage[backend=biber]{biblatex}
\renewbibmacro{in:}{}
\addbibresource{main_red_vOct3.bib}

\setlength{\parskip}{1ex plus 0.5ex minus 0.2ex}

\setcounter{secnumdepth}{4}

\newcommand{\ZZ}{\mathbb{Z}}
\newcommand{\CC}{\mathbb{C}}
\newcommand{\GG}{\mathbb{G}}
\newcommand{\DD}{\mathbb{D}}
\newcommand{\NN}{\mathbb{N}}
\newcommand{\WW}{\mathbb{W}}
\newcommand{\FF}{\mathbb{F}}

\newcommand{\HH}{Hochschild\ }
\newcommand{\hur}{Hurewicz\ }
\newcommand{\Grpd}{\mbox{\textbf{Grpd}}}
\newcommand{\pshv}{\mathscr{PS}hv}
\newcommand{\stk}{\mathscr{S}tk}

\newcommand{\rrarrow}{  \hspace{.05cm}\mbox{\,\put(0,-2){$\rightarrow$}\put(0,2){$\rightarrow$}\hspace{.45cm}}}
\newcommand{\rrrarrow}{ \hspace{.05cm}\mbox{\,\put(0,-3){$\rightarrow$}\put(0,1){$\rightarrow$}\put(0,5){$\rightarrow$}\hspace{.45cm}}}
\newcommand{\Sing}{\mathscr{S}ing}
\newcommand{\sSet}{s\mathscr{S}et}
\newcommand{\bsSet}{bs\mathscr{S}et}
\newcommand{\Map}{\mbox{Maps}}
\newcommand{\calX}{\mathcal{X}}
\newcommand{\calY}{\mathcal{Y}}

\newcommand{\Fin}{\mathcal{F}in}
\newcommand{\oprd}{\mathbf{Prpd_{red}}}
\newcommand{\oprdinfty}{\mathbf{Prpd_{red}^\otimes}}
\newcommand{\oprdten}{\mathbf{Prpd}^\otimes}
\newcommand{\oprdO}{\mathcal{O}}
\newcommand{\oprdOten}{\mathcal{O}^\otimes}


\newcommand{\Mmod}{\mathcal{M}}
\newcommand{\Mmodbar}{\overline{\mathcal{M}}}
\newcommand{\Mmodhat}{\widehat{\mathcal{M}}}
\newcommand{\Mfr}{{\partial_+}\mathfrak{M}}
\newcommand{\Mbar}{\overline{\mathfrak{M}}}
\newcommand{\Mhat}{\widehat{\mathfrak{M}}}
\newcommand{\Mfrfull}{\mathfrak{M}}


\newcommand{\Mfrunst}{\partial_+\mathfrak{M}^{unst}}
\newcommand{\Mhatunst}{\widehat{\mathfrak{M}}^{unst}}
\newcommand{\Mfrfullunst}{\mathfrak{M}^{unst}}
\newcommand{\Mfrfullnop}{\mathfrak{M}^{nop}}
\newcommand{\Mbarunst}{\overline{\mathfrak{M}}^{unst}}
\newcommand{\Mfrunstth}{{\partial_+ \mathbb{M}}^{unst}}
\newcommand{\Mhatunstth}{\widehat{\mathbb{M}}^{unst}}

\newcommand{\mfr}{\partial_+\mathfrak{M}}

\newcommand{\mfrunst}{\partial_+ \mathfrak{M}^{nop}}

\newcommand{\mfrfull}{\mathscr{M}}
\newcommand{\mfrfullmod}{\mathscr{M}^{fr}}

\newcommand{\mfrfullnopair}{\mathscr{M}^{nop}}

\newcommand{\Mbarclsd}{\overline{\mathcal{M}}}
\newcommand{\mfrtilde}{\widetilde{\mathscr{M}}}

\DeclareMathAlphabet{\mathdutchcal}{U}{dutchcal}{m}{n}
\SetMathAlphabet{\mathdutchcal}{bold}{U}{dutchcal}{b}{n}
\DeclareMathAlphabet{\mathdutchbcal}{U}{dutchcal}{b}{n}


\newcommand{\mdfrfull}{\mathdutchcal{M}}

\newcommand{\mdfrfullunst}{\mathdutchcal{M}^{unst}}

\newcommand{\mdbar}{\overline{\mathdutchcal{M}}}
\newcommand{\mdbarunst}{\overline{\mathdutchcal{M}}^{unst}}

\newcommand{\mlfrfullmod}{f\mathcal{M}^{\mathbb{R}}}
\newcommand{\mlbar}{\overline{\mathcal{M}}}

\newcommand{\smallmfrfull}{M^{fr}}
\newcommand{\smallmhat}{\widehat{M}^{fr}}
\newcommand{\smallmbar}{\overline{M}}


\newcommand{\mmfrfull}{\partial_+\mathbb{M}}

\newcommand{\spot}{marked point\ }
\newcommand{\spots}{marked points\ }
\newcommand{\splitsurface}{spotted surface}
\newcommand{\splitsurfaces}{spotted surfaces}

\newcommand{\cfam}{\mathcal{C}}
\newcommand{\Nu}{\mathcal{V}}
\newcommand{\un}[1]{\underline{#1}}

\newcommand{\catc}{\mathcal{C}}
\newcommand{\catb}{\mathcal{B}}

\newcommand{\Top}{\mathbf{Top}}

\newcommand{\redTopSeq}{\mathbf{TopSeq}^{\mathbf{io}}}
\newcommand{\Galg}{\GG\mathbf{Alg}}
\newcommand{\reduced}{io-}
\newcommand{\rda}{ioda}
\newcommand{\symplectic}{symplectic\ }
\newcommand{\CFT}{TFT}
\newcommand{\delCFT}{$\partial_+$TFT}

\newtheorem{theorem}{Theorem}[section]
\newtheorem{lemma}[theorem]{Lemma}
\newtheorem{corollary}[theorem]{Corollary}
\newtheorem{proposition}[theorem]{Proposition}
\numberwithin{equation}{section}

\theoremstyle{remark}
\newtheorem{remark}[theorem]{Remark}
\newtheorem{notation}[theorem]{Notation}

\theoremstyle{definition}
\newtheorem{definition}[theorem]{Definition}

\title{A homotopical description of Deligne-Mumford compactifications}
\author{Yash Deshmukh}

\begin{document}

\begin{abstract}
We give a description of the Deligne-Mumford properad expressing it as the result of homotopically trivializing S1 families of annuli (with appropriate compatibility conditions) in the properad of smooth Riemann surfaces with parameterized boundaries. This gives an analog of the results of Drummond-Cole and Oancea--Vaintrob in the setting of properads. We also discuss a variation of this trivialization which gives rise to a new partial compactification of Riemann surfaces relevant to the study of operations on symplectic cohomology.
\end{abstract}

\maketitle

\setcounter{tocdepth}{1}	
\tableofcontents

\section{Introduction}
\subsection{Context and Motivation}\label{subsec:Context and Motivation}\label{subsec: context motivtn}
In this paper we study homotopical aspects of the problem of extending operations in the closed sector of a two-dimensional Topological Field Theory (\CFT) (in the sense of \cite[Section 2]{costello2009partition}) from the uncompactified moduli spaces of Riemann surfaces of all genera to their compactifications. 

The motivation for considering such extensions comes from a proposal of M. Kontsevich for the construction of the so-called \emph{categorical enumerative invariants} associated with a Calabi-Yau $A_\infty$-category, under suitable hypotheses (see \cite[\nopp 2.23]{kontsevich2008xi}, \cite[Section 11]{kontsevich2008notes}). The \HH chain complex of a proper (respectively, smooth) Calabi-Yau $A_\infty$-category carries a chain-level two-dimensional right (respectively, left) positive-boundary Topological Field Theory ({\CFT}) structure (see \cite{kontsevich2006notes}, \cite{costello2007topological} for the proper case and  \cite{kontsevich1smooth} for the smooth case). Recall that chain-level two-dimensional right (respectively, left) positive-boundary {\CFT}s are field theories with operations given by chains on the moduli spaces of Riemann surfaces in which each connected component has at least one input (respectively, output). The {\CFT} structure in particular induces a chain-level $S^1$-action on the \HH chains coming from the operations given by moduli spaces of annuli. Kontsevich proposed that under the assumption that this $S^1$-action is homotopically trivial, the {\CFT} structure should extend to include operations coming from chains on the Deligne-Mumford compactification of the moduli spaces of Riemann surfaces. The desired categorical enumerative invariants can then be defined using the induced action of chains on the Deligne-Mumford compactifications.

If the Calabi-Yau $A_\infty$-category is Morita equivalent to the Fukaya category of a symplectic manifold or the category of coherent sheaves on a Calabi-Yau manifold then, under nice circumstances (for example when the open-closed map associated with the Fukaya category is an isomorphism), the categorical enumerative invariants are expected to recover, respectively, the Gromov-Witten (GW) invariants and the Bershadsky-Cecotti-Ooguri-Vafa (BCOV) invariants of the underlying manifolds. In such cases \emph{homological mirror symmetry}, which asserts an equivalence between the Fukaya category and a $dg$-enhancement of the derived category of coherent sheaves of a Calabi-Yau mirror pair, can be used to deduce \emph{enumerative mirror symmetry}, which relates the GW and BCOV invariants of the mirror pair.

\begin{remark}
\begin{enumerate}
\item In what follows, we will mostly work with left positive-boundary {\CFT}s. This is because of our interest in examples coming from symplectic topology which, as explained below in Section \ref{subsec: intro SH QH}, admit only a left positive-boundary {\CFT} structure in-general. For this reason, below whenever we refer to a `positive-boundary {\CFT}' we will mean a \emph{left} positive-boundary {\CFT}, unless specified otherwise.
\item An alternate approach to defining the categorical enumerative invariants was carried out in \cite{costello2009partition} and further developed in \cite{caldararu2020categorical}. This approach bypasses the use of Deligne-Mumford spaces by working with a Batalin-Vilkovisky algebra constructed using the uncompactified moduli spaces.
\end{enumerate} 
\end{remark}

The genus $0$, $n$-to-$1$ part of the extension described above was formulated by Kontsevich as the following conjecture: The structure of an algebra over the framed little disk operad along with a homotopy trivialization of the $S^1$-action is equivalent to the structure of an algebra over the Deligne-Mumford operad. Recall that an algebra over the framed little disk operad is equivalent to the data of the genus $0$, $n$-to-$1$ part of a {\CFT}. Various versions of this conjecture were discussed in \cite{khoroshkin2013hypercommutative} and \cite{drummond2013minimal}, in the category of chain complexes over a characteristic $0$ field. The statement in the category of topological spaces was proved in \cite{drummond2014homotopically}. An extension to higher genus, $n$-to-$1$ operations was proved in \cite{oancea2020deligne} where the operad of framed little disks was replaced by the operad of Riemann surfaces with parametrized boundaries.

In this paper we consider an extension of the above conjecture from the genus 0, $n$-to-$1$ part of a {\CFT} to the entire connected, left positive-boundary part of the {\CFT}. In particular,  we shall consider operations coming from Riemann surfaces with possibly higher genus and with multiple inputs and multiple outputs. We do this using the language of `properads'.

\subsection{Properads} Properads are a generalization of operads. Recall that operads encode algebraic structures involving operations with multiple inputs and a single output. Generalizing this, as we shall discuss in Section \ref{sec: overview} below, properads encode algebraic structures involving operations that have multiple inputs and outputs. Moreover, these operations can be composed along multiple inputs and outputs. 

\begin{remark}
	In this paper, we will consider a modification of the notion of properads which we call `input-output' properads (\emph{\reduced properads}). These are analogous to usual properads except that there are no operations having $0$ inputs and $0$ outputs. Consequently, the compositions in properads resulting in operations with no inputs and no outputs are also omitted from the structure. The restriction to \reduced properads has been done for the purpose of technical simplification of some parts of the proof.\\
In the properads which concern us, this omission results in the loss of certain structures coming from operations parametrized by the moduli spaces of stable Riemann surfaces with no intputs and outputs. However, we do not expect the restriction to \reduced properads to be an essential one and expect our results to hold at the level of usual properads.
\end{remark}

We now briefly describe properads of Riemann surfaces that are relevant to us. See Section \ref{sec: moduli spaces} below for more details on these properads and the moduli spaces involved.\\
The data of operations in a {\CFT} coming from connected Riemann surfaces naturally presents itself as representation of a certain \mbox{properad} $\Mfrfull$. The spaces of operations in $\Mfrfull$ are given by the moduli spaces of Riemann surfaces having input and output boundary components, each of which is equipped with an analytic $S^1$-parameterization. We refer to $\Mfrfull$ as the \emph{{\CFT}-properad}.
There is a subproperad $\Mfr$ of the {\CFT}-properad which encodes the structure given by the connected part of a positive-boundary {\CFT}. This properad is obtained by considering only  the operations indexed by Riemann surfaces which have at least one output. We refer to this as the \emph{\delCFT-properad}.\\
Moreover, to encode the operations coming from the compactified Riemann surfaces we use the properad $\Mbar$, which we call the \emph{Deligne-Mumford properad}. The spaces of operations in $\Mbar$ are given by the moduli spaces of stable nodal Riemann surfaces with input and output parametrized boundaries.\\
Theorem \ref{hatthm} will be concerned with a variant of $\Mbar$, denoted $\Mhat$, based on a new partial compactification $\Mmodhat_{g,n_-,n_+}$ of the moduli spaces of Riemann surfaces with parametrized boundaries. This compactification is obtained by considering nodal Riemann surfaces with boundaries, which satisfy the condition that every irreducible component has at least one output boundary. We call it the \emph{\symplectic properad}. See Section \ref{subsec: intro SH QH} for more on this properad.\\
$\Mfrfullunst$, $\Mfrunst$, $\Mhatunst$ , and $\Mbarunst$ will be the subproperads of $\Mfrfull$, $\Mfr$, $\Mhat$, and $\Mbar$ consisting of (possibly nodal) Riemann surfaces in the unstable range 
\[\{ (g,n_-,n_+) | 2g-2+n_++n_-< 0 \}=\{ (0,0,1), (0,1,0), (0,1,1), (0,0,2), (0,2,0)\}.\]
These properads have no operations in aritites other than those described by the above set.  ($\Mfrunst$ and $\Mhatunst$ have no operations in arity $(1,0)$ and $(2,0)$ either).
Finally, $\Mfrfullnop$ will be the subproperad of $\Mfrfullunst$ obtained by excluding from $\Mhatunst$ the annuli with two input boundaries. (The superscript `nop' here stands for \emph{`no pairing'}. This is because we think of the operations corresponding to annuli with two inputs of as pairings.) 

The main results of the paper are statements about certain homotopy colimits of \reduced properads. In order to talk about the homotopy theory of these structures, we outline the construction of a model category structure on the category of topological \reduced properads. This is carried out in Section \ref{sec: model category str and cofib resolutions} below. 

\begin{remark}
The structure of a {\CFT} has the additional data of compositions coming from taking disjoint unions of Riemann surfaces. In order to capture this data, it is necessary to use a generalization of properads, known as \emph{PROPs}. In addition to the compositions similar to those in properads, known as the \emph{vertical compositions}, PROPs also have \emph{horizontal compositions}. These latter compositions can be used to encode the operation of taking disjoint unions of Riemann surfaces. 
We however choose to work with properads instead of PROPs, since the homotopy theory of properads is easier to handle than that of  PROPs.
\end{remark}

\subsection{Statements of the Theorems}
Our main results are as follows:
\begin{theorem}\label{frtofullthm}
\begin{enumerate}
	\item $\Mfrfull$ is the homotopy colimit of the following diagram in the category of \reduced properads:
	\begin{equation}\label{dig: frtofullthm1}
	\begin{tikzcd}
		\Mfrunst \ar{r}\ar{d} & \Mfrfullnop\\
		\Mfr
	\end{tikzcd}
	\end{equation}
	\item  $\Mbar$ is the homotopy colimit of the following diagram in the category of \reduced properads:
	\begin{equation}\label{dig: frtofullthm2}
	\begin{tikzcd}
		\Mfrfullunst \ar{r}\ar{d} & \Mbarunst \\
		\Mfrfull
	\end{tikzcd}
	\end{equation}
\end{enumerate}
\end{theorem}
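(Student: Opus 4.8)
\emph{Plan of the proof.} The two parts have the same formal shape. In each, a ``large'' \reduced properad $\mathcal{L}$ — namely $\Mfrfull$ in part~(1), $\Mbar$ in part~(2) — is to be identified with the homotopy pushout of a span $\mathcal{A}\leftarrow\mathcal{C}\rightarrow\mathcal{C}'$ in which $\mathcal{A}\hookrightarrow\mathcal{L}$ is a subproperad ($\Mfr$, resp.\ $\Mfrfull$), $\mathcal{C}\hookrightarrow\mathcal{A}$ is its subproperad of operations in the unstable range ($\Mfrunst$, resp.\ $\Mfrfullunst$), and $\mathcal{C}\hookrightarrow\mathcal{C}'$ adjoins a handful of unstable operations not present in $\mathcal{A}$: the input disk in arity $(0,1,0)$ in part~(1), and the nodal degenerations of the three unstable annuli in part~(2). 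In both cases the evident inclusions $\mathcal{A}\hookrightarrow\mathcal{L}$ and $\mathcal{C}'\hookrightarrow\mathcal{L}$ restrict to the same map on $\mathcal{C}$, so there is a canonical comparison map $\Phi$ from the homotopy pushout to $\mathcal{L}$, and since weak equivalences of topological \reduced properads are detected on the space of operations of each arity (Section~\ref{sec: model category str and cofib resolutions}), it is enough to show $\Phi$ is an arity-wise weak equivalence. I would treat the two parts together, noting that part~(1) is the softer one — it only adjoins the input disk, whose moduli space is contractible — while part~(2) is the substantive statement, the properadic analogue of the results of Drummond-Cole and Oancea--Vaintrob.

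The first real step is to build a tractable model for the homotopy pushout: the \emph{strict} pushout will not do, since already in part~(1) capping an output of a stable $(g,n_-,n_++1)$-surface by the input disk produces a biholomorphic copy of an ordinary $(g,n_-,n_+)$-surface, so the map from the strict pushout to $\mathcal{L}$ is highly non-injective. Instead I would factor the right leg as a relative cell complex $\mathcal{C}\rightarrowtail\widetilde{\mathcal{C}'}\xrightarrow{\sim}\mathcal{C}'$ of \reduced properads: attach the new unstable generator(s) together with higher cells imposing the manifest relations between them and the operations of $\mathcal{C}$ — for part~(1), $(0,1,0)\circ(0,1,1)=(0,1,0)$ and $(0,1,0)\circ(0,0,2)=(0,0,1)$ and their syzygies; for part~(2), the relations making each nodal annulus behave like an idempotent under the unstable annuli — and then form the strict pushout $\mathcal{A}\sqcup_{\mathcal{C}}\widetilde{\mathcal{C}'}$, which computes the homotopy pushout once one knows the model structure is left proper, or equivalently use the two-sided bar construction $B(\mathcal{A},\mathcal{C},\mathcal{C}')$. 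In either model $\Phi$ becomes concrete: in a fixed arity it is a map out of an explicit space assembled by freely composing operations of $\mathcal{A}$ with copies of the new unstable generators subject to the cell relations.

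The core of the argument is the arity-wise analysis of this $\Phi$, which is where the geometry of Section~\ref{sec: moduli spaces} enters. For part~(1): in arities with $n_+\ge 1$ the target is $\mathcal{M}_{g,n_-,n_+}$, already contained in $\Mfr$, and the cells attached in $\widetilde{\mathcal{C}'}$ only identify the redundant capped descriptions, so the homotopy pushout is unchanged there; in arities with $n_+=0$ one argues by downward induction on the number of outputs, writing every connected surface with no outputs as the cap, by a single input disk, of a connected surface with one more output — which is stable except for the tabulated unstable cases — and identifying the relevant homotopy fibre with the space of embedded parametrized disks effecting such a decomposition, which is contractible by Section~\ref{sec: moduli spaces}. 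For part~(2): one inducts on the number of nodes of a stable nodal surface, using that any such surface is obtained from its smooth pieces by iterated \reduced-properad self-composition along its nodes, each node contributing a nodal-annulus factor; the crucial local computation is that $\overline{\mathcal{M}}_{(0,1,1)}$ — and likewise the compactifications of the other two unstable annuli — is obtained from $\mathcal{M}_{(0,1,1)}\simeq S^1$ by coning off, the cone point being the nodal annulus, hence is contractible, so that attaching the cone cells precisely homotopically trivializes the action of the unstable annuli throughout the properad. One then matches the stratification of $\Mbar_{g,n_-,n_+}$ by topological type of dual graph with the corresponding filtration of the bar construction and checks the map is a weak equivalence on each stratum and compatibly on closures.

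The hard part is this last step. It requires simultaneously controlling the combinatorics of iterated compositions — including, crucially, the self-compositions, which have no analogue in the operadic setting treated by Drummond-Cole and Oancea--Vaintrob — and the non-free symmetric-group actions occurring at surfaces with extra automorphisms, already visible at the symmetric unstable annuli $(0,0,2)$ and $(0,2,0)$, and verifying not merely that the individual spaces of ``decompositions'' (embedded capping disks in part~(1); ways to present a nodal surface as a configuration of glued smooth pieces in part~(2)) are contractible but that they assemble, compatibly with the \reduced-properad structure and with the action of the unstable annuli, into a contracting homotopy for $\Phi$. A secondary technical point, settled in Section~\ref{sec: model category str and cofib resolutions}, is the left properness (or sufficient cofibrancy) needed for the strict pushout along the cofibration $\mathcal{C}\rightarrowtail\widetilde{\mathcal{C}'}$ to represent the homotopy pushout.
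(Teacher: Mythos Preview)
Your outline has the right shape, but there is a genuine gap at the point you yourself flag as ``the hard part,'' and the paper's actual argument takes a rather different route to fill it.

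\textbf{The specific gap.} Your claimed local input for part~(1) --- that the homotopy fibre of $\Phi$ over a surface $\Sigma$ with $n_+=0$ is the space of embedded parametrized disks in $\Sigma$, and that this is contractible ``by Section~\ref{sec: moduli spaces}'' --- is not correct. Section~\ref{sec: moduli spaces} proves no such contractibility, and the space of holomorphic disk embeddings in a bordered Riemann surface has the homotopy type of the unit tangent bundle, which is not contractible. More to the point, the fibre of $\Phi$ over $\Sigma$ (in the bar model) is not that space at all: it is the realization of a simplicial space of \emph{all} nested graph-decompositions of $\Sigma$ into pieces from $\Mfr$ and caps, and contractibility of this is precisely what has to be proven. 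Your induction on the number of outputs does not supply a mechanism for contracting this space.

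\textbf{What the paper does instead.} The key idea you are missing for part~(1) is the replacement of $\Mfrfull$ and $\Mfrfullnop$ by homotopy-equivalent properads of \emph{spotted surfaces} (Section~\ref{subsec: splitsurf}): a surface with $n_+=0$ is required to carry a weight-$1$ marked point, and this spot plays the role that a node plays in the proof of Theorem~\ref{hatthm} --- it gives a canonical place to make a cut. With this model in hand, the paper does \emph{not} induct on outputs or nodes. Instead it uses the bar-construction pushout $|B_\bullet \Mfr|\sqcup^{\Galg}_{|B_\bullet \Mfrunst|}|B_\bullet \mfrfullnopair|$ and proves the comparison map is a weak equivalence via a \emph{local} criterion: an open cover of the target such that over each member (and each finite intersection) one can build, by explicit simplicial ``cut maps'' and ``cut homotopies'' (Sections~\ref{prfcondfiltrtop}, \ref{subsec: fulltcftsec htpyeq}), a fibrewise deformation retraction of the preimage onto a section. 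Part~(2) combines these constructions with those of Section~\ref{sec: hatthem pf pt1}, again via cuts near nodes and spots, with an additional wrinkle (Remarks~\ref{rmk: seq of nodes}, \ref{rmk: mbar no output}) to handle strings of nodal annuli. The stratification-by-dual-graph approach you sketch is not used; the paper's filtrations $F^{(m)}_{n,\Sigma}$ are of a different, more analytic nature, indexed by how close the cut circles are allowed to approach nodes/spots.

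On the model-categorical side: the paper does not invoke left properness of $\Galg$; it uses Proposition~\ref{compareproposition2} (via Hurewicz cofibrations and CW homotopy type) to justify that the bar pushout computes the homotopy pushout.
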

The first part describes the extension from a positive-boundary {\CFT} to a {\CFT}. This is achieved by the introduction of a \emph{trace} operation (the point in $\Mfrfullnop(1,0)$) dualizing the \emph{cotrace} operation (the point in $\Mfrfullnop(0,1)$).

The second part describes the extension of a {\CFT} to an algebra over the Deligne-Mumford properad. An action of properad $\Mfrfull$ in particular induces an action of the subproperad of disks and annuli. The second statement above can be interpreted as saying that, homotopically, an extension of this action to the properad $\Mbarunst$ disks and possibly nodal annuli determines an extension of the {\CFT} structure to an algebra over the Deligne-Mumford properad.

Note that the spaces of operations in $\Mfrfullunst$ in arities $(1,1), (0,1),$ and $(1,0)$ are given by moduli spaces of annuli and are all homotopy equivalent to $S^1$. A homotopically essential family can be obtained by fixing an annulus and varying the parametrization of one of the ends by rotations. On the other hand, the corresponding space of operations in $\Mbarunst$, given by moduli spaces of possibly nodal annuli, are all contractible. Operations in arity $(0,1)$ and $(1,0)$ in both $\Mfrfullunst$ and $\Mbarunst$ are given by points.\\ 
An extension of $\Mfrfullunst$-action to an $\Mbarunst$-action thus corresponds to a homotopy trivialization of operations corresponding to the $S^1$-families in arities $(1,1), (0,1)$, and $(1,0)$, such that the trivializations are compatible with properad compositions with other $S^1$-families, as well as with the disks in arities $(1,0)$ and $(0,1)$. 

We also prove Theorem \ref{hatthm} which describes an extension of the positive-boundary {\CFT}-properad to the {\symplectic} properad. The motivation for studying such extensions comes from looking at the examples coming from symplectic cohomology (see \ref{subsec: intro SH QH} below).
\begin{theorem}\label{hatthm}
	$\Mhat$ is the homotopy colimit of the following diagram in the category of \reduced properads
	\begin{equation}
	\begin{tikzcd}
		\Mfrunst \ar{r}\ar{d} & \Mhatunst \\
		\Mfr
	\end{tikzcd}
	\end{equation}\label{dig: hatthm2}
\end{theorem}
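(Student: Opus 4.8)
The plan is to mirror the proof of Theorem \ref{frtofullthm}(2), restricting throughout to the positive-boundary substructures. The organizing observation is that $\Mmodhat_{g,n_-,n_+}$ is the \emph{open} subspace of $\Mmodbar_{g,n_-,n_+}$ cut out by the requirement that every irreducible component of the nodal curve carry at least one output boundary: this locus is open because smoothing a node merges (or self-glues) components that already have outputs, so the condition propagates to all nearby curves. Correspondingly $\Mhat\subseteq\Mbar$ is an open subproperad — here it is important that properad composition sews smooth parametrized boundary circles, hence \emph{merges} irreducible components rather than creating a node, so an output consumed by a composition always lies on a component that gets merged with one retaining an output of its own. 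The inclusions $\Mfrunst\subseteq\Mfrfullunst$, $\Mhatunst\subseteq\Mbarunst$, $\Mfr\subseteq\Mfrfull$ thus assemble into a map from the span of Theorem \ref{hatthm} into the span of Theorem \ref{frtofullthm}(2), and it is enough to identify the homotopy colimit of the former with the open subproperad $\Mhat$ of the homotopy colimit $\Mbar$ of the latter.

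I would first invoke the model structure on reduced topological properads and the cofibrant resolutions of Section~\ref{sec: model category str and cofib resolutions} to replace the span $\Mfr\leftarrow\Mfrunst\rightarrow\Mhatunst$ by a cofibrant diagram, so that its homotopy pushout is an honest pushout of properads. Unwinding the free-properad description of this pushout, exactly as in Theorem \ref{frtofullthm}(2), presents its value in arity $(g,n_-,n_+)$ as a homotopy colimit over a poset of stable graphs whose vertices are decorated by operations of $\Mfr$ and whose internal edges carry neck-length (collar) parameters; the operations adjoined from $\Mhatunst$ amount to a compatible null-homotopy of the copairing $S^1$-family in arity $(0,2)$, which among the annular families is the only one that degenerates inside $\Mhat$ — the $(1,1)$-family cannot degenerate without producing a component with no output, and the $(2,0)$- and $(1,0)$-arities are absent in the positive-boundary world, so $\Mfrunst\to\Mhatunst$ is an isomorphism in those arities. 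Since every operation of $\Mfr$ and of $\Mhatunst$ has an output on each of its components, and this property is preserved both by sewing and by the nodal pieces themselves, every surface occurring in this colimit again has an output on each component, and the tautological maps $\Mfr\to\Mhat$, $\Mhatunst\to\Mhat$ induce a comparison map $\Phi$ from this homotopy colimit to $\Mhat$.

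It remains to prove that $\Phi$ is a weak homotopy equivalence in every arity. I would stratify $\Mmodhat_{g,n_-,n_+}$ by topological type of the nodal curve, so that each stratum is, up to graph automorphisms, a product of moduli spaces $\Mmod_{g',n'_-,n'_+}$ of its smooth components; being open in $\Mmodbar$, the space $\Mmodhat$ inherits from the gluing construction a system of collar neighborhoods of the closed strata whose normal directions are parametrized by the neck lengths of the nodes, with contractible fibres. This exhibits $\Mmodhat_{g,n_-,n_+}$ as the homotopy colimit of its diagram of strata over the (down-closed) subposet of admissible degenerations, and this diagram matches, term by term, the one computing the source of $\Phi$: the decorated graphs are the dual graphs of the nodal curves, the vertex decorations are the smooth components, and the collar parameters are the neck-length smoothing parameters. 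The only content beyond Theorem \ref{frtofullthm}(2) is to check that excising the non-admissible (non-positive-boundary) strata leaves this local structure intact, which is automatic because the excised locus is closed.

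The main obstacle is precisely this last, levelwise, step: showing that the boundary stratification of $\Mmodhat_{g,n_-,n_+}$ is a \emph{homotopy} colimit rather than merely a colimit, which rests on producing well-behaved collar neighborhoods of the nodal strata with contractible normal data governed by the neck-length parameters — the same mechanism by which a $2$-disk fills in an annular $S^1$-family. The remaining work is bookkeeping: matching the properad-pushout diagram with the stratification diagram equivariantly for graph automorphisms, and confirming that the ``output on every component'' constraint is respected by every composition appearing in the free-properad description, so that the homotopy colimit computes $\Mhat$ and not the full $\Mbar$. No empty operations arise at any stage, since every surface in sight has at least one output boundary.
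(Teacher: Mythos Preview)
Your proposal has a genuine gap at the step you treat as routine. You write that ``unwinding the free-properad description of this pushout\ldots presents its value in arity $(g,n_-,n_+)$ as a homotopy colimit over a poset of stable graphs whose vertices are decorated by operations of $\Mfr$ and whose internal edges carry neck-length (collar) parameters.'' This is exactly the content of the theorem, not a preliminary to it. The cofibrant resolution used here is the bar construction $|B_\bullet(\GG,\GG,-)|$, and the resulting pushout is the geometric realization of a simplicial space whose $n$-simplices are $(n{+}1)$-fold \emph{nested} \rda-graphs with labels in $\Mfr\sqcup_{\Mfrunst}\Mhatunst$. There is no formal mechanism that collapses this to a single-layer colimit over dual graphs with collar parameters; producing such a comparison is precisely what the explicit simplicial ``cut'' maps and homotopies in Section~\ref{prfcondfiltrtop} accomplish. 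The paper even remarks that this is where additional work beyond the operadic case is needed, because properad compositions are indexed by general \rda-graphs rather than trees and the resolution is simplicial rather than geometric.

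Two further issues. First, your plan to ``mirror the proof of Theorem~\ref{frtofullthm}(2)'' inverts the paper's logical order: Theorem~\ref{hatthm} is proved \emph{first}, and the proof of Theorem~\ref{frtofullthm}(2) in Section~\ref{mbarsec} is obtained by combining the constructions already carried out for Theorem~\ref{hatthm} with those of Section~\ref{fulltcftsec}. Second, you pass over the point that $\Mfrunst\to\Mhatunst$ is not a cofibration of underlying topological \reduced sequences, so the bar pushout does not a priori compute the homotopy pushout; the paper handles this by first comparing with the thinner diagram $\mmfrfull\leftarrow\Mfrunstth\to\Mhatunstth$ via Proposition~\ref{compareproposition}. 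Your stratification intuition for $\Mmodhat$ is correct and is indeed what guides the construction of the neighborhoods $U_\Sigma$ and the annuli $A_\nu$, but the comparison with the bar pushout requires the explicit fiberwise contractions (the maps $c$, $\Phi$, $\Psi$, $\Psi^{ns}$, $\Omega$), not a term-by-term matching of posets.
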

Theorem \ref{hatthm} can be interpreted as saying that the data needed for the extension of an $\Mfr$-action to an $\Mhat$-action is precisely the data of extension of the induced action of the subproperad $\Mfrunst$ to $\Mhatunst$. Note that in this case the space of arity $(0,2)$ operations of $\Mhatunst$ is again contractible, however, the space of operations in arity $(1,1)$ is homotopy equivalent to $S^1$. The extension from $\Mfrunst$ to $\Mhatunst$ thus correponds to homotopy trivializing $S^1$-family of coparings coming from operations in airty $(0,2)$, leaving the operations in arity $(1,1)$ unchanged.\\
In the work under preparation \cite{rezchikov2020generalizations}, it is shown that such trivializations are related to conjectures of Kontsevich regarding generalized versions of the categorical Hodge-de Rham degeneration for smooth and for proper dg-categories. These conjectures were proved in \cite{rezchikov2020generalizations} for certain classes of Fukaya categories, but are known to be false in general \cite{efimov2020categorical}. 

\subsection{Symplectic and Quantum cohomology}\label{subsec: intro SH QH}
Symplectic cohomology and Quantum cohomology provide prototypical examples of the extensions described in the above theorems. 

Symplectic cohomology $SH^*(X)$ is an invariant associated with a symplectic manifold $(X,\omega)$ with bounded geometry. $SH^*(X)$ is known to admit a topological quantum field theory (TQFT) structure. This was introduced in \cite{seidel2006biased}, with a detailed construction carried out in \cite{ritter2013topological}. This structure is expected to lift to a chain-level action of the moduli spaces of Riemann surfaces with parametrized boundaries.\\
This structure is defined in terms of counts of maps $u$ from a Riemann surface $(\Sigma, j)$ to $X$ which satisfy a perturbed Cauchy-Riemann equation:
\begin{equation}\label{jholeq} (du- Z \otimes \beta)^{0,1}=0 \end{equation}
Here $Z$ and $\beta$ are, respectively, auxiliary choices of a  Hamiltonian vector field on $X$ and a $1$-form on $\Sigma$, both having a certain standard form near the boundaries of $\Sigma$. When $X$ is not necessarily compact, we must in addition require that $d\beta \leq 0$. This is necessary to prevent a sequence of such maps from escaping to infinity, which in turn is needed to ensure compactness of the moduli space of these maps. Note that this condition imposes the restriction that $n_+ \geq 0$. Thus, in general, such operations can only be defined for surfaces which have at least one output and therefore we only expect a chain-level $\Mfr$-action on $SH^*(X)$. In particular, trace operations coming from disks with an input boundary are not defined on $SH^*(X)$ in general.

The definition of these operations can be extended to include maps from nodal curves which carry a $1$-form $\beta$ on its non-singular locus satisfying $d\beta \leq 0$ and vanishing near the nodes. Curves appearing in the compactification $\Mmodhat_{g,n_-,n_+}$ satisfy this condition. The chain-level action of $\Mfr$ on the symplectic cohomology is thus expected to extend to an action of the properad $\Mhat$. Theorem \ref{hatthm} provides the extension statement at the level of underlying properads.\\
The action of the properad $\Mfr$ includes the data a of ccopairings induced by the action of cylinders with both ends outputs. 
According to  Theorem \ref{hatthm}, the homotopy trivialization of this family of ccopairings, given by the construction of operations corresponding to nodal annuli with two outputs, is the homotopically essential data providing the extension from the $\Mfr$-action to the $\Mhat$-action.

In the case when $X$ is compact, the condition $d\beta \leq 0$ may be dropped. As a consequence, operations can be defined using Riemann surfaces without any restriction on the number of outputs and thus the chain level $\Mfr$-action is expected to extend to an $\Mfrfull$-action. The corresponding extension statement at the level of underlying properads is given by Theorem \ref{frtofullthm}. The compactness of the ambient manifold $X$ is reflected in the properad-level statement as the existence of the trace operations indexed by disks with an input boundary which dualize the cotrace operations mentioned in the previous paragraph.\\
Moreover, in this case, similar to the extension from $\Mfr$ to $\Mhat$ described above, the action of $\Mfrfull$ is expected to extend to an action of \emph{all} stable curves in $\Mbar$. As a result, one would obtain a chain-level action of the properad $\Mbar$ on $SH^*(X)$. This corresponds to the extension from $\Mfrfull$ to $\Mbar$ described in second part of Theorem \ref{frtofullthm}. 

Recall that for a closed symplectic manifold $X$, the symplectic cohomology of $X$ is isomorphic to its quantum cohomology $QH^*(X)$. The quantum cohomology itself is expected to admit a chain-level action of a properad $\Mbarclsd$: the spaces of operations in $\Mbarclsd$ are given by the usual Deligne-Mumford moduli spaces of stable boundary-less nodal Riemann surfaces with input-output marked points. The compositions are given by simply concatenating stable curves along suitable marked points. The $\Mbarclsd$ action is defined by counting pseudo-holomorphic maps from stable (boundary-less) Riemann surfaces with marked points into $X$, in a manner similar to the $\Mfr$-action on $SH^*(X)$.\\
This action is equivalent to the data of a chain-level lift of the \emph{Cohomological Field Theory} (CohFT) structure on $H^*(X)$ (\cite{kontsevich1994gromov}). The properad $\Mbarclsd$ is weakly homotopy equivalent to $\Mbar$ (see Section \ref{subsec: fin dim htpy types}) and the isomorphism between $SH^*(X)$ and $QH^*(X)$ should intertwine these actions. Using Theorem \ref{frtofullthm}(2) it follows that the data of the CohFT structure on quantum cohomology of $X$ is homotopically equivalent to the data of the $\Mfrfull$-action along with a (suitably compatible) homotopy trivializations of the operations coming from the $S^1$-families of annuli in $\Mfrfullunst$. 

\subsection{Properads in Stacks} 
It is well know that the moduli spaces of stable curves admit natural lifts to topological stacks. As a consequence, to discuss the above statements, we are required to deal with properads valued in topological stacks. This introduces an additional layer of complexity: since topological stacks form a 2-category, it is natural to consider properads in which compositions are associative only up to certain natural equivalences. To discuss the homotopy theory of such properads, instead of dealing with the homotopy theory of $2$-properads, we find it convenient to use the language of $\infty$-properads. In, Section \ref{sec: infty properad} we outline this formalism and describe how $\Mbar$ can be viewed as an $\infty$-properad.\\
However, for the most part we carry out our discussion without encountering this issue. Specifically, throughout Sections \ref{sec: overview} to \ref{fulltcftsec}, we only work with properads in topological spaces. In Section \ref{mbarsec}, which essentially is the only place where we deal with a properad in stacks, our computation of homotopy colimit yields a topological properad with the property that for any $(n_-,n_+)$ the space of operations with $n_-$ inputs and $n_+$ outputs has the same weak homotopy type as the stack $\Mmodbar_{n_-,n_+}$, in the sense described in Section \ref{sec: infty properad}. The results outlined in Section \ref{sec: infty properad} can then be used to argue that this homotopy colimit computation carried out in the category of topological properads can be upgraded to a homotopy colimit \emph{of $\infty$-properads}, with the resulting colimit homotopy equivalent (\emph{as an $\infty$-properad}) to $\Mbar$.

\subsection{Outline of the paper}
We start by giving a brief overview of properads in Section \ref{sec: overview}. Here we introduce \reduced properads as algebras over a monad $\GG$. 
We also include a discussion of pushouts in the category of topological \reduced properads.\\
In Section \ref{sec: model category str and cofib resolutions} we describe a model structure on the category of topological \reduced properads using the description of properads as algebras over the monad $\GG$. We then use the bar construction associated with monad $\GG$ to construct cofibrant resolutions of properads, and provide a description of homotopy pushouts of properads. Most of the technical details required to prove the statements appearing here are deferred to Appendix \ref{apndx: Cofibrant Resolutions via The Bar Construction}.\\
In Section \ref{sec: moduli spaces} we give the precise definitions of the properads which appear in the main theorems and of the moduli spaces on which these properads are based.\\
We then carry out the proof of Theorem \ref{hatthm} in Section \ref{sec: hatthem pf pt1}. 
The technical heart of the proof is the constructions in Section \ref{prfcondfiltrtop}, where local weak equivalences are proved by exhibiting explicit simplicial homotopies. These homotopies are the simplicial analogues of the geometric homotopies constructed in \cite{oancea2020deligne} for operads. Additional work is required in our case as we work with properads which have compositions indexed by directed graphs more general than trees and also because we are required to work with cofibrant resolutions which are of a simplicial nature as opposed to the geometric ones used in \cite{oancea2020deligne}\\
In Section \ref{fulltcftsec} we prove the first part of Theorem \ref{frtofullthm}.  The pushout is computed with $\Mfrfullunst$ and $\Mfrfull$ replaced by homotopy equivalent properads of \splitsurfaces\  (see \ref{subsec: splitsurf}). The theorem is then proved by the constructions similar to those in the proof of Theorem \ref{hatthm}, with the weighted \spot playing a role analogous to nodes there.\\ 
In Section \ref{mbarsec} the proof of the second part of Theorem \ref{frtofullthm} is provided. The proof in this case is obtained by combining the constructions used in the proofs of Theorem \ref{hatthm} and the first part of Theorem \ref{frtofullthm}. After a few modifications (see Remarks \ref{rmk: seq of nodes} and \ref{rmk: mbar no output}), the proof follows a scheme very similar to that in the proof of Theorems \ref{hatthm} and \ref{frtofullthm}(1).\\
Finally, in Section \ref{sec: infty properad} we outline a formalism for $\infty$-\reduced properads and show that the homotopy colimits computed in earlier sections in the category of topological properads remain valid in the context of $\infty$-properads.

\subsection*{Acknowledgement}
I thank my advisor Mohammed Abouzaid for his constant support and encouragement. I would also like to thank Andrew Blumberg, Sheel Ganatra, and Ezra Getzler for helpful conversations. Finally, I am grateful to Lea Kenigsberg, Juan Muñoz-Echániz,  Alex Pieloch, and Mohan Swaminathan for useful discussions at various points during the preparation of this article.

\section{An Overview of Properads}\label{sec: overview}

\begin{notation}
	Let $\Top$ denote the category of compactly generated Hausdorff topological spaces. Let $\redTopSeq$ denote the category $\Top^{\NN \times \NN \setminus (0,0)}$ of bi-sequences of topological spaces indexed by $\NN \times \NN \setminus (0,0)$. Here the superscript \emph{`io'}, short for \emph{`input-output'}, refers to the fact that these sequences do not contain a $(0,0)$ component. Below, we will often refer to these sequences as \emph{topological \reduced sequences}.
\end{notation}

In this section we present a brief overview of input-output properads in the category of topological spaces. For a detailed treatment, in the context of \emph{usual} properads, see \cite{vallette2007koszul}. We shall define properads as algebras over a certain monad $\GG$. This point of view will be convenient when we discuss the cofibrant resolutions of properads below.

\subsection{Monads and algebras over them}
\begin{definition}\label{tripledef}
A \textbf{monad} (also called a \textbf{triple}) $T$ on a category $\catc$, is a functor $T \colon \catc \to \catc$ along with natural transformations $\mu \colon T\circ T \to T$ and $\eta \colon \mathbbm{1}_{\catc} \to T$ such that the following diagrams commute:

\[
\begin{tikzcd}
T \circ T \circ T \ar{r}{\mu \circ 1} \ar{d}{1 \circ \mu} &T \circ T \ar{d}{\mu} \\
T \circ T \ar{r}{\mu} & T\\
\end{tikzcd} \mbox{ \	 \	}
\begin{tikzcd}
\mathbbm{1}_{\catc} \circ T\ar{r}{\eta \circ 1} \ar[equals]{rd} &T \circ T \ar{d}{\mu} &T \circ \mathbbm{1}_{\catc} \ar{l}{1 \circ \eta} \ar[equals]{ld}\\
 &T & \\
\end{tikzcd}
\]
The natural transformations $\mu$ and $\eta$ are referred to as the composition and the unit of $T$, respectively. The first commutative diagram describes the associativity of $\mu$, whereas the second one describes the unitality of $\eta$. 

An \textbf{algebra over }$T$ is an object $X \in \catc$ equipped with a map $m \colon TX \to X \in \catc$ such that the following diagrams commute: 
\[
\begin{tikzcd}
T T X \ar{r}{\mu (X)} \ar{d}{Tm} &TX\ar{d}{m} \\
TX  \ar{r}{m} & T\\
\end{tikzcd} \mbox{ \	 \	}
\begin{tikzcd}
X \ar{r}{\eta (X)}\ar[equals]{rd} &T X \ar{d}{m}\\
 &X & \\
\end{tikzcd}
\]
\end{definition}
We denote the category of $T$-algebras in $\catc$ by \textbf{T-Alg}. For any $X \in \catc$, $TX$ is canonically a $T$-algebra. This defines the free $T$-algebra functor $T \colon \catc \to$ \textbf{T-Alg} which is left adjoint to the forgetful functor $Forget \colon $\textbf{T-Alg} $\to \catc$.


\subsection{Graphs}\label{sec: graphs}
To define the monad $\GG$, we need to fix some conventions regarding graphs. 

By a \emph{directed graph} $G$ we mean the data of
\begin{itemize}
\item A finite set of vertices $V(G)$
\item A finite set of directed edges $E(G)$, equipped with source and target incidence maps $s, t \colon E(G) \to V(G)$. 
\item A set of input legs $in(G)$ and output legs $out(G)$ (also called external edges/tails), along with incidence maps $t_l \colon in(G) \to V(G)$ and $s_l \colon out(G) \to V(G)$.
\item An ordering on the sets $in(G), out(G)$ and on $in(v), out(v)$ for every $v\in V(G)$, where $in(v)$ (respectively, $out(v)$) denotes the set of edges and legs directed into (respectively, out of)  $v$.
\end{itemize}

We say that $G$ is connected if the geometric realization of $G$ is connected. Here by the geometric realization of $G$ we mean the topological space obtained by considering a union of points and intervals indexed by vertices and edges/legs of $G$ respectively, with the endpoints of the intervals identified with the vertices according to the incidence maps.

By an \emph{isomorphism of directed graphs} $f \colon G \to H$, we mean a collection of bijections
\[ f_v \colon V(G) \to V(H), f_e \colon E(G) \to E(H),  f_{l_{in}} \colon in(G) \to in(H), \mbox{ and } f_{l_{out}} \colon out(G) \to out(H)\]
which preserve the incidence maps and the orderings.\\
By a \emph{directed cycle} in $G$ we mean a sequence of edges $e_1,...,e_n \in E(G)$ such that $t(e_i)= s(e_{i+1})$ for $1 \leq i \leq n-1$ and $s(e_1)=t(e_n)$.\\
We say that $G$ is \emph{input-output} if either $in(G)$ or $out(G)$ is non-empty i.e. if $G$ has at least one input leg or output leg.

\begin{definition}
By an \emph{input-output directed acyclic graph (\rda-graph)} we mean an input-output, connected directed graph which contains no directed cycles.
\end{definition}

\subsection{The monad $\GG$}\label{GGtripsubsec}
For an \rda-graph $G$ and $P \in \redTopSeq$, let 
\[ P(G) := \prod_{v \in V(G)} P(|in(v)|,|out(v)|) \]
denote the space of $P$-labelings of $G$. Denote by $\mathcal{G}_{n_-,n_+}$ the collection of graphs one from each isomorphism class of \rda-graphs having $n_-$ input and $n_+$ output leaves. Set 
\[ \GG_{n_-,n_+}(P) := \coprod_{G \in \mathcal{G}_{n_-,n_+}}P(G).\]
Denote by $\GG(P) \in \redTopSeq$ the bi-sequence given by $\GG(P) (n_-,n_+) = \GG_{n_-,n_+}(P)$. 

We now describe how to make $\GG$ into a monad. To do this we need to define operations $\mu \colon \GG \GG \to \GG$ and $\eta \colon \mathbbm{1} \to \GG$, satisfying the condition mentioned in Definition \ref{tripledef}.\\
For $P \in \redTopSeq$, $\GG\GG(P)$ is given by
\[ \GG\GG(P)(n_-,n_+) = \coprod_{G\in \mathcal{G}_{n_-,n_+}} \GG(P) (G) .\]
Thus an element in $\GG\GG(P)$ is represented by a $\GG(P)$-labeling of an \rda-graph. Such a decorated graph can be identified with a `nested' \rda-graph. Forgetting the nesting gives an \rda-graph with $P$-labelings. This defines the natural transformation $\mu \colon \GG\GG \to \GG$ (see \cite{markl2008operads} for details).\\
Let us now turn to the unit. Given $P \in \redTopSeq$, consider the maps $P(n_-,n_+) \to \GG_{n_-,n_+}(P)$ which identify $P(n_-,n_+)$ with $P$-labelings of the $(n_-,n_+)$-corolla, the unique graph in $\mathcal{G}_{n_-,n_+}$ having a single vertex and no internal edges. This defines the natural transformation $\eta \colon \mathbbm{1} \to \GG$. \\
It is not difficult to verify that $\mu$ and $\eta$ satisfy the associativity and unitality conditions in Definition \ref{tripledef}. We record here the conclusion for future reference:
\begin{lemma}\label{lem: GG is monad} $(\GG,\mu,\eta)$ is a monad. \\ \null \hfill \qedsymbol\end{lemma}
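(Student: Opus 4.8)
The plan is to verify directly that the natural transformations $\mu$ and $\eta$ constructed in Section \ref{GGtripsubsec} satisfy the associativity and unitality axioms of Definition \ref{tripledef}, exploiting the combinatorial description of $\GG$ in terms of $\rda$-graphs and nested $\rda$-graphs. The key point is that iterating $\GG$ produces iterated nestings of graphs, and both $\mu$ and the comparison maps needed for the pentagon-style diagrams amount to forgetting nesting levels; since forgetting nesting is visibly associative and unital at the level of the underlying combinatorics, the axioms follow. I would organize the argument into the two diagrams of Definition \ref{tripledef}.

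First I would treat associativity. An element of $\GG\GG\GG(P)(n_-,n_+)$ is, unwinding the definitions, a $\GG\GG(P)$-labeling of an $\rda$-graph $G$, i.e.\ a choice for each vertex $v \in V(G)$ of a $\GG(P)$-labeled $\rda$-graph with $(|in(v)|,|out(v)|)$ legs, which in turn is a $P$-labeled $\rda$-graph at each of those vertices. This is exactly the data of a doubly-nested $\rda$-graph with $P$-labeling on the innermost vertices. The two composites $\mu \circ (\mu \GG)$ and $\mu \circ (\GG\mu)$ then correspond, respectively, to first forgetting the outer nesting and then the remaining one, versus first forgetting the inner nesting and then the remaining one; in both cases one arrives at the same plain $P$-labeled $\rda$-graph obtained by forgetting all nesting. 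I would remark that one must also check this respects the chosen orderings on legs and on $in(v), out(v)$ at every vertex, and that the substitution of graphs into vertices is well-defined on isomorphism classes (so that the coproduct indexing sets $\mathcal{G}_{n_-,n_+}$ match up); these are the bookkeeping details, and they are standard --- the reference \cite{markl2008operads} handles precisely this for the monad built from graphs.

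Next I would treat unitality. Both composites $\mu \circ (\eta\GG)$ and $\mu \circ (\GG\eta)$ start from a $P$-labeled $\rda$-graph $G$ and produce a nested $\rda$-graph: in the first case $G$ itself is placed as a single "outer" vertex whose label is the $(n_-,n_+)$-corolla decorated by (the $P$-labeling of) $G$ --- no, more precisely $\eta\GG$ inserts $G$ as the label of the corolla, so forgetting nesting returns $G$; in the second case each vertex $v$ of $G$ gets relabeled by the corolla on $(|in(v)|,|out(v)|)$ decorated by the original label, and again forgetting nesting returns $G$. Either way the composite is the identity on $\GG(P)$.

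The main obstacle I expect is purely notational rather than conceptual: setting up a clean formalism for "nested $\rda$-graphs" and the forget-nesting operation that makes the identifications above literally true on the nose (including compatibility with the orderings, the incidence maps, and passage to isomorphism classes), without drowning in indices. Since this is by now a well-documented construction --- the graph monad whose algebras are properads, with the only modification being the harmless exclusion of the $(0,0)$ arity --- I would not reproduce all of it, but instead give the bijective description of $\GG^{\circ k}(P)$ in terms of $k$-fold nestings, observe that $\mu$ is "forget one level of nesting" and $\eta$ is "add a trivial (corolla) level," and conclude that associativity and unitality are immediate; for the remaining verifications I would cite \cite{vallette2007koszul} and \cite{markl2008operads}, where the analogous statements for properads and for graph-monads are proved in detail. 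This is the content of Lemma \ref{lem: GG is monad}.
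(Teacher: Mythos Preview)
Your proposal is correct and matches the paper's approach: the paper does not give a proof at all, merely remarking that ``it is not difficult to verify that $\mu$ and $\eta$ satisfy the associativity and unitality conditions'' and recording the lemma with a \qedsymbol. Your sketch of forgetting nesting levels for associativity and inserting/forgetting corolla levels for unitality, together with the appeal to \cite{markl2008operads} and \cite{vallette2007koszul}, is exactly the standard verification the paper has in mind.
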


\subsection{\reduced Properads as monad algebras}
\begin{definition}
	An \emph{\reduced properad} in topological spaces is an algebra over the monad $(\GG, \mu, \eta)$ in the category $\redTopSeq$.
\end{definition}

We will denote the category of topological \reduced properads by $\Galg$.


\begin{figure} 
	\centering
	\begin{subfigure}[b]{0.3\textwidth}
		\includegraphics[width=\textwidth]{./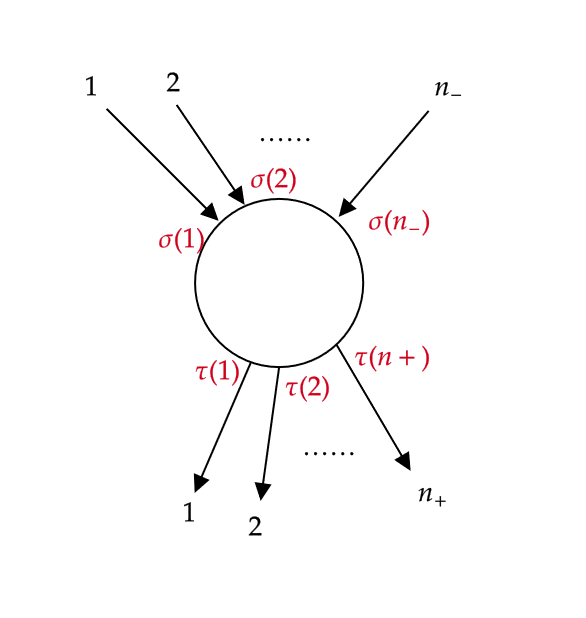}
		\caption{a permuted corolla}
		\label{perm_corr1}
	\end{subfigure}
	\begin{subfigure}[b]{0.35\textwidth}
		\includegraphics[width=\textwidth]{./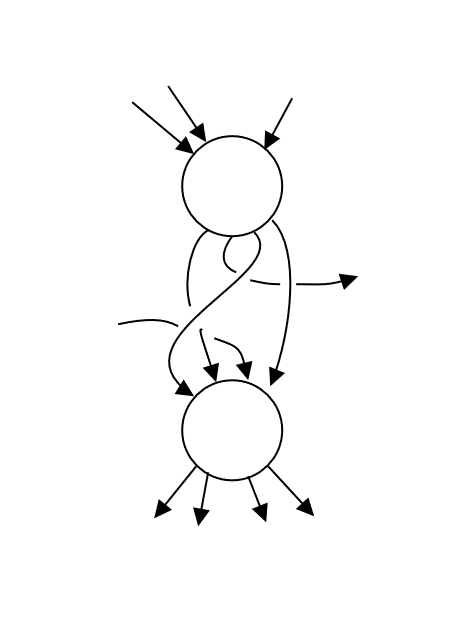}
		\caption{a partially grafted corolla}
		\label{perm_corr2}
	\end{subfigure}
	\caption{}
\end{figure}


\begin{remark}
	In particular if $P$ is an \reduced properad, for any \rda-graph $G$ we have a map
	\[ P(G) \to P(|in(G)|,|out(G)|)\]
	We will refer to it as the \emph{properad composition indexed by $G$} and denote it by $\mu_G$. In particular, we have the following structure maps:
\begin{enumerate}
\item \label{proprmk1} Using the graphs as in Figure \ref{perm_corr1}, we obtain a left $\Sigma_{n_-}$ and a right $\Sigma_{n_+}$ action on $P(n_-,n_+)$.
\item \label{proprmk2} Using the graphs as in Figure \ref{perm_corr2}, for any $n_-,n_+,m_-,m_+ \geq 0$ and \emph{non-empty} subsets $s_-$ and $s_+$ of $[n_+]$ and $[m_-]$ respectively, with an identification $\phi \colon s_- \to s_+$, we get a composition map
 \[ \circ_{\phi} \colon P(n_-,n_+) \times P(m_-,m_+) \to P(n_-+m_--|s_+|,n_++m_+-|s_-|)\]
provided $n_-,m_- - |s_+|, n_+- |s_-|$, and $m_+$ are not all simultaneously $0$. This slightly unnatural constraint is imposed by the fact that we work with \reduced properads which have no $(0,0)$ component.\\
These compositions are associative and are equivariant with respect to the $\Sigma \times \Sigma^{op}$-actions, described in point \eqref{proprmk1}, in a suitable sense.
\end{enumerate}
Conversely, it can be shown that $\Sigma \times \Sigma^{op}$-actions as in point \eqref{proprmk1} and (associative, $\Sigma \times \Sigma^{op}$-equivariant) composition maps as in point \eqref{proprmk2} define the structure of a $(\GG,\mu,\eta)$-algebra over $P$. We will not use this viewpoint and hence omit the details.
\end{remark}

\subsection{Pushouts of topological \reduced properads}
The main results in this paper are statements about certain homotopy pushouts in the category of topological \reduced properads. An explicit description of pushouts in the category of topological \reduced properads is used in Sections \ref{sec: hatthem pf pt1}, \ref{fulltcftsec}, and \ref{mbarsec} below to give a presentation of these homotopy pushouts. We now outline this description.

\begin{notation} 
To distinguish colimits in the category of topological \reduced-sequences from the colimits in topological \reduced properads, we will indicate the latter with a superscript $\Galg$. Below, whenever we use $coeq,\sqcup,$ and $\coprod$ without a superscript $\Galg$, it is understood that they refer to the respective operations in $\redTopSeq$.
\end{notation}

\begin{lemma}\label{pushoutlem}
	Let $P \leftarrow R \to Q$ be a diagram of topological \reduced properads. Then the pushout $P \sqcup_{R}^{\Galg} Q$ can be obtained as the following coequalizer in $\redTopSeq$
	\[ coeq\Big[ \GG(\GG P \coprod_{\GG R} \GG Q) \rightrightarrows \GG(P \coprod_{R} Q) \Big] \]
	with the \reduced properad structure induced from that of $\GG(P \sqcup_{R} Q)$. Here,
	\begin{enumerate}
	\item the first arrow is given by applying functor $\GG$ to the map $ \GG P \sqcup_{\GG R} \GG Q \to P \sqcup_{R} Q$ which itself is induced from the $\GG$-algebra structure maps $\GG P \to P, \GG Q \to Q$, and $\GG R \to R$
	\item the second arrow is induced via the universal property of free $\GG$-algebras using the map $ \GG P \sqcup_{\GG R} \GG Q \to \GG(P \sqcup_{R} Q) $ which in turn is the pushout of the maps given by applying functor $\GG$ to $P \to P \sqcup_R Q$, $Q \to P \sqcup_R Q$, and $R \to P \sqcup_R Q$
	\null \hfill \qedsymbol
	\end{enumerate}
\end{lemma}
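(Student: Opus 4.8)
The plan is to verify directly that the coequalizer displayed in the statement, computed in $\redTopSeq$, carries a canonical $\GG$-algebra structure and, equipped with that structure, satisfies the universal property of the pushout $P \sqcup_R^{\Galg} Q$ in $\Galg$. The argument is an instance of the standard fact that, for a monad $T$ on a cocomplete category $\catc$, the category of $T$-algebras is cocomplete, with colimits computed by a reflexive coequalizer of free algebras; see the treatment in the references already cited (e.g.\ \cite{vallette2007koszul}, \cite{markl2008operads}). What requires care here is only that $\redTopSeq$ is indeed cocomplete (it is, being a diagram category into $\Top$, which is cocomplete), and that the two arrows in the coequalizer are exactly the ones coming from the monadic bar-type construction.

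First I would recall the general mechanism. Given $\GG$-algebras $P, Q, R$ and algebra maps $R \to P$, $R \to Q$, the underlying reduced sequences admit a pushout $P \coprod_R Q$ in $\redTopSeq$; this is generally \emph{not} a $\GG$-algebra, so one forms the free algebra $\GG(P \coprod_R Q)$ and quotients by the relations forcing the original algebra structures on $P$ and $Q$ to be respected. Concretely, one has two parallel maps $\GG(\GG P \coprod_{\GG R} \GG Q) \rightrightarrows \GG(P \coprod_R Q)$: the first, described in item (1), is $\GG$ applied to the map $\GG P \coprod_{\GG R} \GG Q \to P \coprod_R Q$ assembled from the structure maps $m_P \colon \GG P \to P$, $m_Q \colon \GG Q \to Q$, $m_R \colon \GG R \to R$ (these are compatible over $\GG R$ precisely because $R\to P$ and $R\to Q$ are algebra maps, so they descend to the pushout); the second, described in item (2), is the adjunct of the map into $\GG(P\coprod_R Q)$ obtained by applying $\GG$ to the three structure maps of the pushout cocone and using that $\GG$ preserves pushouts up to the canonical comparison — more precisely it is the unique algebra map extending $\GG P \coprod_{\GG R}\GG Q \to \GG(P \coprod_R Q)$. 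Both composites land in the free algebra $\GG(P\coprod_R Q)$, so the coequalizer inherits a $\GG$-algebra structure, since the pair is reflexive (the common section is induced by the units $\eta$) and $\GG$, being a monad on a category where reflexive coequalizers are computed pointwise in $\Top$, interacts well enough with it for the quotient to be an algebra; this is exactly the content of the monadic colimit theorem, and I would cite it rather than reprove it.

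The substance of the proof is then checking the universal property. Let $S$ be any reduced properad with algebra maps $f\colon P \to S$, $g\colon Q \to S$ agreeing on $R$. Then $f,g$ induce a map of underlying sequences $P \coprod_R Q \to S$, hence by freeness an algebra map $\GG(P\coprod_R Q) \to S$; I would verify that this map coequalizes the two parallel arrows — for the arrow of (1) this is the statement that $f$ and $g$ are algebra maps (they commute with $m_P$, $m_Q$), and for the arrow of (2) it is naturality of $\mu$ together with the triangle identities — so it factors uniquely through the coequalizer, and the factorization is an algebra map because the coequalizer's algebra structure was defined as a quotient of the free one. Uniqueness follows because the map $\GG(P\coprod_R Q) \to \mathrm{coeq}$ is an epimorphism of algebras (it is a regular epi in $\Top$ in each arity, being a coequalizer of a reflexive pair). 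I expect the main obstacle to be bookkeeping rather than conceptual: one must be scrupulous about the reduced constraint (no $(0,0)$-arity, no compositions producing a $(0,0)$-operation), confirming that all pushouts, coproducts, and coequalizers invoked stay inside $\redTopSeq$ and that the monad $\GG$ restricts correctly — but since $\GG$ was built in Section~\ref{GGtripsubsec} entirely out of \rda-graphs, which are input-output by definition, this causes no trouble, and I would simply remark on it. A secondary point to state cleanly is that the displayed coequalizer is reflexive, which is what legitimizes computing the $\GG$-algebra colimit this way; this follows formally from the unit $\eta$ of $\GG$.
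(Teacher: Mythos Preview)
Your proposal is correct and matches the paper's approach exactly: the paper gives no detailed proof, stating only that ``the above lemma can be proved by directly verifying that the coequalizer with the given $\GG$-algebra structure satisfies the universal property of the pushout,'' which is precisely what you outline. Your added remarks on reflexivity of the pair and on $\GG$ preserving the relevant coequalizer (needed for the quotient to inherit an algebra structure) are the right points to make explicit, and your handling of the io-constraint is appropriate.
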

The above lemma can be proved by directly verifying that the coequalizer with the given $\GG$-algebra structure satisfies the universal property of the pushout.
\begin{notation} 
	We say that a subgraph $H$ of an \rda-graph $G$ is \emph{collapsible} if the graph $G/H$, obtained by replacing $H$ with a single vertex, contains no directed cycles.	More precisely: consider the topological \reduced-sequence $\mathcal{G}= \{\mathcal{G}_{n_-,n_+}\}$ (with the discrete topology), where, as in Section \ref{GGtripsubsec}, $\mathcal{G}$ is the collection containing one representative from each isomorphism class of \rda-graphs . It has a natural $\GG$-algebra structure. We say that $H \subset G$ is collapsible if $\mu_K(\prod_{v \in V(K)} H_v ) = G$ for some \rda-graphs $K$ and $H_{v}, v \in V(K)$, such that $H_v$, considered as a subgraph of $G$, coincides with $H$ for some $v$.
\end{notation}

Unraveling the coequalizer in Lemma \ref{pushoutlem}, we get that the topological \reduced-sequence underlying $P \sqcup_{R}^{\Galg} Q$ can be described as the quotient
\[ \frac{\GG(P \sqcup Q)}{\sim} \]
where $\sim$ is the equivalence relation generated by the following identifications:
\begin{enumerate}
	\item Let $G$ be a $(P \sqcup Q)$-labeled graph which has a collapsible subgraph $H \subset G$ with all the vertices labeled by $P$ (respectively, $Q$). Then $G$ is identified with the graph obtained by collapsing $H$ to a single vertex labeled by the element of $P$(respectively, $Q$) obtained by applying properad composition (indexed by $H$) to the labels of $H$. 
	\item Suppose that the maps $R\to P$ and $R \to Q$ are denoted by $i$ and $j$ respectively. Let $G$ be a $(P \sqcup Q)$-labeled graph with a vertex $v \in G$ labeled by an element $i(r) \in i(R) \subset P$ for some $r \in R$. Then $G$ is identified with the graph obtained by switching the label of $v$ to $j(r) \in j(R) \subset Q$.
\end{enumerate}
This is analogous to the description of pushouts of operads presented in \cite[Section 2.3]{oancea2020deligne}.

\subsection{Properads as algebras over a colored operad}\label{subsubsec: proprd oprd}
We end this section by describing an alternate characterization of properads as algebras over a colored operad $\oprd$. We will use this alternate description in Section \ref{sec: model category str and cofib resolutions} below to construct a model structure  on topological \reduced properads and also in Section \ref{sec: infty properad} to discuss the notion of an $\infty$-properad.\\
In this subsection we assume basic familiarity with colored operads and algebras over them. We refer the reader to \cite[Section 1]{berger2007resolution} for an exposition to these notions.

\begin{definition}\label{definition: vertex order}
	By a \emph{vertex-ordered} \rda-graph we mean a graph as in Section \ref{sec: graphs}, with the additional data of an ordering on the set of vertices. Isomorphisms of such graphs are required to preserve this ordering. We refer to isomorphisms of underlying \rda-graphs as \emph{unordered-isomorphisms}.\\
\end{definition}

\subsubsection{Colored Operad $\oprd$}
Let us start by describing the set-valued colored operad $\oprd$ which encodes input-output properads. The colors of $\oprd$ are given by $\NN \times \NN \setminus (0,0)$. Let $\un{n}_-^1, \un{n}_-^2,...,\un{n}_-^k,$ and $\un{n}_+$ be elements of $\NN \times \NN \setminus (0,0)$. The operations corresponding to these colors,
\[ \oprd(\un{n}_-^1, \un{n}_-^2,...,\un{n}_-^k; \un{n}_+), \]
 are given by the set of isomorphism classes of vertex-ordered \rda-graphs with input-output profile $\un{n}$ and where the input-output profiles of the vertices, in the vertex-ordering, are given by the sequence $\un{n}_-^1, \un{n}_-^2,...,\un{n}_-^k$. The $\Sigma$-action on the operad is given by permuting the ordering of the vertices and the operad composition is given by graph substitution.
 
The following statement is not difficult to see:
\begin{lemma}\label{lem: proprd as oprd alg}
A $\oprd$-algebra in $\Top$ is precisely a topological \reduced properad.
\null \hfill \qedsymbol
\end{lemma}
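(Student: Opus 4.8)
The plan is to unwind both sides of the claimed equivalence and exhibit a natural, structure-preserving bijection between $\oprd$-algebra structures on a topological $\reduced$-sequence $P$ and $\GG$-algebra structures on $P$. First I would recall what each datum is. By definition, a $\oprd$-algebra in $\Top$ assigns to each color $\un{n} \in \NN \times \NN \setminus (0,0)$ a space $P(\un{n})$ (so $P$ is exactly an object of $\redTopSeq$) together with, for each operation $\gamma \in \oprd(\un{n}_-^1,\dots,\un{n}_-^k;\un{n}_+)$ — i.e.\ each isomorphism class of vertex-ordered \rda-graph with the specified input-output profile and vertex-profiles — a continuous map $P(\un{n}_-^1) \times \cdots \times P(\un{n}_-^k) \to P(\un{n}_+)$, and these maps are required to be $\Sigma_k$-equivariant (for the permutation action reordering vertices) and compatible with operadic composition (graph substitution) and units. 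On the other side, a $\GG$-algebra is a map $m \colon \GG P \to P$ satisfying the monad associativity and unit square from Definition \ref{tripledef}. Since $\GG(P)(n_-,n_+) = \coprod_{G \in \mathcal{G}_{n_-,n_+}} P(G)$ and $P(G) = \prod_{v \in V(G)} P(|in(v)|,|out(v)|)$, a map $m$ is precisely a compatible family of maps $\mu_G \colon P(G) \to P(|in(G)|,|out(G)|)$, one for each \rda-graph $G$.

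Next I would set up the dictionary. Given a $\GG$-algebra structure $\{\mu_G\}$, an element $\gamma \in \oprd(\un{n}_-^1,\dots;\un{n}_+)$ is represented by a vertex-ordered graph $G$; forgetting the vertex-ordering gives an \rda-graph, and I assign to $\gamma$ the map $\mu_G$ after identifying $P(G) \cong P(\un{n}_-^1) \times \cdots \times P(\un{n}_-^k)$ via the vertex-ordering. One must check this is well-defined on isomorphism classes of vertex-ordered graphs (immediate, since $\mu$ is a natural transformation, hence $P(G) \to P(G')$ for an ordered-isomorphism commutes with the $\mu$'s — this is part of what it means for $m$ to be defined on the coproduct over isomorphism \emph{classes}), that the $\Sigma_k$-equivariance of the resulting operad action corresponds exactly to compatibility of $m$ with relabeling vertices, and that operadic composition in $\oprd$ (graph substitution) corresponds to the monad multiplication square: a nested graph, i.e.\ an element of $\GG\GG(P)$, maps under $\mu$ to the substituted graph, and the associativity square $m \circ Tm = m \circ \mu(P)$ is precisely the statement that substituting-then-acting equals acting-level-by-level, which is the operad composition axiom. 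The unit corolla on the $\GG$ side corresponds to the identity operations (the one-vertex graphs) on the $\oprd$ side, matching the two unit conditions. Conversely, from a $\oprd$-algebra one reads off $\mu_G$ for each \rda-graph $G$ by choosing any vertex-ordering and using the corresponding operation; independence of the choice follows from $\Sigma$-equivariance, and the monad axioms follow from the operad axioms by the same correspondence run backwards. Finally one checks that a map of topological $\reduced$-sequences respects one structure iff it respects the other, so the two categories are isomorphic, not merely equivalent.

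The main obstacle, and the only place where genuine care is needed, is the bookkeeping of orderings: $\GG$ is built from \emph{unordered} \rda-graphs (one per isomorphism class, with the orderings on $in(G)$, $out(G)$, $in(v)$, $out(v)$ as part of the data but \emph{no} vertex-ordering), whereas $\oprd$ is built from \emph{vertex-ordered} \rda-graphs with an explicit $\Sigma_k$-action permuting vertices. So the claimed equivalence is really the statement that a presheaf on unordered graphs with a "collapse" structure is the same as a symmetric-monoidal-style algebra over the operad of ordered graphs — the passage between "sum over iso classes" and "symmetric-collection with group action" is exactly the standard operad/monad correspondence (as for $\Sigma$-operads versus their associated monads), and I would either cite the analogous treatment in \cite{markl2008operads} or \cite{vallette2007koszul} for properads, or simply note that the argument is the evident adaptation of the classical comparison between an operad, its free-algebra monad, and algebras over the associated colored operad, as in \cite{berger2007resolution}. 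Because the statement is flagged in the paper as "not difficult to see", I would keep the proof to this outline: set up the bijection on objects, verify the axioms correspond under it, and check naturality in $P$.

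\null \hfill \qedsymbol
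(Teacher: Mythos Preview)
Your proposal is correct and is precisely the standard unwinding that the paper has in mind; the paper itself gives no proof beyond the qedsymbol, treating the statement as the routine observation that the free-properad monad $\GG$ and the colored operad $\oprd$ encode the same algebraic structure. Your attention to the vertex-ordering bookkeeping (and its resolution via $\Sigma$-equivariance, which is exactly Lemma~\ref{lem: sigma cofibrancy}'s content that \rda-graphs have no nontrivial automorphisms) is the only nontrivial point, and you handle it correctly.
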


We note here a fact which will be important later in Section \ref{sec: infty properad}:
\begin{lemma}\label{lem: sigma cofibrancy}
$\oprd$ is $\Sigma$-cofibrant, in other words the $\Sigma$-action on $\oprd$ is free.
\end{lemma}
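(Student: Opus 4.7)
The plan is to show directly that every element $[G] \in \oprd(\un{n}_-^1, \ldots, \un{n}_-^k; \un{n}_+)$ has trivial stabilizer under the $\Sigma_k$-action permuting the vertex ordering. Unpacking the definitions, such an element is an isomorphism class of vertex-ordered \rda-graphs, and $\sigma$ fixes $[G]$ iff the vertex-reordered graph $\sigma \cdot G$ is isomorphic to $G$ as a vertex-ordered graph. By Definition \ref{definition: vertex order}, this amounts to producing an unordered-automorphism $\phi$ of $G$ sending the vertex in position $j$ to the vertex in position $\sigma(j)$. The statement thus reduces to the rigidity claim that every unordered-automorphism of an \rda-graph is the identity.

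The key observation is that the data of an \rda-graph includes total orderings on $in(G), out(G)$ and on $in(v), out(v)$ for each vertex $v$, and every unordered-isomorphism is required to preserve all of these. Since the only order-preserving self-bijection of a finite totally ordered set is the identity, $\phi$ must fix every input and output leg of $G$. Moreover, whenever $\phi$ fixes a vertex $v$, the induced order-preserving bijections on $in(v)$ and $out(v)$ are also identities, so $\phi$ fixes every edge incident to $v$ and hence fixes every vertex adjacent to $v$.

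From here the argument is a standard propagation. The \rda-graph hypothesis ensures $in(G) \cup out(G) \neq \emptyset$; picking any such leg and using the observation above, $\phi$ fixes at least one vertex (the incidence endpoint of the leg). The local rigidity just noted then implies that the set of vertices fixed by $\phi$ is closed under adjacency, and by connectedness of the underlying graph this set must be all of $V(G)$. With every vertex fixed, the same local argument forces every internal edge to be fixed as well, so $\phi = \mathrm{id}_G$ and the stabilizer of $[G]$ is trivial. I do not expect any serious obstacle: the one point requiring care is correctly translating the $\Sigma_k$-action on isomorphism classes of vertex-ordered graphs into a condition on unordered automorphisms, and once that is in place the rigidity argument uses only the orderings and connectedness built into the \rda-graph definition — acyclicity is not needed.
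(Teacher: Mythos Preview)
Your proposal is correct and follows the same route as the paper: both reduce $\Sigma$-freeness to the claim that an \rda-graph has no non-trivial (unordered) automorphisms. The paper asserts this in a single parenthetical remark, whereas you spell out the propagation argument from a fixed leg via the per-vertex orderings and connectedness; your observation that acyclicity is never invoked is also correct.
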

\begin{proof}
This follows from the fact that vertex-ordered \rda-graphs have no non-trivial unordered automorphism (since \rda-graphs have no non-trivial automorphisms).
\end{proof}
\begin{remark}
This is false for a \emph{non-io} vertex-ordered graph. Figure \ref{non_red_graph_fig} shows an example.
\end{remark}


\begin{figure}
	\centering
		\includegraphics[width=\textwidth]{./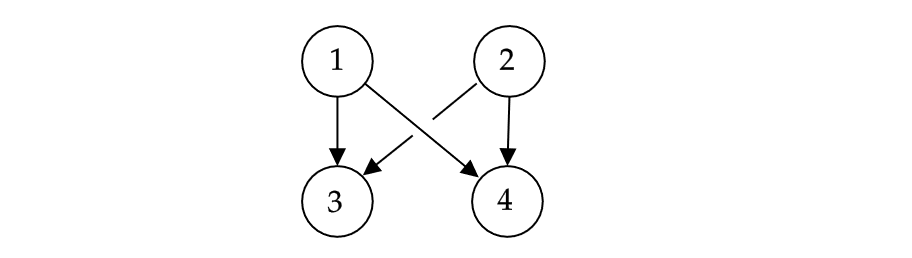}
		\caption{The \emph{non-io} vertex-ordered graph shown above has a non-trivial unordered-automorphism which swaps vertices 1, 2 and 3,4 (at each vertex the order of edges is assumed to be the one induced form the planar embedding)}
		\label{non_red_graph_fig}
	\end{figure}


\section{Model Category Structure and cofibrant resolutions}\label{sec: model category str and cofib resolutions}
\subsection{Model Category structure}
Model categories provide a natural setting for discussing homotopy theory in an abstract categorical setting. A model structure on a category consists of three distinguished classes of morphisms: fibrations, cofibrations, and weak equivalences, satisfying certain conditions. A model structure in particular allows us to define homotopy limits/colimits in a category. For more details on model categories and homotopy limits/colimits see \cite{may2011more}.

In order to make sense of the homotopy colimits in the category of topological \reduced properads which appear in Theorems \ref{frtofullthm} and \ref{hatthm}, we equip it with a model category structure.

\begin{notation}
Unless specified otherwise, whenever we refer to the model structure on $\Top$ we mean the standard model structure with the weak equivalences given by weak homotopy equivalences and fibrations given by Serre fibrations \cite[Section 17.2]{may2011more}. By the model structure on $\redTopSeq$ we will mean the induced model structure with weak equivalences and fibrations respectively given by component-wise weak equivalences and fibrations in $\Top$.
\end{notation}

Let $\oprdO$ be a colored operad in topological spaces with a set of colors $C$. Let $Alg_{\oprdO}(\Top)$ denote the category of algebras over $\oprdO$. Recall that any $\oprdO$-algebra $A$ has an underlying topological space $A(c)$, for every color $c \in C$.\\
Using \cite[Theorem 2.1]{berger2007resolution} for the monoidal model category $\Top$, we have the following:
\begin{proposition}
$Alg_{\oprdO}(\Top)$ admits a cofibrantly generated model structure such that a map $A \to B$ of $\oprdO$-algebras is a weak equivalence (respectively, fibration ) if and only if the underlying map of topological spaces $A(c) \to B(c)$ is a  weak homotopy equivalence (respectively, Serre fibration), for every $c \in C$.
\end{proposition}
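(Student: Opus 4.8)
The plan is to deduce this proposition directly from \cite[Theorem 2.1]{berger2007resolution}, so the work is really about checking that the hypotheses of that theorem apply to the pair $(\Top,\oprdO)$ in question. First I would recall the setup of that reference: it provides a transferred (cofibrantly generated) model structure on $Alg_{\oprdO}(\mathcal{E})$ for a cofibrantly generated monoidal model category $\mathcal{E}$, provided $\mathcal{E}$ satisfies suitable conditions guaranteeing that the free-forgetful adjunction $\oprdO(-) : \mathcal{E}^C \rightleftarrows Alg_{\oprdO}(\mathcal{E})$ transfers the model structure along the right adjoint. The key input is that $\Top$ (compactly generated Hausdorff spaces, with the Quillen/Serre model structure) is a cofibrantly generated monoidal model category which moreover admits a monoidal fibrant replacement functor and whose objects are all small; these are exactly the conditions verified in \cite[Section 2]{berger2007resolution} (or are classical, cf. \cite[Section 17.2]{may2011more}).

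With that in hand, the steps are: (1) observe that $\redTopSeq = \Top^{\NN\times\NN\setminus(0,0)} = \Top^{C}$ with $C$ the color set, equipped with the projective (component-wise) model structure, which is again cofibrantly generated with generating (trivial) cofibrations the component-wise images of those of $\Top$; (2) invoke the adjunction $Forget : Alg_{\oprdO}(\Top) \rightleftarrows \Top^{C} : \oprdO(-)$ from Section \ref{subsubsec: proprd oprd} (in the colored-operad generality, this is standard); (3) apply \cite[Theorem 2.1]{berger2007resolution} to transfer the model structure along $Forget$, which by construction makes a map of algebras a weak equivalence (resp.\ fibration) precisely when it is one component-wise in $\Top$, i.e.\ when each $A(c)\to B(c)$ is a weak homotopy equivalence (resp.\ Serre fibration). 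The cofibrant generation is inherited from the transfer. This is essentially a citation-level argument, and I would keep the proof to a few lines: state that $\Top$ meets the running hypotheses of \cite{berger2007resolution}, note $\redTopSeq$ is its $C$-fold power with the projective structure, and conclude by the cited theorem.

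The one point that deserves a sentence of care — and the only place there is any content beyond bookkeeping — is the verification that the transferred structure actually exists, i.e.\ that the (putative) trivial cofibrations of $Alg_{\oprdO}(\Top)$ are weak equivalences. In \cite{berger2007resolution} this is handled by a small-object/filtered-colimit argument using that relative cell attachments of free algebra maps are built from pushouts of $\oprdO(-)$ applied to generating cofibrations of $\Top^{C}$, together with the fact that in $\Top$ such attachments along trivial cofibrations remain trivial cofibrations (this uses the monoidal axiom and, in the presence of possibly non-$\Sigma$-free operads, a fibrant-replacement/path-object argument). For the bare existence statement we do not need $\oprdO$ to be $\Sigma$-cofibrant — that hypothesis (Lemma \ref{lem: sigma cofibrancy}) only becomes relevant later, in Section \ref{sec: infty properad}, for comparing homotopy types — so I would explicitly note that the proposition is a direct application of \cite[Theorem 2.1]{berger2007resolution} to the monoidal model category $\Top$ and requires no further properties of $\oprdO$. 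The main (and essentially only) obstacle is thus making sure $\Top$ as we have fixed it (compactly generated Hausdorff) genuinely satisfies the axioms assumed in \cite{berger2007resolution}; since those axioms are standard for this category, the proof is short.
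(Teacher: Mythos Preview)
Your proposal is correct and matches the paper's approach exactly: the paper simply states the proposition as a direct consequence of \cite[Theorem 2.1]{berger2007resolution} applied to the monoidal model category $\Top$, without writing out any further verification. Your additional discussion of the hypotheses (cofibrant generation of $\Top^C$, the transfer argument, and the remark that $\Sigma$-cofibrancy is not needed here) is accurate but more detailed than what the paper provides.
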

\begin{remark}\label{rmk: cofib in model str} Using the general theory of model categories, it follows that the cofibrations in the above model structure are given by morphisms which have the left lifting property with respect to the class of acyclic fibrations (fibrations which are weak equivalences) of $\oprdO$-algebras.
\end{remark}

As a consequence, using Lemma \ref{lem: proprd as oprd alg}, we have:
\begin{corollary}\label{thm: model str stmt}
The category of topological \reduced properads has a cofibrantly generated model structure with weak equivalences (respectively, fibrations) given by the weak equivalences (respectively, fibrations) of the underlying topological \reduced-sequences.
\end{corollary}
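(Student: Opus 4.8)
The plan is to obtain the corollary by specializing the preceding proposition to the colored operad $\oprd$. First I would invoke Lemma~\ref{lem: proprd as oprd alg}: the category of topological \reduced properads is canonically the category $Alg_{\oprd}(\Top)$ of algebras over $\oprd$, and the set of colors of $\oprd$ is $\NN\times\NN\setminus(0,0)$, which is exactly the indexing set of $\redTopSeq$. Applying the preceding proposition with $\oprdO=\oprd$ then equips $Alg_{\oprd}(\Top)$ with a cofibrantly generated model structure in which a morphism $P\to Q$ is a weak equivalence (respectively, fibration) precisely when $P(n_-,n_+)\to Q(n_-,n_+)$ is a weak homotopy equivalence (respectively, Serre fibration) for every color $(n_-,n_+)\in\NN\times\NN\setminus(0,0)$.

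To conclude, I would match this with the model structure on $\redTopSeq$ fixed in the Notation above, where a map of topological \reduced-sequences is declared to be a weak equivalence (respectively, fibration) exactly when it is so componentwise in $\Top$. Thus the weak equivalences and fibrations of the model structure just produced are, by definition, those of the underlying topological \reduced-sequences, which is the assertion of the corollary.

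I do not expect a genuine obstacle here. The only input is that the preceding proposition applies to $\oprd$, and that proposition is stated for an \emph{arbitrary} colored operad in $\Top$, so no cofibrancy or admissibility hypothesis on $\oprd$ needs to be checked (in particular the $\Sigma$-freeness of Lemma~\ref{lem: sigma cofibrancy}, which will matter later in Section~\ref{sec: infty properad}, plays no role at this point). Were one to prefer avoiding \cite{berger2007resolution} and instead transfer the model structure by hand along the free--forgetful adjunction between $\redTopSeq$ and $\Galg$ given by the monad $\GG$, the substantive verification would be the familiar one: that the small object argument applies in $\Galg$ and that pushouts of free maps on the generating acyclic cofibrations of $\redTopSeq$ remain weak equivalences in each arity. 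Since $\Top$ is a cofibrantly generated monoidal model category with well-behaved filtered colimits, this is exactly the content subsumed in the proposition we are permitted to assume, so there is nothing further to do.
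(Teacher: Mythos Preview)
Your proposal is correct and matches the paper's approach exactly: the paper derives the corollary by applying the preceding proposition to $\oprdO=\oprd$ together with Lemma~\ref{lem: proprd as oprd alg}, and your identification of the componentwise weak equivalences/fibrations with those of the underlying $\redTopSeq$ is precisely the intended reading.
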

\begin{remark}
As in Remark \ref{rmk: cofib in model str} above, cofibrations of \reduced properads are precisely the maps which have left lifting property with respect to acyclic fibrations of \reduced properads.
\end{remark}

\subsection{Cofibrant Resolutions via the Bar Construction}\label{subsec: Cofibrant Resolutions via The Bar Construction}
Homotopy colimits are defined in terms of the cofibrant resolutions of objects (and diagrams) in model categories. In the computations of homotopy colimits in Sections \ref{sec: hatthem pf pt1}, \ref{mbarsec}, and \ref{fulltcftsec}, we use an explicit description of the cofibrant resolutions to provide presentations of the homotopy colimits, which we now explain.

The cofibrant resolution we use is obtained by applying the bar construction to the monad $\GG$:
\subsubsection{Monadic Bar Construction} Let $(T,\mu,\eta)$ be a monad acting on a category $\catc$. The monadic bar construction of $T$ applied to a $T$-algebra is the analogue of the usual bar construction for algebras. Given a $T$-algebra $X$, its monadic bar construction $B_{\bullet}(T,T,X)$ is a simplicial $T$-algebra with the $n$-simplices given by 
\[ B_n(T,T,X) : = T^{n+1}(X)\]
The simplicial face maps are given by
\[ d_i = T^{i} \mu T^{n-i-1}X, 0 \leq i \leq n\]
and the degeneracy maps are given by
\[ s_i=T^{i+1}\eta T^{n-i}X, 0 \leq i \leq n.\]

The bar construction gives a simplicial resolution of the algebra $X$ in the following sense: Let $s\catc$ denote the category of simplicial objects in $\catc$ and let the constant simplicial object associated to $X$ be denoted by $X_{\bullet}$. Then,
\begin{proposition}[{\cite[Proposition 9.8]{may2006geometry}}]\label{prop: simplicial deformation retract}
$X_\bullet$ is a simplicial deformation retract of $B_{\bullet}(T,T,X)$ in $s\catc$.
\null \hfill \qedsymbol
\end{proposition}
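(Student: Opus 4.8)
The statement to prove is Proposition \ref{prop: simplicial deformation retract}: that the constant simplicial object $X_\bullet$ is a simplicial deformation retract of the monadic bar construction $B_\bullet(T,T,X)$. The plan is to follow the standard argument of May for the two-sided monadic bar construction, which produces the retraction data explicitly and uniformly from the monad structure and the algebra structure map. First I would record the two structure maps that organize everything: the augmentation $\varepsilon \colon B_\bullet(T,T,X) \to X_\bullet$ given in simplicial degree $n$ by the iterated action $m \circ Tm \circ \cdots \circ T^n m \colon T^{n+1}X \to X$ (equivalently, $T^n$ of the algebra structure map composed down to $X$; one checks this is compatible with faces and degeneracies using associativity of $\mu$ and the algebra axioms), and a section $\iota \colon X_\bullet \to B_\bullet(T,T,X)$ given in degree $n$ by $\eta^{n+1}(X) \colon X \to T^{n+1}X$ (the unit applied $n+1$ times). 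A direct check from the unit axiom $m \circ \eta(X) = \mathbbm{1}_X$ shows $\varepsilon \circ \iota = \mathbbm{1}_{X_\bullet}$, so $X_\bullet$ is genuinely a retract.

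The substance is to produce a simplicial homotopy $h \colon \iota \circ \varepsilon \simeq \mathbbm{1}_{B_\bullet(T,T,X)}$ in $s\catc$. Here I would invoke the classical formula: the extra degeneracy. Define $h_i \colon B_n(T,T,X) \to B_{n+1}(T,T,X)$, i.e.\ $h_i \colon T^{n+1}X \to T^{n+2}X$, by $h_i = \eta \, T^{i+1} (\text{structure applied to the remaining } T^{n-i}X\text{-part})$ — more precisely, using the standard recipe $h_i = T^{i} \eta \, T^{n+1-i} X$ for $0 \le i \le n$, together with the requirement that the composite $d_0 h_0 = \iota \varepsilon$ built from iterating. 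The cleanest route is actually to exhibit the extra degeneracy $s_{-1} = \eta T^{n+1}X \colon B_n \to B_{n+1}$ and recall May's Lemma (\cite[Proposition 9.8]{may2006geometry} and its proof, or the simplicial homotopy extracted from an augmented simplicial object with extra degeneracy): an augmented simplicial object equipped with an extra degeneracy is simplicially contractible onto the augmentation, and the contracting homotopy is assembled from the $s_{-1}$'s by the usual formula $h_i = s_{-1}\, d_{i+1} \cdots d_n$ (or its dual), which one verifies satisfies the simplicial identities $d_i h_j = h_{j-1} d_i$ etc.\ purely formally. I would then note that all the verifications — that $\varepsilon$ and $\iota$ are simplicial maps, that $\varepsilon\iota = \mathbbm{1}$, and that the $h_i$ satisfy the simplicial identities making $h$ a homotopy $\iota\varepsilon \simeq \mathbbm{1}$ rel the image of $\iota$ — reduce to the monad associativity/unit diagrams and the $T$-algebra axioms of Definition \ref{tripledef}, applied functorially; this is exactly the content of \cite[Proposition 9.8]{may2006geometry}, applied to the monad $T = \GG$ on $\catc = \redTopSeq$.

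Since the proposition is quoted verbatim from \cite[Proposition 9.8]{may2006geometry}, the honest proof is simply to check that our setting satisfies that reference's hypotheses: $\GG$ is a monad on $\redTopSeq$ (Lemma \ref{lem: GG is monad}), $\redTopSeq$ is cocomplete (being a diagram category in $\Top$), and $X$ is a $\GG$-algebra, i.e.\ an \reduced properad — no point-set or topological hypotheses beyond what May assumes are needed because the bar construction and its contracting homotopy are defined levelwise and the simplicial identities are purely diagrammatic. I expect \textbf{no genuine obstacle} here: the only mild care needed is bookkeeping — making sure the extra-degeneracy formula is applied on the correct ($X$-)side of $T^{n+1}X$ so that the resulting homotopy retracts onto $X_\bullet$ and not merely onto some other subobject, and confirming that "simplicial deformation retract in $s\catc$" is used in exactly May's sense (a retraction together with a simplicial homotopy fixing $X_\bullet$). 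I would therefore present the proof as: (i) define $\varepsilon$, $\iota$, check they are simplicial and $\varepsilon\iota=\mathbbm 1$; (ii) define the extra degeneracy $s_{-1}$ and cite \cite[Proposition 9.8]{may2006geometry} to extract the simplicial homotopy; (iii) conclude.
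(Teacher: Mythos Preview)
Your proposal is correct and in complete agreement with the paper: the paper gives no proof at all for this proposition, simply citing \cite[Proposition 9.8]{may2006geometry} and appending a \qedsymbol. Your sketch reconstructs precisely May's extra-degeneracy argument that the citation points to, so there is nothing to compare.
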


Applying this to the monad $\GG$ and an \reduced properad $P$, we obtain a simplicial resolution of $P$. When $P$ is nice enough, it is possible to use this to obtain a cofibrant resolution of $P$. This is the cofibrant resolution we shall use below. The first step in this direction is to obtain an \reduced properad starting with the simplicial \reduced properad $B_{\bullet}(T,T,X)$. This is done by taking its \emph{geometric realization}, which we now outline.

\subsubsection{Geometric Realization}
The category of topological \reduced properads is tensored over topological spaces. Roughly this means that
\begin{itemize}
\item for any two \reduced properads $P_1$ and $P_2$ we can endow the set of \reduced properad morphisms $\Galg(P, Q)$ with a topology. Denote this topological space by $[P,Q]$, and
\item for any \reduced properad $P$ and a topological space $Z$, we can define their `tensor product' $P \odot Z$,
\end{itemize}
such that a version of hom-tensor adjunction holds:
\[ \Top(Z,[P, Q]) \simeq \Galg(P \odot Z, Q) .\]
See Appendix \ref{apndx: Cofibrant Resolutions via The Bar Construction} for more details on this.

Using this tensor product over topological spaces on the category of \reduced properads, we can define the geometric realization of a simplicial \reduced properad $\{P_\bullet\}_{\bullet \geq 0}$ by the usual formula:
\begin{equation}\label{geomrelform}
|P_\bullet|^{\Galg} = \int_{\Delta}^{\Galg} P_n \odot \Delta^n := coeq^{\Galg} \Big[ \coprod_{\phi \colon [n] \to [m] \in \Delta}^{\Galg} P_m \odot \Delta^n\rightrightarrows \coprod_{[n] \in \Delta}^{\Galg} P_n \odot \Delta^n \Big] .
\end{equation}
Here $\Delta$ denotes the simplex category, and $\Delta^n \in \Top$ is the standard $n$-simplex.	In the coequalizer, the first arrow is induced from the properad map given by the simplicial structure map $P(\phi) \colon P_m \to P_n$ corresponding to $\phi \colon [n] \to [m]$, and the second arrow is induced from the map $\Delta(\phi) \colon \Delta^n \to \Delta^m$ of topological spaces.

\subsubsection{Cofibrancy of the bar construction}
\begin{proposition}\label{prop: cofibrancy of bar construction}
Let $P$ be an \reduced properad which is cofibrant as a topological \reduced sequence. Then, the geometric realization $|B_{\bullet}(\GG,\GG,P)|^{\Galg}$ is a cofibrant \reduced properad.
\end{proposition}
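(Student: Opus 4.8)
The plan is to reduce the cofibrancy statement to two facts: (1) that cofibrations of \reduced properads are detected by a left-lifting property against acyclic fibrations, which are themselves detected on underlying \reduced sequences (Corollary \ref{thm: model str stmt} and the subsequent remark); and (2) that geometric realization, built from the tensor $\odot$ over $\Top$ and colimits in $\Galg$, is a left adjoint and hence preserves cofibrancy when fed a suitably cofibrant simplicial input. First I would recall, from the adjunction $\Top(Z,[P,Q]) \simeq \Galg(P\odot Z, Q)$ spelled out before the proposition, that for a fixed \reduced properad $P$ the functor $P\odot(-)\colon \Top \to \Galg$ is a left adjoint, and that more generally the functor $|-|^{\Galg}\colon s\Galg \to \Galg$ sending a simplicial \reduced properad to its realization \eqref{geomrelform} is a left adjoint (its right adjoint sends $Q$ to the simplicial \reduced properad $[\Delta^\bullet, Q]$). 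Being a left adjoint, $|-|^{\Galg}$ preserves colimits; I would then invoke the standard model-categorical fact that geometric realization of a Reedy-cofibrant (equivalently, ``proper'' / ``good'' in the sense of \cite{may2006geometry}) simplicial object in a model category is cofibrant provided each $P_n$ is cofibrant and the latching maps are cofibrations.

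The substance is therefore to check that $B_\bullet(\GG,\GG,P)$ is a good/Reedy-cofibrant simplicial \reduced properad. The $n$-simplices are $\GG^{n+1}(P)$, which is a free $\GG$-algebra on the \reduced sequence $\GG^n(P)$; since the free-algebra functor $\GG\colon \redTopSeq \to \Galg$ is a left adjoint, it preserves cofibrant objects, so each $B_n$ is cofibrant in $\Galg$ as soon as $\GG^n(P)$ is cofibrant in $\redTopSeq$ — and here I use the hypothesis that $P$ is cofibrant as a \reduced sequence, together with the observation that $\GG$ preserves cofibrant \reduced sequences (it is built from products, coproducts, and the $\Sigma$-free structure recorded in Lemma \ref{lem: sigma cofibrancy}, so it sends cofibrant objects to cofibrant objects; this is where $\Sigma$-freeness of $\oprd$ is essential, exactly as flagged in the excerpt). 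For the latching-object / Reedy condition, I would show that the degeneracies $s_i = \GG^{i+1}\eta\,\GG^{n-i}P$ exhibit $B_\bullet(\GG,\GG,P)$ as a split (or at least ``$\Sigma$-free'' in the simplicial direction) simplicial object: concretely, the latching map at level $n$ is a coproduct inclusion of free $\GG$-algebras, hence a cofibration. This is the standard argument that a monadic bar construction $B_\bullet(T,T,X)$ on a cofibrant $X$, with $T$ preserving cofibrations and cofibrant objects, is Reedy-cofibrant; I would cite \cite{berger2007resolution} or \cite{may2006geometry} for the precise statement and adapt it to $T=\GG$.

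I expect the main obstacle to be the verification that the monadic bar construction is Reedy-cofibrant \emph{at the level of \reduced properads} rather than merely at the level of \reduced sequences — that is, controlling the latching maps $L_n B_\bullet \to B_n$ inside $\Galg$, where colimits are the complicated coequalizers of Lemma \ref{pushoutlem} rather than levelwise colimits of spaces. The clean way around this is to note that the relevant latching colimits are colimits of \emph{free} $\GG$-algebras, and the free functor $\GG$ carries the (levelwise, easy) latching decomposition in $\redTopSeq$ to the corresponding one in $\Galg$ because $\GG$, as a left adjoint, commutes with those colimits; so the Reedy-cofibrancy of $B_\bullet(\GG,\GG,P)$ in $\Galg$ follows from the evident Reedy-cofibrancy of the underlying simplicial \reduced sequence together with cofibrancy of each $\GG^{n+1}(P)$ in $\Galg$. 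Once that is in hand, the realization $|B_\bullet(\GG,\GG,P)|^{\Galg}$ is cofibrant by the preservation property of the left adjoint $|-|^{\Galg}$, completing the proof. (The technical details of the $\odot$-tensoring and of these Reedy arguments are precisely what is deferred to Appendix \ref{apndx: Cofibrant Resolutions via The Bar Construction}.)
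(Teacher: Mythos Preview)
Your proposal is correct and follows the same approach as the paper: verify that the latching maps $L_n B_\bullet \to B_{n+1}$ are cofibrations in $\Galg$ by using that $\GG$ is a left adjoint to transport the computation to $\redTopSeq$, where the paper identifies the relevant map concretely as the inclusion $\bigcup_i s_i(\GG^n P) \hookrightarrow \GG^{n+1}P$ of a sub-coproduct indexed by the degenerate nested graphs. One small note: Lemma~\ref{lem: sigma cofibrancy} is not what is invoked here (it is used later in Section~\ref{sec: infty properad} for rectification); what matters for this argument is simply that $\GG$ is built from coproducts and finite products with no symmetric-group quotients, so that the sub-coproduct inclusion is a cofibration of sequences once $P$ is.
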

The statement is proved by expressing the geometric realization as a sequence of pushouts in terms of the so-called \emph{latching spaces} and reformulating the cofibrancy condition in terms of these successive pushouts. We defer the details to Appendix \ref{apndx: Cofibrant Resolutions via The Bar Construction}.

We will also use a relative version of this statement:
\begin{proposition}\label{prop: rel cofibrancy of bar construction}
Let $P, Q$ be \reduced properads which are cofibrant as topological \reduced-sequences, and let $P \to Q$ be a map of \reduced properads such that the underlying map of topological \reduced sequences is a cofibration. Then, the induced map $|B_{\bullet}(\GG,\GG,P)|^{\Galg} \to |B_{\bullet}(\GG,\GG,Q)|^{\Galg}$ is a cofibration of \reduced properad.
\end{proposition}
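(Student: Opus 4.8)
The plan is to run the proof of Proposition~\ref{prop: cofibrancy of bar construction} \emph{relatively}, keeping track of relative latching objects at every stage. Recall from that argument that the geometric realization \eqref{geomrelform} of a simplicial \reduced properad is built by a skeletal filtration, and that for a map $X_\bullet \to Y_\bullet$ of simplicial \reduced properads the induced map $|X_\bullet|^{\Galg} \to |Y_\bullet|^{\Galg}$ is a composite of a sequence of pushouts in $\Galg$, the $n$-th of which is taken along a morphism assembled, using the tensoring $\odot$ and the cofibration $\partial\Delta^n \hookrightarrow \Delta^n$, from the relative latching map
\[
\rho_n \colon X_n \sqcup^{\Galg}_{L_n X_\bullet} L_n Y_\bullet \longrightarrow Y_n .
\]
Since the tensoring $\odot \colon \Galg \times \Top \to \Galg$ is a left Quillen bifunctor (part of the tensoring formalism recalled in Appendix~\ref{apndx: Cofibrant Resolutions via The Bar Construction}) and $\partial\Delta^n \hookrightarrow \Delta^n$ is a cofibration of spaces, each of these pushout maps --- hence $|X_\bullet|^{\Galg} \to |Y_\bullet|^{\Galg}$ itself --- is a cofibration of \reduced properads as soon as every $\rho_n$ is. So it suffices to show that the relative latching maps of $B_\bullet(\GG,\GG,P) \to B_\bullet(\GG,\GG,Q)$ are cofibrations of \reduced properads.

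What makes this tractable is that the bar construction is freely generated degreewise, compatibly with its degeneracies. Concretely, $B_n(\GG,\GG,-) = \GG\big(\GG^{n}(-)\big)$, and every degeneracy map of $B_\bullet(\GG,\GG,-)$ is $\GG$ applied to an $\eta$-insertion; hence, writing $E_\bullet(-)$ for the simplicial \reduced sequence with $E_n(-) = \GG^{n}(-)$ and degeneracies the $\eta$-insertions, the latching diagram of $B_\bullet(\GG,\GG,-)$ is $\GG$ applied to the latching diagram of $E_\bullet(-)$. Since the free-properad functor $\GG \colon \redTopSeq \to \Galg$ is a left adjoint it preserves these colimits (and pushouts), so $L_n B_\bullet(\GG,\GG,P) = \GG\big(L_n E_\bullet(P)\big)$ and the relative latching map $\rho_n$ of $B_\bullet(\GG,\GG,P) \to B_\bullet(\GG,\GG,Q)$ equals $\GG$ applied to the relative latching map of $E_\bullet(P) \to E_\bullet(Q)$ computed in $\redTopSeq$. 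Moreover $\GG \colon \redTopSeq \to \Galg$ is left Quillen --- it is left adjoint to the forgetful functor, which by Corollary~\ref{thm: model str stmt} preserves fibrations and acyclic fibrations --- so it carries cofibrations of \reduced sequences to cofibrations of \reduced properads. Thus the statement reduces to showing that the relative latching maps of $E_\bullet(P) \to E_\bullet(Q)$ are cofibrations of \reduced sequences.

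This last point I would prove by induction on $n$, using the following properties of $\GG$ regarded as an endofunctor of $\redTopSeq$: since $(\GG A)(n_-,n_+) = \coprod_{G} \prod_{v \in V(G)} A(|in(v)|,|out(v)|)$ is built from coproducts, finite products and evaluations, and finite products and coproducts in $\Top$ preserve cofibrant objects and cofibrations between cofibrant objects --- the finite-product case by iterating the Quillen bifunctor property of $\times$ on $\Top$ --- the functor $\GG$ preserves levelwise-cofibrant \reduced sequences and levelwise cofibrations between them; and the unit $\eta \colon A \to \GG A$ is the inclusion of the corolla summand, hence a cofibration whenever $A$ is levelwise cofibrant. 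Starting from the hypothesis that $P \to Q$ is a cofibration of (levelwise cofibrant) \reduced sequences, these give inductively that each $\GG^{n} P \to \GG^{n} Q$ is a cofibration of levelwise-cofibrant \reduced sequences, and --- after decomposing $E_n(Q) = \GG^{n} Q$ into the summands recording which of the $n$ applications of $\GG$ are ``degenerate'' --- that the $n$-th relative latching map of $E_\bullet(P) \to E_\bullet(Q)$ is a coproduct of identity maps (on the summands already present in $L_n E_\bullet$) and of maps obtained from $P \to Q$ by finite products and coproducts; in particular a cofibration.

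The only real obstacle is the combinatorial bookkeeping: identifying the latching object $L_n E_\bullet(P)$ with the corresponding union of summands of $\GG^{n} P$ and verifying that the relative latching maps decompose as claimed. But this is precisely the Eilenberg--Zilber-type analysis already needed for the absolute statement Proposition~\ref{prop: cofibrancy of bar construction}, and I would carry both out in Appendix~\ref{apndx: Cofibrant Resolutions via The Bar Construction}; the relative case introduces no new idea, only the systematic replacement of ``cofibrant object'' by ``cofibration'' throughout, which is legitimate by the closure properties just listed.
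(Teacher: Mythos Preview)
Your proposal is correct and follows essentially the same approach as the paper: reduce to the relative latching criterion (Lemma~\ref{lem: rel cofib in latching}), use that $\GG$ is a left adjoint to identify the latching objects of $B_\bullet(\GG,\GG,-)$ as $\GG$ applied to latching objects computed in $\redTopSeq$, and then analyze the nested-graph summand decomposition of $\GG^{n}P$ to see that the relevant relative latching maps are cofibrations of \reduced sequences. The paper in fact omits the details (``A similar argument using Lemma~\ref{lem: rel cofib in latching} \ldots\ We omit the details''), so your write-up is more explicit than the paper's own treatment; the only discrepancy is a harmless indexing convention (the paper's $L_n$ maps into $P_{n+1}$ rather than $P_n$).
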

The proof is provided in Appendix \ref{apndx: Cofibrant Resolutions via The Bar Construction}.

\subsubsection{Geometric realizations in topological \reduced sequences}
It is possible to define the geometric realization of simplicial topological \reduced sequences using the same formula \eqref{geomrelform}, with $\odot$ replaced by component-wise product with topological spaces and with the coproduct and the coequalizer taken in topological \reduced-sequences instead of \reduced properads. This coincides with taking the usual geometric realization of simplicial topological spaces component-wise.\\
Let $P_\bullet$ be a simplicial object in topological \reduced properads. Considering it as a  simplicial object in topological \reduced sequences, we can take its geometric realization $|P_\bullet|^{\redTopSeq}$ in the category of topological \reduced sequences.\\
It is well known that $|\ \ |^{\redTopSeq}$ is a monoidal functor (i.e. satisfies$|X_\bullet \times Y_\bullet| = |X_\bullet | \times |Y_\bullet |$), and hence it follows that $|P_\bullet|^{\redTopSeq}$ has a natural properad structure:\\
Given $G \in \mathcal{G}$, the composition along $G$, $\mu_G \colon G(|P_{\bullet}|) \to |P_{\bullet}|(in(G),out(G))$ is defined as the geometric realization of the corresponding map $\mu_G \colon G(P_{\bullet}) \to P_{\bullet}(in(G),out(G))$.\\

A priori it is not clear what the relation between the two geometric realizations is. However, we have the following (somewhat surprising) fact:
\begin{proposition}[{\cite[Theorem 7.5 (ii)]{mandell2019operads}}]\label{prop: geometric realizations iso}
Let $P_{\bullet}$ be a simplicial object in topological \reduced properads. Then, the topological \reduced properads obtained by taking the geometric realization of $P_{\bullet}$ in the category of topological \reduced properads and the category of topological \reduced sequences are isomorphic.
\end{proposition}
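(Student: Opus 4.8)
The plan is to isolate a single key point --- that the free \reduced properad monad $\GG$ commutes with (levelwise) geometric realization of simplicial \reduced sequences --- after which the statement becomes formal monadic bookkeeping. This is the exact properad analogue of the operad argument, and it works because the combinatorics of \rda-graphs is ``finitary'' in the appropriate sense.

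First, I would prove the key lemma: for a simplicial topological \reduced sequence $P_\bullet$ there is an isomorphism $\GG(|P_\bullet|^{\redTopSeq})\cong|\GG(P_\bullet)|^{\redTopSeq}$, natural in $P_\bullet$, where on the right $\GG$ is applied in each simplicial degree. Since realization in $\redTopSeq$ is computed arity-by-arity (as recalled just before the statement), it suffices to produce this isomorphism in each arity $(n_-,n_+)$, where $\GG(P)(n_-,n_+)=\coprod_{G\in\mathcal{G}_{n_-,n_+}}\prod_{v\in V(G)}P(|in(v)|,|out(v)|)$ is a coproduct of \emph{finite} products of components of $P$. Geometric realization of simplicial spaces commutes with arbitrary coproducts, and --- because $\Top$ here is the category of compactly generated Hausdorff spaces (see the conventions at the start of Section \ref{sec: overview}) --- with finite products. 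Interchanging $|-|$ with the coproduct and with each of the finite products gives the desired natural isomorphism.

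Next, given a simplicial topological \reduced properad $P_\bullet$, I would put $X:=|P_\bullet|^{\redTopSeq}$ and transport the algebra structure: the levelwise action maps $\GG P_n\to P_n$ realize, by the key lemma, to a map $\GG X\cong|\GG P_\bullet|^{\redTopSeq}\to|P_\bullet|^{\redTopSeq}=X$; applying $|-|$ to the associativity and unit diagrams of the simplicial $\GG$-algebra $P_\bullet$ (and using the key lemma again to identify $\GG\GG X\cong|\GG\GG P_\bullet|^{\redTopSeq}$) shows that this makes $X$ a topological \reduced properad, whose composition maps $\mu_G$ are visibly the realizations of the levelwise $\mu_G$ --- i.e. $X$ carries exactly the \reduced properad structure described before the statement. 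Finally, I would identify $X$ with the realization $|P_\bullet|^{\Galg}$ computed by \eqref{geomrelform}. This is an instance of the general principle that a monadic forgetful functor creates the colimits preserved by its monad (geometric realization, here, by the key lemma), but it is quickest to check directly: for an arbitrary \reduced properad $Q$, a \reduced properad map $X\to Q$ is the same datum as a simplicially compatible family of \reduced properad maps $P_n\to Q$, because a $\redTopSeq$-map out of $X=|P_\bullet|^{\redTopSeq}$ is exactly such a compatible family of $\redTopSeq$-maps (universal property of realization in $\redTopSeq$), and the key lemma identifies the $\GG$-equivariance of the total map with $\GG$-equivariance of each $P_n\to Q$ separately. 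Hence $\Galg(X,Q)\cong\Galg(|P_\bullet|^{\Galg},Q)$ naturally in $Q$, so $X\cong|P_\bullet|^{\Galg}$ by Yoneda.

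The main obstacle --- indeed essentially the only content --- is the key lemma, and within it the assertion that geometric realization commutes with \emph{finite products}. This fails for arbitrary topological spaces, and is exactly why one works with compactly generated Hausdorff spaces throughout. It is equally essential that every \rda-graph has finitely many vertices, so that the products defining $\GG$ are genuinely finite; this is the precise sense in which passing from arbitrary ``operations'' to properad compositions costs nothing. Once the key lemma is in place, the transport of structure and the identification with $|P_\bullet|^{\Galg}$ are routine.
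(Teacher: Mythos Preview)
Your proposal is correct and shares the same core with the paper's argument: both hinge on the ``key lemma'' $\GG|P_\bullet|^{\redTopSeq}\cong|\GG P_\bullet|^{\redTopSeq}$, proved exactly as you do (realization commutes with coproducts and, in compactly generated Hausdorff spaces, with finite products). The finish, however, is organized differently. The paper does not use your Yoneda argument; instead it proves an auxiliary computation $|\GG X_\bullet|^{\Galg}\cong\GG|X_\bullet|^{\redTopSeq}$ for any simplicial \reduced sequence $X_\bullet$ (via the explicit formula $\GG X_n\odot\Delta^n\cong\GG(X_n\times\Delta^n)$), then applies $|-|^{\Galg}$ to the reflexive coequalizer $\GG\GG P_\bullet\rightrightarrows\GG P_\bullet\to P_\bullet$ and compares with the analogous coequalizer for $|P_\bullet|^{\redTopSeq}$. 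Your route is more conceptual and avoids this extra computation; the paper's route is more hands-on but makes the comparison map explicit. One small imprecision in your write-up: maps out of $|P_\bullet|^{\redTopSeq}$ into $Q$ correspond not to families $P_n\to Q$ but to simplicial maps $P_\bullet\to Q^{\Delta^\bullet}$ (equivalently, compatible maps $P_n\times\Delta^n\to Q$); your $\GG$-equivariance argument then goes through verbatim with this correction.
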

The proof, following \cite{mandell2019operads}, is given in Appendix \ref{apndx: Cofibrant Resolutions via The Bar Construction}.

\begin{notation}\label{notn: geom real}
In the sequel, we will use $|\ \ |$ to denote this common geometric realization. When it is necessary to emphasize the ambient category, we do so using superscripts $\redTopSeq$ and $\Galg$.
\end{notation}

\begin{corollary}\label{corr: cofibrancy of bar construction}
Let $P$ be a topological \reduced properad which is cofibrant as a topological \reduced-sequence, then $|B_{\bullet}(\GG,\GG,P)|^{\redTopSeq} \to P$ is a cofibrant resolution of $P$.
\end{corollary}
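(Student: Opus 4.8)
The plan is to combine the three preceding results—Proposition~\ref{prop: cofibrancy of bar construction} (cofibrancy), Proposition~\ref{prop: simplicial deformation retract} (the bar resolution is a simplicial deformation retract), and Proposition~\ref{prop: geometric realizations iso} (the two geometric realizations agree)—into the assertion that $|B_{\bullet}(\GG,\GG,P)|^{\redTopSeq}\to P$ is a cofibrant resolution, i.e.\ a weak equivalence from a cofibrant \reduced properad. By Notation~\ref{notn: geom real} and Proposition~\ref{prop: geometric realizations iso}, the source $|B_{\bullet}(\GG,\GG,P)|^{\redTopSeq}$ is isomorphic, \emph{as an \reduced properad}, to $|B_{\bullet}(\GG,\GG,P)|^{\Galg}$, so Proposition~\ref{prop: cofibrancy of bar construction} immediately gives that it is cofibrant in $\Galg$ (using the hypothesis that $P$ is cofibrant as a topological \reduced-sequence). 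This disposes of the cofibrancy half of the statement.

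For the weak-equivalence half, I would first identify the augmentation map. The bar construction $B_{\bullet}(\GG,\GG,P)$ comes with its canonical augmentation to the constant simplicial \reduced properad $P_{\bullet}$, given on $n$-simplices by $m\circ \GG^{n}(\text{unit}) $-type composites (concretely, the iterated $\GG$-algebra action $\GG^{n+1}P\to P$); geometric realization of this augmentation is the map in question. By Proposition~\ref{prop: simplicial deformation retract}, $P_{\bullet}$ is a simplicial deformation retract of $B_{\bullet}(\GG,\GG,P)$ \emph{as simplicial objects in $\redTopSeq$} (the statement is about an arbitrary monad on an arbitrary category, applied here to $\GG$ on $\redTopSeq$). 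A simplicial deformation retraction is an ``extra degeneracy''-type datum that is preserved by any functor, and in particular is carried by $|\ \cdot\ |^{\redTopSeq}$ to an honest deformation retraction of topological \reduced-sequences. Hence $|B_{\bullet}(\GG,\GG,P)|^{\redTopSeq}\to |P_{\bullet}|^{\redTopSeq}$ is a (strong) homotopy equivalence of topological \reduced-sequences, and $|P_{\bullet}|^{\redTopSeq}\cong P$ since the geometric realization of a constant simplicial object is the object itself. In particular it is a component-wise weak homotopy equivalence, i.e.\ a weak equivalence in the model structure of Corollary~\ref{thm: model str stmt}, whose weak equivalences are detected on underlying \reduced-sequences.

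The only genuinely delicate point is the compatibility of the two descriptions: the cofibrancy statement lives naturally in $\Galg$ (Proposition~\ref{prop: cofibrancy of bar construction}), whereas the deformation-retract statement is most cleanly obtained in $\redTopSeq$ (where the extra degeneracies obviously realize). Proposition~\ref{prop: geometric realizations iso} is exactly the bridge: it guarantees that the \reduced properad $|B_{\bullet}(\GG,\GG,P)|$ is unambiguous, so the cofibrant object of $\Galg$ and the deformation retract of $\redTopSeq$ are the same object with the same map to $P$. I therefore expect no real obstacle beyond bookkeeping: one should check that the augmentation $|B_{\bullet}(\GG,\GG,P)|^{\Galg}\to P$ of $\Galg$ and the augmentation $|B_{\bullet}(\GG,\GG,P)|^{\redTopSeq}\to P$ of $\redTopSeq$ are identified under the isomorphism of Proposition~\ref{prop: geometric realizations iso} (immediate, since both are the realization of the same simplicial augmentation and the isomorphism is the identity on underlying \reduced-sequences), and then assemble: cofibrant object $+$ weak equivalence to $P$ $=$ cofibrant resolution.
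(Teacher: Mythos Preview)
Your proposal is correct and follows essentially the same approach as the paper: use Proposition~\ref{prop: simplicial deformation retract} plus geometric realization to get that $|B_{\bullet}(\GG,\GG,P)|^{\redTopSeq}\to P$ is a homotopy equivalence, and use Proposition~\ref{prop: cofibrancy of bar construction} together with Proposition~\ref{prop: geometric realizations iso} to conclude cofibrancy. The paper's proof is more terse and omits your bookkeeping remarks about matching up the augmentations, but the logical content is identical.
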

\begin{proof}
Using Proposition \ref{prop: simplicial deformation retract} and taking geometric realizations, we get that $|B_{\bullet}(\GG,\GG,P)|^{\redTopSeq} \to |P_{\bullet}|^{\redTopSeq} \simeq P$ is a homotopy equivalence. Also, using Proposition \ref{prop: cofibrancy of bar construction} and Proposition \ref{prop: geometric realizations iso}, it follows that $|B_{\bullet}(\GG,\GG,P)|^{\redTopSeq} \simeq |B_{\bullet}(\GG,\GG,P)|^{\Galg}$ is a cofibrant properad. This completes the proof that $|B_{\bullet}(\GG,\GG,P)|^{\redTopSeq} \to P$ is a cofibrant resolution of $P$.\\
\end{proof}

Similarly using Proposition \ref{prop: rel cofibrancy of bar construction} we have
\begin{corollary}\label{corr: rel cofibrancy of bar construction}
Let $P \to Q$ be a map of \reduced properad as in Proposition \ref{prop: rel cofibrancy of bar construction}, then $|B_{\bullet}(\GG,\GG,P)|^{\redTopSeq} \to |B_{\bullet}(\GG,\GG,Q)|^{\redTopSeq}$ is a cofibration of \reduced properads.\\
\null \hfill \qedsymbol
\end{corollary}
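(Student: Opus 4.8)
The plan is to deduce this corollary from Proposition \ref{prop: rel cofibrancy of bar construction} in exactly the way Corollary \ref{corr: cofibrancy of bar construction} was deduced from Proposition \ref{prop: cofibrancy of bar construction}: by transporting the conclusion along the comparison between the two geometric realizations furnished by Proposition \ref{prop: geometric realizations iso}. Note first that the hypotheses on $P \to Q$ — that $P$ and $Q$ are cofibrant as topological \reduced sequences and that $P \to Q$ is a cofibration of topological \reduced sequences — are by assumption exactly those of Proposition \ref{prop: rel cofibrancy of bar construction}.

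First I would apply the monadic bar construction functor $B_\bullet(\GG,\GG,-)$ to $P \to Q$, obtaining a map of simplicial topological \reduced properads $B_\bullet(\GG,\GG,P) \to B_\bullet(\GG,\GG,Q)$, and then form geometric realizations in both the category of topological \reduced properads and the category of topological \reduced sequences. Functoriality of each realization construction produces a commuting square whose two horizontal arrows are the induced maps of realizations and whose two vertical arrows are the comparison maps of Proposition \ref{prop: geometric realizations iso}. Since those vertical arrows are isomorphisms of \reduced properads, the bottom arrow $|B_\bullet(\GG,\GG,P)|^{\redTopSeq} \to |B_\bullet(\GG,\GG,Q)|^{\redTopSeq}$ is identified, as a morphism of \reduced properads, with the top arrow $|B_\bullet(\GG,\GG,P)|^{\Galg} \to |B_\bullet(\GG,\GG,Q)|^{\Galg}$. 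The latter is a cofibration of \reduced properads by Proposition \ref{prop: rel cofibrancy of bar construction}, so the former is one as well, which is the assertion of the corollary.

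The one point worth a word of care — and the closest thing to an obstacle — is that being a cofibration is a property of a morphism rather than of its source and target, so one needs to know that the comparison isomorphism of Proposition \ref{prop: geometric realizations iso} is natural in the simplicial \reduced properad, i.e. that it really fits into the commuting square above and not merely identifies the objects at each corner. This naturality is built into the construction of that isomorphism following \cite{mandell2019operads}, where the two realizations are compared term by term and compatibly with the structure maps in the coequalizer presentations of $|\,-\,|^{\Galg}$ and $|\,-\,|^{\redTopSeq}$; so no additional work is needed, but this is precisely what legitimizes the transport of cofibrancy.
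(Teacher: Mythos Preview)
Your proposal is correct and follows essentially the same approach as the paper: the paper indicates (``Similarly using Proposition \ref{prop: rel cofibrancy of bar construction} we have'') that the corollary is obtained from Proposition \ref{prop: rel cofibrancy of bar construction} by the same transport along the comparison isomorphism of Proposition \ref{prop: geometric realizations iso} that was used in Corollary \ref{corr: cofibrancy of bar construction}. Your remark about needing naturality of that comparison is well-taken and implicit in the paper's argument.
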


Using Corollaries \ref{corr: cofibrancy of bar construction} and \ref{corr: rel cofibrancy of bar construction}, we get the following 
\begin{lemma}\label{lem: bar pushout}
 Let $P \leftarrow R \to Q$ be a diagram of \reduced properads, such that 
\begin{itemize}
\item $P,Q, R$ are cofibrant as topological \reduced sequences, and
\item $R \to P$ and $R \to Q$ are cofibrations of topological \reduced sequences.
\end{itemize}
Then, the pushout of bar resolutions of $P,Q$, and $R$ 
\[ |B_{\bullet} P| \coprod_{|B_{\bullet} R|}|B_{\bullet} Q| \]
computes the homotopy pushout of $P \leftarrow R \to Q$.\\
\null \hfill \qedsymbol
\end{lemma}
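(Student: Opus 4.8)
The plan is to recognize the span
\[ |B_\bullet P| \longleftarrow |B_\bullet R| \longrightarrow |B_\bullet Q| \]
as a cofibrant replacement, in the projective model structure on the diagram category $\Galg^{\Lambda}$ (with $\Lambda$ the span category $\bullet \leftarrow \bullet \rightarrow \bullet$), of the original span $P \leftarrow R \rightarrow Q$, and then to invoke the general fact that $\operatorname{colim}_{\Lambda}$ applied to a projectively cofibrant $\Lambda$-diagram computes its homotopy colimit. Throughout I abbreviate $B_\bullet := B_\bullet(\GG,\GG,-)$ and write $|\ \ |$ for the geometric realization, which is unambiguous by Proposition \ref{prop: geometric realizations iso}.

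First I would note that $P \mapsto |B_\bullet(\GG,\GG,P)|$ is a functor $\Galg \to \Galg$ carrying a natural augmentation $\varepsilon_P \colon |B_\bullet(\GG,\GG,P)| \to P$; both facts are immediate from naturality of $\mu,\eta$ and functoriality of geometric realization. Applied to our span this yields a commuting ladder
\[
\begin{tikzcd}
|B_\bullet P| \ar{d}{\varepsilon_P} & |B_\bullet R| \ar{l}\ar{r}\ar{d}{\varepsilon_R} & |B_\bullet Q| \ar{d}{\varepsilon_Q}\\
P & R \ar{l}\ar{r} & Q
\end{tikzcd}
\]
By Corollary \ref{corr: cofibrancy of bar construction} (applicable because $P$, $Q$, $R$ are cofibrant as \reduced sequences) the maps $\varepsilon_P,\varepsilon_R,\varepsilon_Q$ are weak equivalences and $|B_\bullet P|$, $|B_\bullet R|$, $|B_\bullet Q|$ are cofibrant \reduced properads; by Corollary \ref{corr: rel cofibrancy of bar construction} (applicable because $R \to P$ and $R \to Q$ are cofibrations of topological \reduced sequences) the maps $|B_\bullet R| \to |B_\bullet P|$ and $|B_\bullet R| \to |B_\bullet Q|$ are cofibrations of \reduced properads. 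Since $\Lambda$ is a direct (hence Reedy) category, a $\Lambda$-diagram is projectively cofibrant exactly when its apex is a cofibrant object and its two legs are cofibrations; thus the top row of the ladder is a projectively cofibrant diagram, and the vertical maps exhibit it as a levelwise weak equivalence of $\Lambda$-diagrams onto $P \leftarrow R \rightarrow Q$. Hence the homotopy pushout of $P \leftarrow R \rightarrow Q$ is computed by $\operatorname{colim}_\Lambda$ of the top row, i.e. the pushout $|B_\bullet P| \sqcup^{\Galg}_{|B_\bullet R|} |B_\bullet Q|$ in the category of \reduced properads (which admits the explicit description of Lemma \ref{pushoutlem}), as claimed.

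The one step needing care is the passage from a levelwise weak equivalence of spans to a weak equivalence of their pushouts: this is the gluing (cube) lemma for cofibrations between cofibrant objects. Its use here requires no left-properness assumption on $\Galg$ precisely because all three objects of the top span are cofibrant — equivalently, this is subsumed in the statement that $\operatorname{colim}_\Lambda \colon \Galg^{\Lambda} \to \Galg$ is left Quillen for the projective model structure, together with the fact that the projective-cofibrant replacement of any $\Lambda$-diagram computes its homotopy colimit. The remaining bookkeeping — checking that the top row really is a projective-cofibrant replacement and matching $\operatorname{colim}_\Lambda$ of it with the asserted pushout — is routine given the two corollaries above.
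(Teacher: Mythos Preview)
Your proposal is correct and is exactly the argument the paper has in mind: the lemma is stated immediately after ``Using Corollaries \ref{corr: cofibrancy of bar construction} and \ref{corr: rel cofibrancy of bar construction}, we get the following'' and carries only a qedsymbol, so the paper's proof is precisely the one you have written out --- the two corollaries supply cofibrancy of the bar resolutions and cofibrancy of the induced legs, making the span of bar resolutions a projectively cofibrant replacement whose ordinary pushout computes the homotopy pushout.
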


In Section \ref{sec: hatthem pf pt1}, we will need to use this construction for a pushout diagram of properads where 
\begin{itemize}
\item the map $R \to Q$ is not a cofibration of topological \reduced sequences, and
\item the topological \reduced sequence underlying $P$ might not be cofibrant, but satisfies the property that $P(n_-,n_+)$ has the homotopy type of a CW-complex for every $(n_-,n_+)$.
\end{itemize}
The comparison statements in Proposition \ref{compareproposition} and \ref{compareproposition2} below, will be useful in this situation. Before stating the propositions we start with some terminology:\\
\begin{notation}\label{notn: hur weq cofib}
We will say that a map of topological \reduced-sequences is a \emph{\hur weak-equivalence (respectively, \hur cofibration)} if it is component-wise a homotopy equivalence (respectively, \hur cofibration) of topological spaces. We will say that a map of topological \reduced properads is a \hur weak-equivalence if the underlying map of topological \reduced-sequences is one.
\end{notation}
\begin{remark}\label{rmk: hur weq cofib}
\hur weak-equivalences and cofibrations of topological \reduced sequences correspond to weak-equivalences and cofibrations in a certain model structure on topological \reduced sequences, induced from the \hur (or Str\o{}m) model structure on topological spaces. (See Section \ref{subsec: app compareproposition} in Appendix \ref{apndx: Cofibrant Resolutions via The Bar Construction}).
\end{remark}

\begin{proposition}\label{compareproposition}
Let 
\begin{equation}\label{comparepropositiondig}
	\begin{tikzcd}
	P \ar{d} & R \ar{l}\ar{r}\ar{d} & Q \ar{d} \\
	P^{\prime}  & R^{\prime} \ar{l}\ar{r} & Q^{\prime} 
	\end{tikzcd}
\end{equation}
be a map of pushout diagrams of \reduced properads. If
	\begin{enumerate}
		\item each vertical arrow is a \hur weak-equivalence, and
		\item $R \to P$ , $R^{\prime} \to P^{\prime}$ are in fact \hur cofibrations of \reduced topological \reduced sequences
	\end{enumerate}
	Then 
	\[ |B_\bullet P| \coprod^{\Galg}_{|B_\bullet R|} |B_\bullet Q| \to |B_\bullet P^{\prime}| \coprod^{\Galg}_{|B_\bullet R^{\prime}|}|B_\bullet P^{\prime}|\]
	is a \hur weak-equivalence.
\end{proposition}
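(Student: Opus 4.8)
The plan is to reduce the statement to a model-categorical comparison result about homotopy pushouts in the Hurewicz (Strøm-type) model structure on topological $\reduced$ sequences, transported along the bar resolution. The key observation is that, by Remark \ref{rmk: hur weq cofib}, the hypotheses are phrased exactly so that the diagram \eqref{comparepropositiondig} is a map of pushout diagrams in which the vertical maps are weak equivalences and $R \to P$, $R' \to P'$ are cofibrations, in the Hurewicz model structure on $\redTopSeq$. The standard gluing lemma for homotopy pushouts (the "cube lemma", see \cite[Section 15]{may2011more}) would immediately give that $P \sqcup_R Q \to P' \sqcup_{R'} Q'$ is a Hurewicz weak-equivalence \emph{of topological $\reduced$ sequences} — but this is not quite what we want, since we need the comparison after applying the bar construction and forming the pushout \emph{of properads}. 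So the real content is to commute the bar construction and the properad-level pushout with these component-wise (Hurewicz) homotopy-theoretic operations.

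Concretely, I would proceed in the following steps. First, apply the bar construction levelwise: since $B_n(\GG,\GG,-) = \GG^{n+1}(-)$ and $\GG$ is built from products and coproducts of evaluations $P(|in(v)|,|out(v)|)$ over $\rda$-graphs, and since Hurewicz weak-equivalences and Hurewicz cofibrations of spaces are closed under finite products and arbitrary coproducts (and $\GG$ uses only these, because $\rda$-graphs have finitely many vertices and the graph set is discrete), each vertical map $\GG^{n+1}(P) \to \GG^{n+1}(P')$ is a Hurewicz weak-equivalence and each $\GG^{n+1}(R) \to \GG^{n+1}(P)$ remains a Hurewicz cofibration of $\reduced$ sequences. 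Second, pass to geometric realization: geometric realization of simplicial topological spaces (hence, component-wise, of simplicial $\reduced$ sequences) preserves Hurewicz weak equivalences between levelwise-well-pointed simplicial objects and preserves Hurewicz cofibrations; by Proposition \ref{prop: geometric realizations iso} this realization agrees with the one computed in $\Galg$, so $|B_\bullet R| \to |B_\bullet P|$ is a Hurewicz cofibration of $\reduced$ sequences and the vertical comparison maps $|B_\bullet P| \to |B_\bullet P'|$ etc. are Hurewicz weak equivalences. Third, form the pushout of properads: here I use the explicit coequalizer description from Lemma \ref{pushoutlem}, which exhibits the underlying $\reduced$ sequence of $|B_\bullet P| \sqcup^{\Galg}_{|B_\bullet R|} |B_\bullet Q|$ as a quotient of $\GG(|B_\bullet P| \sqcup |B_\bullet Q|)$; combined with the fact (to be checked) that, because one leg $|B_\bullet R| \to |B_\bullet P|$ is a Hurewicz cofibration of $\reduced$ sequences, the properad pushout is in fact a \emph{homotopy} pushout in the Hurewicz sense and its formation is homotopy invariant. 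The conclusion then follows from the gluing lemma applied to the two pushout squares.

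The main obstacle — and the step requiring genuine care rather than routine bookkeeping — is the third one: showing that the pushout in $\Galg$ along a Hurewicz cofibration of underlying $\reduced$ sequences is homotopy invariant for maps of such pushout data. One cannot simply invoke a model structure on $\Galg$ here, because the relevant model structure (Corollary \ref{thm: model str stmt}) is the \emph{Quillen-type} one and the cofibrations there are generated differently; the Hurewicz cofibration hypothesis on $R \to P$ is weaker than cofibrancy in $\Galg$. The way I would handle this is to unravel the pushout as in the text (the quotient $\GG(P \sqcup Q)/\!\sim$) and analyze the quotient map filtration-by-filtration on the number of vertices labeled by $Q$ not in the image of $R$: each stage is itself a pushout along a map built from $\GG$ applied to the cofibration $R \to P$ (smashed with the remaining $Q$-labels), so component-wise it is a pushout along a Hurewicz cofibration of spaces, hence homotopy invariant by the gluing lemma in $\Top_{\mathrm{Hur}}$; assembling the stages (a sequential colimit along closed Hurewicz cofibrations, which is again homotopy invariant) gives the claim. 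This filtration argument is precisely the $\reduced$-properad analogue of the "cellular" analysis of operad pushouts in \cite[Section 2.3]{oancea2020deligne}, and the fact that $\rda$-graphs have no nontrivial automorphisms (Lemma \ref{lem: sigma cofibrancy}) is what keeps every stage a genuine pushout of spaces with no quotient-by-group-action subtleties.
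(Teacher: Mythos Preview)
Your approach is workable in outline but takes a harder route than necessary, and the step you flag as ``the main obstacle'' is one the paper avoids entirely by a simple commutation trick.

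The paper's proof proceeds by first rewriting the pushout of bar constructions as a single geometric realization:
\[
|B_\bullet P| \coprod^{\Galg}_{|B_\bullet R|} |B_\bullet Q|
\;\simeq\;
\Big|\,B_\bullet P \coprod^{\Galg}_{B_\bullet R} B_\bullet Q\,\Big|
\;\simeq\;
\Big|\,\GG\big(\GG^\bullet P \coprod_{\GG^\bullet R} \GG^\bullet Q\big)\,\Big|,
\]
using that $|\ |$ and $\GG$ are left adjoints (so commute with colimits), and crucially that $B_n = \GG^{n+1}$ is \emph{free}, so the $\Galg$-pushout of free objects is $\GG$ applied to the pushout in $\redTopSeq$. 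This last pushout $\GG^\bullet P \coprod_{\GG^\bullet R} \GG^\bullet Q$ is just a componentwise pushout of spaces. The problem then becomes: show this simplicial $\reduced$-sequence is Reedy cofibrant in the Hurewicz structure (straightforward, since the latching maps are inclusions of coproduct summands indexed by nested graphs), and that the map to the primed version is a levelwise Hurewicz weak equivalence. The latter follows from left properness of the Hurewicz model structure on $\redTopSeq$: $\GG^\bullet$ preserves Hurewicz cofibrations and weak equivalences, so the gluing lemma applies \emph{in $\redTopSeq$}, and then $\GG$ preserves weak equivalences.

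By contrast, you realize each $|B_\bullet P|$ etc.\ first and only afterwards form the $\Galg$-pushout of these (now highly non-free) properads, which forces you into the filtration analysis of properadic pushouts in your third step. That analysis is not wrong in spirit, but it is substantially more delicate than you indicate: the relation $\sim$ on $\GG(A\sqcup C)$ mixes collapse of subgraphs with relabeling along $R$, and one has to check carefully that the filtration by number of $C$-vertices not in the image of $R$ survives the quotient and that the successive attaching maps are Hurewicz cofibrations; your parenthetical ``to be checked'' is doing a lot of work. The paper's commutation trick sidesteps all of this by never forming a $\Galg$-pushout of non-free properads.
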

	Again we defer the proof of Proposition \ref{compareproposition} to Appendix \ref{apndx: Cofibrant Resolutions via The Bar Construction}.

\begin{proposition}\label{compareproposition2}
Let 
$P \leftarrow R \to Q$
be a pushout diagram of \reduced properads. If
\begin{enumerate}
\item $R \to P$ and $R \to Q$ are \hur cofibrations of underlying topological \reduced-sequences, and
\item for every $(n_-,n_+)$, $P(n_-,n_+), Q(n_-,n_+)$, and $R(n_-,n_+)$ have the homotopy type of a CW-complex,
\end{enumerate}
then
\[ |B_{\bullet} P| \coprod_{|B_{\bullet} Q|}|B_{\bullet} R| \]
computes the homotopy pushout of $P \leftarrow R \to Q$.
\end{proposition}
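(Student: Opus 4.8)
The plan is to reduce the statement to Lemma \ref{lem: bar pushout} and Proposition \ref{compareproposition} by first replacing the span $P \leftarrow R \to Q$ with an objectwise-equivalent span that is cofibrant in the appropriate sense. Since the category of topological \reduced properads is cofibrantly generated (Corollary \ref{thm: model str stmt}), the span category $\bullet \leftarrow \bullet \to \bullet$ of \reduced properads carries its projective model structure, in which the cofibrant objects are precisely the spans $\tilde P \leftarrow \tilde R \to \tilde Q$ with $\tilde R$ a cofibrant \reduced properad and $\tilde R \to \tilde P$, $\tilde R \to \tilde Q$ cofibrations of \reduced properads. The first step is therefore to choose a cofibrant replacement $(\tilde P \leftarrow \tilde R \to \tilde Q) \to (P \leftarrow R \to Q)$ whose three comparison maps are weak equivalences of \reduced properads (one may in fact take them to be acyclic fibrations).

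The second step is to extract the underlying \reduced-sequence information from this replacement. Here the key input is that $\oprd$ is $\Sigma$-cofibrant (Lemma \ref{lem: sigma cofibrancy}) and, being discrete in arity one with no nullary operations, is homotopically well-behaved: by the standard theory of algebras over $\Sigma$-cofibrant colored operads (cf.\ \cite[Theorem 2.1]{berger2007resolution} and its proof), the forgetful functor to topological \reduced-sequences sends cofibrant \reduced properads to cofibrant \reduced-sequences, and sends a cofibration of \reduced properads with cofibrant source to a cofibration of \reduced-sequences. Applying this, $\tilde P,\tilde Q,\tilde R$ are cofibrant as topological \reduced-sequences and $\tilde R\to\tilde P$, $\tilde R\to\tilde Q$ are cofibrations of topological \reduced-sequences; being objectwise retracts of relative cell complexes, these maps are in particular \hur cofibrations, and $\tilde P,\tilde Q,\tilde R$ have the homotopy type of CW-complexes in every arity. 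Since by hypothesis $P,Q,R$ also have CW homotopy type objectwise, Whitehead's theorem shows that the comparison maps $\tilde P\to P$, $\tilde Q\to Q$, $\tilde R\to R$ — objectwise weak homotopy equivalences between spaces of CW homotopy type — are objectwise homotopy equivalences, i.e.\ \hur weak-equivalences.

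With these properties in hand the three ingredients assemble directly. By Lemma \ref{lem: bar pushout} applied to $\tilde P\leftarrow\tilde R\to\tilde Q$, the pushout $|B_\bullet \tilde P|\coprod_{|B_\bullet \tilde R|}|B_\bullet \tilde Q|$ computes the homotopy pushout of $\tilde P\leftarrow\tilde R\to\tilde Q$; and since homotopy colimits are invariant under objectwise weak equivalences of diagrams, this homotopy pushout agrees with that of $P\leftarrow R\to Q$. On the other hand, the square of spans given by the comparison maps has \hur weak-equivalences as its vertical arrows, and its two $P$-legs $\tilde R\to\tilde P$ and $R\to P$ are \hur cofibrations (the latter by hypothesis of the proposition), so Proposition \ref{compareproposition} yields a \hur weak-equivalence
\[ |B_\bullet \tilde P|\coprod_{|B_\bullet \tilde R|}|B_\bullet \tilde Q| \longrightarrow |B_\bullet P|\coprod_{|B_\bullet R|}|B_\bullet Q|. \]
Chaining these identifications shows that $|B_\bullet P|\coprod_{|B_\bullet R|}|B_\bullet Q|$ computes the homotopy pushout of $P\leftarrow R\to Q$, which is the claim.

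The step I expect to be the main (and essentially the only non-formal) obstacle is the passage from cofibrancy in \reduced properads to cofibrancy of the underlying \reduced-sequences, together with the corresponding statement for the two legs of the span. This is exactly where the $\Sigma$-freeness of $\oprd$ from Lemma \ref{lem: sigma cofibrancy} is indispensable: it is the hypothesis guaranteeing that free cell attachments at the properad level are assembled from cell attachments at the level of \reduced-sequences, so that the forgetful functor does not destroy cofibrancy. One should check that the colored, reduced (no $(0,0)$-component) setting does not introduce pathologies here, but since $\oprd$ is discrete in arity one and has no nullary operations the usual arguments go through unchanged; alternatively, the needed comparison is already part of the bar-construction bookkeeping carried out in Appendix \ref{apndx: Cofibrant Resolutions via The Bar Construction}.
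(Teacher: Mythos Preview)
Your argument is correct and follows the same overall template as the paper's proof: replace the span by a better-behaved one, invoke Lemma \ref{lem: bar pushout} on the replacement, and use Proposition \ref{compareproposition} to transfer back. The difference lies in the choice of replacement. You take a projective cofibrant replacement of the span \emph{in \reduced properads} and then invoke the Berger--Moerdijk-type fact that for a $\Sigma$-cofibrant colored operad the forgetful functor sends cofibrations with cofibrant source to cofibrations of underlying sequences; this is exactly the non-formal step you flag, and it is where Lemma \ref{lem: sigma cofibrancy} enters. The paper instead applies the functorial cofibrant replacement $|\Sing(\_)|$ \emph{component-wise on the underlying \reduced-sequences}: because $|\Sing(\_)|$ is symmetric monoidal on $\Top$, the result is again an \reduced properad, and the legs $|\Sing(R)|\to|\Sing(P)|$, $|\Sing(R)|\to|\Sing(Q)|$ are automatically relative CW complexes (hence \hur cofibrations) since the original legs are injective. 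This route is more elementary in that it never needs the forgetful-functor-preserves-cofibrations result, while your route is cleaner abstract nonsense but imports a theorem not proved in the paper. Both arrive at the same place via the same two black boxes, Lemma \ref{lem: bar pushout} and Proposition \ref{compareproposition}.
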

\begin{proof}
Recall that $\Top$ admits a functorial cofibrant replacement given by 
\[ | \Sing ( \_ ) | \colon \Top \to \Top,\]
where $\Sing(\_)$ is the singular simplicial set functor and $|\_|$ is the geometric realization functor. Applying it component-wise provides a functorial cofibrant replacement on $\redTopSeq$. Since $ | \Sing ( \_ ) |$ is symmetric monoidal on $\Top$, for any topological \reduced properad $X$, the topological \reduced-sequence $ | \Sing ( X) |$ admits a canonical properad structure such that the map
\[ | \Sing ( X) | \to X\]
is a morphism of \reduced properads. Applying this cofibrant replacement functor to pushout diagram $P \leftarrow R \to Q$, we obtain a diagram of \reduced properads
\begin{equation}\label{eq: comparepropositiondig2}
	\begin{tikzcd}
	{|\Sing(P)|} \ar{d} & {|\Sing(R)|} \ar{l}\ar{r}\ar{d} & {|\Sing(Q)|} \ar{d}\\
	P  & R\ar{l}\ar{r} & Q
	\end{tikzcd},
\end{equation}
	such that the vertical arrows are weak-equivalences. Using the fact that $P,Q$, and $R$ satisfy condition $(2)$ mentioned in the statement of the proposition, it follows that the vertical arrows are \hur weak-equivalences of properads. Moreover, note that maps $|\Sing(R)| \to |\Sing(P)|$ and $|\Sing(R)| \to |\Sing(Q)|$ are \hur cofibrations. Thus the diagram \eqref{eq: comparepropositiondig2} satisfies the hypothesis of Proposition \ref{compareproposition} and hence we have a weak-equivalence of \reduced properads
	\[ |B_\bullet {|\Sing(P)|}| \coprod^{\Galg}_{|B_\bullet {|\Sing(R)|}|} |B_\bullet {|\Sing(Q)|}| \to |B_\bullet P| \coprod^{\Galg}_{|B_\bullet R|}|B_\bullet P|\]
	The proposition now follows by observing that 
	\[ |B_\bullet {|\Sing(P)|}| \coprod^{\Galg}_{|B_\bullet {|\Sing(R)|}|} |B_\bullet {|\Sing(Q)|}|\]
	computes the homotopy pushout of $P \leftarrow R \to Q$.
\end{proof}

\subsubsection{Visualizing $B_{\bullet}(\GG,\GG,P)$}
Here we provide some examples of how simplices in the bar construction and their face and degeneracy maps are visualized in the figures which will appear in the later sections.

The $n$-simplices of $B_{\bullet}(\GG,\GG,P)$ are given by $\GG^{n+1}P$. We visualize points in this space as $n$-nested graphs, with each nesting indicated using a different color. For instance, elements in the space $\GG^2 P$ of $1$-simplices are denoted using $1$-nested \rda-graphs. Figure \ref{visualizing_d2_dig} shows an example of such a simplex. The $1$-nested graph there describes an element in $\GG^2P$ as follows: Considering each region in the outer nesting, indicated in blue, as a vertex, we obtain an \rda-graph. Moreover, each vertex of this \rda-graph has a labeling by a $P$-labeled graph given by the part of the nested graph lying inside the region.


\begin{figure} 
	\centering
	\begin{subfigure}[b]{0.3\textwidth}
		\includegraphics[width=\textwidth]{./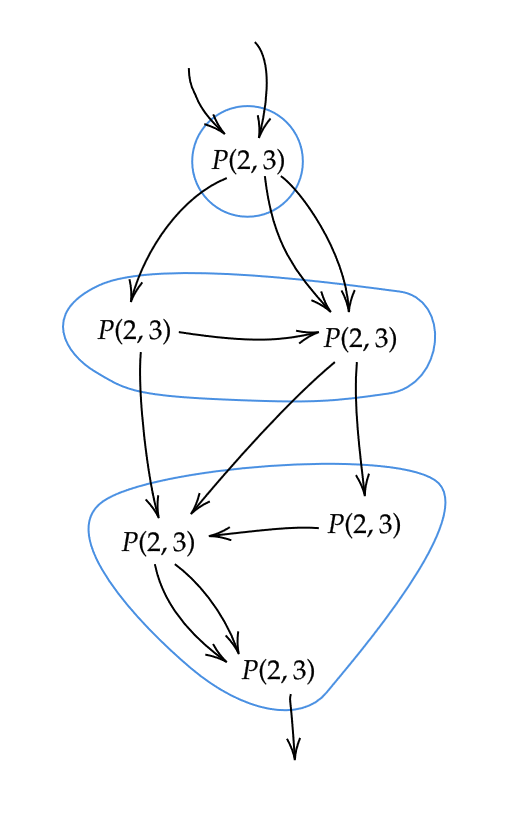}
		\caption{}
		\label{visualizing_d2_dig}
	\end{subfigure}
	\begin{subfigure}[b]{0.35\textwidth}
		\includegraphics[width=\textwidth]{./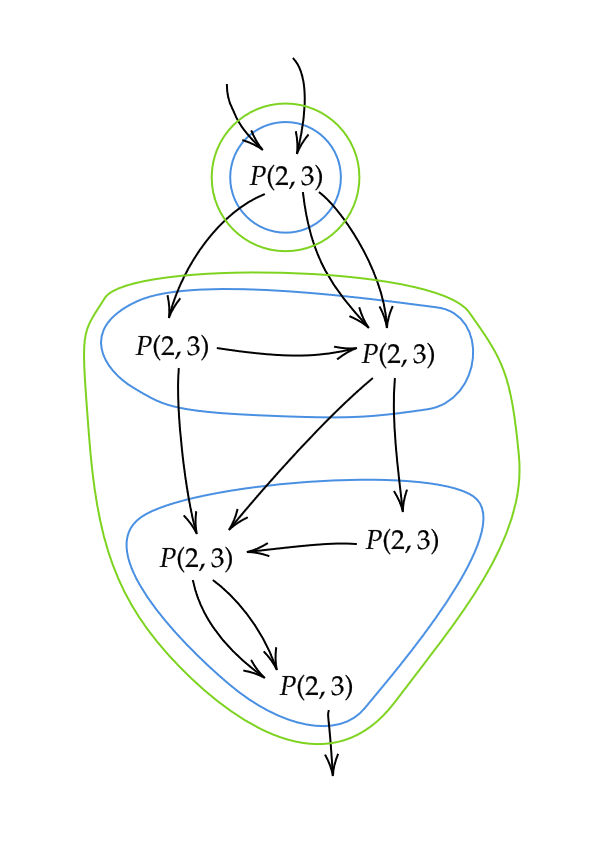}
		\caption{}
		\label{visualizing_base_dig}
	\end{subfigure}
	\caption{\protect\subref{visualizing_base_dig} shows a space of $2$-simplices. \protect\subref{visualizing_d1_dig} shows the result of applying $d_0$ to simplices in \protect\subref{visualizing_base_dig}}
\end{figure}


Similarly, the $2$-nested graph in Figure \ref{visualizing_base_dig} represents a $2$-simplex. Let us now describe the effect of applying the simplicial face maps to this simplex.\\
We start with the face map $d_0$. From its definition it follows that applying $d_0$ to this simplex corresponds to applying the properad composition to the $(\GG P)$-labels of the outermost nesting (along the graph described by the outermost nesting). This corresponds to simply forgetting the green nesting. The result is thus a $1$-simplex having a shape as indicated in Figure \ref{visualizing_d2_dig}. Similarly applying $d_1$ to a simplex in Figure \ref{visualizing_base_dig} corresponds to forgetting the blue nesting, thus giving a $1$-simplex as in Figure \ref{visualizing_d1_dig}.\\
On the other hand, applying $d_2$ corresponds to performing properad composition at the innermost level of the nesting. It is thus given by replacing each blue region by the properad compositions of the $P$-labels lying within it. The result of applying $d_2$ is thus a $1$-simplex as in Figure \ref{visualizing_d0_dig}. Note that unlike $d_0$ and $d_1$, $d_2$ makes use of the \reduced properad structure of $P$.

Lastly, we look at an example of a degeneracy map. Let us consider the case of applying degeneracy map $s_0$ to the simplices in Figure \ref{visualizing_base_dig}. This map is obtained by applying the unit $\eta$ of monad $\GG$ to each region in the blue nesting. It thus corresponds to adding an additional nesting around each blue region, resulting in a $3$-simplex having shape as shown in Figure \ref{visualizing_s1_dig}. This additional nesting is shown in red in Figure \ref{visualizing_s1_dig}.



\begin{figure} 
	\centering
	\begin{subfigure}[b]{0.3\textwidth}
		\includegraphics[width=\textwidth]{./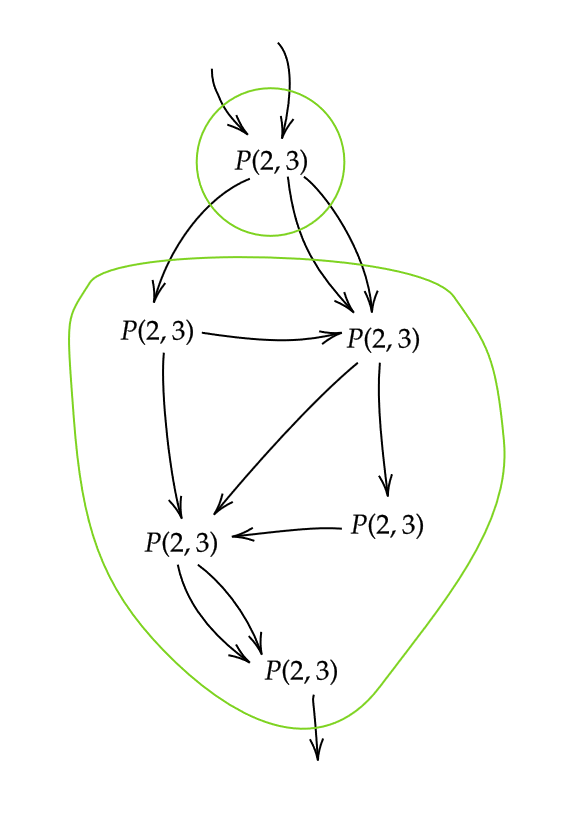}
		\caption{}
		\label{visualizing_d1_dig}
	\end{subfigure}
	\begin{subfigure}[b]{0.35\textwidth}
		\includegraphics[width=\textwidth]{./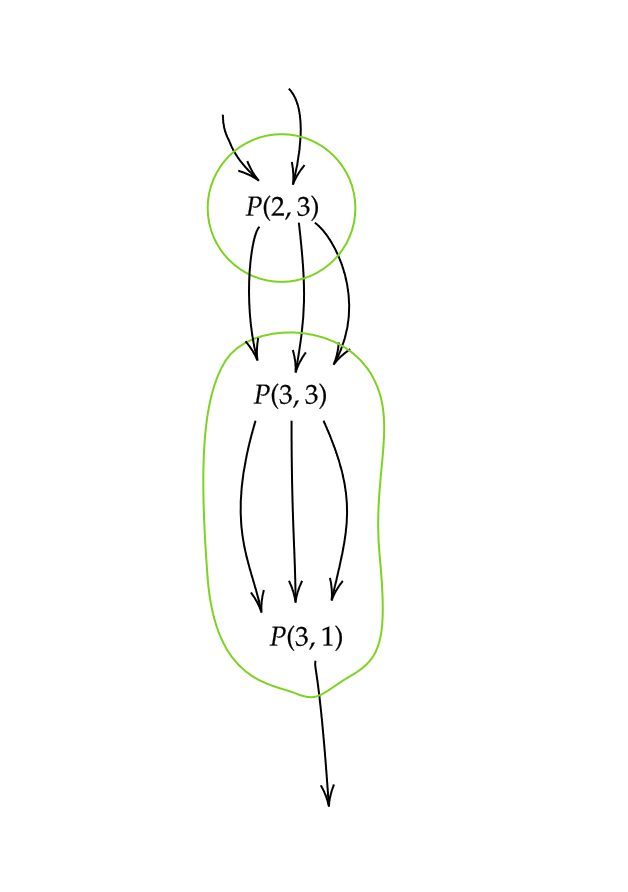}
		\caption{}
		\label{visualizing_d0_dig}
	\end{subfigure}
	\begin{subfigure}[b]{0.3\textwidth}
		\includegraphics[width=\textwidth]{./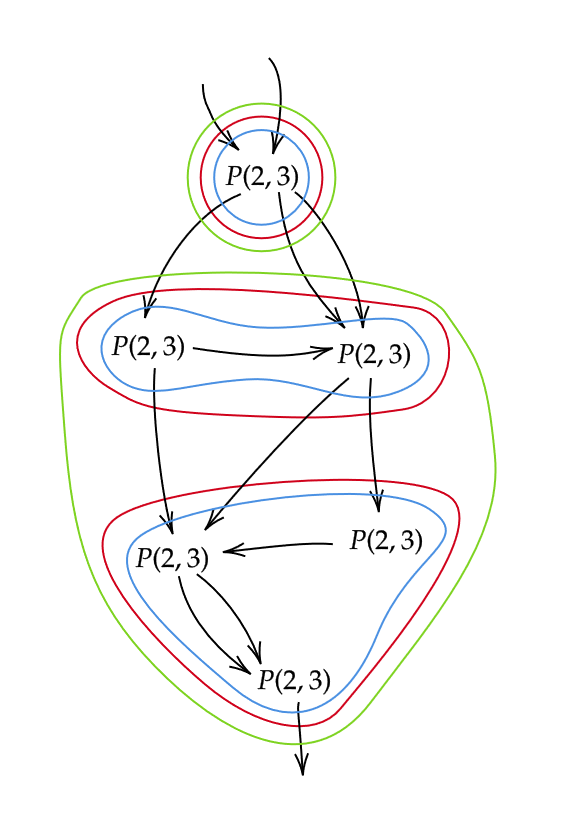}
		\caption{}
		\label{visualizing_s1_dig}
	\end{subfigure}
	\caption{\protect\subref{visualizing_d2_dig}, \protect\subref{visualizing_d0_dig}, \protect\subref{visualizing_s1_dig} show the result of applying, respectively, $d_2, d_0$, and $s_0$ to simplices in Figure \ref{visualizing_base_dig}}
\end{figure}


\section{Properads of Riemann surfaces}\label{sec: moduli spaces}
\subsection{Some moduli spaces of Riemann surfaces}\label{subsec: moduli spaces def}
For $g,n_-,n_+$ in the stable range $2-2g-n_--n_+<0$, let $\Mmod_{g,n_-,n_+}$ denote the moduli space of genus $g$ Riemann surfaces with $n_-$ input and $n_+$ output marked points. Let $\overline{\Mmod}_{g,n_-,n_+}$ be its Deligne-Mumford compactification, given by the moduli space of stable nodal curves with $n_-$ input and $n_+$ output marked points, and arithmetic genus $g$.

Let $\Mmod^{fr}_{g,n_-,n_+}$ denote the moduli space of genus $g$ Riemann surfaces with $n_-$ input and $n_+$ output boundary components, each equipped with an analytic $S^1$-parametrization. The parametrization is assumed to be orientation preserving at the inputs and orientation reversing at the outputs. Let $\Mmodbar^{fr}_{g,n_-,n_+}$ be the moduli space of stable nodal Riemann surfaces with $n_-$ input and $n_+$ output analytically $S^1$-parametrized boundaries. We assume that the nodes are away from the boundaries. Also, let $\Mmodhat^{fr}_{g,n_-,n_+}$ be the subspace of $\Mmodbar^{fr}_{g,n_-,n_+}$ given by the moduli spaces of stable curves in which every irreducible component contains at least one output.

In addition, we also consider the moduli spaces $\Mmod^{fr}_{g,n_-,n_+},\Mmodbar^{fr}_{g,n_-,n_+}, \Mmodhat^{fr}_{g,n_-,n_+}$ for the unstable range 
\[(g,n_-,n_+) \in \{(0,0,1), (0,1,0), (0,1,1), (0,0,2), (0,2,0)\}.\]
For $(g,n_-,n_+)$ given by $\{(0,1,1), (0,0,2)$, and $(0,1,0)$, $\Mmod^{fr}_{g,n_-,n_+}$ are all identified with the space of annuli with $S^1$-parametrized boundary components. The spaces $\Mmodbar^{fr}_{g,n_-,n_+}$ and $\Mmodhat^{fr}_{0,0,2}$ are obtained by compactifying these spaces  by also including nodal annuli, thought of as annuli with modulus $\infty$ (with boundaries suitably labeled as inputs and outputs). Consistent with the restriction that each irreducible component has an output, the space $\Mmodhat_{0,1,1}$ coincides with $\Mmod_{0,1,1}$ and $\Mmodhat^{fr}_{0,2,0}$ is empty.\\
For $(g,n_-,n_+)$ equal to $(0,0,1)$ or $(0,1,0)$, all the corresponding moduli spaces are identified with moduli spaces of disks with a parametrized boundary and thus are given by a point. 
Moreover, in the case $(g,n_-,n_+)=(0,1,1)$ for all the three types of moduli mentioned above, we also allow exceptional points corresponding to degenerate annuli with modulus $0$. These points are added to ensure that the \reduced properads we consider below are unital.

Finally, denote by $\Mmod^{fr}_{g,n_-,n_+}(i)$ and $\Mmodbar^{fr}_{g,n_-,n_+}(i)$ the moduli space of, respectively, non-nodal and possibly nodal stable Riemann surfaces with arithmetic genus $g$, $n_-$ input boundaries, $n_+$ output boundaries, and with $i$ marked points disjoint from the nodes and the boundaries. For every $1 \leq j \leq i$ we have forgetful maps
\[ \pi_j\colon \Mmod^{fr}_{g,n_-,n_+}(i) \to \Mmod^{fr}_{g,n_-,n_+}(i-1) \mbox{ and } \pi_j \colon \Mmodbar^{fr}_{g,n_-,n_+}(i) \to \Mmodbar^{fr}_{g,n_-,n_+}(i-1),\]
given by forgetting the $j$th marked point and stabilizing (by collapsing any unstable components created) if necessary.


All the moduli spaces mentioned above are a priori topological stacks. However the following lemma shows that most of these are in fact topological spaces. This observation already occurs in \cite[ Section 3.1]{oancea2020deligne}:
\begin{lemma}
The stacks $\Mmod^{fr}_{g,n_-,n_+}$ and $\Mmodhat^{fr}_{g,n_-,n_+}$ are represented by topological spaces provided $(n_-,n_+) \neq (0,0)$. Moreover, the moduli spaces $\Mmodbar^{fr}_{g,n_-,n_+}$ for $(g,n_-,n_+)$ in the unstable range $2g-2+n_++n_-<0$, are also represented by topological spaces.
\end{lemma}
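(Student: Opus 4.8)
The plan is to separate the statement into a general principle about stacks and a concrete geometric input about Riemann surfaces. The general principle is that a topological stack is represented by an honest topological space as soon as every one of its objects has trivial automorphism group. Concretely, each of the moduli problems in the lemma is presented, as in \cite[Section~3.1]{oancea2020deligne}, by a groupoid $[R\rightrightarrows U]$ in which $U$ is a Teichm\"uller-type chart of surfaces carrying the relevant parametrizations and markings and $R\subset U\times U$ is the space of isomorphisms between them, the map $R\to U\times U$ being proper. The stabilizer of a point of $U$ is precisely the automorphism group of the corresponding surface; when all such stabilizers are trivial, $R\to U\times U$ is an embedding, the groupoid is an equivalence relation, and the stack is equivalent to the quotient space $U/R$, hence representable. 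So it suffices to prove: every surface occurring in $\Mmod^{fr}_{g,n_-,n_+}$ with $(n_-,n_+)\neq(0,0)$, in $\Mmodhat^{fr}_{g,n_-,n_+}$, and in $\Mmodbar^{fr}_{g,n_-,n_+}$ for $(g,n_-,n_+)$ in the unstable range, has no non-trivial structure-preserving automorphism.

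First I would handle smooth surfaces. Let $\Sigma$ be a connected Riemann surface with a labelled boundary component $\partial_0\Sigma$ carrying a fixed analytic parametrization $\gamma\colon S^1\to\partial_0\Sigma$, and let $\phi$ be an automorphism of $\Sigma$ preserving $\gamma$. Then $\phi\circ\gamma=\gamma$, so $\phi$ restricts to the identity on $\partial_0\Sigma$. Reflecting $\Sigma$ across $\partial_0\Sigma$ produces a Riemann surface $\Sigma^{\mathrm d}\supset\Sigma$, and by Schwarz reflection $\phi$ extends to a biholomorphism of a neighbourhood of $\partial_0\Sigma$ in $\Sigma^{\mathrm d}$ which is the identity on $\partial_0\Sigma$; the identity theorem forces this extension, and hence $\phi$, to be the identity near $\partial_0\Sigma$, and therefore, $\Sigma$ being connected, on all of $\Sigma$. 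Interior marked points and the exceptional modulus-$0$ points adjoined in arity $(0,1,1)$ play no role, $\phi$ being already pinned to the identity. Since a surface in $\Mmod^{fr}_{g,n_-,n_+}$ has at least one labelled boundary whenever $(n_-,n_+)\neq(0,0)$, this settles that case.

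Next I would propagate across nodes. Let $\Sigma$ now be a connected nodal Riemann surface with $S^1$-parametrized, labelled input and output boundary components, the nodes disjoint from the boundary, and let $\phi$ be an automorphism preserving all labels and parametrizations. Each labelled boundary component is determined by its label, so $\phi$ maps it to itself and, by the previous paragraph applied on the normalization, restricts to the identity on it; hence $\phi$ fixes every irreducible component that carries a labelled boundary, is the identity on each such component, and fixes the nodes lying on it. For $\Mmodhat^{fr}_{g,n_-,n_+}$ every irreducible component carries an output boundary by definition, so $\phi=\mathrm{id}$. For $\Mmodbar^{fr}_{g,n_-,n_+}$ with $(g,n_-,n_+)$ in the unstable range, a direct check of the five cases shows that the only nodal surfaces occurring are unions of disks, each with a single labelled boundary (together with, in arity $(0,1,1)$, a possible exceptional point), so again every component carries a labelled boundary and $\phi=\mathrm{id}$.

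The genuinely delicate point is the first, reduction step: pinning down what it means for a topological stack to be \emph{represented by a topological space} and verifying that this is equivalent to triviality of the stabilizers of an atlas, which rests on the properness of the relevant action in the Teichm\"uller presentation, together with the standard Schwarz-reflection input used in the smooth case. The remaining steps are combinatorial and use only that the defining condition of $\Mmodhat^{fr}$, respectively the explicit list in the unstable range for $\Mmodbar^{fr}$, forbids an irreducible component without a labelled boundary. This is also why the hypotheses are needed: for $(n_-,n_+)=(0,0)$, or for a stable-range $\Mmodbar^{fr}$ in which a higher-genus component is attached to the rest only through nodes, such a component can have a non-trivial automorphism (for instance a hyperelliptic involution), and the corresponding stack is genuinely non-representable.
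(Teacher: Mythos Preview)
Your proposal is correct and follows essentially the same approach as the paper: show that the relevant surfaces have no non-trivial automorphisms by using analytic continuation from a parametrized boundary, and then observe that for $\Mmodhat^{fr}$ and for $\Mmodbar^{fr}$ in the unstable range every irreducible component carries such a boundary. You supply considerably more detail than the paper (the Schwarz-reflection formulation, the explicit reduction from trivial isotropy to representability, and the node-propagation framework), but the underlying argument is the same.
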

\begin{proof}
Notice that a Riemann surface with at least one parametrized boundary component has no non-trivial automorphisms: any automorphism of a Riemann surface with parametrized boundary preserves the boundary parametrization and hence, by analytic continuation, is forced to be the identity. Thus it follows that $\Mmod^{fr}_{g,n_-,n_+}$ for $(n_-,n_+) \neq (0,0)$ is in fact a topological space. Similar arguments can be used for nodal Riemann surfaces to show that $\Mmodhat^{fr}_{g,n_-,n_+}$ for  $(n_-,n_+) \neq (0,0)$ and the moduli spaces $\Mmodbar^{fr}_{g,n_-,n_+}$  in unstable range $2g-2+n_-+n_+$ are also given by topological spaces.
\end{proof}

On the other hand $\Mmodbar^{fr}_{g,n_-,n_+}$ are in general not spaces but only topological stacks. The stacky points are given by surfaces containing irreducible components which have no boundaries and have a non-trivial conformal automorphism group.

\subsection{Finite dimensional homotopy types}\label{subsec: fin dim htpy types} Even though we will not need this fact in what follows, let us note that although the spaces $\Mmod^{fr}_{g,n_-,n_+}$, $ \Mmodhat^{fr}_{g,n_-,n_+}$, $\Mmodbar_{g,n_-,n_+}$ are infinite dimensional, they have finite dimensional homotopy types:\\
Let $\smallmfrfull_{g,n_-,n_+}$ be the moduli space of Riemann surfaces with $n_-$ input and $n_+$ output boundary components with each boundary component carrying a marked point (and no analytic $S^1$-parametrization). It can be shown that $\smallmfrfull_{g,n_-,n_+}$ is represented by a topological space and is in fact a  finite dimensional smooth manifold. Moreover, $\smallmfrfull_{g,n_-,n_+}$ has the same homotopy type as $\Mmod^{fr}_{g,n_-,n_+}$: there is a homotopy equivalence $\Mmod^{fr}_{g,n_-,n_+} \xrightarrow{\simeq} \smallmfrfull_{g,n_-,n_+}$ given by mapping a point in $\Mmod^{fr}_{g,n_-,n_+}$ to the underlying Riemann surface with boundary marked points given by base points of the analytic $S^1$-parametrizations of the boundaries.\\
Similarly, spaces $\Mmodhat^{fr}_{g,n_-,n_+}$ and $\Mmodbar_{g,n_-,n_+}$ have homotopy types of finite dimensional spaces $\smallmhat_{g,n_-,n_+}$ and $\smallmbar_{g,n_-,n_+}^{fr}$ given by moduli spaces of stable Riemann surfaces as in $\Mmodhat^{fr}_{g,n_-,n_+}$ and $\Mmodbar_{g,n_-,n_+}$, but without the boundary parametrizations and with each boundary component carrying a marked point (see \cite{liu2020moduli} for details on topology of the spaces $\smallmhat_{g,n_-,n_+}$ and $\smallmbar_{g,n_-,n_+}$). The homotopy equivalences in these cases are realized by maps constructed similarly as before.

In the case of $\Mmod^{fr}_{g,n_-,n_+}$ and $\Mmodbar_{g,n_-,n_+}$ there are alternative descriptions of homotopy types that are more closely related to their appearance in the theory of closed string invariants of symplectic manifolds, namely symplectic cohomology and Gromov-Witten theory:\\
Let $\mlfrfullmod_{g,n_-,n_+}$ denote the moduli space of Riemann surfaces (without boundaries), with $n_-$ input and $n_+$ output marked points and with each marked point carrying a marker i.e. a ray in the tangent space of the marked point. This is a torus bundle over the usual moduli space of Riemann surfaces with marked points.  It can be shown that the stack $\mlfrfullmod_{g,n_-,n_+}$ is represented by a  finite dimensional smooth manifold. Further, there is a homotopy equivalence $\Mmod^{fr}_{g,n_-,n_+} \xrightarrow{\simeq} \mlfrfullmod_{g,n_-,n_+}$ given by mapping a surface in $\Mmod^{fr}_{g,n_-,n_+}$ to the Riemann surface obtained by gluing-in unit disks at the boundary components using the boundary parametrizations (see Section \ref{subsec: gluing riemann surf} below), with the marked points at the centers of the disks and the markers determined by the positive real direction in each disc.\\
Similarly, it can be shown that $\Mmodbar_{g,n_-,n_+}^{fr}$ has the homotopy type of the stack $\mlbar_{g,n_-,n_+}$ which is the finite dimensional moduli stack of stable Riemann surfaces with  $n_-$ input and $n_+$ output marked points (with no markers), in other words the usual Deligne-Mumford compactification of the moduli space of Riemann surfaces (without boundaries). The map $\Mmodbar_{g,n_-,n_+}^{fr} \to \mlbar_{g,n_-,n_+}^{fr}$ is constructed as above by gluing unit disks along boundaries followed by collapsing any unstable spheres generated as a result of this gluing.\\

\subsection{Gluing Riemann surfaces}\label{subsec: gluing riemann surf}
Let $\Sigma_1$ and $\Sigma_2$ be two Riemann surfaces with parametrized boundaries, and let $\gamma_1$ be an output boundary of $\Sigma_1$ and $\gamma_2$ an input boundary of $\Sigma_2$. Then there exists a unique complex structure on gluing $\Sigma_1 \#_{\gamma_1,\gamma_2} \Sigma_2$. For an identification of the boundaries along a diffeomorphism this is a classical fact. For an analytic identification, as in our case, it is possible to provide a simpler argument. We give an outline, the details are described in \cite[Section 3.1]{oancea2020deligne}: the analytic $S^1$-parametrization of $\gamma_1$ can be extended to an analytic identification of a neighborhood of the boundary with an annulus in $\CC$ of the form $\{1 \leq |z| \leq 1+\epsilon_1\}$. Similarly a neighborhood of $\gamma_2$ can be identified with $\{ 1-\epsilon_2 \leq |z| \leq 1\}$. The germs of these identifications are uniquely determined. The gluing of these annuli is canonically identified with the annulus $\{1-\epsilon_2 \leq |z| \leq 1+\epsilon\}$ and thus gets a unique complex structure. This can now be used to obtain a complex structure on the gluing $\Sigma_1 \#_{\gamma_1,\gamma_2} \Sigma_2$.

It is straightforward to generalize this to gluing along multiple boundaries and also to the gluing of nodal Riemann surfaces.

\subsection{\reduced Properads of Riemann surfaces}
We now describe the \reduced properads which will appear in our discussion:
	\begin{itemize}
		\item $\Mfrfull$ is the \reduced properad defined by $\Mfrfull (n_-,n_+) = \coprod_{g \geq 0} \Mmod^{fr}_{g,n_-,n_+}$. The properad compositions are given by gluing the Riemann surfaces along suitable boundaries via the $S^1$-parametrization. We refer to $\Mfrfull$ as the \emph{{\CFT}-properad}.
		\item $\Mbar$ is the \reduced  properad defined by $\Mbar (n_-,n_+) = \coprod_{g \geq 0} \Mmodbar^{fr}_{g,n_-,n_+}$. The properad compositions are again given by gluing the nodal Riemann surfaces along suitable boundaries via the $S^1$-parametrization, followed by collapsing any unstable components created as a result of the gluing. We refer to this as the \emph{Deligne-Mumford properad}.
	\item $\Mfr$ is   the \reduced subproperad of $\Mfrfull$ consisting of all operations which have at least one output i.e. $\Mfr (n_-,n_+) = \coprod_{g \geq 0} \Mmod^{\partial_+}_{g,n_-,n_+}$, where
\[\Mmod^{\partial_+}_{g,n_-,n_+} = \begin{cases} \varnothing &\mbox{ if } n_+=0, \\ \Mmod^{fr}_{g,n_-,n_+} &\mbox{ otherwise }.\end{cases} \]
We refer to $\Mfr$ as the \emph{{\delCFT}-properad}. 
\item $\Mhat$ is the \reduced subproperad of $\Mbar$ defined by $\Mhat (n_-,n_+) = \coprod_{g \geq 0}\Mmodhat^{fr}_{g,n_-,n_+}$. Note that similarly to $\Mfr$, the components with $n_+=0$ are empty. We refer to $\Mhat$ as the \emph{\symplectic properad}
	\end{itemize}
	
	Let $\Mfrfullunst$, $\Mbarunst$, $\Mhatunst$, and $\Mfrunst$ be the \reduced subproperads of $\Mfrfull$,  $\Mbar$, $\Mhat$, and $\Mfr$ respectively, consisting of (possibly nodal) Riemann surfaces in the unstable range. More precisely, these properads are defined by:

	\begin{itemize}
	\item $\Mfrfullunst$ is the \reduced subproperad of $\Mfrfull$ which is empty in all components except the following:
	\begin{align*}
		&\Mfrfullunst(0,1) = \Mmod^{fr}_{0,0,1}, \quad \Mfrfullunst(1,0) = \Mmod^{fr}_{0,1,0}, \quad \Mfrfullunst(1,1) = \Mmod^{fr}_{0,1,1}, \\
		&\Mfrfullunst(0,2) = \Mmod^{fr}_{0,0,2}, \quad \Mfrfullunst(2,0) = \Mmod^{fr}_{0,2,0}.  
	\end{align*}
	\item $\Mbarunst$ is the \reduced subproperad of  $\Mbar$ which is empty in all components except the following:
	\begin{align*}
		&\Mbarunst(0,1) = \Mmod^{fr}_{0,0,1}, \quad \Mbarunst(1,0) = \Mmod^{fr}_{0,1,0}, \quad \Mbarunst(1,1) = \Mmod^{fr}_{0,1,1}, \\
		&\Mbarunst(0,2) = \Mmod^{fr}_{0,0,2}, \quad \Mbarunst(2,0) = \Mmod^{fr}_{0,2,0}.  
	\end{align*}
	The properad operations are defined as in $\Mbar$, i.e. by gluing along boundaries followed by stabilization. 
	\item $\Mhatunst$ is the \reduced subproperad of $\Mhat$ which is empty in all components except the following:
\[
	\Mhatunst(1,1)=\Mmodhat^{fr}_{0,1,1},	\quad \Mhatunst(1,1)=\Mmodhat^{fr}_{0,1,1} , 	 \quad  \Mhatunst(0,2)=\Mmodhat^{fr}_{0,0,2}. 
\]
	\item Finally $\Mfrunst$ is the \reduced subproperad of $\Mfr$  which is empty in all components except the following
\[
	  \Mfrunst(0,1)=\Mmod^{fr}_{0,0,1},	\quad \Mfrunst(1,1)=\Mmod^{fr}_{0,1,1} , 	\quad   \Mfrunst(0,2)=\Mmod^{fr}_{0,0,2}. 
\]
Clearly $\Mfrunst$ is a subproperad of $\Mhatunst,\Mfrfullunst,$ and $\Mbarunst$.
	\end{itemize}
	Lastly, $\Mfrfullnop$ is the io-subproperad of $\Mfrfull$ which is empty  all components except the following:
\[
	  \Mfrfullnop(0,1)=\Mmod^{fr}_{0,0,1},	\quad \Mfrfullnop(1,0)=\Mmod^{fr}_{0,1,0},	\quad \Mfrfullnop(1,1)=\Mmod^{fr}_{0,1,1} , 	\quad   \Mfrfullnop(0,2)=\Mmod^{fr}_{0,0,2}. 
\]
	In Section \ref{subsec: mthm1htppushsec} below, we will also use the following modifications of \reduced properads $\Mfr, \Mfrunst,$ and $\Mhatunst$ which we record here for convenience:
	\begin{itemize}
\item $\mmfrfull$ is the \reduced  subproperad of $\Mfr$ which coincides with $\Mfr$ in all degrees except $(1,1), (0,2)$. In these degrees the genus $0$ components of $\mmfrfull(1,1),\mmfrfull(0,2)$ are the subspaces of $\Mmod^{fr}(1,1), \Mmod^{fr}(0,2)$, respectively, containing only the exceptional annuli which have modulus $0$. 
\item $\Mfrunstth$ is the \reduced subproperad of $\Mfrunst$ defined in an analogous manner.
\item $\Mhatunstth$ is an \reduced subproperad of $\Mhatunst$. In degree $(1,1)$, similarly to the cases above, it only contains the exceptional annuli of modulus $0$. However in this case, we define $\Mhatunstth(0,2)$ to coincide with $\Mhatunst(0,2)$.
\end{itemize}
	
\section{From the \delCFT-properad to the \symplectic properad}\label{sec: hatthem pf pt1}

In this section we present the proof of Theorem \ref{hatthm}(1) . Let us start by recalling the statement:
\begin{theorem}
	$\Mhat$ is the homotopy colimit of the following diagram in the category of \reduced properads
	\begin{equation}\label{repeat: thm1dgm}
		\begin{tikzcd}
		\Mfrunst \ar{r}\ar{d} & \Mhatunst\\
		\Mfr & 
		\end{tikzcd}
	\end{equation}
\end{theorem}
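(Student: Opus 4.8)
The plan is to present $\Mhat$ as the homotopy pushout of the diagram \eqref{repeat: thm1dgm} by exhibiting an explicit model of the pushout and showing it is weakly equivalent to $\Mhat$. By Lemma \ref{lem: bar pushout} and the comparison Propositions \ref{compareproposition} and \ref{compareproposition2}, the homotopy pushout is computed (up to \hur weak-equivalence) by the honest pushout of bar resolutions $|B_\bullet \Mfr| \coprod_{|B_\bullet \Mfrunst|} |B_\bullet \Mhatunst|$, provided we arrange the maps $\Mfrunst \to \Mfr$ and $\Mfrunst \to \Mhatunst$ to be \hur cofibrations of topological \reduced sequences and the spaces involved to have CW homotopy type. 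Since $\Mfrunst \to \Mfr$ is generally not a cofibration and $\Mfr(n_-,n_+)$ is not obviously cofibrant, the first technical move is to replace the diagram \eqref{repeat: thm1dgm} by the weakly equivalent one built from the modified \reduced properads $\Mfrunstth \to \Mhatunstth$ and $\Mfrunstth \to \mmfrfull$ recorded at the end of Section \ref{sec: moduli spaces}, in which the degree $(1,1)$ and (where relevant) $(0,2)$ pieces only contain the modulus-$0$ exceptional annuli; these inclusions are \hur cofibrations. One then checks, via the finite-dimensional homotopy type statements of Section \ref{subsec: fin dim htpy types}, that all spaces in sight have the homotopy type of CW complexes, so that Proposition \ref{compareproposition2} applies.

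Next I would give a concrete description, using the unraveling of the pushout quotient $\GG(P \sqcup Q)/\!\sim$ from Section \ref{sec: overview}, of the topological \reduced sequence underlying $\Mfr \coprod_{\Mfrunst} \Mhatunst$: its operations are \rda-graphs whose vertices are labeled either by smooth surfaces with at least one output (from $\Mfr$) or by unstable degenerate/nodal annuli and disks (from $\Mhatunst$), modulo collapsing monochromatic collapsible subgraphs via properad composition and modulo reidentifying shared labels from $\Mfrunst$. There is an evident "total gluing" map from this pushout to $\Mhat$: glue all the smooth pieces together along the parametrized boundaries, degenerate the edges labeled by nodal annuli into nodes, and stabilize. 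The crux is to show that the induced map from the homotopy pushout (i.e. from the pushout of bar resolutions) to $\Mhat$ is a weak equivalence. Following the strategy of Section \ref{prfcondfiltrtop}, I would filter the bar-resolution pushout by a complexity invariant (number of nodal/degenerate annulus edges, or a suitable count of nesting depth), and prove that each associated filtration step is a local weak equivalence by constructing explicit simplicial homotopies — the simplicial analogues of the geometric "pinch/unpinch" homotopies of \cite{oancea2020deligne} — that contract the $S^1$-families of annuli in $\Mfrunst(1,1)$ (and $\Mfrunst(0,2)$) onto the corresponding contractible space of nodal annuli in $\Mhatunst$, compatibly with the properad compositions by disks and by other annuli.

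The main obstacle, as the introduction flags, is precisely this last step: building the simplicial homotopies that realize the homotopy trivialization at the level of the bar construction, while respecting that our compositions are indexed by directed acyclic graphs far more general than trees. Unlike the operad case, an $S^1$-family of annuli sitting at an internal edge of an \rda-graph can interact with several incoming and outgoing edges and with cycles of composition that trees do not see, so the contracting homotopy must be defined graph-by-graph and shown to be compatible with graph substitution and with the simplicial face and degeneracy maps $d_i, s_i$ of $B_\bullet(\GG,\GG,-)$; in particular the face map $d_2$ (which uses the \reduced properad structure of $P$, as illustrated around Figure \ref{visualizing_d0_dig}) has to be handled with care. I expect the argument to proceed by an induction over the filtration in which the inductive step is a pushout along a cofibration whose attaching map is shown to be a homotopy equivalence using these explicit homotopies, so that the colimit over the filtration is a weak equivalence; assembling the local equivalences into the global statement $\mathrm{hocolim} \simeq \Mhat$ then follows formally. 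The remaining bookkeeping — checking the \hur cofibrancy and CW hypotheses, and checking that passing to the modified properads $\Mfrunstth, \mmfrfull, \Mhatunstth$ does not change the homotopy pushout — is routine given Sections \ref{sec: model category str and cofib resolutions} and \ref{subsec: fin dim htpy types}.
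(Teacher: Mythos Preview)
Your first reduction step is correct and matches the paper exactly: replace the diagram by the one built from $\Mfrunstth \to \mmfrfull$ and $\Mfrunstth \to \Mhatunstth$, invoke Propositions \ref{compareproposition} and \ref{compareproposition2}, and conclude that the bar-pushout $|B_\bullet \Mfr| \coprod^{\Galg}_{|B_\bullet \Mfrunst|} |B_\bullet \Mhatunst|$ computes the homotopy pushout. Likewise your description of the total-gluing map $\pi$ to $\Mhat$ is right.

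Where your proposal diverges and becomes unworkable is in the crucial step: showing $\pi$ is a weak equivalence. You propose to filter the \emph{source} by a combinatorial complexity invariant (number of nodal annulus edges, nesting depth) and then argue cell-by-cell that attaching maps are homotopy equivalences via homotopies that ``contract the $S^1$-families of annuli onto nodal annuli''. This is not what Section \ref{prfcondfiltrtop} does, and it misses the essential difficulty. The map $\pi$ is not a fibration (the paper flags this explicitly), and the fibers over different points of $\Mhat$ have entirely different shapes depending on the nodal structure of the target surface; a uniform combinatorial filtration of the source cannot see this variation, and there is no evident way to exhibit the successive quotients as pushouts along homotopy-equivalence attaching maps.

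The paper's approach is instead \emph{local over the target} $\Mhat$. For each $\Sigma \in \Mhat$ one constructs an explicit neighborhood $U_\Sigma \cong U_{\widetilde\Sigma} \times \DD^{\Nu}$ using gluing coordinates $t_\nu$ at the nodes (Section \ref{gammasubsubsect}), and verifies the local weak-equivalence criterion $\pi^{-1}(U_{\Sigma_1,\dots,\Sigma_k}) \simeq U_{\Sigma_1,\dots,\Sigma_k}$ for all finite intersections. The filtrations $W_{n,\Sigma}$ and $F_{n,\Sigma}^{(m)}$ are not combinatorial: they are defined by the \emph{geometric} condition that the seam circles $\gamma_{\un x}$ of a simplex $\un x$ avoid prescribed annular neighborhoods $A_{n,\nu}^{(m)}$ of the nodes in the universal curve. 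The contracting homotopies are then the ``cut maps'' $c_r$ of Section \ref{cutmaps}, which literally slice each labeling surface along the circles $S^{\hat\nu}, S^{\check\nu}$ bounding those annuli --- isolating each (possibly nodal) annulus $A_\nu$ as its own vertex in the graph --- together with the simplicial ``cut homotopy'' $\Phi$ and two further contractions $\Psi$ (near the nodal locus) and $\Psi^{ns}$ (over the smooth locus). The final step (Section \ref{mthm1finalparag}) is a careful patching of $\Psi$ and $\Psi^{ns}$ via a bump function in the gluing parameters, checking the simplicial identity $\psi_0 c_r = c_{r+1}\psi_0^{ns}$ on the overlap. None of this geometric input --- gluing coordinates, annular neighborhoods of nodes, cutting along specified circles, or the smooth/nodal interpolation --- appears in your plan, and it is precisely what makes the argument go through.
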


\subsection{The homotopy pushout}\label{subsec: mthm1htppushsec}
Let $\mmfrfull, \Mfrunstth$ and $\Mhatunstth$ be as described at the end of Section \ref{sec: moduli spaces}. Note that
\begin{itemize}
\item $\mmfrfull(n_-,n_+), \Mfrunstth(n_-,n_+)$ and $\Mhatunstth(n_-,n_+)$ have the homotopy type of CW-complexes (\cite[Corollary 1]{milnor1959spaces}), and
\item $\Mfrunstth \to \mmfrfull$ and $\Mfrunstth \to \Mhatunstth$ are \hur cofibrations of underlying topological \reduced sequences (see Notation \ref{notn: hur weq cofib}).
\end{itemize}
Thus using Proposition \ref{compareproposition2}, it follows that
\begin{equation}\label{eq: bar pushout 1}
|B_\bullet \mmfrfull| \bigsqcup^{\Galg}_{|B_\bullet \Mfrunstth|} |B_\bullet \Mhatunstth| 
\end{equation}
computes the homotopy pushout of $\mmfrfull \leftarrow \Mfrunstth \to \Mhatunstth$.  The analogous conclusion for the pushout 
\begin{equation} \label{eq: bar pushout 2} |B_\bullet \Mfr| \bigsqcup^{\Galg}_{|B_\bullet \Mfrunst|} |B_\bullet \Mhatunst| \end{equation}
is not clear from the argument above, since the map $\Mfrunst \to \Mhatunst$ is not a cofibration of topological \reduced sequences. \\
Now consider the inclusion of pushout diagrams:
\begin{equation}\label{inclusionofdiagrams} 
	\begin{tikzcd}
	\mmfrfull \ar{d} & \Mfrunstth\ar{l}\ar{d}\ar{r} & \Mhatunstth \ar{d} \\ 
	\Mfr & \Mfrunst \ar{l}\ar{r}& \Mhatunst
	\end{tikzcd}
\end{equation}
Note that 
\begin{itemize}
\item all the vertical maps are \hur weak-equivalences of \reduced properads (see Notation \ref{notn: hur weq cofib}), and
\item the maps of topological \reduced sequences underlying $\Mfrunstth \to \mmfrfull$ and $\Mfrunst \to \Mhatunst$ are \hur cofibrations.
\end{itemize}
Therefore using Proposition \ref{compareproposition}, we have:
\begin{lemma}
The pushouts \eqref{eq: bar pushout 1} and \eqref{eq: bar pushout 2} are weakly homotopy equivalent.
	  In particular, pushout \eqref{eq: bar pushout 2} viz.
	   \[|B_\bullet \Mfr| \bigsqcup^{\Galg}_{|B_\bullet \Mfrunst|} |B_\bullet \Mhatunst|\] 
	   computes the homotopy colimit of \eqref{repeat: thm1dgm}\\
\null \hfill \qedsymbol	   
\end{lemma}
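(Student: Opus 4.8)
The plan is to read the lemma off from the two comparison results of this subsection together with the homotopy-invariance of homotopy colimits; no new geometry is involved, since all of the substantive work has been absorbed into Propositions~\ref{compareproposition} and~\ref{compareproposition2}. First I would invoke Proposition~\ref{compareproposition2} for the auxiliary span $\mmfrfull \leftarrow \Mfrunstth \to \Mhatunstth$. Both of its hypotheses have already been checked above: each of $\mmfrfull(n_-,n_+)$, $\Mfrunstth(n_-,n_+)$, $\Mhatunstth(n_-,n_+)$ has the homotopy type of a CW complex, and the legs $\Mfrunstth \to \mmfrfull$ and $\Mfrunstth \to \Mhatunstth$ are \hur cofibrations of underlying topological \reduced sequences. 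Hence \eqref{eq: bar pushout 1} is a model for the homotopy pushout of this span.

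Next I would apply Proposition~\ref{compareproposition} to the map of spans \eqref{inclusionofdiagrams}. Its hypotheses hold as well: the three vertical arrows are \hur weak-equivalences of \reduced properads, and, orienting each pushout so that the \hur-cofibrant leg is the distinguished one (these being $\Mfrunstth \to \Mhatunstth$ in the top row and $\Mfrunst \to \Mhatunst$ in the bottom), the maps of underlying \reduced sequences required to be \hur cofibrations are so. Proposition~\ref{compareproposition} then yields that the canonical map from \eqref{eq: bar pushout 1} to \eqref{eq: bar pushout 2} is a \hur weak-equivalence, in particular a weak homotopy equivalence of \reduced properads; this proves the first assertion of the lemma.

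For the "in particular" clause I would combine these two facts with the standard observation that, in the model structure of Corollary~\ref{thm: model str stmt}, the homotopy colimit of a span depends, up to weak equivalence, only on its objectwise weak-homotopy type. The vertical arrows of \eqref{inclusionofdiagrams} exhibit such an objectwise weak equivalence from the auxiliary span to the span \eqref{repeat: thm1dgm}, so the homotopy pushout of the auxiliary span --- which by the first step is computed by \eqref{eq: bar pushout 1} --- also computes the homotopy colimit of \eqref{repeat: thm1dgm}. By the second step, \eqref{eq: bar pushout 2} receives a weak equivalence from this model, hence it too computes the homotopy colimit of \eqref{repeat: thm1dgm}, as claimed.

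The only delicate point is organizational rather than mathematical: because $\Mfrunst \to \Mhatunst$ fails to be a \hur cofibration relative to the other leg $\Mfrunst \to \Mfr$, Proposition~\ref{compareproposition2} cannot be applied directly to \eqref{repeat: thm1dgm}, which is precisely why the auxiliary span through $\mmfrfull$, $\Mfrunstth$, $\Mhatunstth$ is introduced; one then has only to keep careful track, in applying Propositions~\ref{compareproposition} and~\ref{compareproposition2}, of which leg of each pushout plays the role of the distinguished \hur-cofibrant leg. The genuine content lies in those two propositions, whose proofs are given in Appendix~\ref{apndx: Cofibrant Resolutions via The Bar Construction}.
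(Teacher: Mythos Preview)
Your overall strategy matches the paper's exactly: first use Proposition~\ref{compareproposition2} on the auxiliary span to identify \eqref{eq: bar pushout 1} as a homotopy pushout, then use Proposition~\ref{compareproposition} on the ladder \eqref{inclusionofdiagrams} to compare \eqref{eq: bar pushout 1} with \eqref{eq: bar pushout 2}, and finally invoke homotopy-invariance of homotopy colimits. That is precisely the argument the paper indicates by the sentence ``Therefore using Proposition~\ref{compareproposition}, we have:'' together with the preceding discussion.

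There is, however, an internal inconsistency in your write-up that should be fixed. In the second paragraph you take the distinguished \hur-cofibrant legs to be $\Mfrunstth \to \Mhatunstth$ and $\Mfrunst \to \Mhatunst$, asserting both are \hur cofibrations; yet in your final paragraph you (correctly) say that $\Mfrunst \to \Mhatunst$ \emph{fails} to be a \hur cofibration. Indeed, the smooth locus sits as an \emph{open} (not closed) subset of the partially compactified moduli in arity $(0,2)$, so that inclusion cannot be a closed \hur cofibration. The way out is to take the \emph{other} leg in each row: $\Mfrunstth \to \mmfrfull$ and $\Mfrunst \to \Mfr$ are inclusions of connected components (summands) in every arity and are therefore \hur cofibrations. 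This matches the literal hypothesis ``$R\to P$, $R'\to P'$'' of Proposition~\ref{compareproposition} when \eqref{inclusionofdiagrams} is read as written. With that correction your argument goes through unchanged.
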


\subsection{Pushout \eqref{eq: bar pushout 2} is weak homotopy equivalent to $\Mhat$}\label{finalhtpyeq}
From the computation \eqref{barpushout}, we have:
\[|B_\bullet \Mfr| \coprod_{|B_\bullet\Mfrunst|}^{\Galg}  |B_\bullet \Mhatunst|  \simeq  \Big|\{ \GG \big(\GG^{\bullet} \Mfr \coprod_{\GG^{\bullet}\Mfrunst} \GG^{\bullet}\Mhatunst \big) \Big|\]

We prove Theorem \ref{hatthm}(1) using the following strategy: We will show that the map
\begin{equation}\label{eq: pi defined}
\pi \colon \Big|\{ \GG \big(\GG^{\bullet} \Mfr \coprod_{\GG^{\bullet}\Mfrunst} \GG^{\bullet}\Mhatunst \big) \Big| \to \Mhat 
\end{equation}
satisfies the property that every $\Sigma \in \Mhat$ has a neighborhood $U_{\Sigma}$ such that for any finite collection $\Sigma_1,...,\Sigma_k$, 
\begin{equation}
	\label{finintwheq} \pi^{-1}(U_{\Sigma_1,...,\Sigma_k}) \to U_{\Sigma_1,...,\Sigma_k} \mbox{ is a weak homotopy equivalence}
\end{equation}
where $U_{\Sigma_1,...,\Sigma_k} = U_{\Sigma_1} \cap ... \cap U_{\Sigma_k}$.

\begin{remark} Note that here we are using a local criterion for weak equivalences (see for example, \cite[Corollary 1.4]{may1990weak}). We have to resort to this approach since $\pi$ is not a fibration in general and hence just verifying the contractibility of fibers may not be sufficient to imply weak equivalence. However to understand the contracting homotopies constructed below, particularly those in Sections \ref{psihtpy} and \ref{mthm1finalparag}, it might be helpful to think of them as ways of extending the contracting homotopy of a given fiber to a neighborhood consisting of nearby fibers.
\end{remark}

For every $\Sigma \in \Mhat$ we shall in fact construct a neighborhood $U_{\Sigma}$ satisfying the following properties: $\pi^{-1}(U_{\Sigma})$ has a filtration
\[ W_{1,\Sigma} \subset ... \subset W_{n, \Sigma} ... \subset \pi^{-1}(U_{\Sigma})\]
such that for each $n$, 
\[\pi|_{W_{n, \Sigma}} \colon W_{n, \Sigma} \to U_{\Sigma}\]
is a weak homotopy equivalence. We prove this by showing that for each $n$,  $W_{n, \Sigma}$ has a further filtration

\[ F_{n, \Sigma}^{(1)} \subset ... \subset F_{n, \Sigma}^{(m)}... \subset W_{n, \Sigma}\]
such that 
\begin{equation}\label{filtrantncondition}
	\begin{split}
		& F_{n, \Sigma}^{(m)}  \mbox{ admits a fiberwise contracting homotopy \emph{inside} } W_{n, \Sigma},\\[-4pt]
		&\mbox{ onto a section } \chi_{n, \Sigma}^{(m)} \colon U_{\Sigma} \to W_{n, \Sigma} \mbox{ of } \pi|_{W_{n, \Sigma}}
	\end{split}
\end{equation}


Note that this implies \eqref{finintwheq} for any collection of elements of the cover.




\subsubsection{Construction of $U_{\Sigma}, W_{n, \Sigma},$ and $F_{n, \Sigma}^{(m)}$}\label{gammasubsubsect}
Let us start by describing the open set $U_{\Sigma}$. We use a local description of the moduli spaces as outlined in \cite{siebert1996gromov}. Let $\widetilde{\Sigma} = \coprod \Theta_i$ be the normalization of $\Sigma$, where $\Theta_i$ are smooth connected Riemann surfaces. For every node $\nu$ of $\Sigma$, denote by $\hat{\nu}, \check{\nu}$ its pre-images in $\widetilde{\Sigma}$. Let $\Nu$ denote the collection of nodes of $\Sigma$. Let $\widetilde{\Nu}$ be the collection of the points $\hat{\nu}, \check{\nu}$ as $\nu$ varies over $\Nu$. Let $\widetilde{\Nu}_i$ denote the subcollection of such points on $\Theta_i$. Now, let $U_{\Theta_i,\widetilde{\Nu}_i}$ be a small neighborhood of $\Theta_{i,\widetilde{\Nu}_i}$ in the moduli space of Riemann surfaces with the same topological type as $\Theta_i$ and with internal marked points indexed by $\widetilde{\Nu}_i$. Let $\cfam_{\Theta_i, \widetilde{\Nu}_i} \to U_{\Theta_i,\widetilde{\Nu}_i}$ be the universal curve over it, and let $s_\zeta$ be the sections corresponding to the marked points $\zeta \in \widetilde{\Nu}_i$.

Consider the product family 
\[ \cfam_{\widetilde{\Sigma}, \widetilde{\Nu}} = \prod \cfam_{\Theta_i,\widetilde{\Nu}_i} \to U_{\widetilde{\Sigma}, \un{\widehat{\nu}}} =  \prod U_{\Theta_i, \widetilde{\Nu}_i}\]
along with the sections $s_{\zeta}, \zeta \in \widetilde{\Nu}$. Now let $z_\zeta$ be a holomorphic function in a neighborhood of Im$(s_{\zeta})$ which restricts to a holomorphic co-ordinate on each fiber. Shrinking $U_{\Theta_i,\widetilde{\Nu}_i}$ and rescaling the $z_{\zeta}$ if necessary, we may assume that the open sets $R_{\zeta} = \{ |z_{\zeta}| <1\}$ contain no marked points other than $\zeta$ and are isomorphic to $U_{\Theta_i,{\widetilde{\Nu}}_i} \times \DD$ over $U_{\Theta_i,{\widetilde{\Nu}}_i}$. Let $t_{{\nu}} \colon \DD^{\Nu} \to \DD$ be the projection onto the $\nu$th component of $\DD^{\Nu}$. Now, define $U_{\Sigma}$ to be the  neighborhood of $\Sigma$ in $\Mhat$ given by the product
\[ U_{\Sigma} := U_{\widetilde{\Sigma}} \times \DD^{\Nu}. \]

The restriction of the universal curve over $U_{\Sigma}$
\[ \cfam_{\Sigma} \to U_{\Sigma} = U_{\widetilde{\Sigma}} \times \DD^{\Nu} \]
is obtained as follows: for every node $\nu$ in $\Nu$ remove the closed subsets 
\[ \{(z,\un{t}) \in R_{\hat{\nu}} \times \DD^{\Nu} | |z_{\hat{\nu}}(z)| \leq |t_{\nu}| \} \mbox{ and } \{(z,\un{t}) \in R_{\check{\nu}} \times \DD^{\Nu} | |z_{\check{\nu}}(z)| \leq |t_{\nu}| \}\]
from $\cfam_{\widetilde{\Sigma}, \widetilde{\Nu}} \times \DD^{\Nu}$ and identify the rest of $R_{\hat{\nu}}$ and $R_{\check{\nu}}$ via 
\[ z_{\hat{\nu}} \cdot z_{\check{\nu}} = t_{\nu}.\]

Let us now turn to the construction of the filtrations $W_{n, \Sigma}$ and $F_{n, \Sigma}^{(m)}$ satisfying condition \eqref{filtrantncondition}.

Let $\un{x}$ be an element in $\GG \Mfr \coprod_{\GG \Mfrunst} \GG \Mhatunst$. It is an $\left( \Mfr \coprod_{\Mfrunst} \Mhatunst  \right)$-labeled graph, say with underlying \rda-graph $G$ and labeling $\prod_{v \in V(G)} \Sigma_{v}$. Consider the map
\[ \GG^1 \Mfr \coprod_{\GG^1 \Mfrunst} \GG^1 \Mhatunst \to \Mhat\]
induced by properad compositions. It maps $\un{x}$ to the Riemann surface $\Sigma_{\un{x}} = \#_{v \in V(G)} \Sigma_{v}$ obtained by gluing the surfaces $\Sigma_v$ as prescribed by the edges of $G$. In particular, we get an analytic embedding 
\[ \label{gammacircle} \gamma_{\un{x}} \colon \coprod_{E(G) \cup in(G) \cup out(G)} S^1 \to \Sigma_{\un{x}}\]
whose image lies away from the nodes of $\Sigma_{\un{x}}$.

Now consider the analogous maps
\[ \GG^{n+1} \Mfr \coprod_{\GG^{n+1}  \Mfrunst} \GG^{n+1} \Mhatunst \to \Mhat\]
induced by (iterated) properad compositions. Note that these maps factor as 
\[  \GG^{n+1} \Mfr \coprod_{\GG^{n+1}  \Mfrunst} \GG^{n+1} \Mhatunst \to  \GG^1 \Mfr \coprod_{\GG  \Mfrunst} \GG^1 \Mhatunst \to \Mhat \] 
where the first map is the composition of simplicial degeneracy maps $\underbrace{d_0 \circ ... \circ d_0}_{\text{$n$-times}}$ and corresponds to forgetting all the nestings in the underlying graph of an element, and the second map is as described above. In particular, to any 
\[\un{x} \in  \GG^{n+1} \Mfr \coprod_{\GG^{n+1}  \Mfrunst} \GG^{n+1} \Mhatunst,\]
we can assign an embedding $\gamma_{\un{x}}$ as in \ref{gammacircle}. For an example see Figure \ref{gamma_fig}.



\begin{figure}
	\centering
		\includegraphics[width=\textwidth]{./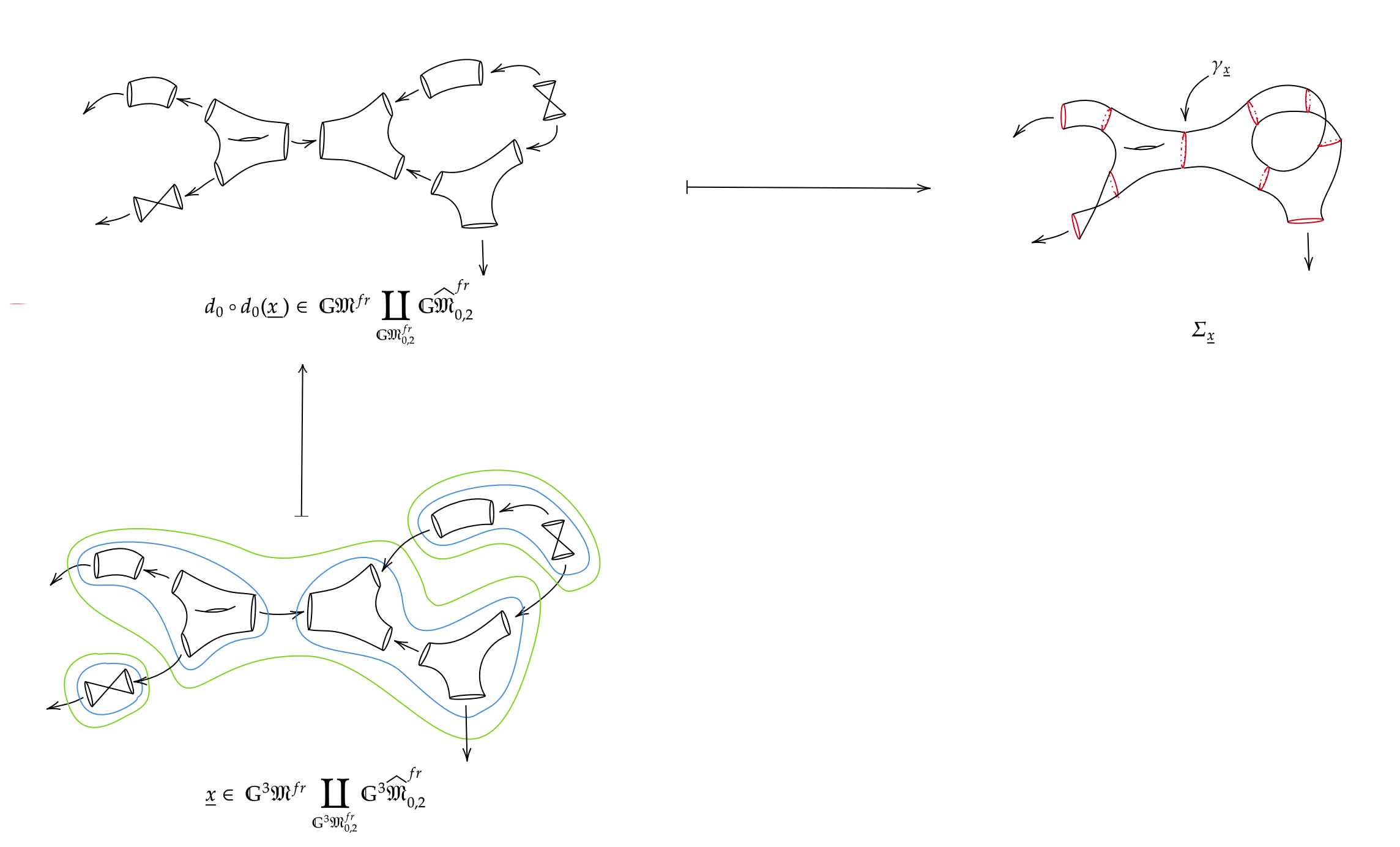}
		\caption{Construction of the embedding $\gamma_{\un{x}}$ for an $\un{x}$ as shown in the bottom left corner. The surface $\Sigma_{\un{x}}$ obtained by gluing the inner-most labels of $\un{x}$ is shown in the top right corner. The image of $\gamma_{\un{x}}$ inside $\Sigma_{\un{x}}$ is indicated in red.}
		\label{gamma_fig}
	\end{figure}

	Recall the description of the family of curves $\catc_{\Sigma}$ over $U_{\Sigma}$ given above. For each $n, m$ and $\nu$, let $A_{n,\nu}^{(m)}  \subset \cfam_{\Sigma}$ be the subset defined, in terms of this descriptions, by 
\[ A_{n,\nu}^{(m)}  =  \left\{ (z, \un{t}) \in R_{\hat{\nu}} \times \DD^{\widetilde{\Nu}} \Bigg \lvert |t_\nu| \in \left[0,\frac{1}{n^2}\right] \mbox{ and }  |z_{\hat{\nu}}(z) | \in \left[ \frac{|t_{\nu}|}{\frac{1}{n}\left( 1 + \frac{1}{m} \right)}, \frac{1}{n}\left( 1 + \frac{1}{m} \right) \right] \right\}. \]
	Set 
	\[A_{n}^{(m)}= \bigcup_{\nu \in \Nu} A_{n,\nu}^{(m)}.\]

	Define $\FF_{n, \Sigma}^{(m)}$ to be the simplicial subspace of $\GG^{\bullet+1} \Mfr \coprod_{\GG^{\bullet+1}  \Mfrunst} \GG^{\bullet+1} \Mhatunst$ consisting of simplices $\un{x}$ such that\\ 
\begin{equation}\label{enum: W subcmplx}
	\begin{minipage}{0.92 \textwidth}
		\begin{enumerate}
				\item the associated embedding $\gamma_{\un{x}}$ is disjoint from $A_{n}^{(m)}$, and
				\item if $\Sigma_u$ is one of the inner-most labels of $\un{X}$
					(i.e. an element of $\underbrace{d_0 \circ ... \circ d_0}_{\text{$n$-times}}$, the labeled graph obtained by forgetting all the nestings of the graph underlying $\un{x}$) which, considered as a subset of $\Sigma_{\un{x}}$, intersects $A_{n}^{(m)}$, then each component of $\Sigma_u \setminus A_{n}^{(m)}$ has at least one output boundary.
		\end{enumerate}
	\end{minipage}
\end{equation}
We then define $F_{n, \Sigma}^{(m)}$ to be an open subset of $\left| \GG^{\bullet+1} \Mfr \coprod_{\GG^{\bullet+1}  \Mfrunst} \GG^{\bullet+1} \Mhatunst \right|$ which fiberwise deformation retracts onto this subcomplex.\\
Finally, we set 
\[\WW_{n, \Sigma}=\bigcup_{m} \FF_{n, \Sigma}^{(m)} \quad \mbox{ and } \quad W_{n, \Sigma} = \bigcup_{m} F_{n, \Sigma}^{(m)}.\]
Note that $W_{n, \Sigma}$ deformation retracts onto the geometric realization of $\WW_{n, \Sigma}$.

Let $U_{\Sigma}^{sing}$ denote the subset
\[ \{ (z,\un{t})| |t_{\nu}| =0 \mbox{ for some } \nu\}. \]
This is the locus of nodal Riemann surfaces in $U_{\Sigma}$. Denote by $U^n_{\Sigma}$ the neighborhood of $U_{\Sigma}^{sing}$ given by
\[  \{(z,\un{t})| |t_{\nu}| \leq \frac{1}{n^2} \mbox{ for some } \nu\}. \]
In the constructions below we will also use the subcomplex $\FF_{n, \Sigma}^{(m)} \cap \pi^{-1}(U^n_{\Sigma})$. Denote this by $\FF_{n, \Sigma}^{(m)}|_{U^n_{\Sigma}}$. $F^n (U_{\Sigma}) \cap \pi^{-1}(U^n_{\Sigma})$ give an open subset of $\pi^{-1}(U_{\Sigma})$ fiberwise deformation retracting onto this subcomplex.

%

\begin{figure} 
	\centering
	\begin{subfigure}[b]{0.3\textwidth}
		\includegraphics[width=\textwidth]{./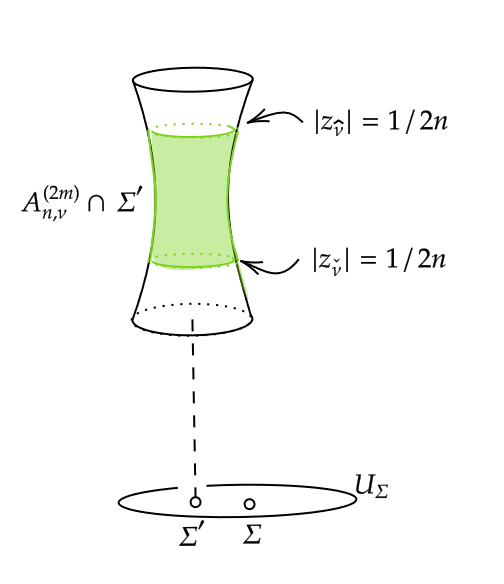}
		\caption{Part of $\Sigma^\prime$ lying in $A^n_{\nu}$, with  $\Sigma^\prime$ thought of as a fiber of $\catc_{\Sigma}$}
		\label{filtrationfig1}
	\end{subfigure}
	\begin{subfigure}[b]{0.35\textwidth}
		\includegraphics[width=\textwidth]{./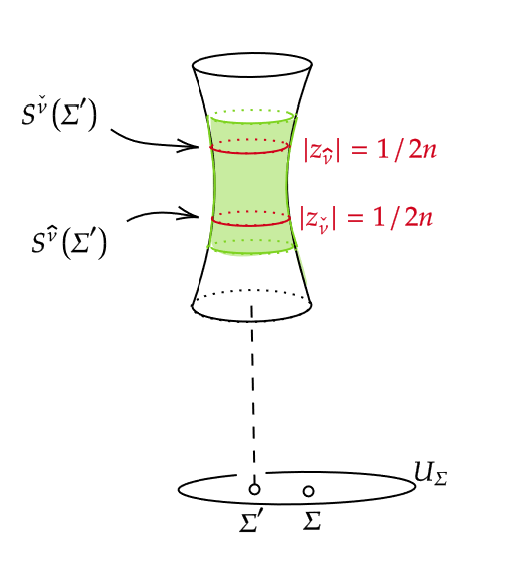}
		\caption{Circles $S_{\hat{\nu}}, S_{\check{\nu}}$}
		\label{filtrationfig2}
	\end{subfigure}
	\caption{}
\end{figure}

\subsection{Proof of  condition \eqref{filtrantncondition}}\label{prfcondfiltrtop}
Let us now turn to the fiberwise contractions. We begin by describing a number of preliminary notions and maps:

	\subsubsection{Annuli $A_{\nu}$ and curves $S^{\check{\nu}}, S^{\hat{\nu}}$ }\label{curveannuluspara}
For any $\Sigma^{\prime} \in U^n_{\Sigma}$, denote by $A_{\nu}(\Sigma^\prime)$ the annulus 
\[ A^{(2m)}_{n,\nu} \cap \Sigma^\prime \]
where we think of $\Sigma^\prime$ as sitting inside the universal curve $\cfam_{\Sigma}$ as a fiber. Denote by $S^{\check{\nu}}, S^{\hat{\nu}}$ the boundaries of this annulus given by 
\[ S^{\hat{\nu}}(\Sigma^{\prime}) = \left\{(z,t_{\hat{\nu}}(\Sigma^{\prime})) \Big \lvert |z_{\hat{\nu}}(z)| = \frac{1}{n}\left(1 + \frac{1}{2m} \right) \right\} , S^{\check{\nu}}(\Sigma^{\prime}) = \left\{(z,t_{\hat{\nu}}(\Sigma^\prime)) \Big \lvert |z_{\hat{\nu}}(z)| =\frac{|t_{\hat{\nu}}|(\Sigma^\prime)}{\frac{1}{n}\left(1 + \frac{1}{2m} \right)}\right\} \subset \Sigma^{\prime}\]
Now consider the graph $G_{\Sigma^{\prime}}$, with
\begin{itemize}
	\item vertices given by the connected components of $\Sigma^{\prime} \setminus \bigcup_{\Nu} \partial A_{\nu}(\Sigma^{\prime})$, and
	\item edges correspond to the connected components of $\bigcup_{\Nu} \partial A_{\nu}(\Sigma^{\prime})$, with all edges directed away from vertices labeled by the annuli $A_{\nu}(\Sigma^{\prime})$.
\end{itemize}
Denote by $\un{x}_{\Sigma^{\prime}}$ the $0$-simplex with the underlying graph $G_{\Sigma^\prime}$ and with the labeling of each vertex given by the corresponding connected component of $\Sigma^{\prime} \setminus \bigcup_{\Nu}\partial A_{\nu}(\Sigma^{\prime})$. 
\subsubsection{Graphs $H_v$}\label{hvgrph} Let $\un{x}$ be an element in the space of $k$-simplices $(\FF_{n, \Sigma}^{(m)}|_{U^n_{\Sigma}})^k$. Denote the underlying \rda-graph by $G$. Let $\un{x}_v$ denote the label of $v \in V(G)$ in $\un{x}$ and let $\Sigma^{\prime}_v$ be the surface obtained by gluing the inner-most labeles of $\in{x}_v$. Since $\un{x}$ lies in $\FF_{n, \Sigma}^{(m)}$, it follows that for any $\nu \in \Nu$, the intersection $A_{\nu}(\Sigma^{\prime}) \cap \Sigma^{\prime}_v$ is either empty or $A_{\nu}(\Sigma^{\prime})$. We can then construct a graph $H_v$ as follows:
		\begin{itemize}
			\item the vertices of $H_v$ are given by the connected components of $\Sigma^{\prime}_v \setminus \bigcup_{\Nu} \partial A_{\nu}$, and
			\item the edges of $H_v$ are given by the connected components of $\bigcup_{\Nu} \partial A_{\nu}$ lying inside $\Sigma^{\prime}_v$, with each edge directed away from the vertex corresponding to a component of the form $A_{\nu}$ for some $\nu$.
		\end{itemize}
		The following observation will turn out to be important in the construction of the cut maps and cut homotopies below in Sections \ref{cutmaps} and \ref{para: cut homotopy}:
		\begin{lemma}\label{rmk: at least one output}
		Every vertex in $H_v$ has at least one output.
		\end{lemma}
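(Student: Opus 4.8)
The plan is to unwind the definitions and reduce the statement to the second defining condition of $\FF_{n,\Sigma}^{(m)}$ (condition (2) in \eqref{enum: W subcmplx}). First I would observe that a vertex $w$ of $H_v$ is, by construction, a connected component of $\Sigma'_v \setminus \bigcup_{\Nu}\partial A_\nu$, so it is either (a) one of the annuli $A_\nu$, or (b) a connected component of $\Sigma'_v \setminus A_n^{(2m)}$ (where $A_n^{(2m)} = \bigcup_\nu A_{n,\nu}^{(2m)}$ is the union of the relevant annular regions). In case (a) the vertex is an annulus $A_\nu(\Sigma')$; here I would recall from the construction of $\cfam_\Sigma$ over $U_\Sigma$ that the two boundary circles of $A_\nu$ carry opposite orientations as boundary components of the surface, and that in $H_v$ all edges incident to an $A_\nu$-vertex are directed \emph{away} from it — so such a vertex has (at least) one output, and in fact exactly two outputs. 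This case is immediate once the orientation convention for the glued cylinders is recalled.

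The substantive case is (b). Here I would argue as follows. The surface $\Sigma'_v$ is (a deformation of) one of the innermost labels $\Sigma_u$ of the simplex $\un{x}$, which lives in $\GG^{\bullet+1}\Mfr \coprod \GG^{\bullet+1}\Mhatunst$; concretely $\Sigma_u$ is either a surface from $\Mfr$ or one of the unstable surfaces from $\Mhatunst$ — and in either case, by definition of $\Mfr$ and $\Mhat$ as properads of Riemann surfaces with at least one output per component, every irreducible component of $\Sigma_u$ has at least one output boundary. Now a vertex $w$ of $H_v$ of type (b) is a connected component of $\Sigma_u \setminus A_n^{(2m)}$. If $w$ meets $A_n^{(2m)}$ along some boundary circle, then $\Sigma_u$ itself intersects $A_n^{(2m)}$, so condition (2) of \eqref{enum: W subcmplx} applies and guarantees that \emph{each} connected component of $\Sigma_u \setminus A_n^{(2m)}$ — in particular $w$ — has at least one output boundary; these output boundaries of $\Sigma_u$ survive as output legs of $w$ in $H_v$. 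If instead $w$ is disjoint from $A_n^{(2m)}$, then $w$ is an entire irreducible component (or union of components) of $\Sigma_u$ away from the cutting annuli, and hence inherits an output boundary directly from the "every component has an output" property of $\Sigma_u$. Either way $w$ has an output, and the direction convention on the edges of $H_v$ (edges point away from the $A_\nu$-vertices) ensures this output boundary is recorded as an output leg, not converted into an input.

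The main obstacle — really the only place care is needed — is bookkeeping the distinction between the edges of $H_v$ (boundary circles of the cutting annuli, internal to $\Sigma'_v$) and the external legs of $H_v$ (the genuine input/output boundaries of $\Sigma'_v$ inherited from $\Sigma_u$), and checking that when we pass from "$\Sigma_u$ has an output boundary on this component" to "$w$ has an output leg in $H_v$", no output boundary gets absorbed into the annular region $A_n^{(2m)}$. This is handled by the fact that $A_n^{(2m)}$ is, by its explicit description in Section \ref{gammasubsubsect}, a union of small annuli around the nodes $\Nu$ of $\Sigma$, which are disjoint from all boundary components of $\Sigma$ (nodes are assumed away from the boundary); hence cutting along $\partial A_\nu$ never removes or reorients a boundary component of $\Sigma_u$. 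Assembling these observations completes the proof; I do not expect any further subtlety.
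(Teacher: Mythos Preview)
Your treatment of case (a) is fine, and you correctly identify condition (2) of \eqref{enum: W subcmplx} as the key input. But case (b) has a genuine gap: you identify $\Sigma'_v$ with a single innermost label $\Sigma_u$, and this is wrong for simplicial degree $k>0$. Recall that $\un{x}$ is a $k$-simplex, so $\un{x}\in\GG^{k+1}\big(\Mfr\coprod_{\Mfrunst}\Mhatunst\big)$; the outermost nesting gives the graph $G$, and the label $\un{x}_v$ of $v\in V(G)$ lies in $\GG^{k}\big(\Mfr\coprod_{\Mfrunst}\Mhatunst\big)$. By definition $\Sigma'_v$ is obtained by gluing \emph{all} the innermost labels of $\un{x}_v$ --- that is, $\Sigma'_v$ is the composition of the labels of a \emph{subgraph} $K$ of $d_0^{\,k}(\un{x})$, generally with many vertices. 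This is exactly what the paper's proof says. Condition (2) of \eqref{enum: W subcmplx} applies to the individual innermost labels $\Sigma_{u_j}$, not to their composite $\Sigma'_v$; you cannot invoke it directly on $\Sigma'_v$.

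The missing step is this: a non-annulus component $w$ of $\Sigma'_v\setminus\bigcup_\nu\partial A_\nu$ is a union of pieces, each piece being a non-annulus component of some $\Sigma_{u_j}\setminus\bigcup_\nu\partial A_\nu$, glued along edges of $K$ (the gluing circles lie in $\gamma_{\un{x}}$, hence avoid $A_n^{(m)}\supset A_\nu$ by condition (1)). Each such piece has an output boundary of $\Sigma_{u_j}$ --- either because $\Sigma_{u_j}$ does not meet $A_n^{(m)}$ at all (so the piece is all of $\Sigma_{u_j}$, which has an output as it lies in $\Mfr$ or $\Mhatunst$), or by condition (2). But that output boundary may be an \emph{internal} edge of $K$, not an output of $\Sigma'_v$. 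One then follows that edge to the adjacent piece (still inside $w$), finds its output, and repeats; acyclicity of the \rda-graph $K$ guarantees the process terminates at an output leg of $K$, which is an output boundary of $\Sigma'_v$ lying in $w$. This ``chase outputs through the DAG'' argument is what the paper's one-line proof is gesturing at, and it is precisely what your proposal is missing.
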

		\begin{proof}
			The proof follows by observing that $\Sigma^{\prime}_v$ is obtained by composing labels of a subgraph of $\underbrace{d_0 \circ ... \circ d_0}_{\text{$n$-times}}(\un{x})$, each of which has at least one output and moreover satisfies the condition mentioned in \ref{enum: W subcmplx}, (2).
		\end{proof}
		

\begin{figure}
	\centering
		\includegraphics[width=\textwidth]{./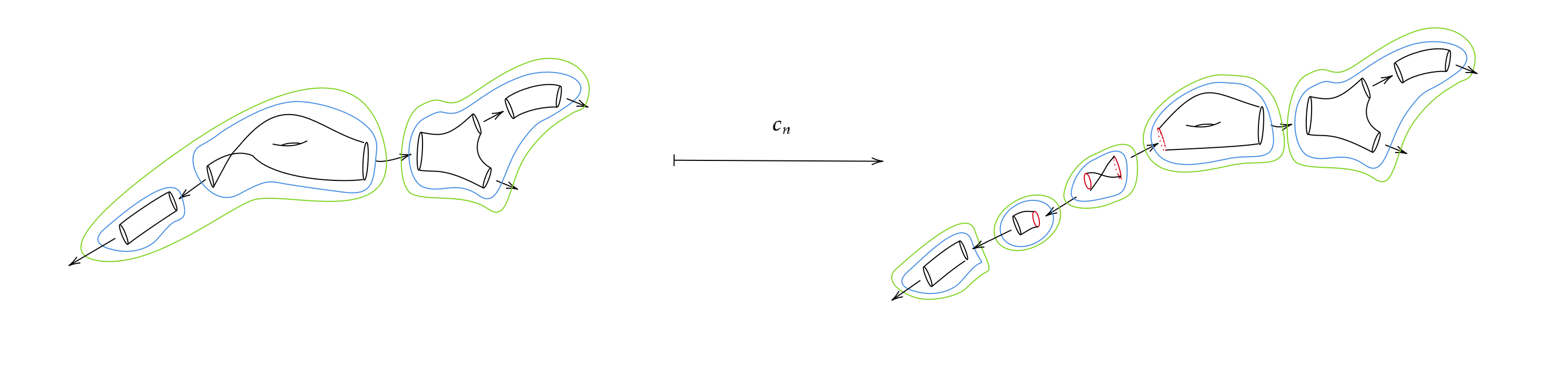}
		\caption{An example of cut map $c_n$ for $n=2$}
		\label{cutmap_fig}
	\end{figure}


	\subsubsection{The cut maps}\label{cutmaps} We define a simplicial map 
	\[c \colon \FF_{n, \Sigma}^{(m)}|_{U^n_{\Sigma}} \to \WW_{n, \Sigma}|_{U^n_{\Sigma}}\]
	 which will be used in the definitions of homotopies below. Intuitively, it corresponds to making `cuts' to the underlying Riemann surface along the circles $S^{\hat{\nu}}$ and $S^{\check{\nu}}$ (see Figure \ref{cutmap_fig}).
	Maps 
	\[c_r \colon (\FF_{n, \Sigma}^{(m)}|_{U^n_{\Sigma}})^r \to (\WW_{n, \Sigma}|_{U^n_{\Sigma}})^r\]
	 are defined inductively as follows:
		\begin{itemize}
			\item Base case: Let $\un{x} \in (\FF_{n, \Sigma}^{(m)}|_{U^n_{\Sigma}})^0$ and let $G$ be the graph underlying $\un{x}$. For a vertex $v \in G$ let $\Sigma_v$ and $H_v$ be as described above in Section \ref{hvgrph}. Consider an element $\un{y}_v \in \WW^0_{n,\Sigma_v}$ whose underlying \rda-graph is $H_v$ and whose vertex labels are given by the corresponding connected components of $\Sigma_v \setminus \bigcup_{\Nu} \partial A_{\nu}$.  The fact that such a labeled \rda-graph indeed lies in $\WW^0_{n, \Sigma_v}$ follows from Lemma \ref{rmk: at least one output}.
			
			We then define $c_0(\un{x})$ as the graph composition 
			\begin{equation}\label{c0eq} c_0(\un{x}) := \mu_G( \prod \un{y}_v ). \end{equation}
			\item Assume inductively that $c_i$ for $0 \leq i <r$, have been defined and satisfy the following property: For any $\un{x} \in(\WW_{n, \Sigma}|_{U^n_{\Sigma}})^i$ with underlying graph $G$, 
			\[c_{i} (\un{x}) = \mu_{G}( \prod \un{y}_v)\] 
			for some $\un{y}_v \in (\WW_{n, \Sigma}|_{U^n_{\Sigma}})^i$. Notice that the condition is satisfied for $i=0$.\\
		Now suppose $\un{x} \in(\WW_{n, \Sigma}|_{U^n_{\Sigma}})^r$, having underlying \rda-graph $G$. Consider the element $c_{r-1}(d_0 \un{x}) \in(\WW_{n, \Sigma}|_{U^n_{\Sigma}})^{r-1}$. From the induction hypothesis we know that 
		$c_{r-1}(d_0 \un{x}) =  \mu_{G_{d_0 \un{x}}}(\prod \un{y}^{\prime \prime}_{w})$
		 for some $\un{y}^{\prime \prime}_w$, $w \in V( G_{d_0\un{x}} )$, where $G_{d_0\un{x}}$ is the \rda-graph underlying $d_0\un{x}$. But since  $G_{d_0 \un{x}}  = \mu_G( \prod G_v)$ for some graphs $G_v$, we can express $c_{r-1}(d_0 \un{x})$  as a graph composition $\mu_G( \prod \un{y}^{\prime}_v )$ for some $\un{y}^{\prime}_v \in (\WW_{n, \Sigma}|_{U^n_{\Sigma}})^{r-1}$. For $v \in G$, consider an element $\un{y}_v \in ^r_{\Sigma}$ defined as follows: the \rda-graph underlying $\un{y}_v$ is $H_v$. The label of a vertex $u \in  H_v$ in $\un{y}_v$ is the labeled subgraph of $\un{y}^{\prime}_v$ consisting of vertices and edges mapping under $\pi$ into the connected component $\Sigma_u$ of $\Sigma \setminus \bigcup_{\Nu}\partial A_{\nu}$ corresponding to $u$, as in Section \ref{hvgrph}. Then, we set 
		\[c_r(\un{x}) = \mu_G( \prod \un{y}_v).\]
		\end{itemize}

\subsubsection{The cut homotopies}\label{para: cut homotopy} We define a fiber preserving homotopy 
\[\Phi_t \colon F_{n, \Sigma}^{(m)}|_{U^n_{\Sigma}} \to W_{n, \Sigma}|_{U^n_{\Sigma}}\]
from the inclusion $F_{n, \Sigma}^{(m)}|_{U^n_{\Sigma}} \hookrightarrow W_{n, \Sigma}|_{U^n_{\Sigma}}$ to the geometric realization of the cut map, defined above in \ref{cutmaps}. It turns out to be easier to write down the expression for the inverse homotopy 
\[\widetilde{\Phi}_t \colon F_{n, \Sigma}^{(m)}|_{U^n_{\Sigma}} \to W_{n, \Sigma}|_{U^n_{\Sigma}}\]
going from the geometric realization of the cut map to the inclusion $F_{n, \Sigma}^{(m)}|_{U^n_{\Sigma}} \hookrightarrow W_{n, \Sigma}|_{U^n_{\Sigma}}$.\\
$\widetilde{\Phi}$ is defined as the geometric realization of a simplicial homotopy 
\[\widetilde{\phi} \colon \FF_{n, \Sigma}^{(m)}|_{U^n_{\Sigma}} \to \WW_{n, \Sigma}|_{U^n_{\Sigma}},\]
which in turn consists of maps $\widetilde{\phi}_j \colon(\FF_{n, \Sigma}^{(m)}|_{U^n_{\Sigma}})^r \to (\WW_{n, \Sigma}|_{U^n_{\Sigma}})^{r+1}$ for $0 \leq j \leq r$, satisfying the usual conditions of a simplicial homotopy.
\begin{itemize}
\item Base Case: Let $\un{x} \in(\FF_{n, \Sigma}^{(m)}|_{U^n_{\Sigma}})^0$ and let $G$ be the graph underlying $\un{x}$. Suppose that $c_0(\un{x})=\mu_G(\prod \un{y}_v)$. Then, we define 
	\[ \widetilde{\phi}_0(\un{x}) := \mu_G( \prod \eta(\un{y}_v)), \]
	where we recall from Section \ref{GGtripsubsec} that $\eta \colon \mathbbm{1}_{\redTopSeq} \to \GG$ is the unit of the monad $\GG$.
\item Assume inductively that the maps $\widetilde{\phi}_j$ have been defined for all degrees $i < r$ and satisfy the following property similar to the one used above in \ref{cutmaps}: For any $\un{x} \in (\FF_{n, \Sigma}^{(m)}|_{U^n_{\Sigma}})^{i}$, with the underlying graph $G$, 
\[\widetilde{\phi}_{j} (\un{x}) = \mu_{G}( \prod \un{y}_v)\]
 for some $\un{y}_v \in  (\FF_{n, \Sigma}^{(m)}|_{U^n_{\Sigma}})^{i}$. The maps $\widetilde{\phi}_j \colon (\FF_{n, \Sigma}^{(m)}|_{U^n_{\Sigma}})^{r} \to (\WW_{n, \Sigma}|_{U^n_{\Sigma}})^{r+1}$ are then defined as follows for $\un{x} \in  (\FF_{n, \Sigma}^{(m)}|_{U^n_{\Sigma}})^{r}$ with underlying \rda-graph  $G$: 
	\begin{itemize}
			\item $j=0$: For $\un{x} \in  (\FF_{n, \Sigma}^{(m)})^{r}$, suppose that $c_r(\un{x}) = \mu_G(\prod \un{y}_v)$. Then define
		\[ \widetilde{\phi}_j(\un{x}) := \mu_G( \prod \eta(\un{y}_v)) .\]
		\item $j > 0$: From the condition in the inductive hypothesis we have $\widetilde{\phi}_{j-1}(d_0\un{x}) = \mu_G( \prod \un{y}_v)$, for some $\un{y}_v$. Then define
		 \[\widetilde{\phi}_j(\un{x}) := \mu_G ( \prod \eta(\un{y}_v) ).\]
	\end{itemize}
\end{itemize}

\subsubsection{The homotopy $\Psi$}\label{psiproprmain} Let $\WW_{n, \Sigma}|_{U^n_{\Sigma}}^M$ be the subcomplex of $\WW_{n, \Sigma}|_{U^n_{\Sigma}}$ consisting of simplices $\un{x}$ in which the annuli $A_\nu$ are isolated in the following sense: if $\un{x}_v$  is the label of a vertex in $\un{x}$, $\Sigma_v$ the surface obtained by gluing the inner-most labels of $\un{x}_v$, then 
\[\Sigma_v \cap A_{\nu} \neq \phi \Rightarrow \un{x}_v = \eta^r(A_{\nu}), \mbox{ for any } \nu \in \Nu.\]
Let $W_{n, \Sigma}|^M_{U^n_{\Sigma}}$ be the geometric realization of $\WW_{n, \Sigma}|_{U^n_{\Sigma}}^M$.\\
Let $\chi^M_{n, \Sigma} \colon U_\Sigma^{n}  \to W_{n, \Sigma}|^M_{U^n_{\Sigma}}$ be a section of $\pi$ over $U_\Sigma^{n}$ defined as follows: the image of $\chi_{n, \Sigma}^M $ is contained inside the geometric realization of the $0$-skeleton $(\WW_{n, \Sigma}|_{U^n_{\Sigma}})^0$ with the associated map given by
\[ \chi_{n, \Sigma}^M \colon \Sigma^\prime \mapsto \un{x}_{\Sigma^{\prime}} \mbox{ for every } \Sigma^{\prime} \in U^n_{\Sigma},\]
where $\un{x}_{\Sigma^{\prime}}$ is a $0$-simplex as defined in \ref{curveannuluspara}. Denote the constant simplicial space corresponding to the image of $\chi_{n, \Sigma}^M$ by $\mathbb{X}_{n, \Sigma}^M$ (recall a constant simplicial space is a simplicial space with all face and degeneracy maps given by identities).


We will construct a fiber-preserving homotopy $\Psi$ contracting $W_{n, \Sigma}|_{U^n_{\Sigma}}^M$ onto the image of $\chi_{n, \Sigma}^M$. This will again be a geometric realization of a simplicial homotopy $\psi$, which will be constructed by specifying extra degeneracies and augmentation on $\WW_{n, \Sigma}|_{U^n_{\Sigma}}^M$ (cf. \cite[Section 4.5]{riehl2014categorical}). For this we need to define:
	\begin{enumerate}
	 \item Extra degeneracies: $s_{-1} \colon (\WW_{n, \Sigma}|_{U^n_{\Sigma}}^M)^r \to (\WW_{n, \Sigma}|_{U^n_{\Sigma}}^M)^{r+1}$ for all $r \geq -1$, where $(\WW_{n, \Sigma}|_{U^n_{\Sigma}}^M)^{-1} := \mathbb{X}_{n, \Sigma}^M$, and
	 \item Augmentation: $d_0 \colon (\WW_{n, \Sigma}|_{U^n_{\Sigma}}^M)^0 \to (\WW_{n, \Sigma}|_{U^n_{\Sigma}}^M)^{-1}$,
	\end{enumerate}
where $(\WW_{n, \Sigma}|_{U^n_{\Sigma}}^M)^{-1}= \mathbb{X}_{n, \Sigma}^M$, satisfying:
	\begin{enumerate}
		\item $d_{0}s_{-1}$ is identity on $(\WW_{n, \Sigma}|_{U^n_{\Sigma}}^M)^r$
		\item for $r \geq 1, 0 \leq i,j \leq r$, 
			\begin{enumerate}
			\item $d_{i+1}s_{-1} = s_{-1}d_i$
			\item $s_{j+1}s_{-1} = s_{-1}s_j$
			\end{enumerate}
		on $(\WW_{n, \Sigma}|_{U^n_{\Sigma}}^M)^r$.
	\end{enumerate}
	The contracting homotopy $\psi$ underlying $\Psi$ is then obtained by setting $\psi_i \colon (\WW_{n, \Sigma}|_{U^n_{\Sigma}}^M)^r \to (\WW_{n, \Sigma}|_{U^n_{\Sigma}}^M)^{r+1}$  to be $\underbrace{s_0 \hdots s_0}_{i-\mbox{times}} s_{-1} \underbrace{d_0 \hdots d_0}_{i-\mbox{times}}$ for $0 \leq i \leq r$.
	
	\begin{itemize}\label{extradegpsi}

	\item Extra degeneracies: We define $s_{-1} \colon (\WW_{n, \Sigma}|_{U^n_{\Sigma}}^M)^r \to (\WW_{n, \Sigma}|_{U^n_{\Sigma}}^M)^{r+1}$. Let $\un{x} \in (\WW_{n, \Sigma}|_{U^n_{\Sigma}}^M)^r$ and $\Sigma_{\un{x}}$ the Riemann surface obtained by gluing the inner-most labels of $\un{x}$. Let $G_{\Sigma_{\un{x}}}$ be the graph for the Riemann surface $\Sigma_{\un{x}}$ as defined in \ref{curveannuluspara}. From the definition of $\WW_{n, \Sigma}|_{U^n_{\Sigma}}^M$, it follows that the vertices of $\un{x}$ can be regrouped into subgraphs $\un{y}_v \in  (\WW_{n, \Sigma}|_{U^n_{\Sigma}}^M)^r$ labeled by $v\in G_{\Sigma_{\un{x}}}$ so that 
$\un{x} = \mu_{G_{\Sigma_{\un{x}}}} ( \prod \un{y}_v)$.
	Then  set 
	\[ s_{-1}(\un{x}) := \mu_{G_{\Sigma_{\un{x}}}} \left( \prod \eta(\un{y}_v)\right). \]
\item Augmentation: Note that for $\un{x} \in  (\WW_{n, \Sigma}|_{U^n_{\Sigma}}^M)^0 $, $d_1(s_{-1}\un{x}) = \un{x}_{\Sigma_{\un{x}}} \in \mathbb{X}_{n, \Sigma}^M$. We define $d_0 \colon (\WW_{n, \Sigma}|_{U^n_{\Sigma}}^M)^0 \to \mathbb{X}_{n, \Sigma}^M$ as
	\[ d_0 = d_1 \circ s_{-1}. \]
	\end{itemize} 

	\subsubsection{The homotopy ${\Psi}^{ns}$}\label{psihtpy} We construct another fiber-preserving homotopy, this time contracting $W_{n, \Sigma}^{ns}=W_{n, \Sigma} \cap \pi^{-1}(U_{\Sigma}^{ns})$ onto a section ${\chi}^{ns}$ of $\pi$ over $U_{\Sigma}^{ns}$. Here 
\[U_{\Sigma}^{ns} = \{(z,\un{t})| t_{\nu} \neq 0 \mbox{ for all } \nu\} \subset U_{\hat{\Sigma}} \times \DD^{\Nu} = U_{\Sigma}\]
is the subspace of non-singular curves in $U_{\Sigma}$. Let $\WW_{n, \Sigma}^{ns}$ be the corresponding subcomplex of $\WW_{n, \Sigma}$.

The section ${\chi}_{\Sigma}^{ns}$ is contained in the $0$-skeleton of $\WW_{n, \Sigma}^{ns}$ and the corresponding map is given by \[\Sigma^\prime \mapsto \widetilde{\un{x}}_{\Sigma^\prime}\]
where $\tilde{\un{x}}_{\Sigma^\prime}$ is the $0$-simplex such that the underlying graph has a single vertex, no internal edges, and the label of the vertex is given by $\Sigma^\prime$. Note that $\Sigma^\prime$ is non-singular and hence $\widetilde{\un{x}}_{\Sigma^\prime}\in \GG \Mfr \subset (\WW_{n, \Sigma})^0$.

Denote by $\mathbb{X}^{ns}_{\Sigma}$ the simplicial subspace of $\WW_{n, \Sigma}^{ns}$ corresponding to the image $\chi_{\Sigma}^{ns}$. It is a constant simplicial space and is contained inside the $0$-skeleton of $\WW_{n, \Sigma}^{ns}$.
Again we shall construct the contraction by specifying extra degeneracies and augmentation as in \ref{psiproprmain}. We continue using the same notation for these as in \ref{psiproprmain}.
	\begin{itemize}
		\item Extra degeneracies: In this case the map $s_{r+1} \colon (\WW_{n, \Sigma}^{ns})^r \to  (\WW_{n, \Sigma}^{ns})^{r+1}$ is defined by 
			\[ s_{-1}(\un{x}) _= \eta(\un{x}) .\]
		\item The augmentation $d_0 \colon  (\WW_{n, \Sigma}^{ns})^0 \to \mathbb{X}^{ns}_{\Sigma}$ is defined by 
			\[ d_0 := d_1 \circ s_{-1} .\]
	\end{itemize}
	The extra degeneracies $s_{-1}$'s and augmentation $d_0$ satisfy conditions analogous to those mentioned in \ref{psiproprmain}. The simplicial homotopy ${\psi}^{ns}$ underlying ${\Psi}^{ns}$ is obtained from $s_{r+1}$'s and $d_0$ by formulas similar to those in \ref{psiproprmain}.

\subsubsection{The contraction for proving condition \eqref{filtrantncondition}}\label{mthm1finalparag}
We are finally ready to define the homotopy needed to show that condition \eqref{filtrantncondition} is satisfied:\\
We construct a homotopy $\Omega$ from the inclusion $F_{n, \Sigma}^{(m)} \hookrightarrow W_{n, \Sigma}$ to a map which takes $F_{n, \Sigma}^{(m)}$ onto a section of $\pi$ over $U_{\Sigma}$. The section coincides with $\chi_{n, \Sigma}^M$ near the singular locus $U_{\Sigma}^{sing}$, and with $\chi_{\Sigma}^{ns}$ away from it.

Fix a continuous function $\zeta \colon U_{\Sigma} \to [0,1]$ such that 
\begin{align*}
\zeta &\equiv 1 \mbox{ on } \bigcup_{\Nu} \{|t_{\nu}|\leq \frac{1}{2n^2}\} \mbox{ and }\\
\zeta &\equiv 0 \mbox{ outside of } U^n_{\Sigma} = \bigcup_{\Nu} \{|t_{\nu}|\leq \frac{1}{n^2}\}
\end{align*}
Set $\kappa = \zeta \circ \pi$. The homotopy $\Omega$ is defined as follows:
	\begin{align*}
		\mbox{ On } \pi^{-1}\left(\bigcup_{\Nu} \{|t_{\nu}|\leq \frac{1}{2n^2}\}\right)&: \Omega(\un{x},t) = 
		\begin{cases} 
			\Phi(\un{x},2t), & t < 1/2 \\
			\Psi(\Phi(\un{x},1), 2t-1) & t \geq 1/2
		\end{cases}\\
	\mbox{ On } \pi^{-1}(U_{\Sigma}^{ns})&: \Omega(\un{x},t) = 
		\begin{cases} 
			\Phi(\un{x},2t), & t <  \kappa(\un{x})/2 \\
			\Phi({\Psi^{ns}}(\un{x},\frac{t-\kappa/2}{1-\kappa/2}), \kappa(\un{x})) & t \geq \kappa({\un{x}})/2
		\end{cases}
	\end{align*}
	To ensure that $\Omega$ is well-defined it suffices to check that the two definitions agree on points lying over $U_{\Sigma}^{ns} \bigcap \bigcup_{\Nu} \{|t_{\nu}|\leq \frac{1}{2n^2}\}$
	\[ \Psi ( \Phi (\un{x},1),t) = \Phi ({\Psi^{ns}} ( \un{x} , t) ,1 ) \mbox{ for all } t\]
Since $\Phi(\un{x},1)=c(\un{x})$, this amounts to checking for every $r$ and $0 \leq i \leq r$,
\[\psi_i c_r = c_{r+1} \psi^{ns}_{i}\]
on $r$-simplices lying over $U_{n, \Sigma}^{ns} \cap \bigcup_{\Nu} \{|t_{\nu}|\leq \frac{1}{2n^2}|\}$. Since $c$ is simplicial, from the definitions of $\psi_i,\psi^{ns}_i$ we are reduced to checking 
\[ \psi_0 c_r =  c_{r+1} \psi^{ns}_0 \]
This can be verified directly from the definition of $\phi_0$ and $\psi^{ns}_0$.

This shows that the filtration $\{F_{n, \Sigma}^{(m)}\}$ constructed in Section \ref{gammasubsubsect} satisfies the condition \eqref{filtrantncondition} and thus completes the proof of the first part of Theorem \ref{hatthm}.

\section{From the \delCFT-properad to the \CFT-properad}\label{fulltcftsec}
In this section we prove the first part of Theorem \ref{frtofullthm}. Let us start by recalling the statement:
\begin{theorem}
$\Mfrfull$ is the homotopy colimit of the following diagram in the category of \reduced properads
\begin{equation}\label{eq: fulltcftsec thm}
	\begin{tikzcd}
		\Mfrunst \ar{r}\ar{d} & \Mfrfullnop \\
		\Mfr &
	\end{tikzcd}
\end{equation}
\end{theorem}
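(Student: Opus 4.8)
\emph{Step 1: reduction to a bar-construction pushout.} The plan is to mimic the proof of Theorem \ref{hatthm}. As indicated in Section \ref{subsec: splitsurf}, first replace $\Mfrfull$ and $\Mfrfullnop$ by homotopy equivalent \reduced properads $\mfrfullmod$ and $\mfrfullnopair$ of \splitsurfaces, keeping $\Mfr$ and $\Mfrunst$ (which carry no spots) unchanged. The relevant structure maps remain \hur cofibrations of underlying \reduced sequences and all terms have CW homotopy type, so Propositions \ref{compareproposition} and \ref{compareproposition2} identify the homotopy colimit of \eqref{eq: fulltcftsec thm} with
\[ |B_\bullet \Mfr| \bigsqcup^{\Galg}_{|B_\bullet \Mfrunst|} |B_\bullet \mfrfullnopair|, \]
which, by Lemma \ref{pushoutlem} and the unraveling of the coequalizer used in Section \ref{finalhtpyeq}, is the realization $\big|\,\GG\big(\GG^{\bullet}\Mfr \coprod_{\GG^{\bullet}\Mfrunst}\GG^{\bullet}\mfrfullnopair\big)\big|$.

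\emph{Step 2: the comparison map and the local criterion.} Properad composition induces a map
\[ \pi\colon\ \big|\,\GG\big(\GG^{\bullet}\Mfr \coprod_{\GG^{\bullet}\Mfrunst}\GG^{\bullet}\mfrfullnopair\big)\big|\ \longrightarrow\ \mfrfullmod \]
sending a nested $\big(\Mfr\coprod_{\Mfrunst}\mfrfullnopair\big)$-labeled graph to the \splitsurface\ obtained by gluing all of its labels, the trace/capping operations in $\mfrfullnopair$ creating the spots. Since $\mfrfullmod\simeq\Mfrfull$, it suffices to prove $\pi$ is a weak equivalence, and for this I would run the local-weak-equivalence argument of Section \ref{finalhtpyeq} verbatim: for every \splitsurface\ $\Sigma$ build a neighbourhood $U_\Sigma$ such that $\pi^{-1}\!\left(\bigcap_i U_{\Sigma_i}\right)\to\bigcap_i U_{\Sigma_i}$ is a weak equivalence for every finite subfamily of the cover $\{U_\Sigma\}$.

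\emph{Step 3: local charts, filtrations, contractions.} Here the weighted \spots of $\Sigma$ take over the role of the nodes $\Nu$ in Sections \ref{gammasubsubsect}--\ref{mthm1finalparag}. Writing $S$ for the set of spots, I would build a Siebert--Tian-type chart $U_\Sigma=U_{\widetilde\Sigma}\times\DD^{S}$ in which the disk coordinate $t_s$ at a spot $s$ plays the role of the nodal smoothing parameter and the collar annulus $A_s$ encircling the spot (whose conformal size is governed by $t_s$) plays the role of the nodal annulus $A_\nu$. With these substitutions the whole apparatus of Sections \ref{curveannuluspara}--\ref{mthm1finalparag} carries over: the embeddings $\gamma_{\un{x}}$ and the subsets $A^{(m)}_{n}=\bigcup_{s\in S}A^{(m)}_{n,s}$; the simplicial subspaces $\FF_{n,\Sigma}^{(m)}$ cut out by disjointness of $\gamma_{\un{x}}$ from $A^{(m)}_{n}$ together with the analogue of condition \eqref{enum: W subcmplx}(2), now reading: every component of an innermost piece minus $A^{(m)}_n$ either has at least one output boundary (hence lies in $\Mfr$) or is a capping disk (hence lies in $\mfrfullnopair$); the filtrations $W_{n,\Sigma}$, $F^{(m)}_{n,\Sigma}$; the cut maps $c$ of Section \ref{cutmaps}, whose definition relies on the analogue of Lemma \ref{rmk: at least one output} (each vertex of the cut graph $H_v$ has an output or is a capping disk); the cut homotopies $\Phi$; the contraction $\Psi$ onto the ``isolated spot'' section; the contraction $\Psi^{ns}$ over the non-degenerate locus; and finally the interpolating homotopy $\Omega$. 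This establishes condition \eqref{filtrantncondition}, whence $\pi$ is a local, therefore a global, weak equivalence.

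\emph{Main obstacle.} I expect the homotopical core --- the maps $c$, $\Phi$, $\Psi$, $\Psi^{ns}$, $\Omega$ and the simplicial-homotopy bookkeeping --- to go through with only cosmetic changes from Theorem \ref{hatthm}. The real work lies in the \splitsurface\ formalism: (i) defining weighted spots and capping disks precisely and proving the properad-level equivalences $\Mfrfull\simeq\mfrfullmod$ and $\Mfrfullnop\simeq\mfrfullnopair$, so that the homotopy pushout is genuinely unchanged; (ii) producing holomorphic local charts near a spot in which the weight behaves like a nodal parameter and the collars $A_s$ vary holomorphically with it, so that the cutting construction is well defined; and (iii) getting the combinatorics of $\FF^{(m)}_{n,\Sigma}$ right, since the pieces produced by cutting are now of two kinds --- positive-boundary surfaces in $\Mfr$ and trace disks in $\Mfrfullnop$ --- so one must check that each cut configuration lands in the correct summand of $\GG^{\bullet}\Mfr\coprod_{\GG^{\bullet}\Mfrunst}\GG^{\bullet}\mfrfullnopair$ and that the $H_v$-output lemma survives in this modified form. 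This is the ``weighted \spot plays the role of a node'' principle referred to in the introduction.
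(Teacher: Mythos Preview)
Your overall strategy --- replace by spotted-surface properads, identify the homotopy pushout with the bar realization, then prove the comparison map $\pi$ is a weak equivalence by a local criterion with cut maps and simplicial contractions --- matches the paper, and Steps~1 and~2 are correct.

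The gap is in Step~3, where the spot~$\leftrightarrow$~node dictionary is too literal. A neighbourhood of a spotted surface $\Sigma$ in $\mfrfullmod$ is \emph{not} of the form $U_{\widetilde\Sigma}\times\DD^S$: a spotted surface with $|S|$ spots is approximated by surfaces with arbitrarily many additional spots of small weight (this is exactly what the quotient $\sim$ in the definition of $\mfrfullmod$ allows), so there is no finite set of ``smoothing parameters'' playing the role of the $t_\nu$. Consequently there is no clean singular/non-singular dichotomy, and the homotopy $\Psi^{ns}$ over a ``non-degenerate locus'' has no analogue here: away from a spot there is nothing forcing the surface into $\Mfr$, because new low-weight spots can appear anywhere.

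The paper handles this differently. The cover is indexed not by surfaces $\Sigma$ but by triples $(\alpha,i,\delta)$: $U^{\delta}_{\alpha,i}$ consists of spotted surfaces with exactly $i$ spots of weight $>\alpha$ and none of weight $=\alpha$, together with a local choice of analytic disks around those $i$ spots. The cut is made only around these $i$ ``significant'' spots; all lighter spots are first pushed to weight~$0$ by a step function $\rho$ (this is an extra homotopy $H_\rho$ with no analogue in Section~\ref{finalhtpyeq}). Because $\rho$ changes the weights, the resulting section $\chi_n$ is \emph{not} a section of $\pi$, so the contractions are not fiber-preserving. This is why the paper must verify the weak-equivalence condition over every finite intersection $U_J$ directly (rather than over single $U_\Sigma$'s), and why the final contraction is just the concatenation $H_\rho * \Phi * \Psi$, with no $\Psi^{ns}$ or interpolating $\Omega$. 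Your ``main obstacle'' paragraph flags chart construction as a difficulty, but the issue is structural, not just technical: the node-style chart you propose does not exist, and the argument you outline (fiberwise contraction interpolating between a cut section and a trivial section) cannot be run as stated.
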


To compute the homotopy pushout of the diagram \eqref{eq: fulltcftsec thm}, we first replace $\Mfrfull$ and $\Mfrfullunst$ with homotopy equivalent properads $\mfrfull$ and $\mfrfullnopair$ which are more suitable for the computation. A suitable modification of the strategy used for proving Theorem \ref{hatthm}(1) is then used to complete the proof.

\subsection{Properads $\mfrfullnopair$ and $\mfrfull$}\label{subsec: splitsurf}

Define $\mfrtilde^{fr}_{g,n_-,n_+}(k)$ to be the moduli space of tuples $(\Sigma, \un{s} , \un{t})$, where
\begin{itemize}
	\item $\Sigma \in \Mmod^{fr}_{g,n_-,n_+}$,
	\item $\un{s} = (s_1,...,s_k)$ a tuple of $k$ labeled marked points in the interior of $\Sigma$, and 
	\item $\un{t} = (t_1,...,t_k) \in [0,1]^k$, such that $t_i=1$ for at least one $i$ whenever $n_+=0$.
\end{itemize}
Define $\mfrfullmod_{g,n_-,n_+}(k)$ to be the quotient 
\[\mfrfullmod_{g,n_-,n_+}(k):= \mfrtilde^{fr}_{g,n_-,n_+}(k)  / \Sigma_k\]
where $\Sigma_k$ acts on $\mfrfullmod_{g,n_-,n_+}(k)$ by permuting the labels of the marked points and the co-ordinates of $\un{t}$. We can think of  $\mfrfullmod_{g,n_-,n_+}(k)$ as the moduli space of Riemann surfaces with $k$ unlabeled marked points such that each point carries a weight in $[0,1]$ and at least one of the weights is positive when the surface has no outputs. We refer to these marked points as \emph{weighted \spots} and to a point in $\mfrfullmod_{g,n_-,n_+}(k)$ as a `\emph{\splitsurface}'.\\
Now define 
\[\mfrfullmod_{g,n_-,n_+} := \coprod_{k \geq 0} \mfrfullmod_{g,n_-,n_+}(k)/ \sim\]
where $\sim$ identifies a \splitsurface\  with the one obtained by forgetting the \spots which have weight $0$. More precisely, $\mfrfullmod_{g,n_-,n_+}$ is obtained as the colimit 
\[ \mfrfullmod_{g,n_-,n_+} = \varinjlim \big[ F_0 \mfrfullmod_{g,n_-,n_+} \to F_1 \mfrfullmod_{g,n_-,n_+} \to \hdots \big] \] 
where the spaces $F_k \mfrfullmod_{g,n_-,n_+}$ are defined inductively, satisfying the following properties:
\begin{enumerate}
	\item $F_0 \mfrfullmod_{g,n_-,n_+}  = \mfrfullmod_{g,n_-,n_+}(0)$ 
\item $F_k \mfrfullmod_{g,n_-,n_+} $ admits a map from $\big[ \mfrfullmod_{g,n_-,n_+}(k+1) \big]^{-}$, where $\big[ \mfrfullmod_{g,n_-,n_+}(k+1) \big]^{-}$ is the subspace of  $\mfrfullmod_{g,n_-,n_+}(k+1)$ of {\splitsurfaces} where at least one \spot has weight $0$.
	\item $F_{k+1} \mfrfullmod_{g,n_-,n_+} $ is obtained inductively from $F_k \mfrfullmod_{g,n_-,n_+} $ as the pushout
	\[ 
		\begin{tikzcd}\label{polkapush}
			\big[ \mfrtilde^{fr}_{g,n_-,n_+}(k+1) \big]^{-} \ar{r}\ar{d} &  \mfrtilde^{fr}_{g,n_-,n_+}(k+1) \ar{d}\\
			F_{k} \mfrfullmod_{g,n_-,n_+} \ar{r} &  F_{k+1} \mfrfullmod_{g,n_-,n_+}.
		\end{tikzcd}
	\]
\end{enumerate}

\begin{notation}
	\begin{enumerate}
		\item The image of a tuple $(\Sigma,\un{x},\un{t})$ in $\mfrfullmod_{g,n_-,n_+}$ will be denoted by $[\Sigma,\un{x},\un{t}]$
		\item We fix notation for some surfaces which appear frequently in the later sections: A \splitsurface\  given by a disk with an output  (respectively, input) boundary along with a single positive weight \spot will be called a positive cup (respectively, cap). Note that the \spot in a positive weight cup necessarily has weight $1$.
	\end{enumerate}
\end{notation}

\begin{proposition}\label{polkaproposition}
	The forgetful map $\mfrfullmod_{g,n_-,n_+} \to \Mmod^{fr}_{g,n_-,n_+}$ is a weak homotopy equivalence.
\end{proposition}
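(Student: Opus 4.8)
The plan is to write down an explicit homotopy inverse to the forgetful map $\pi\colon \mfrfullmod_{g,n_-,n_+}\to \Mmod^{fr}_{g,n_-,n_+}$; this sidesteps the question of whether $\pi$ is a fibration, and in fact will produce a genuine (not merely weak) homotopy equivalence. Since we work with \reduced sequences we always have $(n_-,n_+)\neq(0,0)$, so whenever $n_+=0$ the stability inequality forces $n_-\geq 1$ and $\Sigma$ carries at least one input boundary component. Define a section $\sigma$ of $\pi$ as follows: if $n_+\geq 1$, set $\sigma(\Sigma)=[\Sigma,\varnothing]$, the spotted surface with no weighted marked points; if $n_+=0$, set $\sigma(\Sigma)=[\Sigma,q(\Sigma),1]$, the spotted surface carrying a single weight-$1$ marked point at an interior point $q(\Sigma)\in\Sigma$ obtained by pushing the base point of the analytic $S^1$-parametrization of a chosen input boundary component slightly inward. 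In either case $\sigma$ is continuous and $\pi\circ\sigma=\mathrm{id}$, so it remains to exhibit a homotopy $\mathrm{id}\simeq \sigma\circ\pi$ on $\mfrfullmod_{g,n_-,n_+}$ lying over the identity of $\Mmod^{fr}_{g,n_-,n_+}$.

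When $n_+\geq 1$ this is immediate. Since there is no constraint on the weights in this range, the assignment $R_s\colon [\Sigma,\un{x},\un{t}]\mapsto[\Sigma,\un{x},(1-s)\un{t}]$ that rescales every weight by $1-s$ is well defined; it is continuous, it respects both the colimit $\varinjlim_k F_k \mfrfullmod_{g,n_-,n_+}$ and the relation that forgets weight-$0$ marked points, and at $s=1$ every weight is $0$ and hence every marked point is forgotten. Thus $R$ is a strong deformation retraction of $\mfrfullmod_{g,n_-,n_+}$ onto the subspace of spotless spotted surfaces, which is precisely $\Mmod^{fr}_{g,n_-,n_+}=F_0 \mfrfullmod_{g,n_-,n_+}$, and the proposition follows in this case.

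The case $n_+=0$ is where the real work lies: a spotted surface with no outputs must retain a marked point of weight $1$, so one cannot simply scale all weights to $0$. Here I would construct the homotopy $\mathrm{id}\simeq\sigma\circ\pi$ in three stages. In the first stage one applies an ambient isotopy of $\Sigma$, chosen to depend continuously on $\Sigma$, that sweeps every marked point out of a small disk $D(\Sigma)\subset\Sigma$ containing $q(\Sigma)$ (such a disk can be taken inside the canonical collar of the chosen input boundary, hence varies continuously with $\Sigma$); weights are untouched, so the original weight-$1$ marked point survives, merely displaced. In the second stage one introduces a new marked point at $q(\Sigma)$, which is now free of other marked points, and lets its weight grow from $0$ to $1$; at intermediate times this new point coexists with the original weight-$1$ marked point so the constraint holds, and at the initial time it has weight $0$ and is therefore invisible. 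In the third stage, the marked point at $q(\Sigma)$ now having weight $1$, one scales every \emph{other} weight down to $0$, arriving at $[\Sigma,q(\Sigma),1]=\sigma(\pi([\Sigma,\un{x},\un{t}]))$; no undoing of the first stage is needed, since the displaced points have been made invisible. At every instant of all three stages the spotted surface retains a marked point of weight $1$, so the homotopy stays inside $\mfrfullmod_{g,n_-,n_+}$, and since only the marked points and not $\Sigma$ are altered, it lies over the identity of $\Mmod^{fr}_{g,n_-,n_+}$.

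The main obstacle is therefore confined to the $n_+=0$ case and is essentially a bookkeeping of continuity: one must choose the disks $D(\Sigma)$ and the clearing isotopy so that they vary continuously over all of $\Mmod^{fr}_{g,n_-,0}$, and one must check that each stage is continuous and compatible with the colimit $\mfrfullmod_{g,n_-,n_+}=\varinjlim_k F_k \mfrfullmod_{g,n_-,n_+}$ and with the identifications forgetting weight-$0$ marked points (the homotopy is defined uniformly in the number of marked points and commutes with adjoining a weight-$0$ marked point, which is what makes this compatibility expected). If the marked points of a spotted surface need not be pairwise distinct, the clearing isotopy of the first stage can be dispensed with and the second stage simplifies accordingly. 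Once these continuity verifications are in place, $\sigma$ is a homotopy inverse of $\pi$, so $\pi$ is a homotopy equivalence and in particular a weak homotopy equivalence, which proves the proposition.
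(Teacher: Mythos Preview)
Your argument for $n_+\geq 1$ is correct and identical to the paper's. The problem is Stage~1 in the $n_+=0$ case: the clearing isotopy you describe cannot exist.

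You ask for an ambient isotopy $\phi_s$ of $\Sigma$, chosen depending only on $\Sigma$, such that $\phi_1$ carries every marked point out of the disk $D(\Sigma)$. But each $\phi_s$ is a self-diffeomorphism of the interior of $\Sigma$, hence surjective; in particular $\phi_1^{-1}(D(\Sigma))$ is a nonempty open subset, and any marked point that happens to lie there is sent \emph{into} $D(\Sigma)$ by $\phi_1$. Since the isotopy is fixed before the marked points are known and the marked points may lie anywhere in the interior, there is no way to guarantee they avoid $\phi_1^{-1}(D(\Sigma))$. Replacing the diffeomorphism by a (non-injective) retraction onto $\Sigma\setminus D(\Sigma)$ does not help either: it would force distinct marked points to collide, and the paper's spotted surfaces do require distinct spots (this is implicit in the covering map $\Mmod^{fr}_{n_-,n_+}(i)\to Z_{\alpha,i}$ of Section~6.3, and is exactly why the paper's own $H_2$ needs the preparatory step $H_1$). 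So Stage~1 is not a bookkeeping issue but a genuine obstruction, and your caveat about non-distinct points does not apply here.

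The paper's proof is built precisely to evade this obstruction. Rather than a global homotopy inverse, it observes that $\pi$ is a fibration and shows each fiber is weakly contractible via an open cover $\{W_x\}$ and an exhausting filtration $\{F_n\}$ of each finite intersection $W_J$. The point of $F_n$ is that all \emph{heavy} spots (weight $\geq 1/3$) already avoid a small disk; the homotopy $H_1$ then sends the weights of the remaining light spots in that disk to~$0$, so they are forgotten rather than moved. Only after this does one introduce the new weight-$1$ spot at the now-vacant $x_0$ (this is $H_2$) and then kill the remaining weights ($H_3$). Your Stages~2 and~3 are essentially the paper's $H_2$ and $H_3$; what you are missing is a workable substitute for Stage~1, and the combination of the filtration $F_n$ with the weight-killing homotopy $H_1$ is exactly that substitute.
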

\begin{proof}
	\emph{Case $n_+>0$:} In this case can we consider the section of the forgetful map given by mapping a Riemann surface $\Sigma$ to a \splitsurface\  with underlying Riemann surface $\Sigma$ and with no \spots. Then, there is a homotopy contracting $\mfrfullmod_{g,n_-,n_+}$ onto the image of this section given by shrinking the weights of all the \spots to $0$. 

	\emph{General Case:} The map $\mfrfullmod_{g,n_-,n_+} \to \Mmod^{fr}_{g,n_-,n_+}$ is a fibration and hence it suffices to show that the fiber of this map is contractible.\\
	Fix a surface $\Sigma \in \Mmod^{fr}_{g,n_-,n_+}$. We will show that the fiber over $\Sigma$ has an open cover $\{W_x\}$, indexed by points $x \in\Sigma$, such that every finite intersection $W_{x_1,...,x_n} := W_{x_1} \cap ... \cap U_{x_n}$ is non-empty and contractible.\\
The open sets $W_x$ are defined to be subsets of the fiber consisting of {\splitsurfaces} which have no \spot with weight $\geq 1/3$ at $x$. It is clear that $\{W_x\}$ forms an open cover of the fibre, with every finite intersection non-empty.\\
Let $J=\{x_1,...,x_n\}$ be a finite subspace of $\Sigma$. We now turn to showing contractibility of the finite intersection $W_J=W_{x_1} \cap ... \cap W_{x_n}$. To show this we will exhibit a filtration 
\[F_1 \subset F_2 \subset ... \subset F_n \subset ...  \mbox{ of $W_{x_1,...,x_n}$}\]
such that the inclusion of each $F_n$ is null-homotopic inside $W_J$.\\
The filtration is defined as follows: Choose a conformal embedding $\phi_{x_1} \colon \DD \to \Sigma$ with $\phi_{x_1}(0)=x_1$. For $n \in \NN$, define $F_n$ to be the subspace of the fiber consisting of {\splitsurfaces} for which all the \spots with weight $\geq 1/3$ are contained inside $\Sigma \setminus {\phi_{x_1}}(|z| \leq 1/n)$. The subspaces $F_n$ give an increasing and exhausting sequence of open subsets of $W_J$.\\
We now prove that each $F_n$ is null-homotopic \emph{inside} $W_J$. Consider the point $x_0 \in \Sigma$ given by $x_0 = \phi_{x_1}(1/2n)$, and the \splitsurface\  in $W_J$ which has a single weight $1$ \spot at $x_0$ viz. $[\Sigma,(x_0),(1)]$. We show that $F_n$ can be contracted to this {\splitsurface}  \emph{inside $W_J$}.\\
	The contraction homotopy is given by concatenation of $3$ homotopies $H_1,H_2,H_3$:
	\begin{enumerate}
		\item $H_1$: Consider a function $\tau \colon [0,1] \to [0,1]$ which is identically $0$ on $[0,1/3]$ and identically $1$ on $[2/3,1]$. Let $f_{1}$ be the map $[(\Sigma,\un{x}, (t_1,...,t_k))] \mapsto [ (\Sigma,\un{x}, ( \tau(t_1),...,\tau(t_k))) ]$. Then $H_1$ is a homotopy from the identity to $f_{1}$. The homotopy is simply given by linearly interpolating weights of the \spots.
		\item $H_2$: Now consider the map $f_{2}$ which takes any surface $[(\Sigma,\un{x},\un{t})]$ in $F_n$ to the surface obtained by adding the \spot $x_0$ with weight $1$ to $f_1([\Sigma,\un{x},\un{t}])$. (In particular, note that $f_2$ takes values outside $F_n$.) Define $H_2$ as the homotopy between $f_1$ and $f_2$, given by interpolating the weight of $\gamma_0$ from $0$ to $1$. The well-definedness (and continuity) of $f_2$ and $H_2$ follows from the restriction in the definition of the subset $F_n$ that any \spot contained inside $\phi_{x_1}(\{|z| \leq \frac{1}{n}\})$ has weight $< 1/3$.
		\item $H_3$: Finally, $H_3$ is a homotopy from $f_2$ to the constant map $f_3$ with value $[\Sigma,(x_0),(1)]$. The homotopy $H_3$ is given by decreasing the weights of \spot outside $\phi_{x_1}(\{|z| \leq \frac{1}{n}\})$ to $0$.
	\end{enumerate}
\end{proof}

The properads $\mfrfull$ and $\mfrfullnopair$ are now defined as follows:\\
$\mfrfull$ is defined in a manner similar to $\Mfrfull$ with the space $\Mmod^{fr}_{g,n_-,n_+}$ replaced by the corresponding spaces $\mfrfullmod_{g,n_-,n_+}$.\\
$\mfrfullnopair$ is the \reduced subproperad of $\mfrfull$ defined as follows:
	\begin{itemize}
		\item $\mfrfullnopair(0,1)$ is the subspace of $ \mfrfullnopair_{0,0,1}$ having a positive weight cap i.e. a disk with output having a single weight $1$ \\spot.
		\item $\mfrfullnopair(1,0) =\Mmod_{0,1,1}$ is the subspace of $ \mfrfullnopair_{0,1,0}$ consisting of disks with inputs having a single \spot. Note that the weight of the spot is allowed to vary in $[0,1]$. 
		\item $\mfrfullnopair(1,1) =\Mmod_{0,1,1}$ i.e. the subspace of $\Mmod_{0,1,1}$ \splitsurfaces\  having no positive weight \spots. Similarly,
		\item $\mfrfullnopair(0,2) =\Mmod_{0,0,2}$ i.e. the subspace of $\Mmod_{0,0,2}$ \splitsurfaces\  having no positive weight \spots. 
	\end{itemize}
The properad compositions are given by the maps induced by the gluing of Riemann surfaces underlying the {\splitsurfaces}.

The following is a consequence of Proposition \ref{polkaproposition}:
\begin{corollary}
Maps
\[\mfrfullnopair \to \Mfrfullnop \quad \mbox{ and } \quad \mfrfull \to \Mfrfull\]
are weak equivalences of \reduced properads.\\
\null \hfill \qedsymbol
\end{corollary}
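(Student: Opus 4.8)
The plan is to reduce everything to Proposition \ref{polkaproposition} together with the characterization of weak equivalences of \reduced properads. First, recall from Corollary \ref{thm: model str stmt} that a morphism of topological \reduced properads is a weak equivalence exactly when the underlying morphism of topological \reduced sequences is a component-wise weak homotopy equivalence of spaces. So it suffices to check, for each pair $(n_-,n_+)$, that the maps $\mfrfull(n_-,n_+) \to \Mfrfull(n_-,n_+)$ and $\mfrfullnopair(n_-,n_+) \to \Mfrfullnop(n_-,n_+)$ induced by the forgetful maps on {\splitsurfaces} are weak homotopy equivalences.

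For $\mfrfull \to \Mfrfull$: in arity $(n_-,n_+)$ the map is the disjoint union over $g \geq 0$ of the forgetful maps $\mfrfullmod_{g,n_-,n_+} \to \Mmod^{fr}_{g,n_-,n_+}$, each of which is a weak homotopy equivalence by Proposition \ref{polkaproposition}. Since weak homotopy type is detected componentwise (on $\pi_0$ and on the homotopy groups at each basepoint, all of which commute with disjoint unions), the disjoint union is again a weak homotopy equivalence, so this map is a weak equivalence of \reduced properads.

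For $\mfrfullnopair \to \Mfrfullnop$: both \reduced properads are empty outside the arities $(0,1),(1,0),(1,1),(0,2)$, so only those four must be treated, and here the plan is to simply unwind the definition of $\mfrfullnopair$ case by case. In arities $(1,1)$ and $(0,2)$ the space $\mfrfullnopair(n_-,n_+)$ consists of {\splitsurfaces} all of whose weighted {\spots} have weight $0$; under the identification built into the definition of $\mfrfullmod$ (weight-$0$ {\spots} are forgotten), such a {\splitsurface} is identified with its underlying Riemann surface, so the map is literally the identity of $\Mmod^{fr}_{0,1,1}$, respectively $\Mmod^{fr}_{0,0,2}$. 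In arities $(0,1)$ and $(1,0)$ the targets $\Mmod^{fr}_{0,0,1}$ and $\Mmod^{fr}_{0,1,0}$ are single points, while $\mfrfullnopair(0,1)$ and $\mfrfullnopair(1,0)$ are contractible, each being parametrized by the position of the single {\spot} in the open disk (together with its weight in $[0,1]$ in the $(1,0)$ case); hence the maps to a point are weak equivalences. Combining the four cases finishes the argument.

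There is no new homotopy-theoretic input needed beyond Proposition \ref{polkaproposition}; the only genuinely delicate point is the bookkeeping in the unstable range — one must make sure Proposition \ref{polkaproposition} is being invoked (or re-derived) also for the unstable triples $(0,0,1),(0,1,0),(0,1,1),(0,0,2),(0,2,0)$, in particular that the exceptional modulus-$0$ annuli present in arity $(1,1)$ do not disturb the comparison — and to match correctly the four small subspaces comprising $\mfrfullnopair$ with the corresponding components of $\Mfrfullnop$.
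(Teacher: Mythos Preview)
Your argument is correct and follows the paper's approach. The paper simply cites Proposition \ref{polkaproposition} and leaves the details implicit; you have correctly identified that this suffices directly for $\mfrfull \to \Mfrfull$, while for $\mfrfullnopair \to \Mfrfullnop$ the four non-empty components must be handled by direct inspection rather than by a blanket invocation of the proposition (since the components of $\mfrfullnopair$ are proper subspaces of the corresponding $\mfrfullmod_{0,n_-,n_+}$), and your case-by-case check is fine.
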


\subsection{Homotopy pushout of \eqref{eq: fulltcftsec thm}}
As mentioned before, to find a model for the homotopy pushout  we shall consider the diagram 
\begin{equation}\label{eq: fullsftsec alt} \mfr \leftarrow \mfrunst \to \mfrfullnopair \end{equation}
which is weakly equivalent to \eqref{eq: fulltcftsec thm}. It follows that
\[ | B_{\bullet} \mfr | \coprod_{|B_{\bullet} \mfrunst |}^{\Galg}|B_{\bullet}\mfrfullnopair|  \]
computes the homotopy pushout of \eqref{eq: fullsftsec alt} and thus of \eqref{eq: fulltcftsec thm}.
From the computation \eqref{barpushout}, we have:
\[| B_{\bullet} \mfr | \coprod_{|B_{\bullet} \mfrunst |}^{\Galg}|B_{\bullet}\mfrfullnopair| = \big| \GG( \GG^{\bullet} \mfr \sqcup_{\GG^{\bullet} \mfrunst} \GG^{\bullet} \mfrfullnopair) \big|\]

To complete the proof of Theorem \ref{frtofullthm} it now suffices to show that:
\begin{equation}\label{eq: fulltcftsec weq}
\pi \colon \big| \GG\big( \GG^{\bullet} \mfr \sqcup_{\GG^{\bullet} \mfrunst} \GG^{\bullet} \mfrfullnopair\big) \big| \to \mfrfull
\end{equation}

\subsection{Map \eqref{eq: fulltcftsec weq} is a weak equivalence of properads}\label{subsec: fulltcftsec htpyeq}
	The strategy of proof is similar to that of Section \ref{finalhtpyeq}. In this case instead of cuts made around nodal points, we will construct cuts around positive weight \spots. This gives a canonical way of decomposing an element of $\mfrfull$ into a surface in $\mfr$ and a collection positive weight caps and cups. As in Section \ref{finalhtpyeq}, some care will be needed to extend this decomposition continuously to a neighborhood of the \splitsurface. Cut homotopies, analogous to those in Section \ref{finalhtpyeq}, will then be constructed to homotope the points in the pushout to the zero simplices corresponding to such decompositions.

To highlight the parallels between the formal structure of the argument here and in Section \ref{finalhtpyeq}, we will use the same notation as in Section \ref{finalhtpyeq} for the analogous notions here. As we shall see, having suitably adopted various definitions to the current context, many of the maps and homotopies will be given by the same expressions as in Section \ref{finalhtpyeq}.


As before, we will find a cover $\{U_{\lambda}\}_{\lambda \in \Lambda}$ of $\mfrfullmod_{g,n_-,n_+}$ such that for any finite subset $J =\{ \lambda_1, ... , \lambda_k\} \subset \Lambda$, 
\[\pi^{-1}|_{U_J} \colon \pi^{-1}(U_J) \to U_J\]
is a weak-equivalence, where $U_J:= U_{\lambda_1} \cap ... \cap U_{\lambda_k}$. 

We will in fact show that each $J$, $U_J$ has a filtration 
\[ W_{1,J} \subset \hdots \subset W_{n,J} \subset \hdots \pi^{-1}(U_J)\]
such that
\begin{equation}\label{filtrfulltcft}
	\pi|_{W_{n,J}} \colon W_{n,J} \to U_J \mbox{ is a weak homotopy equivalence.} 
\end{equation}
Similarly to Section \ref{finalhtpyeq}, we prove this by showing that each $W_{n,J}$ has a further filtration 
\[ F^{(1)}_{n,J} \subset ... \subset  F^{(m)}_{n,J} \subset W_{n,J},\]
satisfying a condition analogous to \eqref{filtrantncondition}, with $W_{n,\Sigma}$ there replaced by $W_{n,J}$.
\begin{remark}
	Notice that unlike Section \ref{finalhtpyeq}, $\chi_n$ is not a section of $\pi$ in this case. In particular the homotopies as in \eqref{finalhtpyeq} which we construct, will not be fiber-preserving. Thus, in contrast to Section \ref{finalhtpyeq}, it is not sufficient to work with a single $U_{\lambda}$ and instead  it becomes necessary to construct the homotopies over all finite intersections $U_J$.
\end{remark}

\subsubsection{Construction of $U_{\lambda}, W_{n,J},$ and $F^{(m)}_{n,J}$}
We start by describing the spaces $U_{\lambda}$. Let $U_{\alpha, i}$ be the open subset of $\mfrfullmod_{g,n_-,n_+}$ consisting of \splitsurfaces\ containing exactly $i$ \spots with weight $>\alpha$ and no \spot with weight $=\alpha$. Also, let $Z_{\alpha,i} \subset U_{\alpha,i}$ be the subspace of \splitsurfaces\  containing exactly $i$ positive weight \spots all of which have weight $1$. There exists a deformation retraction $r_{\alpha,i} \colon U_{\alpha,i} \to Z_{\alpha,i}$ obtained by homotoping the weight $w_{\gamma}$ for an \spot $\gamma$ to 
\[ \begin{cases}
	0 &\mbox{ if } w_{\gamma}<\alpha\\
	1 &\mbox{ if } w_{\gamma}>\alpha.
\end{cases}\]
Notice that we have a covering map $\Mmod^{fr}_{n_-,n_+}(i) \to Z_{\alpha,i}$, where are the spaces $\Mmod^{fr}_{n_-,n_+}(i)$ are as defined at the end of Section \ref{subsec: moduli spaces def}. The fiber over a \splitsurface\  in $Z_{\alpha,i}$ is given by different ways of labeling the \spots. Consider a covering $\{W^{\delta}_{\alpha,i}\}_{\delta \in \Delta_{\alpha,i}}$  of $\Mmod^{fr}_{g,n_-,n_+}(i)$, where $ \Delta_{\alpha,i}$ is some indexing set, satisfying the following conditions:
	\begin{itemize}
		\item $W^{\delta}_{\alpha,i}$ maps homeomorphically onto an open subset of $Z_{\alpha,i}$.
		\item 
			Over $W_{\alpha,i}^{\delta}$, there exists a continuous choice of analytically embedded disks around the \spots in the following sense: Let $\mathcal{C}_{g,n_-,n_+}(i) \to \Mmod^{fr}_{g,n_-,n_+}(i)$ denote the universal curve. Then, there exists maps
		\begin{equation} \label{eq: sigma maps}
			\sigma_{\alpha, i}^{\delta} \colon W_{\alpha,i}^{\delta} \times \sqcup_{j=1}^i \DD \to \mathcal{C}_{g,n_-,n_+}(i)|_{W_{\alpha,i}^{\delta}} 
		\end{equation}
	over $W_{\alpha,i}^{\delta}$, such that $\sigma_{\alpha,i}^{\delta}$ is a fiberwise embedding which maps $0 \in \DD$ in the $j$th disk in the second factor to the $j$th \spot of the fiber. Moreover, we assume that the embeddings given by $\sigma_{\alpha,i}^{\delta}$ are disjoint from the boundaries.
	\end{itemize}

	Let $Z^{\delta}_{\alpha,i}$ be the image of $W^{\delta}_{\alpha,i}$ in $Z_{\alpha,i}$. The collection $\{Z^{\delta}_{\alpha,i}\}$ forms a cover of $Z_{\alpha,i}$. Define $U_{\alpha,i}^{\delta} = r_{\alpha,i}^{-1} (Z_{\alpha,i}^{\delta}) $, where $r_{\alpha,i} \colon U_{\alpha,i} \to Z_{\alpha,i}$ is the retraction mentioned above. The collection $U_{\alpha,i}^{\delta}, \delta \in \Lambda_{\alpha,i}$ gives a covering of $U_{\alpha,i}$.
	Putting all the $\{U_{\alpha,i}^{\delta}\}$'s together as $\alpha, n ,$ and $\delta$ vary in $(0,1), \ZZ_{\geq 0}$, and $\Delta_{\alpha,i}$ respectively, we obtained the desired cover $\{U_{\lambda}\}_{\lambda \in \Lambda}$ of $\mfrfullmod_{g,n_-,n_+}$.

	For a finite collection $J \subset \Lambda$, we now turn to constructing a filtrations $W_{n,J}$ and $F^{(m)}_{n,\Sigma}$ satisfying a condition analogous to \eqref{filtrfulltcft}. Let $\alpha_1, ... , \alpha_k$ be the constants for $J$ as discussed above. Without loss of generality, assume that $0 = \alpha_{0} < \alpha_{1} \leq ... \leq \alpha_{k} < \alpha_{k+1} = 1$. From the definitions of the sets $U_{\lambda_i}$ it is clear that for any surface in the intersection $U_J$ the number of \spots with weight in $>\alpha_r$ stays constant, viz. $i_r$, for every $r$. 

	We now construct a filtration $F^{(m)}_{n,J}$ satisfying \eqref{filtrfulltcft}.
	Analogous to Section \ref{gammasubsubsect}, the filtered pieces are defined to be a neighborhood deformation retracting onto of a simplicial subcomplex $\mathbb{F}^{(M)}_{n,J}$ constructed as follows:
Consider the fiberwise embeddings 
\[(\sigma_{\alpha_{1},i_{1}}^{\lambda_{1}})^{(m)} \colon W_{\alpha_{1},i_{1}}^{\lambda_{1}} \times \left\{ |z| \leq \frac{1}{n}\left(1+\frac{1}{m}\right)\right\}  \to {\mathcal{C}_{g,n_-,n_+}(i)}|_{W_{\alpha,i}^{\lambda}} \]
obtained by restricting the maps $\sigma_{\alpha_{1},i_{1}}^{\lambda_{1}}$ from \eqref{eq: sigma maps}. Let $\phi^{\lambda_1}_{\alpha_{1},i_{1}} \colon U_{\alpha_{1},i_{1}}^{\lambda_{1}} \to W_{\alpha_{1},i_{1}}^{\lambda_{1}}$ denote the composition of the retraction $r_{\alpha_1,i_1} \colon U^{\lambda}_{\alpha_1,i_1}\to Z^{\lambda}_{\alpha_1,i_1}$ and  homeomorphism $ Z^{\lambda}_{\alpha_1,i_1} \to  W^{\lambda}_{\alpha_1,i_1}$. Pulling back the maps $(\sigma_{\alpha_{1},i_{1}}^{\lambda_{1}})^{(m)}$ by $\phi^{\lambda_1}_{\alpha_{1},i_{1}}$ we obtain
\[ (\phi_{\alpha_{1},i_{1}}^{\lambda_{1}})^*(\sigma_{\alpha_{1},i_{1}}^{\lambda_{1}})^{(m)} \colon U_{\alpha_{1},i_{1}}^{\lambda_{1}} \times  \left\{ |z| \leq \frac{1}{n}\left(1+\frac{1}{m}\right)\right\} \to (\phi_{\alpha_{1},i_{1}}^{\lambda_{1}})^* {\mathcal{C}_{g,n_-,n_+}(n)}|_{W_{\alpha,i}^{\lambda}} \]
Denote the image of this map along $A^{(m)}_{n,j}$ and set $A^{(m)}_n = \bigcup_{j} A^{(m)}_{n,j}$.\\
Similar to Section \ref{gammasubsubsect}, given an element $\un{x} \in   \GG^{n+1}\mfr \coprod_{\GG^{n+1}\mfrunst} \GG^{n+1}\mfrfullnopair$, with underlying \rda-graph $G$, we can define an analytic embedding
\[ \gamma_{\un{x}} \colon \coprod_{E(G) \cup in(G) \cup out(G)} S^1 \to \Sigma_{\un{x}}\]
with $\Sigma_{\un{x}}$ the Riemann surface obtained by gluing the inner-most labels of $\un{x}$ (ignoring any \spot present on the labels).
Let $\mathbb{F}^{(m)}_{n,J}$ denote the subcomplex of $\GG^{\bullet}\mfr \coprod_{\GG^{\bullet}\mfrunst} \GG^{\bullet}\mfrfullnopair$ consisting of simplices $\un{x}$ such that the corresponding embedding $\gamma_{\un{x}}$ is disjoint from $A^{(m)}_n$. We then set $F^{(m)}_{n,J}$ to be the open subset of $|\GG^{n+1}\mfr \coprod_{\GG^{n+1}\mfrunst} \GG^{n+1}\mfrfullnopair|$ corresponding to this subcomplex.
Set 
\[\mathbb{W}_{n,J} = \bigcup_{m} \mathbb{F}^{(m)}_{n,j} \mbox{ and } W_{n,J} = \bigcup_{m} F^{(m)}_{n,j}.\]
Note that $W_{n,J}$ deformation retracts onto the subcomplex 

\begin{notation}
Denote the simplicial complex underlying $\pi^{-1}(U_{\lambda_1 ...,\lambda_k})$ by $\WW_{n,J}$.
\end{notation}

\subsubsection{Construction of maps $\chi_n \colon U_J \to F^n \pi^{-1}(U_J)$}\label{subsec: full tcft chi}
We now construct maps $\chi_n$ as in \eqref{filtrfulltcft}. Note that unlike Section \ref{finalhtpyeq}, $\chi_n$ will not be a section in this case of $\pi$. The image of the map lies in the subspace of $0$-skeleton of $\pi^{-1}U_J$.

Recall that from the construction of $U_{\lambda_1}$ it follows that any surface with weighted \spot in $U_{\lambda_1}$ has $i_1$ \spot with weight $>\alpha_1$ and there is a way of labeling these \spot as $1,...,i_1$ with the labels varying continuously over $U_{\lambda_1}$.\\
Let $\un{\Sigma}$ be an element of $U_J$ and let $\Sigma$ be the Riemann surface underlying $\un{\Sigma}$. We identify $\Sigma$ with the fiber over $\un{\Sigma}$ of the pullback of $\mathcal{C}_{g,n_-,n_+}(i_1)$ to $U_J$. Let $x_1,...,x_{i_1}$ denote the corresponding marked points on $\Sigma$. Denote by $A_j^n(\un{\Sigma}) \subset \Sigma$ the image of the (pullback of) map $(\sigma_{\alpha_,i_1}^{\lambda_1})^n$ restricted to the fiber above $\un{\Sigma}$ and by $w_j(\Sigma)$ the weight of the $j$th \spot, $0 \leq j \leq i_1$. 
Set 
\[ \un{A}^n_j(\un{\Sigma}) :=
	\begin{cases}
		[A^{(m)}_{n,j}(\un{\Sigma}),(x_j),\rho(w_j)] \in \mfrfullmod_{0,0,1} &\mbox{ for } w_j < \alpha_n \\ 
		[A^{(m)}_{n,j}(\un{\Sigma}),(x_j),\rho(w_j)=1] \in \mfrfullmod_{0,1,0} &\mbox{ for } w_j > \alpha_n 
	\end{cases}
\]
where $\rho$ is the step function defined on $[0,1] \setminus \{\alpha_1,...,\alpha_k\}$, given by:
\begin{equation}\label{rhofunction} \rho(x) := \begin{cases}
				0 & \mbox{ for } x \in (0,\alpha_1) \\
				\frac{\alpha_{r} + \alpha_{r+1}}{2} &\mbox{ for } x \in (\alpha_{r}, \alpha_{r+1}), 0<r<n \\ 
				1 & \mbox{ for } x \in (\alpha_k,1) 
			\end{cases} 
\end{equation}
(the parametrization of the boundary $\partial A^{(m)}_{n,j}$ is given by the embeddings $\sigma^{\lambda_1}_{\alpha_1,i_1}$).

Consider the complement $\Sigma \setminus \bigcup_j \partial A^{(m)}_{n,j}(\un{\Sigma})$. Denote by $G_J(\un{\Sigma})$ the graph with vertices given by the connected component of $\Sigma \setminus \bigcup_j \partial A^{(m)}_{n,j}(\un{\Sigma})$ and edges given by the components of $\partial A^{(m)}_{n,j}(\un{\Sigma})$, directed away from the vertices corresponding to disks $A^{(m)}_{n,j}(\un{\Sigma})$ if $w_j < \alpha_n$ and towards them if $w_j > \alpha_n$.

Consider a labeling of $G_J(\un{\Sigma})$ with labels of vertices given by the corresponding connected components of $\Sigma \setminus \bigcup \partial A^{(m)}_{n,j}$. The boundaries are considered as inputs or outputs according to the orientations of the edges mentioned above. \\
This gives an element in  the $0$-skeleton $(\WW_{n,J})^0$. Denote this element by $x_{\un{\Sigma}}$.

We then define the map $\chi_n \colon U_J \to W_{n,J}$ by
\[\un{\Sigma} \mapsto x_{\un{\Sigma}} \]

\subsection{Verifying condition \ref{filtrfulltcft}}
We construct on $W_{n,J}$ a homotopy between the identity and $\chi_{n} \circ \pi$. We start by describing notions analogous to those in Section \ref{prfcondfiltrtop}.

\subsubsection{The graphs $K_v$}
These are analogous to graphs $H_v$ defined in Section \ref{hvgrph}.\\
Let $\un{x}$ be an element in $\mathbb{W}_{n,J}$ with underlying graph $G$, and let $\un{x}_v$ be the label of $v \in V(G)$ in $\un{x}$. Let $\un{\Sigma}_{\un{x}}$ and $\un{\Sigma}_{\un{x}_v}$ denote the \splitsurfaces\  obtained by gluing the inner-most labels of $\un{x}$ and $\un{x}_v$ respectively.\\
	Then, denote by $K_v$ the graph with: 
		\begin{itemize}
			\item vertices given by the connected components of $\un{\Sigma}_{\un{x}_v} \setminus \bigcup_{n} \partial A^{(m)}_{n,j} (\un{\Sigma}_{\un{x}})$, and
			\item edges given by connected components of $\bigcup_{n} \partial A^{(m)}_{n,j} (\un{\Sigma}_{\un{x}})$. The edges corresponding to $\partial A^{(m)}_{n,j}$ are directed away from the disk $A^{(m)}_{n,j} (\un{\Sigma}_{\un{x}})$ if $w_j < \alpha_n$ and towards it if $w_j > \alpha_n$.
		\end{itemize}
		
		\subsubsection{Cut map $c \colon F^{(m)}_{n,J} \to W_{n,J}$}\label{subsubsec: fulltcft cut maps}
		As before $c$ is defined as the geometric realization of a map of simplicial spaces given by $c_r \colon \FF^{(m)}_{n,J} \to (\WW_{n,J})^r$. Maps $c_r$ are defined inductively as follows:
		\begin{itemize}
			\item $r=0$: Let $\un{x} \in (\WW_{n,J})^0$ be an element with underlying graph $G$. Then $c_0(\un{x})$ is defined by the same expression as in \eqref{c0eq}, where now $\un{y}_v, v \in V(G)$ are defined as follows: let $\un{x}_v$ denote the label of $v$ in $\un{x}$. Then $\un{y}_v$ is defined to be the element of $(\WW_{n,J})^0$ with underlying \rda-graph $K_v$ as above and with the labels of vertices in $K_v$ given as follows:
			\begin{itemize}
				\item For vertices corresponding to connected components $\Sigma_{\un{x}}\setminus  \bigcup_i \partial A^{(m)}_{n,j}$ other than the disks $A^{(m)}_{n,j}$, the label is given by the underlying Riemann surface, treating the boundaries as inputs or outputs according to the edges of $K_v$ and forgetting any \spot present on this surface (equivalently by the \splitsurfaces\  containing no positive weight \spot corresponding to the connected component).
				\item For vertices corresponding to the annuli $A^{(m)}_{n,j}$, the labels are given by the Riemann surface with weighted \spot given by the annuli $A^{(m)}_{n,j}$ and with both boundaries treated as inputs.
			\end{itemize}
		\item The inductive step for constructing the map $c_r$ assuming the existence of $c_i$ for $i <r$ is performed exactly as in section \ref{cutmaps}, with $(\WW_{\Sigma})^i, H_v$, and $A_\nu$ respectively replaced by $(\WW_{n,J})^i, k_v$, and $A^{(m)}_{n,j}(\un{\Sigma})$.
		\end{itemize}
		\subsubsection{Map $f_{\rho}$ and homotopy $H_\rho$} 
		 Recall the step function $\rho$ from Section \ref{rhofunction}. We define $f_{\rho} \colon \WW_{n,J}\to \WW_{n,J}$ to be the geometric realization of the map of simplicial spaces which changes the label of a simplex by applying $\rho$ to the weights of the \spot in the label. Function $\rho$ is continuous on its domain and thus the induced map $f_{\rho} \colon \WW_{n,J}\to \WW_{n,J}$ is continuous as well.\\ 
		 Let $H_{\rho}$ be the homotopy from the identity to $f_{\rho}$ obtained by linearly interpolating the weights of the \spot in the labels.
		 \subsubsection{Cut homotopy $\Phi_t \colon F^{(m)}_{n,J} \to W_{n,J}$}
		We now construct a homotopy from $f_{\rho}$ to the cut map $c$.\\
		It defined exactly as the cut homotopy in Section \ref{para: cut homotopy}, using the same formulas as there, provided 
		\begin{enumerate}
			\item we replace the subcomplex $\WW_{n,\Sigma}$ there with subcomplex $\WW_{n,J}$ 
			\item the simplicial components $c_i$ of the cut maps being used are as in Section \ref{subsubsec: fulltcft cut maps} here, instead of Section \ref{cutmaps}.
		\end{enumerate}

	\subsubsection{The Homotopy $\Psi$} 
	Finally we have a homotopy $\Psi$ form $c$ to the map $\chi_n \circ \pi$. The homotopy is again defined exactly as in Section \ref{psiproprmain}, with the following straightforward adjustments:
	\begin{enumerate}
		\item Subcomplex $\WW_{\Sigma}^M$ is replaced by the subcomplex $\WW_{n,J}^n$ given by simplices with labels $\un{x}$, such that for a vertex $v$ with label $\un{x}_v$ in $\un{x}$, 
		\[\pi(\un{x}_v) \cap A^{(m)}_{n,j}(\un{\Sigma}) \neq \varnothing \Rightarrow \un{x}_v = \eta^r( A^{(m)}_{n,j}(\un{\Sigma})) \mbox{ for some $r$}\] 
		\item Section $\chi_{n, \Sigma}^M$ is replaced by $\chi_n$ as in \ref{subsec: full tcft chi}.
		\item graph $G_{\pi(\un{x})}$ is replaced by $G_{J}(\un{x})$ as in \ref{subsec: full tcft chi}.
	\end{enumerate}
	 The homotopy $\Psi$ is then defined using extra degeneracies and augmentations as in \ref{extradegpsi}.

	\subsubsection{Contraction homotopy proving condition \eqref{filtrfulltcft}}  This is just the concatenation of the cut homotopy $\Phi$, from identity to the cut map, and homotopy $\Psi$, from $c$ to $\chi_n\circ \pi$.
	
	This completes the proof of condition \ref{filtrfulltcft} and hence of the proof of Theorem \ref{frtofullthm}(1).

\section{From the \CFT-properad to the Deligne-Mumford properad}\label{mbarsec}
We now prove that homotopy trivializing the \reduced subproperad of annuli $\Mfrfullunst$ in $\Mfrfull$ gives the Deligne-Mumford properad $\Mbar$.

Let us recall the statement of the Theorem:
\begin{theorem}\label{mbarmainthm}
$\Mbar$ is the homotopy colimit of the following diagram in the category of \reduced properads
	\begin{equation}\label{eq: mbarsec thm}
		\begin{tikzcd}
			\Mfrfullunst \ar{r} \ar{d} & \Mbarunst \\
			\Mfrfull &
		\end{tikzcd}
	\end{equation}
\end{theorem}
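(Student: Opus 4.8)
The plan is to run the argument of Section~\ref{mbarsec} by merging the two schemes already developed in this paper: the node-cutting argument behind Theorem~\ref{hatthm} (Section~\ref{sec: hatthem pf pt1}) and the spot-cutting argument behind Theorem~\ref{frtofullthm}(1) (Section~\ref{fulltcftsec}).

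\textbf{Step 1: reduction to spotted properads and the bar presentation.} As in Section~\ref{subsec: splitsurf}, first replace the diagram \eqref{eq: mbarsec thm} by a weakly equivalent diagram $\mfrfull \leftarrow \mfrfullunst \to \mbarunst$ of \reduced properads of \splitsurfaces, where $\mfrfull$, $\mfrfullunst$ are as in Section~\ref{subsec: splitsurf} and $\mbarunst$ is the evident spotted analogue of $\Mbarunst$ (stable, possibly nodal unstable \splitsurfaces, with a weighted \spot on the cap). The weak equivalences $\mfrfull \to \Mfrfull$ and $\mbarunst \to \Mbarunst$ follow from Proposition~\ref{polkaproposition} and its nodal analogue; since $\Mfrfullunst \to \Mbarunst$ is not a \hur cofibration of \reduced sequences, one combines Propositions~\ref{compareproposition} and~\ref{compareproposition2} with an inclusion-of-diagrams argument as in Section~\ref{subsec: mthm1htppushsec}. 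Writing $\mbar$ for the corresponding spotted analogue of $\Mbar$, the bar-construction machinery then shows that the homotopy pushout of \eqref{eq: mbarsec thm} is modelled by $\big|\GG\big(\GG^{\bullet}\mfrfull \sqcup_{\GG^{\bullet}\mfrfullunst}\GG^{\bullet}\mbarunst\big)\big|$, so it suffices to prove that the canonical map
\[
\pi \colon \big|\GG\big(\GG^{\bullet}\mfrfull \sqcup_{\GG^{\bullet}\mfrfullunst}\GG^{\bullet}\mbarunst\big)\big| \longrightarrow \mbar,
\]
induced by gluing, stabilizing, and forgetting zero-weight \spots, is a weak homotopy equivalence of \reduced properads; the identification of the resulting colimit with $\Mbar$ \emph{as an $\infty$-properad} is then supplied by the formalism of Section~\ref{sec: infty properad}.

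\textbf{Step 2: the combined cover, filtrations, and cut maps.} Apply the local criterion for weak equivalences as in Sections~\ref{finalhtpyeq} and~\ref{subsec: fulltcftsec htpyeq}. For a \splitsurface\ $\Sigma \in \mbar$ with node set $\Nu$, combine the two kinds of neighborhood data: a Siebert-type local model $U_{\widetilde{\Sigma}} \times \DD^{\Nu}$ resolving the nodes (as in Section~\ref{gammasubsubsect}) together with a weight-threshold $\alpha$, a \spot-count, and a local trivialization $\delta$ of analytically embedded disks around the positive-weight \spots (as in Section~\ref{subsec: fulltcftsec htpyeq}). This yields a cover $\{U_{\lambda}\}$; because the relevant sections will not be fiberwise (as in Section~\ref{fulltcftsec}) one works over finite intersections $U_J$. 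For each $J$ one builds filtrations $W_{n,J} \supset F^{(m)}_{n,J}$, with $F^{(m)}_{n,J}$ the open set deformation retracting onto the subcomplex of simplices $\un{x}$ whose gluing circles $\gamma_{\un{x}}$ avoid \emph{both} the node-annuli $A^{(m)}_{n,\nu}$ and the \spot-disks $A^{(m)}_{n,j}$, subject to a local-output condition of the type \eqref{enum: W subcmplx}(2); the map $\chi_n \colon U_J \to W_{n,J}$ sends a surface to the $0$-simplex obtained by cutting along the boundary circles of all these regions. The cut map $c$ then cuts \emph{simultaneously} around the nodes (à la Section~\ref{cutmaps}) and around the positive-weight \spots (à la Section~\ref{subsubsec: fulltcft cut maps}), producing a $0$-simplex with vertices labelled by the (possibly nodal) annuli $A_{\nu}$, by caps or cups $\un{A}_j$ with type dictated by the rounded weight $\rho(w_j)$ as in \eqref{rhofunction}, and by the remaining smooth ``big'' pieces. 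Finally one concatenates, exactly as in Section~\ref{mthm1finalparag}: the weight-rounding homotopy $H_\rho$, the cut homotopy $\Phi$ (Section~\ref{para: cut homotopy}), the contraction $\Psi$ near the singular-plus-heavy-\spot locus (Section~\ref{psiproprmain}), and the contraction $\Psi^{ns}$ away from it (Section~\ref{psihtpy}), glued by a cutoff $\kappa$, and checks the overlap identity $\psi_0 c_r = c_{r+1}\psi^{ns}_0$. This verifies the analogue of conditions \eqref{filtrantncondition} and \eqref{filtrfulltcft} for $\{F^{(m)}_{n,J}\}$, hence the theorem.

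\textbf{The main obstacle.} The genuinely new content lies in two points, which I would isolate in Remarks~\ref{rmk: seq of nodes} and~\ref{rmk: mbar no output}. First, a stable curve in $\mbar$ can carry arbitrarily many nodes, and collapsing a node-annulus may force further stabilization, so the cutting, the auxiliary graphs analogous to the $H_v$ of Section~\ref{hvgrph}, and the passage through the pushout relations must be organized into a sequence that is simultaneously canonical and continuous over all of $U_J$; this is where the interaction of node-resolution with stabilization must be controlled. Second — and this is the principal difficulty — the analogue of Lemma~\ref{rmk: at least one output} \emph{fails}: unlike for $\Mhat$, a ``big'' piece produced by the cut map may have no output boundary, and since its \spots have been absorbed into caps it may also carry no positive-weight \spot, so it is not a priori a legal inner-most label. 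The resolution is to feed the no-pairing bookkeeping of Section~\ref{fulltcftsec} into the node-cutting scheme: on any piece that would otherwise have no output one must decline to cut off one of its heavy \spots, keeping it as a weight-$1$ trace \spot, and the definitions of $F^{(m)}_{n,J}$ and $\chi_n$ must be arranged so that this choice varies continuously. Getting this interplay right — rather than the formal transcription of the homotopies, which goes through verbatim — is the crux of the proof.
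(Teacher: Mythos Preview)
Your overall strategy --- merge the node-cutting of Section~\ref{sec: hatthem pf pt1} with the spot-cutting of Section~\ref{fulltcftsec} --- matches the paper. But two of the key technical moves diverge from the paper's proof, and one of them is a genuine gap.

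\textbf{The ``no output'' issue is self-inflicted.} You use the spotted properad $\mfrfull$ of Section~\ref{subsec: splitsurf}, in which a surface with $n_+=0$ must carry a weight-$1$ \spot. That is what forces you, after cutting away all heavy \spots, to worry about ``big'' pieces with no output and no remaining weight-$1$ \spot, and to propose the delicate ``decline to cut one heavy spot, continuously'' scheme. The paper sidesteps this entirely by using a \emph{different} spotted replacement $\mdfrfull$ (and $\mdbar$, $\mdfrfullunst$, $\mdbarunst$): here the weight-$1$-\spot constraint is imposed only when the underlying surface is a disk, i.e.\ $(g,n_-,n_+)\in\{(0,0,1),(0,1,0)\}$. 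Since $\mdfrfull$ has operations in every arity, the cut pieces are automatically legal labels regardless of their output count; see Remark~\ref{rmk: mbar no output}. What you call the ``principal difficulty'' is thus not a difficulty in the paper's setup at all.

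\textbf{The filtration does not cover.} This is the real gap. In $\Mbarunst$ (unlike $\Mhatunst$) nodal annuli can have arity $(1,1)$ or $(2,0)$, so a simplex $\un{x}$ may contain a string of nodal annuli composed in series, or a nodal annulus capped by a disk with a \spot. For such $\un{x}$ the map $\gamma_{\un{x}}$ sends some $S^1$-components \emph{into the node itself} (Remark~\ref{rmk: gamma seq nodes}). Since your node-regions $A^{(m)}_{n,\nu}$, taken verbatim from Section~\ref{gammasubsubsect}, contain the node, these $\gamma_{\un{x}}$ meet $A^{(m)}_{n,\nu}$ for \emph{every} $n$ and $m$, so such simplices lie in no $F^{(m)}_{n,J}$ and $\{W_{n,J}\}_n$ fails to exhaust $\pi^{-1}(U_J)$. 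The paper's fix (Remark~\ref{rmk: seq of nodes}) is to replace each $A^{(m)}_{n,\nu}$ by a pair of thin annuli $K^{(m)}_{n,\nu}$, one on each side of the node and disjoint from it; the $\{W_{n,\un{\Sigma}}\}$ are then treated as an open \emph{cover} (with intersections $W_{I,\un{\Sigma}}$) rather than a nested filtration, and the cut circles $S^{\hat\nu},S^{\check\nu}$ and annuli $A_\nu$ are chosen inside these regions. Your first ``obstacle'' paragraph gestures at trouble near the nodes but does not identify this failure of the cover or its remedy.
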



As explained below in Section \ref{subsec: mbar as stk proprd}, properad $\Mbar$ takes values in stacks. In this section we shall show that the homotopy colimit of the diagram \eqref{eq: mbarsec thm} in the category of topological properads has the property that for every $(n_-,n_+)$ the corresponding space of operations has the same homotopy type as the stack $\Mbar(n_-,n_+)= \coprod_{g \geq 0} \Mmod^{fr}_{g,n_-,n_+}$, in the sense of Section \ref{subsubsec: htpy type of top stk}. The statement in the category of \reduced properads then follows using Proposition \ref{prop: stk prpd colim}.

A combination of the strategies used for proving Theorem \ref{hatthm} and Theorem \ref{frtofullthm}(1) will be used for proving Theorem \ref{mbarmainthm}. To highlight the similarity between the argument here and in Section \ref{finalhtpyeq} as well as Section \ref{subsec: fulltcftsec htpyeq}, we will use the same notation as in these sections for the analogous notions here. As outlined in Remark \ref{rmk: seq of nodes} below, to account for the appearance of composition of a sequence of nodal cylinders as well as nodal cylinders composed with disks with positive weight \spots, some changes are needed when constructing the open sets and filtration as in Section \ref{gammasubsubsect}. In the rest of the section we carry out this proof.

\subsection{Homotopy pushout of \eqref{eq: mbarsec thm}}
Similar to Section \ref{subsec: fulltcftsec htpyeq}, it turns out that for computing the pushout \ref{eq: mbarsec thm}, it is convenient to replace $\Mfrfull, \Mfrfullunst, \Mbar$ and $\Mbarunst$ with homotopy equivalent properads $\mdfrfull, \mdfrfullunst, \mdbar$ and $\mdbarunst$. The definitions of these moduli spaces are of similar flavor as those in Section \ref{subsec: splitsurf} using \splitsurfaces, but are slightly different. We now describe these:

Moduli spaces $\mdfrfull_{g,n_-,n_+}$, underlying properad $\mdfrfull$, are defined analogously as the moduli spaces $\mfrfull_{g,n_-,n_+}$ except the following: We do not impose the restriction that an irreducible component without an output necessarily has a \spot of weight $1$, instead we require that if $(g,n_-,n_+)=(0,0,1)$ or $(0,1,0)$, i.e. if with the Riemann surface underlying the \splitsurface\ is given by a disk, then it contains at least one weight $1$ \spot.\\ 
Properad $\mdbar$ is built using moduli spaces $\mdbar_{g,n_-,n_+}$ which are defined similarly as $\mdfrfull_{g,n_-,n_+}$,  with the only difference being that we define it using stable nodal surfaces with weighted \spots instead of smooth surfaces. We continue to impose the restriction that the \splitsurface\ contains at least one weight $1$ \spot when $(g,n_-,n_+)=(0,0,1)$ or $(0,1,0)$. Note that the identification used for defining $\mdbar$ by forgetting weight $0$ \spots may involve collapsing unstable components.\\
Finally, $\mdfrfullunst$ and $\mdbarunst$ are defined as the subproperad consisting of (possibly nodal) \splitsurfaces\  such that the underlying surface, obtained by forgetting all the \spots, is a (possibly nodal) annulus, or a disk.
%


The following proposition is proved using an argument similar to the one used for Proposition \ref{polkaproposition}:
\begin{proposition}
The maps
\[\mdfrfull \to \Mfrfull, \quad \mdfrfullunst \to \Mfrfullunst, \quad \mdbar \to \Mbar \quad \mbox{and} \quad \mdbarunst \to \Mbarunst,\] 
given by mapping a \splitsurface\ to the underlying nodal Riemann surface, are a homotopy equivalence of properads.\\
\null \hfill \qedsymbol
\end{proposition}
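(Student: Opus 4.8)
The plan is to imitate, almost verbatim, the proof of Proposition~\ref{polkaproposition}. First I would reduce to a component-wise statement: by Corollary~\ref{thm: model str stmt} a morphism of \reduced properads is a weak equivalence exactly when it is one on each component $(n_-,n_+)$, and every such component is a disjoint union over the genus, so it suffices to show that for each relevant triple $(g,n_-,n_+)$ the forgetful maps of moduli spaces
\[
\mdfrfull_{g,n_-,n_+} \longrightarrow \Mmod^{fr}_{g,n_-,n_+}, \qquad \mdbar_{g,n_-,n_+} \longrightarrow \Mmodbar^{fr}_{g,n_-,n_+},
\]
and their restrictions giving $\mdfrfullunst \to \Mfrfullunst$ and $\mdbarunst \to \Mbarunst$, are weak homotopy equivalences. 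On the components where the target of $\mdbar \to \Mbar$ is only a topological stack the same argument applies to the associated homotopy types (cf.\ Section~\ref{sec: infty properad}); the homotopies constructed below are all equivariant, so nothing new happens there.

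The key observation I would use is that, unlike $\mfrfullmod_{g,n_-,n_+}$, the spaces $\mdfrfull_{g,n_-,n_+}$ and $\mdbar_{g,n_-,n_+}$ impose the ``at least one weight~$1$ \spot'' condition \emph{only} when the underlying surface is a single disk, i.e.\ when $(g,n_-,n_+)\in\{(0,0,1),(0,1,0)\}$. Hence, for every other triple, the \splitsurface\ with no \spots is a legitimate point of the source; this gives a section $s$ of the forgetful map, and the homotopy
\[
H_\tau([\Sigma,\un{x},\un{t}]) = [\Sigma,\un{x},(1-\tau)\un{t}], \qquad \tau\in[0,1],
\]
which scales all \spot weights linearly down to $0$ (whereupon every \spot is forgotten), is a strong deformation retraction of the source onto the image of $s$. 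For the nodal moduli $\mdbar_{g,n_-,n_+}$ I would note that scaling a weight to $0$ and deleting the \spot creates no instability, since stability of the underlying surface does not involve the \spots, and that continuity at $\tau=1$ is built into the colimit topology defining these spaces (cf.\ Section~\ref{subsec: splitsurf}). This settles all triples except the two single-disk ones, and in fact makes those forgetful maps genuine homotopy equivalences.

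For the remaining cases $(g,n_-,n_+)\in\{(0,1,0),(0,0,1)\}$ the target is a point, so one must show the source is weakly contractible. For $(0,1,0)$ the source $\mdfrfull_{0,1,0}$ is literally the same space as $\mfrfullmod_{0,1,0}$ (the two ``weight~$1$'' constraints coincide there), so this is exactly Proposition~\ref{polkaproposition}. For $(0,0,1)$ — a disk $\Sigma$ with an output boundary and at least one weight~$1$ \spot — the space is different, but the ``general case'' argument in the proof of Proposition~\ref{polkaproposition} goes through verbatim: cover $\mdfrfull_{0,0,1}$ by the opens $W_x$, $x\in\Sigma$, of \splitsurfaces\ with no \spot of weight $\geq 1/3$ at $x$; check the $W_x$ are non-empty and cover; and for each finite intersection $W_J=W_{x_1}\cap\dots\cap W_{x_k}$ produce the exhausting filtration by the opens $F_n$ of \splitsurfaces\ whose weight-$\geq 1/3$ \spots avoid $\phi_{x_1}(\{|z|\leq 1/n\})$, together with the three-step null-homotopy $H_1H_2H_3$ contracting each $F_n$, inside $W_J$, onto $[\Sigma,(x_0),(1)]$ with $x_0=\phi_{x_1}(1/2n)$. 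The one point to verify is that the standing constraint survives throughout: up to and including step $H_2$ an original weight~$1$ \spot is still present, and from the end of $H_2$ onwards the \spot at $x_0$ carries weight~$1$, so a weight~$1$ \spot exists at every stage. The local (nerve) criterion for weak equivalences \cite{may1990weak} then shows $\mdfrfull_{0,0,1}$ is weakly contractible.

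Putting these together, all four maps are component-wise weak homotopy equivalences, hence weak equivalences of \reduced properads; and since every moduli space involved has the homotopy type of a CW complex (\cite{milnor1959spaces}), they are in fact \hur weak-equivalences. The only genuinely non-formal point — the hard part — is the pair of single-disk components: there the forced weight~$1$ \spot blocks the naive shrinking homotopy, so one must fall back on the open-cover argument of Proposition~\ref{polkaproposition} and check, as above, that the weight~$1$ constraint is preserved by every homotopy used. Everything else, including the compatibility of the scaling homotopy with the colimit topology and with the stack structure on the nodal moduli, is routine bookkeeping.
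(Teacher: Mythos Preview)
Your approach matches the paper's (which just points to Proposition~\ref{polkaproposition}), and your argument for $\mdfrfull\to\Mfrfull$ and $\mdfrfullunst\to\Mfrfullunst$ is complete. However, there is a gap for $\mdbar$ and $\mdbarunst$ in the disk components.

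Your claim that ``stability of the underlying surface does not involve the \spots'' is contrary to the paper's own remark that forgetting weight-$0$ \spots in $\mdbar$ \emph{may involve collapsing unstable components}; in other words, a point of $\mdbar_{g,n_-,n_+}$ is a nodal surface that is stable \emph{with the \spots counted as marked points}. For the non-disk components this misreading is harmless --- your scaling homotopy still works, the endpoint landing on the stabilized surface via the colimit topology --- but for $(g,n_-,n_+)\in\{(0,0,1),(0,1,0)\}$ it means $\mdbar_{0,0,1}$ and $\mdbar_{0,1,0}$ (and hence $\mdbarunst(0,1)$, $\mdbarunst(1,0)$) contain genuinely nodal \splitsurfaces: a disk with trees of spheres attached at nodes, each sphere stabilized by $\geq 2$ \spots. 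Such configurations arise, for instance, as the properad composition of a disk carrying two \spots with a nodal annulus. Your open-cover argument, indexed by points $x$ of a fixed smooth disk $\Sigma$, does not apply verbatim here: a \spot sitting on an attached sphere bubble is not located at any point of $\Sigma$, so the sets $W_x$ and $F_n$ are not even defined for those \splitsurfaces. One way to close the gap is to index the cover by points of the \emph{stabilized} disk and let $W_x$ consist of \splitsurfaces\ with no weight-$\geq 1/3$ \spot lying over $x$ under the collapsing map; the filtration and the homotopies $H_1,H_2,H_3$ can then be adapted, but some care is required (e.g.\ ensuring the new \spot at $x_0$ lands on the disk component rather than under a bubble), and this is not ``verbatim''.
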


Thus to compute the homotopy pushout of diagram \eqref{eq: mbarsec thm}, we use the homotopy equivalent pushout diagram
\[ \mdfrfull \leftarrow \mdfrfullunst \to \mdbarunst \] 
Using Proposition \ref{lem: bar pushout} and \ref{compareproposition}, and an argument similar to the one used in Section \ref{subsec: mthm1htppushsec}, we know that the homotopy pushout of the diagram \eqref{eq: mbarsec thm} coincides with the topological \reduced properad
\[ \left| \GG \left( \GG^{\bullet}\mdfrfull \coprod_{\GG^{\bullet}\mdfrfullunst} \GG^{\bullet} \mdbarunst \right) \right|. \]
Consider the induced map
\begin{equation}\label{eq: mbarsec weq}
	\left| \GG \left( \GG^{\bullet}\mdfrfull \coprod_{\GG^{\bullet}\mdfrfullunst} \GG^{\bullet} \mdbarunst \right) \right|  \to \mdbar.
\end{equation}
We now show that this is a weak equivalence of \reduced properads in stacks, in the sense of Section \ref{subsubsec: htpy type of top stk} below.

\subsection{\eqref{eq: mbarsec weq} is a weak-equivalence}
As in Section \ref{finalhtpyeq},  we will prove that for every stable nodal \splitsurface\  ${\un{\Sigma}}$, there is a chart $U_{{\un{\Sigma}}}$ of $\mdbar$ such that 
for the atlas
\[ U= \coprod_{{\un{\Sigma}}_1,...,{\un{\Sigma}}_k \in \mdbar} U_{{\un{\Sigma}}_1,...,{\un{\Sigma}}_k} \to \mdbar\]
where 
$U_{{\un{\Sigma}}_1,...,{\un{\Sigma}}_k} = U_{{\un{\Sigma}}_1} \times_{\mdbar} ... \times_{\mdbar} U_{{\un{\Sigma}}_k}$, and for the pullback
\[ \pi \colon \left| \GG \left( \GG^{\bullet}\mdfrfull \coprod_{\GG^{\bullet}\mdfrfullunst} \GG^{\bullet} \mdbarunst \right) \right| \times_{\mdbar} U \to U\]
of \eqref{eq: mbarsec weq} to this atlas, the following property holds: for every finite collection ${\un{\Sigma}}_1,...,{\un{\Sigma}}_k$, 
\begin{equation}
\pi^{-1}(U_{{\un{\Sigma}}_1,...,{\un{\Sigma}}_k}) \to U_{{\un{\Sigma}}_1,...,{\un{\Sigma}}_k} \mbox{ is a weak homotopy equivalence}.
\end{equation}
We will in fact show that open sets $\pi^{-1}(U_{{\un{\Sigma}}})$ have a covering $\{W_{n, {\un{\Sigma}}}\}_{n \in \NN}$ such that for every $I \subset \NN$ non-empty, 
\[ \pi|_{W_{I, {\un{\Sigma}}}} \colon W_{I, {\un{\Sigma}}} \to U_{{\un{\Sigma}}} \mbox{ is a weak homotopy equivalence} ,\]
where $W_{I,{\un{\Sigma}}} = \bigcap_{n \in I} W_{n, {\un{\Sigma}}}$.
%
%
%
%
%
%
%
%
As in Section \ref{finalhtpyeq} we shall prove this by exhibiting a filtration 
\[ F_{I, {\un{\Sigma}}}^{(1)} \subset ... \subset F_{I, {\un{\Sigma}}}^{(m)}... \subset W_{I, {\un{\Sigma}}}\]
which satisfies a condition analogous to Condition \eqref{filtrantncondition}, with $W_{n, {\un{\Sigma}}}$ there replaced by $W_{I,{\un{\Sigma}}}$, and section $\chi_{n,{\un{\Sigma}}}^{(m)}$ there replaced by a section $\chi_{n,{\un{\Sigma}}}^{(m)} \colon U_{{\un{\Sigma}}} \to W_{I,{\un{\Sigma}}}$. 
%
%
%
%
%
%
%



\subsubsection{Construction of $U_{{\un{\Sigma}}}, W_{n, {\un{\Sigma}}}$, and $F_{I, {\un{\Sigma}}}^{(m)}$}\label{gammasubsubsect_red}
Suppose that $\un{\Sigma}$ is a nodal \splitsurface. Let $i$ be the number of positive weight \spots on $\un{\Sigma}$ and $0 < \alpha <1 $ be  such that there exists no \spot with weight $\leq \alpha$ in $\un{\Sigma}$. $\Sigma$ be the nodal Riemann surface underlying $\un{\Sigma}$. \\ 
The open set $U_{{\un{\Sigma}}}$ is defined to be the set of nodal \splitsurfaces\  satisfying the following conditions 
\begin{enumerate}
	\item The underlying nodal Riemann surface lies in an open set $U_{\Sigma}$ constructed exactly as in Section \ref{gammasubsubsect}, the only difference being this time we work with ${\un{\Sigma}}$ in $\Mmodbar^{fr}_{g,n_-,n_+}(i)$, the moduli space of stable nodal Riemann surfaces with $i$ marked points, instead of $\Mmodhat^{fr}_{g,n_-,n_+}$. 
	\item Shrinking $U_{\Sigma}$ if necessary, we may assume that there exists a choice of charts $\sigma_{\Sigma^\prime,j}, 1 \leq j \leq i$ centered at the $j$th \spot in each surface $\Sigma^\prime \in U_{\Sigma}$, varying continuously with $\Sigma^\prime$. Then, we assume that there are no \spots with weight $\geq \alpha$ lying outside the images of these charts.
\end{enumerate}

The open sets $W_{n, {\un{\Sigma}}}$ are constructed as follows:\\
For a simplex
\[\un{x} \in \GG^{n+1} \mdfrfull \coprod_{\GG^{n+1}  \mdfrfullunst} \GG^{n+1} \mdbarunst\]
define $\gamma_{\un{x}}$ as in Section \ref{gammasubsubsect}. 
\begin{remark}\label{rmk: gamma seq nodes} In the current situation, the map $\gamma_{\un{x}}$ may not be an embedding and moreover its image may not be disjoint from the nodes or the \spots. More precisely, the image of a component $S^1$ under $\gamma_{\un{x}}$ is either disjoint from the nodes and \spots, or is contained inside one. Components mapping into nodes arise when we consider simplices $\un{x}$ for which the associated labeled graph contains a string of nodal annuli. Such strings do not appear in graphs considered in Section \ref{gammasubsubsect} since any vertex labeled by a nodal annulus there is of type $(0,2)$. Components mapping into \spots arise from simplices for which the associated labeled graph contains a path with one of the vertices labeled by a nodal annulus and which terminates in a vertex labeled by a disk with a positive weight \spot.
\end{remark}

For each $n$ and $\nu \in \Nu$, let $K_{n,\nu}^{(m)}$ be the open subset of the universal curve $\cfam_{{\Sigma}}$ over $U_{\Sigma}$ defined by 
	\begin{equation*} 
		\begin{split}
			K_{n,\nu}^{(m)} = \Bigg\{ (z, \un{t}) \in R_{\hat{\nu}} \times \DD^{\Nu} \Bigg \lvert  |t_\nu| \in  & \left[ 0, \frac{1}{4n^2} \right],\mbox{ and }\\
														|z_{\hat{\nu}}(z) | \in & \left[ \frac{ |t_{\nu}| }{ \frac{1}{n-1}\left( 1+ \frac{1}{m} \right) }, \frac{ |t_{\nu}| }{ \frac{1}{n}\left( 1- \frac{1}{m} \right) } \right] \cup \left[\frac{1}{n}\left( 1- \frac{1}{m} \right), \frac{1}{n-1}\left( 1+ \frac{1}{m} \right)\right] \Bigg\}.
		\end{split}
	\end{equation*}
Set 
\[K_n^{(m)}:= \bigcup_{\nu \in \Nu} K_{n,\nu}^{(m)}.\]

Also for every $n$ and $1 \leq j \leq i$, let $L_{n,j}^{(m)} \subset \cfam_{\Sigma}$ be the open set defined by the union of images
\[ L_{n,j}^{(m)} := \bigcup_{\Sigma^\prime \in U_{\Sigma}} \sigma_{\Sigma^\prime,j}\left[\frac{1}{n}\left( 1 - \frac{1}{m}\right) < |z| < \frac{1}{n-1}\left( 1 + \frac{1}{m} \right) \right] \]

Define $\FF_{n, {\un{\Sigma}}}^{(m)}$ to be the simplicial subspace of $\GG^{n+1} \mdfrfull \coprod_{\GG^{n+1}  \mdfrfullunst} \GG^{n+1} \mdbarunst$ consisting of simplices $\un{x}$ such that the associated embedding $\gamma_{\un{x}}$ is disjoint from $K_n^{(m)} \cup L_n^{(m)}$. Set $\FF_{I, {\un{\Sigma}}}^{(m)}$ to be the intersection
\[ \FF_{I, {\un{\Sigma}}} := \bigcap_{n \in I} \FF_{n, {\un{\Sigma}}}.\]
Set $F_{n, {\un{\Sigma}}}^{(m)}$ and $F_{I, {\un{\Sigma}}}^{(m)}$ to be the open sets of $\left| \GG^{n+1} \mdfrfull \coprod_{\GG^{n+1}  \mdfrfullunst} \GG^{n+1} \mdbarunst \right|$ corresponding to these subcomplexes. We set 
\[ \WW_{n, {\un{\Sigma}}} := \bigcup_{m \in \NN} \FF_{n, {\un{\Sigma}}}^{(m)} \quad \mbox{ and } \quad \WW_{I, {\un{\Sigma}}}  = \bigcap_{n \in I} \WW_{n, {\un{\Sigma}}} \]
Finally, we define
\[W_{n, {\un{\Sigma}}} = \bigcup_{n \in \NN} F_{n, {\un{\Sigma}}}^{(m)}.\]
$W_{n, {\un{\Sigma}}}$ deformation retracts onto the geometric realization of the subcomplex $\WW_{n, {\un{\Sigma}}}$.

Note that the open subsets 
\[ F_{I, {\un{\Sigma}}}^{(m)} := \bigcap_{n \in I}  F_{n, {\un{\Sigma}}}^{(m)} \]
deformation retract onto the subcomplexes $\FF_{I, {\un{\Sigma}}}$ and provide a filtration of the subspace
\[ W_{I, {\un{\Sigma}}} = \bigcap_{n \in I} W_{n, {\un{\Sigma}}}\]
\begin{remark}\label{rmk: seq of nodes}
Note that this is analogous to the construction of open sets $A_n^{\nu}$ and $\WW_{n, {\un{\Sigma}}}$ in \ref{gammasubsubsect}, except that in this case instead of an annulus containing the node we have a union of two annuli, one on each side of the node, and both disjoint from the node. This is necessary since the open sets constructed using annuli $A_n^{\nu}$ as in \ref{gammasubsubsect} do not cover the entire geometric realization in the current context: for all $\un{x}$ which contains a sequence of nodal annuli as mentioned in Remark \ref{rmk: gamma seq nodes}, the image of $\gamma_{\un{x}}$ intersects annuli $A_n^{\nu}$ for any $n$. In order to construct a filtration which covers such points and moreover gives an open subset in the geometric realization, it is natural to work with the sets $K^\nu_n$ considered above.
\end{remark}



%
%

%

\begin{figure} 
	\centering
	\begin{subfigure}[b]{0.3\textwidth}
		\includegraphics[width=\textwidth]{./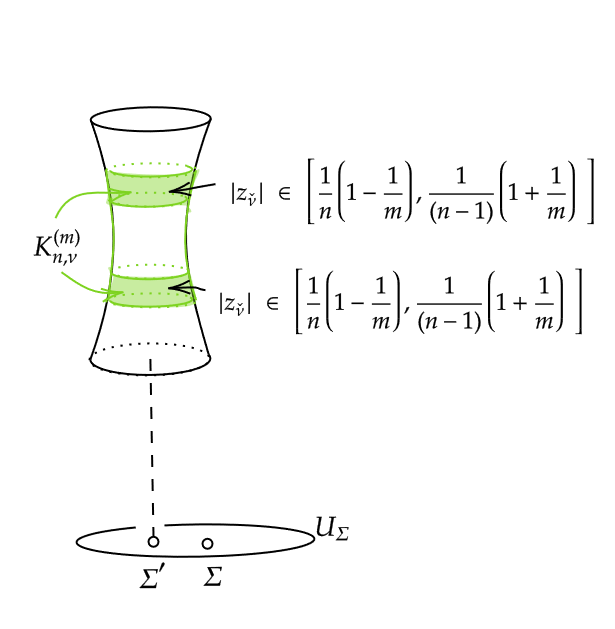}
		\caption{Open sets $W_{n, {\un{\Sigma}}}$}
		\label{filtrationfig3}
	\end{subfigure}
	\begin{subfigure}[b]{0.35\textwidth}
		\includegraphics[width=\textwidth]{./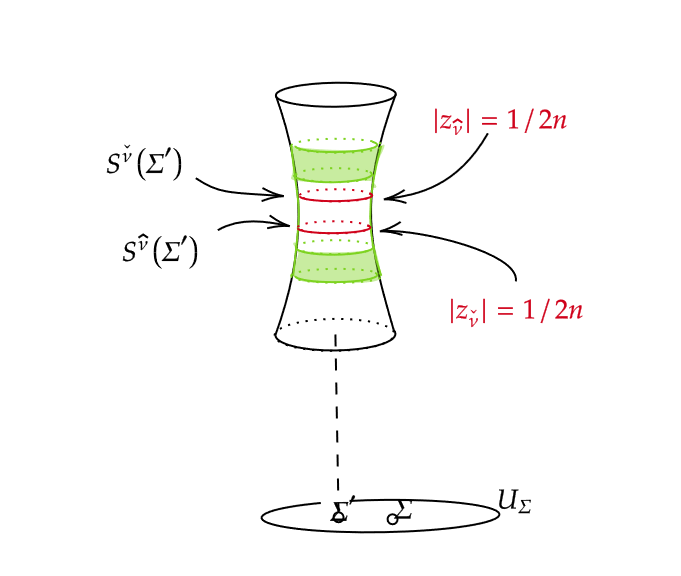}
		\caption{Circles $S_{\hat{\nu}}, S_{\check{\nu}}$}
		\label{filtrationfig4}
	\end{subfigure}
	\caption{}
\end{figure}

\subsection{Proof of  Condition \eqref{filtrantncondition}}\label{subsec: mbar prfcondfiltrtop}
Let us now turn to the fiberwise contraction.\\ 
Fix any $n_0 \in I$. Let ${\un{\Sigma}}^\prime \in U_{{\un{\Sigma}}}$ and let $S^{\hat{\nu}}({\un{\Sigma}}^\prime)$, and $S^{\check{\nu}}({\un{\Sigma}}^\prime)$ be circles defined by
\[ S^{\hat{\nu}}({\un{\Sigma}}^{\prime}) = \left\{(z,t_{\hat{\nu}}({\un{\Sigma}}^{\prime})) \Big \lvert |z_{\hat{\nu}}(z)| = \frac{1}{n_0}\left(1 + \frac{1}{2m} \right) \right\} , S^{\check{\nu}}({\un{\Sigma}}^{\prime}) = \left\{(z,t_{\hat{\nu}}({\un{\Sigma}}^\prime)) \Big \lvert |z_{\hat{\nu}}(z)| =\frac{|t_{\hat{\nu}}|({\un{\Sigma}}^\prime)}{\frac{1}{n_0}\left(1 + \frac{1}{2m} \right)}\right\} \subset {\un{\Sigma}}^{\prime}\]
Denote by $A_{\nu}({\un{\Sigma}}^\prime)$ the annulus bounded by $S^{\hat{\nu}}$ and $S^{\check{\nu}}$ in ${\un{\Sigma}}^\prime$. 
Similarly, let $S^{\hat{j}}(\Sigma^\prime)$ and $S^{\check{j}}(\Sigma^\prime)$ denote the circles 
\[ S^{\hat{j}}(\Sigma^\prime)=\sigma_{\Sigma^\prime,j}\left(\left\{|z| = \frac{1}{n_0}\left(1- \frac{1}{m} \right) \right\}\right) \mbox{ and }  S^{\check{j}}(\Sigma^\prime)=\sigma_{\Sigma^\prime,j}\left(\left\{|z| = \frac{1}{n_0-1}\left(1+ \frac{1}{m} \right) \right\} \right)\]
and $A_j(\Sigma^\prime)$ be the annulus bounded by these.

Having thus defined $A_{\nu}(\Sigma^\prime)$ and $A_j(\Sigma^\prime)$, rest of the argument for verifying Condition \eqref{filtrantncondition}, including the construction of cut maps, cut homotopies,  and the contraction proving Condition \eqref{filtrantncondition}, proceeds by combining the constructions in Sections \ref{prfcondfiltrtop} and \ref{subsec: fulltcftsec htpyeq} in a straightforward manner, with cuts made around both the nodes and weight $1$ \spots. The corresponding maps and homotopies can be defined using exactly the same expressions as in Section \ref{prfcondfiltrtop}, provided we adopt the definition of graphs $H_v$ constructed in \ref{hvgrph} as follows:
\subsubsection{Modification of graphs from Section \ref{hvgrph}}
Let $\un{x}$ be an element in $\mathbb{W}_{I,\un{\Sigma}}$ with underlying graph $G$, and let $\un{x}_v$ be the label of $v \in V(G)$ in $\un{x}$. Let $\un{\Sigma}_{\un{x}}$ and $\un{\Sigma}_{\un{x}_v}$ denote the \splitsurfaces\  obtained by gluing the inner-most labels of $\un{x}$ and $\un{x}_v$ respectively.\\
	Then, in this case we define $H_v$ to be the graph with: 
		\begin{itemize}
			\item vertices given by the connected components of $\un{\Sigma}_{\un{x}_v} \setminus \left( \bigcup_{\nu} \partial A_{\nu} (\un{\Sigma}_{\un{x}}) \cup \bigcup_{j} \partial A_{j} (\un{\Sigma}_{\un{x}})  \right)$, and
		\item edges given by connected components of $\bigcup_{\nu} \partial A_{\nu} (\un{\Sigma}_{\un{x}}) \cup \bigcup_{j} \partial A_{j} (\un{\Sigma}_{\un{x}})$. The edges corresponding to $\partial A_{\nu} (\un{\Sigma}_{\un{x}})$ are directed away from the annulus $A^{\nu}_{n,j} (\un{\Sigma}_{\un{x}})$ and those corresponding to $\partial A_{j} (\un{\Sigma}_{\un{x}})$ are directed away from the disk $A_j(\un{\Sigma}_{\un{x}})$. 
		\end{itemize}

\begin{remark}\label{rmk: mbar no output}
	Unlike the graphs $H_v$ constructed in \ref{hvgrph}, analogously defined graphs here will not satisfy the property that every vertex has an output (see Remark \ref{rmk: at least one output}). This was needed to ensure that the cut maps and homotopies constructed there land in appropriate targets. This itself was necessary since properad $\Mfr$ has no operations with $n_+=0$. However in the current case, properad $\mdfrfull$ has operations in all components and thus the constraint on graphs $H_v$ mentioned above is not relevant for the construction of cut maps and homotopies in this case.
\end{remark}
Having thus adopted the definition of graphs $H_v$, the rest of the argument for proving condition \ref{filtrantncondition} carries over from Section \ref{prfcondfiltrtop}, with the same formulae defining the maps and homotopies in this case. This concludes the proof of condition Condition \eqref{filtrantncondition}, and hence of Theorem \ref{mbarmainthm}

\section{Homotopy colimits of properads in stacks}\label{sec: infty properad}
In this section we outline a formalism for $\infty$-\reduced properads and explain how to view $\Mbar$ as an example of such a properad. 
Moreover, we explain how to interpret the earlier statements about homotopy pushouts of \reduced properads in the context of $\infty$-\reduced properads.

In this section we use the language of $\infty$-operads and $\infty$-categories. 
The main references for this part are \cite[Section 2.1]{lurie2016higher}, and \cite[Section 5.2]{lurie2009higher}. 
We shall also use the theory of homotopy type of stacks and singular simplicial complexes of stacks developed in \cite{noohi2012homotopy} and  \cite{coyne2016singular}.

\subsection{$\infty$-properads}
As in the case of operads, there are now a number of models available for up-to-homotopy properads. 
In \cite{hackney2015infinity}, a model for $\infty$-properads in terms of graphical sets (presheaves on a certain category of graphs) was constructed, generalizing the dendroidal sets approach to $\infty$-operads. 
A model category structure was constructed on the category of simplicial properads in \cite{hackney2018simplicial}, generalizing an analogous structure on the category of simplicial operads. 
Another approach is to use the fact that ordinary properads can be expressed as algebras over a colored operad. 
A model for $\infty$-properads can then be obtained by considering algebras over the $\infty$-operad associated with this operad.

We will follow this last approach to set up a category of $\infty$-\reduced properads. 
The reason behind this choice is that in the restricted setting of \reduced properads, the results of \cite{pavlov2018admissibility} can be used to prove Proposition \ref{thm: equiv of inf prpds}. 
It gives a convenient way for comparing the homotopy categories, and in particular the homotopy colimits, of ordinary \reduced properads with those of $\infty$-\reduced properads. 
Proposition \ref{thm: equiv of inf prpds} uses the fact that we are working with \reduced properads: the corresponding theorem for non-\reduced properads is known to be false (see Remark \ref{rmk: proprd rectification} below).

\subsubsection{$\infty$-properads}\label{subsubsec: infty proprd oprd}

As described in \cite{lurie2016higher}, we can associate to any colored operad $\oprdO$ a category $\oprdOten$ with a map $\oprdOten \to \Fin_*$ such that the associated map of nerves $N\oprdOten \to N\Fin_*$ defines an $\infty$-operad. We denote this $\infty$-operad simply as $\oprdO^\otimes$ and refer to it as the $\infty$-operad associated to $\oprdO$. For later reference, we note here that 
\begin{equation}\label{eq: tensor category 1}
	\begin{minipage}{0.92 \textwidth}
		\begin{itemize}
			\item objects of $\oprdOten$ are sequences $c_1,...,c_k$ of colors of $\oprdO$, and 
			\item morphisms in $\oprdOten$ from $\{c_i\}_{i \in [k]}$ to $\{d_j\}_{j \in [l]}$ are given by a map $\alpha \colon [k] \to [l]$ along with an element 
			\[ \phi_j \in \oprdO(\{c_i\}_{i \in \alpha^{-1}(j)}; d_j) \mbox{ for every } j\]
		\end{itemize}
	\end{minipage}
\end{equation}

Given a symmetric monoidal category $(\catc, \otimes)$, we have an associated colored operad $\oprdO_{\catc}$. We will denote the corresponding category ${\oprdOten_{\catc}}$ simply as $\catc^\otimes$. Here,
\begin{equation}\label{eq: tensor category 2}
	\begin{minipage}{0.92 \textwidth}
		\begin{itemize} 
			\item objects of $\catc^\otimes$ are given by sequences of $c_1,....,c_n$ of objects in $\catc$ and 
			\item morphisms in $\catc^\otimes$ between $\{c_i\}_{i \in [k]}$ and $\{d_j\}_{j \in [l]})$ are given by a maps $\alpha \colon \phi \colon [k] \to [l]$ along with an element 
			\[ \phi_j \in  \catc\left(\bigotimes_{i \in \alpha^{-1}(j)}c_i; d_j \right) \mbox{ for every } j\]
		\end{itemize}
	\end{minipage}
\end{equation}
Notice that the data of an $\oprdO$-algebra in $\catc$ is equivalent to an $\infty$-operad map $\oprdO^\otimes \to \catc^\otimes$.

More generally when $\catc$ is a symmetric monoidal $\infty$-category, there is an associated $\infty$-operad $\catc^{\otimes} \to N \Fin_*$.
Now, recall the set-valued, colored operad $\oprd$ defined in Section \ref{subsubsec: proprd oprd} above. Note that an algebra over $\oprd$ in $\Top$ is precisely a topological properad. Let $\oprdinfty$ denote the $\infty$-operad associated to $\oprd$. 
\begin{definition}
An $\infty$-\reduced properad in a symmetric monoidal $\infty$-category $\catc$ is defined to be a morphism of $\infty$-operads $\oprdinfty \to \catc^{\otimes}$, in other words a $\oprdinfty$-algebra in $\catc$.
\end{definition}
These algebras can be assembled into an $\infty$-category which we denote by $Alg_{\oprdinfty}(\catc)$.

\subsubsection{The comparison functor}
%

Recall that to any model category $A$ we can associate an $\infty$-category $N(A^c)[W^{-1}]$, where
\begin{enumerate}
\item $A^c$ denotes the full subcategory of cofibrant objects in $A$,
\item $N(A^c)$ denotes the $\infty$-category obtained by taking the nerve of the ordinary category $A^c$,
\item $W$ is the collection of weak equivalences in $A^c$, and 
\item $N(A^c)[W^{-1}]$ denotes the ($\infty$-categorical) localization of the $\infty$-category $N(A^c)$ at $W$.
\end{enumerate}
(see \cite[Definition 1.3.4.15 and Remark 1.3.4.16]{lurie2016higher}). This is an $\infty$-categorical enhancement of the homotopy category of $A$. In particular, homotopy colimits in $A$ correspond to the $\infty$-categorical colimits in $N(A^c)[W^{-1}]$.

We denote the $\infty$-categories associated with the model categories $\Top$ and $Alg_{\oprd}(\Top)$ by $\Top_\infty$ and $Alg_{\oprd}(\Top)_\infty$.\\
We have a comparison map
\begin{equation}\label{eq: comparison functor} Alg_{\oprd}(\Top)_\infty \to Alg_{\oprdinfty}(\Top_\infty) \end{equation}
between $\infty$-categories.

\begin{proposition}\label{thm: equiv of inf prpds}
The comparison functor \eqref{eq: comparison functor}
is an equivalence of $\infty$-categories. 
\end{proposition}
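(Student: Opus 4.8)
The plan is to invoke the rectification machinery for algebras over admissible $\Sigma$-cofibrant operads, specialized to the operad $\oprd$ introduced in Section \ref{subsubsec: proprd oprd}. The key structural inputs are already recorded in the excerpt: by Lemma \ref{lem: proprd as oprd alg} the model category $Alg_{\oprd}(\Top)$ is the category of strict $\oprd$-algebras with the transferred model structure, and by Lemma \ref{lem: sigma cofibrancy} the operad $\oprd$ is $\Sigma$-cofibrant (its $\Sigma$-action is free because \rda-graphs have no nontrivial automorphisms). This $\Sigma$-cofibrancy is precisely the hypothesis that fails for non-\reduced properads, and it is what makes the comparison work here but not there (cf.\ the Remark following the statement).

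First I would recall the general rectification theorem: for a $\Sigma$-cofibrant, admissible colored operad $\oprd$ in a sufficiently nice monoidal model category (here $\Top$ with its standard model structure, which is known to satisfy the relevant hypotheses of \cite{pavlov2018admissibility}), there is a Quillen-type comparison showing that the $\infty$-category $N(Alg_{\oprd}(\Top)^c)[W^{-1}]$ obtained from strict algebras is equivalent to the $\infty$-category $Alg_{\oprdinfty}(\Top_\infty)$ of $\oprd^\otimes$-algebras in the underlying $\infty$-category $\Top_\infty$. Concretely, one checks that $\oprd$ is admissible in $\Top$ (so that $Alg_{\oprd}(\Top)$ indeed carries the transferred model structure used throughout Section \ref{sec: model category str and cofib resolutions}), invokes the main theorem of \cite{pavlov2018admissibility} giving that the homotopy theory of strict $\oprd$-algebras models $\oprd^\otimes$-algebras in $\Top_\infty$, and then identifies the resulting comparison functor with \eqref{eq: comparison functor}. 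The identification of functors is essentially a matter of unwinding definitions: an $\oprdinfty$-algebra is by definition a map $\oprdinfty \to \Top_\infty^\otimes$, a strict $\oprd$-algebra is a map $\oprd^\otimes \to \Top^\otimes$ of ordinary operads (equivalently a map of $\infty$-operads after taking nerves, by the description \eqref{eq: tensor category 1}, \eqref{eq: tensor category 2} of the tensor categories), and the comparison functor is induced by postcomposition with the localization $\Top^\otimes \to \Top_\infty^\otimes$.

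The main obstacle I expect is verifying the hypotheses of the rectification theorem for $\Top$ with its Quillen (Serre) model structure rather than, say, simplicial sets, and in particular checking admissibility of $\oprd$ — i.e.\ that the forgetful functor from $\oprd$-algebras to the underlying category of $C$-colored objects creates a model structure. For a $\Sigma$-cofibrant colored operad this reduces to the standard analysis of free-algebra extensions and the filtration of pushouts of algebras along free maps; the freeness of the $\Sigma$-action (Lemma \ref{lem: sigma cofibrancy}) is exactly what guarantees these pushout filtrations are built from cofibrations in $\Top$, so the argument goes through as in \cite{berger2007resolution} and \cite{pavlov2018admissibility}. The only additional care needed is the \reduced bookkeeping: the indexing category is $\NN \times \NN \setminus (0,0)$ and the colored operad has no operations producing or consuming the excluded $(0,0)$-color, but this does not interfere with any of the homotopical arguments.

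A convenient way to organize the write-up is therefore: (i) cite \cite{pavlov2018admissibility} for the statement that, for an admissible $\Sigma$-cofibrant colored operad $\oprd$ in $\Top$, the canonical functor from strict to homotopy-coherent algebras is an equivalence of $\infty$-categories; (ii) note that admissibility and $\Sigma$-cofibrancy of $\oprd$ hold by the transferred model structure of Corollary \ref{thm: model str stmt} together with Lemma \ref{lem: sigma cofibrancy}; (iii) observe that the functor produced by that theorem is the comparison functor \eqref{eq: comparison functor}, by unpacking the definition of $\oprdinfty$ and of $\catc^\otimes$ recorded above. I would keep step (i) at the level of a citation and spend the bulk of the prose on steps (ii) and (iii), since those are where the \reduced hypothesis and the specific combinatorics of \rda-graphs enter.
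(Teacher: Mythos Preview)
Your proposal is correct and follows essentially the same route as the paper: both invoke the Pavlov--Scholbach rectification theorem, with the decisive input being the $\Sigma$-cofibrancy of $\oprd$ from Lemma \ref{lem: sigma cofibrancy}. The paper's write-up is terser --- it cites \cite[Theorem 7.2.4]{chu2020rectification} as the direct analogue and phrases the key hypothesis as ``symmetric flatness'' rather than admissibility plus $\Sigma$-cofibrancy --- but these amount to the same verification, and your steps (ii) and (iii) simply make explicit what the paper packages into that citation.
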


\begin{proof}
This is an analogue for $\infty$-\reduced properads of \cite[ Theorem 7.2.4]{chu2020rectification}, which itself is a corollary of \cite[Theorem 7.11]{pavlov2018admissibility}. As in \cite{chu2020rectification}, the main step is the verification of the symmetric flatness hypothesis in \cite[Theorem 7.11]{pavlov2018admissibility}, which in our case follows from Lemma \ref{lem: sigma cofibrancy}.
\end{proof}
\begin{remark}\label{rmk: proprd rectification}
In Proposition \ref{thm: equiv of inf prpds} we use the fact that we are working with input-output properads. The corresponding statement is not true for ordinary properads (see \cite[Theorem 7.2.5]{chu2020rectification}). However, even without the full strength of the proposition, it may still be possible to show that the homotopy colimits appearing in our theorems continue to be preserved in the context of ordinary $\infty$-properads. We do not pursue this here.
\end{remark}

\subsection{Topological stacks and their homotopy theory}
In this section we recall the definition of topological stacks and construct an $\infty$-category modeling their homotopy theory.
\subsubsection{Topological stacks}
Recall that a \emph{groupoid} is a category in which every morphism is an isomorphism. Groupoids form a (strict) $2$-category. An equivalence of groupoids is an equivalence of the underlying categories.\\
Denote by $\pshv(\Grpd)$ the (strict) $2$-category of presheaves of groupoids over topological spaces. By the Yoneda Lemma,
we have a fully faithful embedding
\[ \Top \to \pshv(\Grpd).\]
More precisely, for any topological space $T$ and a presheaf of groupoid $\calX$, we have an isomorphism of groupoids
\[ \calX(T) \simeq \pshv(\Grpd)(\calX,T).\]
Say that a map of presheaves $\calX \to \calY$ is an equivalence if for every topological space $T$, the corresponding maps $\calX(T) \to \calY(T)$ are equivalences of groupoids. Two presheaves are called equivalent if they are related by a zig-zag of equivalences.

\begin{definition}\label{def: stk}
A \emph{stack} over topological spaces is a presheaf of groupoids $\calX$ such that for any topological space $T$ and an open cover $\{U_{\alpha}\}$ of $T$,
\[ \calX(T) \to \varinjlim \left[ \prod_{\alpha} \calX(U_\alpha) \rrarrow \prod_{\alpha, \beta}\calX(U_{\alpha \beta}) \rrrarrow \prod_{\alpha, \beta, \gamma}\calX(U_{\alpha \beta \gamma})  \right]\]
is an equivalence of groupoids. Here $\varinjlim$ is the $2$-categorical limit in the category of groupoids.
\end{definition}
 Let $\stk$ denote the full subcategory spanned by stacks in $\pshv(\Grpd)$. Equivalences of stacks are precisely those of the underlying presheaves of groupoids.
 
\begin{remark}\label{rmk: cat fibrd in grpd}
Conventionally a stack is defined as a category fibered in groupoids which satisfies a descent condition. Let $\mathcal{G}rpd/\Top$ denote the $2$-category of categories fibered in groupoids over $\Top$. It was proved in \cite{hollander2008homotopy}, that there are model category structures on $\mathcal{G}rpd/\Top$ and $\pshv(\Grpd) $ such that the weak equivalences in both categories are given by the corresponding notions of equivalences. Moreover, there is an adjunction
\[ \Gamma \colon \mathcal{G}rpd/\Top \leftrightarrows \pshv(\Grpd) \colon p\]
which is a Quillen equivalence with respect to these model structures. Furthermore, this adjunction restricts to one between the category of categories fibered in groupoids over topological spaces which satisfy a descent condition and the category of stacks in the sense of Definition \ref{def: stk}. Thus both these approaches for defining stacks turn out to be equivalent.\\
For later reference, we note that the functor $\Gamma$ is given by 
\[ \Gamma \colon \mathcal{E} \mapsto (T \mapsto {\mathcal{G}rpd/\Top}_{Grpd}(\Top/T,\mathcal{E} ) ).\]
Here ,
	\begin{enumerate}
		\item $\Top/T$ given by the category of topological spaces with a map to $T$. Note that this is a category fibered in groupoids over $\Top$. 
		\item${\mathcal{G}rpd/\Top}_{Grpd}(\_,\_)$ denotes the groupoid of  $1$-morphisms in $\mathcal{G}rpd/\Top$.
	\end{enumerate}
\end{remark}

Recall that a topological groupoid $\mathbb{X}$ is a groupoid such that the set of objects and the set of morphisms are topological spaces, and moreover all the structure maps (source, target, identities, compositions, and inverses) are continuous. Let $\Top\Grpd$ be the category of topological groupoids.

Given a topological groupoid $\mathbb{X}$, we can define a presheaf given by
\[ T \mapsto [\mathbb{X}] (T) := {\Top \Grpd}_{Grpd}(T,\mathbb{X}) \]
where ${\Top \Grpd}_{Grpd}$ denotes the groupoid of $\Top\Grpd$-morphisms from $T$ to $\mathbb{X}$. This presheaf can be \emph{sheafified} to obtain a stack $[\mathbb{X}]$, known as the quotient stack of $\mathbb{X}$, by 
\[ [\mathbb{X}] \colon T \to \varinjlim_{U \underset{open}{\subset} T} \Top\Grpd_{Grpd}(U,\mathbb{X})\]
\begin{definition}
By a \emph{topological stack} we mean a stack which is equivalent to the quotient stack of some topological groupoid.
\end{definition}

\subsubsection{Homotopy type of a topological stack}\label{subsubsec: htpy type of top stk}
Following \cite[Section 5]{noohi2012homotopy} we say that a map of topological spaces $f \colon X \to Y$ is \emph{shrinkable} if it has a section $s \colon Y \to X$ such that $X$ admits a fiberwise relative deformation retract onto $s(Y)$. We say that a map $f \colon X \to Y$ is \emph{locally shrinkable} if $Y$ has an open cover $\{U_\alpha\}$ such that $f|_{f^{-1}(U_\alpha)}$ is shrinkable for each $\alpha$.\\
Say that a representable map of topological stacks $f \colon \calX \to \calY$ is locally shrinkable if for any topological space $T$ and a map $g \colon T \to \calY$, the pullback $g\times_{\calY} f \colon T \times_{\calY} \calX \to T$ is locally shrinkable.\\
We say that a representable map $\mathcal{X} \to \mathcal{Y}$ of topological stacks is a \emph{universal weak equivalence} if for any topological space $T$ with a map $T \to \calY$, the induced map $T \times_{\calY} \calX \to T$ is a weak equivalence. In particular, any locally shrinkable map is a universal weak equivalence.\\
Finally, we say that a stack $\calX$ admits an \emph{atlas} if there exists a representable map $U \to \calX$ from a topological space $U$, such that for any topological space $T$ and a map $T \to \calX$ the pullback $T \times_{\calX} U \to T$ admits local sections. It follows from the definition of topological stacks that every topological stack admits an atlas.

Note that if $V \to \calY$ is an atlas for $Y$, a representable map of stacks $\calX \to \calY$ is locally shrinkable (respectively, a universal weak equivalence) if and only if the induced map of topological spaces $T \times_{\calY} \calX \to \calX$ is locally shrinkable (respectively, a weak equivalence).

\begin{definition}
	We say that a topological space $X$ \emph{represents the weak homotopy type} of a stack $\calX$ if there exists a universal weak homotopy equivalence $X \to \calX$.
\end{definition}
Note that such a representative, if it exists, is unique up to weak homotopy equivalence. We now state a theorem, due to Noohi, guaranteeing the existence of such a space: 
\begin{theorem}[{\cite[Theorem 3.4]{coyne2016singular}}]\label{thm: class sp of stk}
Let $\calX$ be a topological stack. Then there exists a topological space $X$ along with a map $X \to \calX$ which is locally shrinkable and in particular a universal weak equivalence. 
\end{theorem}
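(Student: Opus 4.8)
The plan is to follow Noohi's strategy, reducing the statement about a topological stack to a statement about topological spaces via an atlas, and then building the representative by a simplicial/bar-type construction. First I would invoke the definition of a topological stack: by hypothesis $\calX$ is equivalent to the quotient stack $[\mathbb{X}]$ of some topological groupoid $\mathbb{X} = (\mathbb{X}_1 \rightrightarrows \mathbb{X}_0)$. The source/target maps $s,t \colon \mathbb{X}_1 \to \mathbb{X}_0$ give a simplicial topological space, the nerve $N_\bullet \mathbb{X}$, with $N_k \mathbb{X}$ the space of composable $k$-tuples of arrows. The natural candidate for the representative is the fat (or ordinary) geometric realization $X := \|N_\bullet \mathbb{X}\|$, together with the canonical map $X \to [\mathbb{X}] \simeq \calX$ induced by the augmentation $N_\bullet\mathbb{X} \to [\mathbb{X}]$ (viewing the constant simplicial stack on $[\mathbb{X}]$ and using that $N_\bullet\mathbb{X}$ maps to it levelwise).

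The key steps, in order: (1) Construct $X \to \calX$ explicitly as above; this is a representable map because each $N_k\mathbb{X}$ is a space and realizations of representable maps of simplicial stacks over a space stay representable. (2) Reduce local shrinkability to a statement over an atlas: pick an atlas $U \to \calX$ (which exists by the last paragraph of the excerpt), form the pullback $X \times_\calX U \to U$, and recall from the remark just before the theorem that $X \to \calX$ is locally shrinkable iff this pulled-back map of \emph{topological spaces} is. (3) Identify the pullback concretely. Pulling back the nerve $N_\bullet \mathbb{X}$ along $U \to \calX = [\mathbb{X}]$ yields (up to equivalence) the nerve of the action groupoid / Čech-type groupoid associated to $U$, i.e. a simplicial space $E_\bullet$ whose realization comes with a map to $U$ that is a "bundle" with contractible (in fact, canonically deformation-retractible) fibers. (4) Exhibit the fiberwise deformation retract: over suitable opens of $U$, the map $\|E_\bullet\| \to U$ admits a section given by the degenerate $0$-simplices, and the standard "prismatic" contraction of $\|E_\bullet\|$ (sliding a composable string of arrows down using the extra degeneracy coming from the chosen point in the fiber over each point of the open set) gives the required fiberwise relative deformation retraction. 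This is where one actually uses that $\mathbb{X}_1 \to \mathbb{X}_0$ admits local sections, which holds because $U \to \calX$ is an atlas. Assembling (3) and (4) shows $X\times_\calX U \to U$ is locally shrinkable, hence by (2) so is $X \to \calX$, and then by the observation in the excerpt every locally shrinkable representable map is a universal weak equivalence — completing the proof.

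The main obstacle I expect is step (4): carefully setting up the extra degeneracy and verifying that the resulting simplicial homotopy realizes to an honest \emph{fiberwise} relative deformation retract over each member of an open cover of $U$, rather than merely a levelwise homotopy equivalence. One must be careful that the section lands in the realization correctly, that the homotopy is continuous in the base parameter, and — since $N_\bullet\mathbb{X}$ need not be proper or Reedy cofibrant as a simplicial space — that one uses the \emph{fat} realization (or checks the necessary cofibrancy) so that the levelwise contraction passes to the realization. A secondary technical point is checking that pulling back the realization along $U \to \calX$ commutes (up to the relevant equivalence) with realization, so that the concrete description in step (3) is valid; this is a standard but slightly delicate interchange of homotopy colimits with base change that one can handle by working with the fat realization and noting that pullback of spaces preserves it.
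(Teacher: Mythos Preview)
The paper does not prove this theorem; it is quoted verbatim as \cite[Theorem 3.4]{coyne2016singular} and used as a black box, with no argument supplied. So there is nothing in the paper to compare your proposal against.

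That said, your sketch is broadly in the spirit of the constructions in \cite{noohi2012homotopy} and \cite{coyne2016singular}: one takes a groupoid presentation $\mathbb{X}_1 \rightrightarrows \mathbb{X}_0$ of $\calX$, forms a classifying-space-type realization of its nerve, and checks that the resulting map to $\calX$ is a universal weak equivalence by pulling back along an atlas. Two cautions. First, in Noohi's original treatment the object playing the role of your $X$ is not quite the ordinary fat realization $\|N_\bullet\mathbb{X}\|$ but a Haefliger--Milnor style join construction; the reason is precisely the issue you flag at the end---getting an honest \emph{locally shrinkable} map (fiberwise deformation retraction over small opens), not merely a levelwise weak equivalence, requires care with how sections and contractions are built, and the join model makes this transparent. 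Second, your step (3) asserts an identification of the pullback $X \times_{\calX} U$ with the realization of a pulled-back simplicial space; this interchange of realization with fiber product over a stack is exactly the delicate point, and in the references it is handled by working with explicit models rather than by a general base-change lemma. If you intend to write out a proof, I would recommend consulting \cite{noohi2012homotopy} directly for the precise construction rather than reverse-engineering it, since the paper you are reading treats the result as input.
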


From Theorem \ref{thm: class sp of stk} it follows that the weak homotopy type of every topological stack can be represented by a topological space. 

Given a map of stacks $f \colon \calX \to \calY$ and a topological space $Y$ representing the weak homotopy type of $\calY$ via a map $Y \to \calY$, the pullback $X:=Y \times_{\calY} \calX \to \calX$ represents the weak homotopy type of $X$ and moreover, we have an induced map $\widetilde{f} \colon X \to Y$. 
In Section \ref{subsubsec: infty cat of stacks} below, we shall construct a model for the homotopy theory of topological stacks obtained by inverting maps $f$ such that the induced maps $\widetilde{f}$ on representatives of the homotopy types are weak homotopy equivalences.

\subsubsection{Singular simplicial complexes of presheaves of groupoid}
Let $\sSet$ denote the category of simplicial sets.
We now define a functor 
\[\Sing \colon \pshv(\Grpd) \to \sSet.\]
For an $\calX \in \pshv(\Grpd)$, define $\calX_{\Delta}$ to be the simplicial groupoid 
\[ \calX_{\Delta} \colon n \mapsto \calX(\Delta^n).\]
Let $N(\calX_{\Delta})$ denote the bisimplicial set obtained by taking level-wise nerve of $\calX_{\Delta}$. Finally, define $\Sing(\calX)$ by 
\[ \Sing(\calX) := \mbox{Diag} (N(\calX_{\Delta})),\]
where $\mbox{Diag} \colon \bsSet \to \sSet$ is the diagonal functor defined by $\{\calY_{m,n}\}_{m\geq 0, n \geq 0} \mapsto \{\calY_{n,n}\}_{n\geq 0}$.

We note here a useful property of $\Sing$:
\begin{lemma}\label{prop: sing prod}
$\Sing$ preserves limits. In particular, $\Sing$ preserves products.
\null \hfill \qedsymbol
\end{lemma}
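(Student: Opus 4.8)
The statement to prove is Lemma \ref{prop: sing prod}: that $\Sing \colon \pshv(\Grpd) \to \sSet$ preserves limits.

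\textbf{Plan of proof.} The plan is to unwind the definition of $\Sing$ into a composite of three functors and check that each one preserves limits, so that the composite does too. Recall that $\Sing(\calX) = \mathrm{Diag}(N(\calX_\Delta))$, where $\calX_\Delta$ is the simplicial groupoid $n \mapsto \calX(\Delta^n)$, then $N$ is applied level-wise to produce a bisimplicial set, and finally $\mathrm{Diag}$ restricts to the diagonal. So $\Sing$ factors as
\[
\pshv(\Grpd) \xrightarrow{\ (-)_\Delta\ } \mathrm{Fun}(\Delta^{op}, \Grpd) \xrightarrow{\ N_{\mathrm{lvl}}\ } \bsSet \xrightarrow{\ \mathrm{Diag}\ } \sSet,
\]
and it suffices to show each arrow preserves limits.

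\textbf{Key steps.} First, the functor $\calX \mapsto \calX_\Delta$ is, in each simplicial level $n$, just evaluation $\calX \mapsto \calX(\Delta^n)$ at the object $\Delta^n$; since limits of presheaves are computed object-wise, evaluation at any object preserves limits, and hence so does $(-)_\Delta$ (limits of simplicial objects are also computed level-wise). Second, the level-wise nerve $N_{\mathrm{lvl}}$ is obtained by post-composing with the nerve functor $N \colon \Grpd \to \sSet$ (a groupoid being a small category); the nerve functor, being a right adjoint (its left adjoint is the fundamental-groupoid functor, or more simply it is right adjoint to the inclusion of categories into simplicial sets composed appropriately — in any case $N$ is well known to preserve limits, being representable level-wise as $N(\mathcal{C})_n = \mathrm{Fun}([n], \mathcal{C})$ and $\mathrm{Fun}([n], -)$ preserves limits), so post-composing with it preserves limits; and again limits of bisimplicial sets are level-wise. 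Third, $\mathrm{Diag} \colon \bsSet \to \sSet$, $\{\calY_{m,n}\} \mapsto \{\calY_{n,n}\}$, is a restriction functor along the diagonal $\Delta \to \Delta \times \Delta$, i.e. it is precomposition with a functor on indexing categories; such restriction functors on presheaf categories preserve all limits (indeed all limits and colimits), since limits of simplicial sets are computed level-wise. Composing, $\Sing$ preserves limits. The "in particular" clause about products is immediate since products are a special case of limits.

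\textbf{Main obstacle.} There is no serious obstacle here; this is a routine verification. The only point that requires a moment's care is the claim that the nerve functor on \emph{groupoids} preserves limits — one should note that a limit of groupoids, computed in $\Grpd$, agrees with the limit computed in $\mathbf{Cat}$ (the full subcategory $\Grpd \hookrightarrow \mathbf{Cat}$ is closed under limits, as a limit of categories in which all morphisms are invertible again has all morphisms invertible), and then invoke that $N \colon \mathbf{Cat} \to \sSet$ preserves limits because $N(\mathcal{C})_n = \mathbf{Cat}([n], \mathcal{C})$ and representable functors preserve limits. Thus the proof is short: exhibit the factorization, observe each factor is either evaluation at an object, post-composition with a limit-preserving functor, or restriction along a functor of index categories, and conclude.
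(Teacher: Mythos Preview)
Your argument is correct. The paper does not actually supply a proof of this lemma at all --- the \qedsymbol\ appears immediately after the statement, so the authors treat it as evident. Your decomposition of $\Sing$ as a composite of evaluation, level-wise nerve, and diagonal restriction, each of which preserves limits for the reasons you give, is exactly the routine verification the paper is tacitly invoking.
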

%
%
\subsubsection{$\infty$-category of stacks}\label{subsubsec: infty cat of stacks}
Given two stacks $\calX$ and $\calY$, we can consider the mapping stack $\Map(\calX,\calY)$. The underlying presheaf of groupoids is given by 
\[ T \mapsto {\stk}_{Grpd}(\calX \times T, \calY),\]
where ${\stk}_{Grpd}$ denotes the groupoid of stack morphisms. Using the functor $\Sing$ above we can associate with this stack a simplicial set $\Sing(\Map(\calX,\calY))$. Using Lemma \ref{prop: sing prod}, it follows that this provides a simplicial enrichment of the category of stacks. Denote this simplicial category by $\stk^{\Delta}$.

We can associate with this a fibrant simplicial category by taking the Kan replacements of the simplicial morphism spaces: Recall that the category $\sSet$ has a symmetric monoidal fibrant replacement functor given by 
\[ X \mapsto {X}^{\wedge}= \Map(\Delta_{\bullet},|X|)\]
Using this functor, we replace the $\sSet$-morphism spaces $\Sing(\Map(\calX,\calY))$ by their fibrant replacement $\Sing(\Map(\calX,\calY))^{\wedge}$. This gives us a fibrant simplicial enrichment of the category of stacks. Taking the homotopy coherent nerve of this category gives an $\infty$-category $\widetilde{\stk}_\infty$. As a model for the homotopy theory of topological stacks, we shall use a category $\stk_\infty$ which is obtained by further localization of $\widetilde{\stk}_\infty$ at the class of morphisms mentioned at the end of Section \ref{subsubsec: htpy type of top stk} above.

Let $\Top^{\Delta}$ denote the simplicially enriched category of spaces and $\widetilde{\Top}_\infty$ the corresponding $\infty$-category of spaces. Note that 
\[ \Top_\infty = \widetilde{\Top}_\infty[S^{-1}],\]
where $S$ denotes the class of weak homotopy  equivalences in $\widetilde{\Top}_\infty$ (see for example, \cite[Theorem 1.3.4.20]{lurie2016higher}).
For topological spaces $X,Y$, the mapping stack $\Map(X,Y)$ is the stack associated with the space of continuous maps $\Top(X,Y)$ with the usual compact-open topology. 
Thus, the functor 
\[\widetilde{\Top}_\infty \to \widetilde{\stk}_\infty\]
obtained by mapping a topological space to the stack associated to it, is fully faithful.

We now state Lemma \ref{lem: noohi 7.2} and Proposition \ref{lem: noohi 7.1} which are general category theoretic results.  These are essentially $\infty$-categorical refinements of some results from \cite[Section 7]{noohi2012homotopy}. We shall use $\infty$-categorical generalizations of the $1$-categorical arguments in loc. cit. to prove their $\infty$-categorical enhancements.

Recall that for an $\infty$-category $\catc$ and a collection of morphisms $\mathcal{W}$ in $\catc$, $\catc[W^{-1}]$ denotes the $\infty$-categorical localization of $\catc$ at $W$.

\begin{lemma}[{\cite[Lemma 7.5]{noohi2012homotopy}}]\label{lem: noohi 7.2}
Let $F \colon \mathcal{D} \leftrightarrows \mathcal{E} \colon G$ be an adjunction between $\infty$-categories. Let $S \subset \mathcal{D}$ and $T \subset \mathcal{E}$ be collection of morphisms such that $F(S) \subset T$ and $G(T) \subset S$. Then, there is an induced adjunction 
\[ \widetilde{F} \colon  \mathcal{D}[S^{-1}] \leftrightarrows \mathcal{E}[T^{-1}] \colon \widetilde{G}\]
Moreover, if $F$ is fully-faithful then so is $\widetilde{F}$.
\end{lemma}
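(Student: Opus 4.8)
The statement is the $\infty$-categorical version of \cite[Lemma 7.5]{noohi2012homotopy}, so the natural strategy is to transport the $1$-categorical argument there to the setting of $\infty$-categories using the universal property of localization. Recall that for an $\infty$-category $\catc$ and a collection of morphisms $W$, the localization $\catc[W^{-1}]$ comes equipped with a functor $\ell_\catc \colon \catc \to \catc[W^{-1}]$ which is initial (in the $\infty$-categorical sense) among functors out of $\catc$ that send the morphisms of $W$ to equivalences; precisely, for every $\infty$-category $\mathcal{A}$, restriction along $\ell_\catc$ induces a fully faithful embedding $\mathrm{Fun}(\catc[W^{-1}],\mathcal{A}) \hookrightarrow \mathrm{Fun}(\catc,\mathcal{A})$ with essential image the functors inverting $W$ (see \cite[Definition 1.3.4.1]{lurie2016higher}). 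This is the only input needed.

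\textbf{Construction of $\widetilde F$ and $\widetilde G$.} First I would build the two functors. Since $F(S) \subseteq T$, the composite $\ell_\mathcal{E} \circ F \colon \mathcal{D} \to \mathcal{E}[T^{-1}]$ sends morphisms of $S$ to equivalences, hence factors (uniquely up to contractible choice) through $\ell_\mathcal{D}$, yielding $\widetilde F \colon \mathcal{D}[S^{-1}] \to \mathcal{E}[T^{-1}]$ with $\widetilde F \circ \ell_\mathcal{D} \simeq \ell_\mathcal{E} \circ F$. Symmetrically, using $G(T) \subseteq S$, one gets $\widetilde G \colon \mathcal{E}[T^{-1}] \to \mathcal{D}[S^{-1}]$ with $\widetilde G \circ \ell_\mathcal{E} \simeq \ell_\mathcal{D} \circ G$.

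\textbf{The adjunction.} To exhibit $\widetilde F \dashv \widetilde G$ I would produce unit and counit transformations and verify the triangle identities. The unit $u \colon \mathbbm{1}_\mathcal{D} \to GF$ of the original adjunction gives, after applying $\ell_\mathcal{D}$, a transformation $\ell_\mathcal{D} \to \ell_\mathcal{D} \circ G \circ F \simeq \widetilde G \circ \ell_\mathcal{E} \circ F \simeq \widetilde G \circ \widetilde F \circ \ell_\mathcal{D}$; since $\ell_\mathcal{D}$ is a localization (in particular essentially surjective), a natural transformation between functors out of $\mathcal{D}[S^{-1}]$ is determined by its precomposition with $\ell_\mathcal{D}$, so this descends to $\widetilde u \colon \mathbbm{1}_{\mathcal{D}[S^{-1}]} \to \widetilde G \widetilde F$; similarly the counit descends to $\widetilde c \colon \widetilde F \widetilde G \to \mathbbm{1}_{\mathcal{E}[T^{-1}]}$. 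The triangle identities for $(\widetilde u, \widetilde c)$ hold because, after precomposing with the essentially surjective $\ell_\mathcal{D}$ (resp. $\ell_\mathcal{E}$), they reduce to the triangle identities for $(u,c)$, which hold by hypothesis. This produces the adjunction (e.g. via \cite[Proposition 5.2.2.8]{lurie2009higher} characterizing adjunctions by unit/counit satisfying the triangle identities, or directly by constructing the adjunction data as a relative functor). Alternatively, and perhaps more cleanly, one can check that for $d \in \mathcal{D}[S^{-1}]$ and $e \in \mathcal{E}[T^{-1}]$ the mapping spaces $\mathrm{Map}_{\mathcal{E}[T^{-1}]}(\widetilde F d, e)$ and $\mathrm{Map}_{\mathcal{D}[S^{-1}]}(d, \widetilde G e)$ are naturally equivalent by reducing, via essential surjectivity of the localization functors, to the case $d = \ell_\mathcal{D} d'$, $e = \ell_\mathcal{E} e'$, and invoking the universal property of $\infty$-categorical localizations to compute these mapping spaces together with the original adjunction equivalence.

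\textbf{Preservation of fully faithfulness.} For the last clause, suppose $F$ is fully faithful. I want $\widetilde F$ fully faithful, i.e. that $\widetilde u \colon \mathbbm{1} \to \widetilde G \widetilde F$ is an equivalence. Again by essential surjectivity of $\ell_\mathcal{D}$ it suffices to check this after precomposing with $\ell_\mathcal{D}$, where $\widetilde u \ell_\mathcal{D}$ is identified with $\ell_\mathcal{D}(u) \colon \ell_\mathcal{D} \to \ell_\mathcal{D} G F$. Since $F$ fully faithful means $u$ is an equivalence of functors $\mathcal{D} \to \mathcal{D}$, its image $\ell_\mathcal{D}(u)$ is an equivalence as well. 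Hence $\widetilde u$ is an equivalence and $\widetilde F$ is fully faithful.

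\textbf{Main obstacle.} The genuinely delicate point is the descent of natural transformations and the triangle identities through the localization functors: one must be careful that "a natural transformation out of $\catc[W^{-1}]$ is determined by precomposition with $\ell_\catc$" is used correctly — this is the statement that $\mathrm{Fun}(\catc[W^{-1}],\mathcal{A}) \to \mathrm{Fun}(\catc,\mathcal{A})$ is fully faithful onto the $W$-inverting functors, so it is not only on objects but on mapping spaces, which is exactly what lets one both define $\widetilde u, \widetilde c$ and verify their coherences. Once this bookkeeping is set up, everything else is formal. It is worth noting that no special feature of stacks or properads enters here; the lemma is purely $\infty$-categorical, which is why the excerpt flags it as "general category theoretic."
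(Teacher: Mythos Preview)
Your proof is correct and follows the same strategy as the paper's. The paper is considerably more terse: for the existence of the induced adjunction it simply cites \cite[Proposition 7.1.14]{cisinski2019higher} rather than spelling out the descent of unit and counit through the localization functors as you do, and for the fully-faithful clause it gives exactly your argument (the unit of $F \dashv G$ is an equivalence, hence so is the induced unit, hence $\widetilde{F}$ is fully faithful).
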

\begin{proof}
The existence of the adjoint follows using \cite[Proposition 7.1.14]{cisinski2019higher}. If $F$ is fully-faithful it follows that the unit of the adjunction  $\mathbbm{1}_{\mathcal{E}} \Rightarrow G \circ F$ is an isomorphism of $\infty$-functors. In this case, the unit of the adjunction induced on localized categories is also an isomorphism and it follows that the functor $\widetilde{F}$ is fully-faithful as well.
\end{proof}

Let us now turn to Proposition \ref{lem: noohi 7.1}. The set up for the proposition is as follows: Let
$\catb \to \catc$ be a fully-faithful functor between $\infty$-categories and let $R$ be a class of morphisms in $\catb$ which contains all identities, is closed under compositions, and is closed under pullbacks (see \cite[Definition 7.2.14]{cisinski2019higher} for the precise meanings of these terms). Denote by $\widetilde{R}$ the class of morphisms $f \colon y \to x$ in $\catc$ satisfying the property that for any $p \colon t \to x$ with $t \in \catb$, the pullback $t \times_x y \to t$ of $f$ along $p$ is in $R$. 
\begin{proposition}[{\cite[Lemma 7.1, 7.2]{noohi2012homotopy}}]\label{lem: noohi 7.1}
In the setting described in the previous paragraph, suppose that for every $x \in \catc$, there exists an object $\theta(x)$ in $\catb$ along with a map $\theta(x) \to x$ lying in $\widetilde{R}$. Then, the functor $\catb \to \catc$ induces a functor $i \colon \catc[R^{-1}] \to \catb[R^{-1}]$ which is fully faithful and has a right adjoint $\catb[R^{-1}] \to \catc[R^{-1}]$ which naturally extends $\theta$. Further, the functors $i$ and $\theta$ induce an equivalence between $\catc[\widetilde{R}^{-1}]$ and $\catb[R^{-1}]$.
\end{proposition}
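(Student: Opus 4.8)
The plan is to follow Noohi's proof of \cite[Lemmas 7.1 and 7.2]{noohi2012homotopy}, upgrading it to the $\infty$-categorical setting by replacing the elementary calculus of fractions used there with the $\infty$-categorical localization calculus of \cite[Chapter 7]{cisinski2019higher}.

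First I would record two elementary points. (i) The fully faithful functor $j\colon\catb\to\catc$ carries $R$ into $\widetilde R$: if $f\colon y\to x$ lies in $R$ and $p\colon t\to x$ has $t\in\catb$, then the pullback $t\times_x y\to t$ --- which, as the definition of $\widetilde R$ presupposes, is computed compatibly in $\catb$ and in $\catc$ --- is again in $R$ by stability of $R$ under base change, so $f\in\widetilde R$. By the universal property of $\infty$-categorical localization, $j$ therefore descends to a functor $i\colon\catb[R^{-1}]\to\catc[\widetilde R^{-1}]$. (ii) $i$ is essentially surjective: for $x\in\catc$ the chosen morphism $\theta(x)\to x$ lies in $\widetilde R$, hence becomes invertible in $\catc[\widetilde R^{-1}]$, so $x\simeq i(\theta(x))$.

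Next I would produce a right adjoint $G$ of $i$ with $G\simeq\theta$ on objects. The resolution hypothesis plays here exactly the role that functorial cofibrant replacement plays in identifying the localization of a model category with that of its subcategory of cofibrant objects: the assignment $x\mapsto\theta(x)$, together with the $\widetilde R$-morphisms $\theta(x)\to x$, determines --- through the universal property of $\catc[\widetilde R^{-1}]$ rather than an explicit formula --- a functor $G\colon\catc[\widetilde R^{-1}]\to\catb[R^{-1}]$ and a natural equivalence $i\circ G\simeq\mathrm{id}$ whose components are the localized $\widetilde R$-maps $\theta(x)\to x$. That $G$ is a genuine adjoint then follows from the criterion for adjoints of localized functors, \cite[Proposition~7.1.14]{cisinski2019higher}, exactly as in the proof of Lemma \ref{lem: noohi 7.2}; and $i$ is fully faithful precisely when the unit $\mathrm{id}\to G\circ i$ is an equivalence, which I would check on objects of $\catb$ using the defining property of $\widetilde R$ --- so that an $\widetilde R$-resolution of $j(a)$ may be compared with $a$ already inside $\catb$ --- together with two-out-of-three for $R$. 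Combining this with (ii), $i$ is an equivalence with quasi-inverse $G$ extending $\theta$, and the adjunction statement, together with its compatibility with Lemma \ref{lem: noohi 7.2}, follows formally.

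The main obstacle is this third step. In \cite{noohi2012homotopy} the assignment $\theta$ is a strict functor of ordinary categories and the triangle identities are equations, whereas here $\theta$ is prescribed only on objects and one must manufacture a genuine $\infty$-functor $G$ together with all of its higher coherence data. The way around this is not to build $G$ by hand, but to show that the restricted functor $\catb\to\catc[\widetilde R^{-1}]$ inverts exactly the (saturation of the) class $R$ --- using point (i) and the definition of $\widetilde R$ to recognize $R$-morphisms already within $\catb$ --- and then to invoke the recognition criterion for $\infty$-categorical localizations to identify $\catc[\widetilde R^{-1}]$ with $\catb[R^{-1}]$; the description of $\theta$ as a right adjoint comes out as a consequence. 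What remains is routine bookkeeping inside Cisinski's localization formalism.
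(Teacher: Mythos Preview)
Your high-level strategy is reasonable, and you correctly identify the main obstacle: since $\theta$ is prescribed only on objects, there is no adjunction to feed into \cite[Proposition~7.1.14]{cisinski2019higher}, and your earlier invocation of that result is therefore premature. However, the proposed workaround has a genuine gap. Showing that $\catb \to \catc[\widetilde R^{-1}]$ ``inverts exactly the saturation of $R$'' amounts only to showing that the induced functor $\catb[R^{-1}] \to \catc[\widetilde R^{-1}]$ is conservative; combined with essential surjectivity this is still far from an equivalence of $\infty$-categories, since nothing in those hypotheses controls the mapping spaces. There is no general recognition criterion of the form you describe, and your sketch does not supply the needed comparison of mapping spaces.

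The paper takes a different route. It works at the level of the $\mathcal{H}$-enriched homotopy categories $h\catb[R^{-1}]$ and $h\catc[R^{-1}]$ (localizing $\catc$ at $R$, not yet at $\widetilde R$) and uses the right calculus of fractions \cite[Theorem~7.2.16, Remark~7.2.10]{cisinski2019higher} to identify the mapping spaces with span $\infty$-categories $\mbox{Span}^R(x,y)$. The key input is \cite[Lemma~7.2.15]{cisinski2019higher}: for $x \in \catb$ and $y \in \catc$, pulling back along the chosen $\widetilde R$-morphism $\theta(y) \to y$ furnishes a right adjoint to the comparison $\mbox{Span}^R_{\catb}(x,\theta(y)) \to \mbox{Span}^R_{\catc}(x,y)$, yielding the required equivalence of mapping spaces. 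One can then write down the $\mathcal{H}$-enriched functor $h\theta$ on morphisms by hand and verify compatibility with composition by a diagram chase; the $\infty$-categorical adjoint is recovered from the $\mathcal{H}$-enriched one via \cite[Proposition~5.2.2.12]{lurie2009higher}, and only afterward does one pass to $\catc[\widetilde R^{-1}]$ using Lemma~\ref{lem: noohi 7.2}. The ingredient your plan is missing is precisely this span-level pullback argument --- it is where the hypotheses that $R$ is stable under pullback and that $\theta(y) \to y$ lies in $\widetilde R$ are actually put to work.
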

\begin{proof}
Let $\mathcal{H}$ denote the homotopy category of spaces and let $h\catb[R^{-1}]$ and $h\catc[R^{-1}]$ denote the $\mathcal{H}$-enriched homotopy categories of $\catb[R^{-1}]$ and $\catc[R^{-1}]$ respectively, as described in \cite[Section 1.1.5]{lurie2009higher}. Let 
$hi \colon h\catb[R^{-1}] \to h\catc[R^{-1}]$
 be the functor induced by inclusion $i \colon \catb \to \catc$. We shall prove that $hi$ is fully faithful and has a right adjoint 
\[h\theta \colon h\catc[R^{-1}] \to h\catb[R^{-1}]\]
given by $\theta(\_)$ on objects.  It will follows that $i$ is fully-faithful and using \cite[Proposition 5.2.2.12]{lurie2009higher} we get that $i$ has a right adjoint 
 \[\theta \colon  \catc[R^{-1}] \to \catb[R^{-1}],\] 
Using the proof of that proposition it follows that we may assume that $\theta$ is given by $\theta(\_)$ on objects. 
Finally, the fact that $i$ and $\theta$ induce an equivalence of categories $\catc[\widetilde{R}^{-1}]$ and $\catb[R^{-1}]$ follows using Lemma \ref{lem: noohi 7.2}.\\
Let us now turn to the construction of the functor $h\theta$. In order to carry out the construction we use the description of morphism spaces in localizations given by the theory of \emph{Calculus of Fractions} discussed in \cite[Section 7.2]{cisinski2019higher}.\\
Using Theorem 7.2.16 and Remark 7.2.10 from \cite{cisinski2019higher} it follows that the class $R$ of morphisms has a right calculus of fractions given by the maximal putative right calculus of fraction at each object and the homotopy types of the morphism spaces in the localization are given by the following homotopy equivalences of simplicial sets
\begin{align*}
\catc[R^{-1}](x,y) &\simeq \mbox{Span}^{R}_{\catc}(x,y)  \mbox{ and }\\
\catb[R^{-1}](x,y) &\simeq \mbox{Span}^{R}_{\catb}(x,y),
\end{align*}
Here $\mbox{Span}^{R}_{\catc}(x,y)$ (respectively, $\mbox{Span}^{R}_{\catb}(x,y)$) denotes the $\infty$-category of diagrams of the form $x \xleftarrow{r} \widetilde{x} \xrightarrow{f} y$ in $\catc$ (respectively, $\catb$) with $r$ in $R$ (see \cite[Remark 7.2.10]{cisinski2019higher} for more on the Span-categories).\\
Applying \cite[Lemma 7.2.15]{cisinski2019higher} to the map $\theta(y) \to y$ and using the fact that the pull-back of $\theta(y) \to y$ along any $x \to y$ with $x \in \catb$ lies in $R$, it follows that the map
\[ i_{x,y} \colon \mbox{Span}^{R}_{\catb}(x,\theta(y)) \to \mbox{Span}^{R}_{\catc}(x,y) \]
given by inclusion $\catb \to \catc$ followed by post-composition with the map $\theta(y) \to y$, has a right adjoint $g_{x,y}$. Moreover, on the level of objects $g_{x,y}$ is given by mapping a span $x \xleftarrow{r} z \xrightarrow{f} y$ in $\catc$ to the span $x \xleftarrow{r^\prime} z^\prime \xrightarrow{f^\prime} \theta(y)$ in $\catb$ where $f^\prime$ is obtained by considering the cartesian square
\[ 
	\begin{tikzcd}
		z^\prime \ar{r}{f^\prime}\ar{d}{s} &  \theta(y) \ar{d}\\
		z \ar{r}{f} & y
	\end{tikzcd}
\]
and $r^\prime= r \circ s$.
It follows that for any $x \in \catb$ and $y \in \catc$, the map 
\[ i_{x,y} \colon \catb[R^{-1}](x,\theta(y)) \to \catc[R^{-1}](x,y)\]
induces a homotopy equivalence of the simplicial sets underlying the $\infty$-categories, with the homotopy inverse given by the $g_{x,y}$. \\
We now describe the $\mathcal{H}$-enriched functor $h\theta \colon h\catc[R^{-1}] \to h\catb[R^{-1}]$ : On objects it is given by $\theta(\_)$, as mentioned above. On morphisms we define $h\theta_{x,y}$ by the composition
\[ h\catc[R^{-1}](x,y) \to h\catc[R^{-1}](\theta(x),y) \xrightarrow{g_{\theta(x),y}} h\catb[R^{-1}](\theta(x), \theta(y))\]
where the first arrow is induced by pre-composing with $\theta(x) \to x$. To verify compatibility with composition note that we have the following commutative diagram in the homotopy category of spaces $\mathcal{H}$:
\[
	\begin{tikzcd}
		h\catc[R^{-1}](x,y) \times h\catc[R^{-1}](y,z) \ar{r}\ar{d} 	&h\catc[R^{-1}](x,z) \ar{d} \\
	h\catc[R^{-1}](\theta(x),y) \times h\catc[R^{-1}](y,z) \ar{r} & h\catc[R^{-1}](\theta(x),z) \ar[equal]{dd} \\
		h\catb[R^{-1}](\theta(x),\theta(y))  \times h\catc[R^{-1}](y,z)  \ar{d}\ar{u}{i_{\theta(x),y}\times \mathbbm{1}}& \\
		 h\catb[R^{-1}](\theta(x), \theta(y)) \times h\catc[R^{-1}](\theta(y),z)  \ar{r} &   h\catc[R^{-1}](\theta(x),z) \\
		h\catb[R^{-1}](\theta(x), \theta(y)) \times h\catb[R^{-1}](\theta(y), \theta(z)) \ar{r}\ar{u}{\mathbbm{1} \times i_{\theta(y),z}} & h\catb[R^{-1}](\theta(x), \theta(z)) \ar{u}{i_{\theta(x),z}}
	\end{tikzcd}
\]
Here all the unlabeled vertical arrows are induced by suitable pre/post-compositions and the horizontal arrows are given by compositions in suitable categories. Notice that the maps $i_{\theta(x),y}\times \mathbbm{1}, \mathbbm{1} \times i_{\theta(y),z},$ and  $i_{\theta(x),z}$ are homotopy equivalences with inverses $g_{\theta(x),y}\times \mathbbm{1}, \mathbbm{1} \times g_{\theta(y),z},$ and  $g_{\theta(x),z}$ respectively. Thus inverting these maps and composing the vertical arrows we get the following commutative diagram :
\[
	\begin{tikzcd}
	h\catc[R^{-1}](x,y) \times h\catc[R^{-1}](y,z) \ar{r}\ar{d}{h\theta_{x,y} \times h\theta_{y,z} } 	&h\catc[R^{-1}](x,z) \ar{d}{h\theta_{x,z} } \\
	h\catb[R^{-1}](\theta(x), \theta(y)) \times h\catb[R^{-1}](\theta(y), \theta(z)) \ar{r} & h\catb[R^{-1}](\theta(x), \theta(z))
	\end{tikzcd}
\]
This completes the proof of compatibility of $h\theta$ with composition and thus of the fact that $h\theta$ defines a functor. It is clear from the construction of $h\theta$ that it is right adjoint to $hi$. That $hi$, and thus $i$, is fully-faithful follows from the adjunction using the fact that $\theta(y) \to y$ is an isomorphism in $\catb[R^{-1}]$ for any $y \in B$.
\end{proof}

Now, let $R$ denote the class of locally shrinkable morphisms in $\widetilde{\Top}_\infty$. We have the following 
\begin{proposition}[{\cite[Proposition 8.1]{noohi2012homotopy}}]\label{prop: noohi htpy 1}
	$i \colon \widetilde{\Top}_\infty \to \widetilde{\stk}_\infty$ induces a fully faithful functor $\widetilde{\Top}_\infty[R^{-1}] \to \widetilde{\stk}_\infty[R^{-1}]$ which has a right adjoint $\theta \colon \widetilde{\Top}_\infty \to \widetilde{\stk}_\infty$.
\end{proposition}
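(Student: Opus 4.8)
The statement to prove is Proposition~8.1 from~\cite{noohi2012homotopy} in its $\infty$-categorical form: that $i\colon \widetilde{\Top}_\infty \to \widetilde{\stk}_\infty$ induces a fully faithful functor $\widetilde{\Top}_\infty[R^{-1}] \to \widetilde{\stk}_\infty[R^{-1}]$ with a right adjoint $\theta$. The plan is to deduce this directly from Proposition~\ref{lem: noohi 7.1}, which was set up precisely for this kind of application, by taking $\catb = \widetilde{\Top}_\infty$, $\catc = \widetilde{\stk}_\infty$, and $R$ the class of locally shrinkable morphisms in $\widetilde{\Top}_\infty$. First I would check that $\widetilde{\Top}_\infty \to \widetilde{\stk}_\infty$ is fully faithful: this was already recorded in the discussion preceding the statement (the functor sending a topological space to its associated stack is fully faithful, since mapping stacks between representable stacks are the usual mapping spaces with the compact--open topology), so it only needs to be cited.

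Next I would verify the hypotheses on $R$ required by Proposition~\ref{lem: noohi 7.1}: that $R$ contains all identities, is closed under composition, and is closed under pullback. Closure under identities and composition is immediate from the definition of shrinkable/locally shrinkable maps. Closure under pullback is the one genuinely geometric point: given a shrinkable map $f\colon X\to Y$ with section $s$ and fiberwise deformation retraction onto $s(Y)$, and a map $g\colon Y'\to Y$, one pulls back the section and the retracting homotopy to exhibit $X\times_Y Y' \to Y'$ as shrinkable; the local version follows by pulling back the open cover. This is exactly~\cite[Section 5]{noohi2012homotopy} and can be invoked, though I would spell out that the argument is insensitive to passing to the $\infty$-categorical localization because pullbacks of locally shrinkable maps along arbitrary maps of spaces remain locally shrinkable on the nose, before localizing.

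The heart of the argument is then the verification of the remaining hypothesis of Proposition~\ref{lem: noohi 7.1}: for every topological stack $\calX$ there exists a topological space $\theta(\calX)$ together with a map $\theta(\calX)\to\calX$ lying in $\widetilde{R}$, i.e.\ a map whose pullback along any map from a topological space $T$ is locally shrinkable. This is precisely the content of Theorem~\ref{thm: class sp of stk} (Noohi's classifying space theorem, in the form of~\cite[Theorem 3.4]{coyne2016singular}): every topological stack admits a locally shrinkable map from a topological space, and locally shrinkable maps of stacks are characterized by local shrinkability after pullback to any atlas, hence to any test space $T$. So I would take $\theta(\calX)$ to be Noohi's classifying space and $\theta(\calX)\to\calX$ the associated map, noting that local shrinkability is stable under the base change appearing in the definition of $\widetilde{R}$. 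With all hypotheses in place, Proposition~\ref{lem: noohi 7.1} yields directly that $i$ induces a fully faithful functor $\widetilde{\stk}_\infty[R^{-1}] \to \widetilde{\Top}_\infty[R^{-1}]$ with a right adjoint extending $\theta$ --- note the direction: $i$ in Proposition~\ref{lem: noohi 7.1} runs from $\catc[R^{-1}]$ to $\catb[R^{-1}]$, which matches the asserted statement once one identifies $\catb[R^{-1}] = \widetilde{\Top}_\infty[R^{-1}]$ and $\catc[R^{-1}] = \widetilde{\stk}_\infty[R^{-1}]$.

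The main obstacle is bookkeeping rather than mathematics: one must be careful that the class $R$ used for localizing both sides is genuinely the \emph{same} class (locally shrinkable maps, viewed inside $\widetilde{\Top}_\infty$ and, via $\widetilde{R}$ unwinding to the same notion, inside $\widetilde{\stk}_\infty$), and that Theorem~\ref{thm: class sp of stk} really produces maps in $\widetilde{R}$ and not merely in the class of universal weak equivalences --- the text of the theorem explicitly gives \emph{locally shrinkable} maps, so this is fine, but it is the point where one must resist the temptation to weaken $R$ to universal weak equivalences (that weakening is what Proposition~\ref{prop: noohi htpy 1} is a stepping stone toward, not what it asserts). Once the classes are pinned down, everything is a formal consequence of Proposition~\ref{lem: noohi 7.1}, so the proof is short: cite full faithfulness of $i$, cite closure properties of $R$ from~\cite{noohi2012homotopy}, cite Theorem~\ref{thm: class sp of stk} for the existence of $\theta$, and apply Proposition~\ref{lem: noohi 7.1}.
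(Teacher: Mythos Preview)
Your approach is correct and matches the paper's proof, which consists of a single sentence: apply Proposition~\ref{lem: noohi 7.1} to the inclusion $\widetilde{\Top}_\infty \to \widetilde{\stk}_\infty$. You spell out the hypothesis verification (closure properties of $R$, full faithfulness of $i$, and Theorem~\ref{thm: class sp of stk} supplying the map $\theta(\calX)\to\calX$ in $\widetilde{R}$) that the paper leaves implicit, and this is all correct.

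One small tangle: your final paragraph gets the direction of $i$ backwards. The stated direction of $i$ in Proposition~\ref{lem: noohi 7.1} (``$i\colon \catc[R^{-1}]\to\catb[R^{-1}]$'') is a typo in the paper, as is clear from its proof, where $hi\colon h\catb[R^{-1}]\to h\catc[R^{-1}]$; the correct direction is $i\colon \catb[R^{-1}]\to\catc[R^{-1}]$, which with your identifications $\catb=\widetilde{\Top}_\infty$, $\catc=\widetilde{\stk}_\infty$ gives exactly the fully faithful functor $\widetilde{\Top}_\infty[R^{-1}]\to\widetilde{\stk}_\infty[R^{-1}]$ asserted in Proposition~\ref{prop: noohi htpy 1}, with right adjoint $\theta$ going the other way. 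Your attempt to reconcile with the typo led you to state the conclusion in the wrong direction; just drop that paragraph and the argument is clean.
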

\begin{proof}
	This is an $\infty$-categorical restatement of \cite[Proposition 8.1]{noohi2012homotopy}. The proof follows by applying Lemma \ref{lem: noohi 7.1} to the inclusion $\widetilde{\Top}_\infty \to \widetilde{\stk}_\infty$.
\end{proof}

Finally, recall that $S$ denotes the class of weak homotopy equivalences in $\widetilde{\Top}_\infty$ and $\widetilde{\Top}_\infty [S^{-1}] \simeq \Top_\infty$. Denote by $S_{\stk}$ the class of maps in $\widetilde{\stk}_\infty$ whose image under $\theta$ lies in $\stk$ and by $\stk_\infty$ the localization $\stk[S_{\stk}^{-1}]$. Then, using Lemma \ref{lem: noohi 7.2} and Proposition \ref{prop: noohi htpy 1} we have:
\begin{corollary}[{\cite[Corollary 8.3]{noohi2012homotopy}}]\label{corr: noohi htpy 2}
	$i$ and $\theta$ as in Proposition \ref{prop: noohi htpy 1} induce a pair of adjoint functors
\[ i \colon \Top_\infty \leftrightarrows \stk_\infty \colon \theta\]
which are in fact equivalences of categories. In particular, $i \colon \Top_\infty \to \stk_\infty$ is fully faithful and preserves colimits.\\
\null \hfill \qedsymbol 
\end{corollary}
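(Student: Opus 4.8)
The plan is to deduce the statement by localizing the adjunction of Proposition \ref{prop: noohi htpy 1} one further time and invoking Lemma \ref{lem: noohi 7.2}. The starting observation is that Proposition \ref{prop: noohi htpy 1}, together with the last assertion of Proposition \ref{lem: noohi 7.1} applied to the fully faithful inclusion $\widetilde{\Top}_\infty \hookrightarrow \widetilde{\stk}_\infty$ and the class $R$ of locally shrinkable morphisms, already gives that $i$ and $\theta$ restrict to a pair of inverse equivalences between $\widetilde{\Top}_\infty[R^{-1}]$ and $\widetilde{\stk}_\infty[R^{-1}]$; in particular the unit $\mathbbm{1}\to\theta i$ and the counit $i\theta\to\mathbbm{1}$ are equivalences at this level. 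What remains is to pass from the $R$-localizations to the full localizations $\Top_\infty \simeq \widetilde{\Top}_\infty[S^{-1}]$ and $\stk_\infty = \widetilde{\stk}_\infty[S_{\stk}^{-1}]$.

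First I would record two containments: a locally shrinkable map of topological spaces is a weak homotopy equivalence, so $R\subset S$; and a locally shrinkable map of topological stacks is a universal weak equivalence (Section \ref{subsubsec: htpy type of top stk}), so $\theta$ carries it into $S$, giving $R\subset S_{\stk}$ by the very definition of $S_{\stk}$. These identify $\Top_\infty$ and $\stk_\infty$ with the iterated localizations $(\widetilde{\Top}_\infty[R^{-1}])[S^{-1}]$ and $(\widetilde{\stk}_\infty[R^{-1}])[S_{\stk}^{-1}]$, so it suffices to localize the equivalence of the previous paragraph at the classes $S$ and $S_{\stk}$. Then I would apply Lemma \ref{lem: noohi 7.2} to the adjunction $i\colon \widetilde{\Top}_\infty[R^{-1}]\leftrightarrows \widetilde{\stk}_\infty[R^{-1}] \colon\theta$ with these two classes: the hypothesis $\theta(S_{\stk})\subset S$ is the definition of $S_{\stk}$, while $i(S)\subset S_{\stk}$ follows because the unit equivalence identifies $\theta i(f)$ with $f$ for $f\in S$. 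Lemma \ref{lem: noohi 7.2} then produces the induced adjunction $i\colon \Top_\infty\leftrightarrows\stk_\infty\colon\theta$, with $i$ still fully faithful because it is fully faithful upstairs.

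To upgrade full faithfulness to an equivalence I would check that the counit of the induced adjunction is again an equivalence: it is the image of the counit $i\theta\to\mathbbm{1}$ under the essentially surjective localization functor $\widetilde{\stk}_\infty[R^{-1}]\to\stk_\infty$, and an equivalence is sent to an equivalence; equivalently, every object of $\stk_\infty$ lies in the essential image of that localization functor, hence of $i\theta$, hence of $i$. This shows $i$ and $\theta$ are inverse equivalences $\Top_\infty\simeq\stk_\infty$, after which full faithfulness and preservation of colimits of $i$ are formal (the latter because $i$ is in particular a left adjoint). The main obstacle I anticipate is precisely this last descent: making rigorous that the equivalence present at the level of $R$-localizations survives the further localization at the not-obviously-comparable classes $S$ and $S_{\stk}$ — this is exactly what Lemma \ref{lem: noohi 7.2} is built to handle, so the real work consists in verifying its hypotheses, namely the two containments $R\subset S$, $R\subset S_{\stk}$ together with the compatibilities $i(S)\subset S_{\stk}$ and $\theta(S_{\stk})\subset S$.
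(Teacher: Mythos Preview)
Your proposal is correct and follows exactly the route the paper indicates: the paper's entire argument is the one-line remark ``using Lemma \ref{lem: noohi 7.2} and Proposition \ref{prop: noohi htpy 1}'', and you have unpacked precisely that, verifying the containments $i(S)\subset S_{\stk}$ and $\theta(S_{\stk})\subset S$ needed to invoke Lemma \ref{lem: noohi 7.2} and then checking the counit survives the localization. One small point: the equivalence coming from the last clause of Proposition \ref{lem: noohi 7.1} is between $\widetilde{\Top}_\infty[R^{-1}]$ and $\widetilde{\stk}_\infty[\widetilde{R}^{-1}]$ rather than $\widetilde{\stk}_\infty[R^{-1}]$, but this is harmless since your argument only uses that the unit is an equivalence, which already follows from the full faithfulness of $i$ in Proposition \ref{prop: noohi htpy 1}.
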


\subsection{$\infty$-\reduced Properads in $\stk_\infty$}
Both $\Top_\infty$ and $\stk_\infty$ admit finite products and the inclusion $\Top_\infty \hookrightarrow \stk_\infty$ preserves finite products. Let $\Top_\infty^\otimes$ and $\stk_\infty^\otimes$ be the symmetric monoidal categories associated with the $\infty$-categories $\Top_\infty$ and $\stk_\infty$ with the monoidal structure give by finite products. Using Corollary \ref{corr: noohi htpy 2} it follows that we have an equivalence of symmetric monoidal $\infty$-categories
\[ \Top_\infty^\otimes \hookrightarrow \stk_\infty^\otimes\]
Thus we have,
\begin{proposition}\label{prop: stk prpd colim}
The inclusion $\Top_\infty \hookrightarrow \stk_\infty$ induces a functor of operad algebras
\[ Alg_{\oprdinfty} (\Top_\infty^\otimes) \hookrightarrow Alg_{\oprdinfty} (\stk_\infty^\otimes)\]
which is an equivalence of categories and in particular preserves colimits.
\null \hfill \qedsymbol 
\end{proposition}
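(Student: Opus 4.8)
The plan is to deduce the proposition formally from the preceding discussion, using the functoriality of the construction $\catc \mapsto Alg_{\oprdinfty}(\catc)$ in the receiving symmetric monoidal $\infty$-category together with the fact that it carries equivalences to equivalences. Concretely, one observes that the $\infty$-operad $\oprdinfty$ is fixed and independent of the target, so any symmetric monoidal functor $\Top_\infty^\otimes \to \stk_\infty^\otimes$ induces, by post-composition on $\infty$-operad maps $\oprdinfty \to (-)^\otimes$, a functor $Alg_{\oprdinfty}(\Top_\infty^\otimes) \to Alg_{\oprdinfty}(\stk_\infty^\otimes)$; it then remains to check this functor is an equivalence and that equivalences preserve colimits (the latter being automatic).

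Carrying this out: by the paragraph preceding the statement, the cartesian monoidal structures on $\Top_\infty$ and $\stk_\infty$, together with Corollary \ref{corr: noohi htpy 2} (which gives an equivalence $i\colon \Top_\infty \to \stk_\infty$ of underlying $\infty$-categories preserving finite products, this last point resting on Lemma \ref{prop: sing prod} and the discussion of representing spaces in Section \ref{subsubsec: htpy type of top stk}), promote $i$ to an equivalence of symmetric monoidal $\infty$-categories $\Top_\infty^\otimes \to \stk_\infty^\otimes$. Now $Alg_{\oprdinfty}(\catc)$ is, by the definition in Section \ref{subsubsec: infty proprd oprd}, the full subcategory of $\mathrm{Fun}(\oprdinfty,\catc^\otimes)$ spanned by the $\infty$-operad maps; an equivalence $\catc^\otimes \to \catd^\otimes$ of $\infty$-operads induces an equivalence $\mathrm{Fun}(\oprdinfty,\catc^\otimes)\to\mathrm{Fun}(\oprdinfty,\catd^\otimes)$ which preserves and reflects the property of being an operad map, hence restricts to an equivalence on the two full subcategories of algebras (cf. \cite{lurie2016higher}, the invariance of $Alg$ under equivalences). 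Applying this to $\Top_\infty^\otimes \to \stk_\infty^\otimes$ yields the asserted equivalence, and since any equivalence of $\infty$-categories preserves all colimits that exist, the final clause follows. Combined with Proposition \ref{thm: equiv of inf prpds}, this is what allows the homotopy colimits computed earlier in topological \reduced properads to be reinterpreted as colimits of $\infty$-\reduced properads in $\stk_\infty$.

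I expect the only real subtlety here — and the step I would spell out most carefully — is not the proposition itself, which is essentially formal given Corollary \ref{corr: noohi htpy 2}, but the bookkeeping needed to be sure one is entitled to upgrade $i$ to a \emph{symmetric monoidal} equivalence: one must know that the products in $\stk_\infty$ agree, under the localizations, with products computed at the level of representing spaces, and that the cartesian symmetric monoidal structure is uniquely determined by the underlying $\infty$-category so that a product-preserving equivalence of underlying categories automatically lifts. Once that is granted (it is exactly what the paragraph before the statement asserts), the passage to $\oprdinfty$-algebras and the preservation of colimits are immediate consequences of the standard $\infty$-categorical machinery for operads and their algebras.
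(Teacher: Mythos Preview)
Your proposal is correct and follows essentially the same approach as the paper: the paper treats the proposition as immediate (it is stated with a \qedsymbol\ and no written proof), deriving it from the preceding paragraph which establishes that $\Top_\infty^\otimes \hookrightarrow \stk_\infty^\otimes$ is an equivalence of symmetric monoidal $\infty$-categories via Corollary~\ref{corr: noohi htpy 2}. Your write-up simply makes explicit the formal step of passing from a symmetric monoidal equivalence to an equivalence on $\oprdinfty$-algebras by post-composition, which the paper leaves implicit.
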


\subsection{$\Mbar$ as an $\infty$-\reduced properad in $\stk_\infty$}\label{subsec: mbar as stk proprd}
Recall that $\Mmodbar^{fr}_{g,n_-,n_+}$ is the moduli stack of genus $g$ stable nodal Riemann surfaces with $n_-$ input and $n_+$ output parametrized boundaries. As a category fibered in groupoids this stack is the category with the
\begin{itemize}
\item objects given by the families $\mathcal{C} \to T$ of nodal stable Riemann surfaces of genus $g$ with $n_-$ input and $n_+$ output boundary components, over topological spaces, and 
\item morphisms given by pullback diagrams of such families.
\end{itemize}
The functor $\Mmodbar^{fr}_{g,n_-,n_+} \to \Top$ is given by mapping a family $(\mathcal{C} \to T)$ to its base $T$.

The presheaf of groupoids underlying $\Mmodbar^{fr}_{g,n_-,n_+}$, obtained by applying functor $\Gamma$ described in Remark \ref{rmk: cat fibrd in grpd}, is thus given by
\[ T \mapsto {\mathcal{G}rpd/\Top}_{Grpd}(\Top/T,\Mmodbar^{fr}_{g,n_-,n_+})\]

\begin{notation}
Let $\Mmodbar^{fr}_{n_-,n_+}$ denote the stack $\coprod_{g \geq 0} \Mmodbar^{fr}_{g,n_-,n_+}$. 
\end{notation}

We now outline how to view $\Mbar$ as a $\oprdinfty$-algebra using these moduli spaces. We shall in fact show that $\Mbar$ can be realized as an algebra over $\oprd$, interpreted as a $2$-operad, in the (strict) $2$-category of stacks $\stk$. More concretely,  we will construct a lax $2$-functor of categories over $\Fin_*$
\[ \Mbar_2 \colon \oprdten \to \stk^\otimes.\]
Here
\begin{enumerate}
	\item $\stk^\otimes$ is the category over $\Fin_*$ associated to the strict $2$-category $\stk$ of stacks as in \eqref{eq: tensor category 2}, where the monoidal product is given by product of stacks. (The construction in \eqref{eq: tensor category 2} is described for $1$-categories, but generalizes to the case of strict $2$-categories in a straightforward manner).
	\item $\oprdinfty$ is the category over $\Fin_*$ associated with the colored operad $\oprd$ as in \eqref{eq: tensor category 1}.
\end{enumerate}
For an object $\un{n}^1, \un{n}^2,...,\un{n}^k$ in $\oprdten$ define
\[\Mbar_2( \un{n}^1, \un{n}^2,...,\un{n}^k) : = \{\Mbar^{fr}_{\un{n}^i}\}_{i=1}^k\]
Action of the functor $\Mbar_2$ on morphisms is defined as follows: as described in \ref{subsubsec: infty proprd oprd}, the morphisms in the category $\oprdten$ are given by a sequence of vertex-ordered \rda-graphs.  We describe how $\Mbar_2$ acts on the morphisms of type $\oprdten(\un{n}^1, \un{n}^2,...,\un{n}^k;\un{n}_+)$, in other words on morphisms given by a single vertex-ordered \rda-graphs. The definition extends to morphisms described by disconnected graphs in a straightforward manner.\\
Let $G$ be a (connected) vertex-ordered \rda-graph describing a morphism form $\un{n}^1, \un{n}^2,...,\un{n}^k$ to $\un{n}_+$. We define a map
\[ \mu_G(\Mbar_2) \colon\Mbar_2( \un{n}^1, \un{n}^2,...,\un{n}^k)  = \prod_{i}\Mbar^{fr}_{\un{n}^i} \to \Mbar_2( \un{n}_+) = \Mbar^{fr}_{\un{n}^1}\]
Let $T$ be any test space, we will describe the functor 
\[\mu_G(\Mbar_2)(T) \colon \prod \Mbar^{fr}_{\un{n}^i}(T) \to \Mbar_2( \un{n}_+)(T)\]
Here,
\begin{enumerate}
\item $\prod_{i}\Mbar^{fr}_{\un{n}^i}(T)$ is the groupoid of tuples $\{\catc^i \to T\}_{i=1}^n$ such that the curves in $\catc^i$ have input-output profile $\un{n}^i$
\item $\Mbar_2( \un{n}_+)(T)$ is the groupoid of families $\catc_{\un{n}_+} \to T$ with input-output profile $\un{n}_+$
\end{enumerate}
The groupoid map $\mu_G(\Mbar_2)(T)$ is induced by mapping a tuple of families over $T$ as in $(1)$ to the family obtained by gluing them as prescribed by the edges of $G$ (followed by stabilization).\\
Now let $G$ and $H$ be two graphs describing a pair of composible morphisms in $\oprdten$. Given a test space $T$ and a tuple of families of curves $\{\catc_v \to T\}_{v \in V(G)}$ indexed by the vertices of $G$, we note that the two families over $T$ obtained by 
\begin{itemize}
\item first gluing the families along edges of $G$ and then gluing the resulting families along $H$, and 
\item gluing the families along the composed graph $H \circ G$
\end{itemize}
are canonically isomorphic. Thus there exist canonical isomorphisms 
\[\mu_H(\Mbar_2)(T) \circ \mu_G(\Mbar_2)(T) \Rightarrow \mu_{H \circ G}(T).\]
These isomorphisms are natural in $T$ and hence it follows that there exists a \emph{unique} $2$-isomorphism $\alpha \colon \Mbar_2 (H) \circ \Mbar_2(G) \Rightarrow \Mbar_2(H \circ G)$.\\
Thus we have a lax $2$-functor 
\[\Mbar_2 \colon \oprdten \to \stk^\otimes \]
as desired. \\
From the construction of $\Mbar_2$ it is clear that the induced map of $\infty$-categories over $\Fin_*$ preserves inert morphisms and thus defines a map of $\infty$-operads.\\
Note that $\stk$ is a subcategory of the $\infty$-category $\stk_{\infty}$ and the inclusion $\stk \hookrightarrow \stk_\infty$ induces a map of $\infty$-operads $\stk^\otimes \to \stk_{\infty}^\otimes$. Composing $\Mbar_2$ with this map then gives a map the desired map of $\infty$-operads:
\[\Mbar \colon \oprdten \to \stk^\otimes\]
exhibiting $\Mbar$ as a $\oprdinfty$-algebra.\\

%


\appendix

\section{Proofs from Section \ref{sec: model category str and cofib resolutions}}\label{apndx: Cofibrant Resolutions via The Bar Construction}
The aim of this appendix is to fill in technical details which were left out in Section \ref{subsec: Cofibrant Resolutions via The Bar Construction}. 

In Section \ref{subsec: appendix tensor cotensor} we explain in detail the fact that the category of topological \reduced properads is tensored and cotensored over the category of topological spaces. We also prove Lemma \ref{complem} which describes the compatibility of the tensor and cotensor operations with the model structure on \reduced properads. In Section \ref{subsec: appendix geometric realization}, we recall the definition of geometric realization from Section \ref{subsec: Cofibrant Resolutions via The Bar Construction}, and provide its alternate description in terms of \emph{latching spaces} which was alluded to there. In Section \ref{subsec: appendix cofib of bar}, we use this description to prove Proposition \ref{prop: cofibrancy of bar construction} and Corollary \ref{corr: cofibrancy of bar construction} from Section \ref{subsec: Cofibrant Resolutions via The Bar Construction}. In Section \ref{subsec: appendix geometric realizations iso} we provide the proof of Proposition \ref{prop: geometric realizations iso} asserting that the geometric realizations of simplicial \reduced properads in the categories of topological \reduced sequences and \reduced properads coincide. Finally, in Section \ref{subsec: app compareproposition}, we provide the details of the proof of Proposition \ref{compareproposition}.


\subsection{Tensor and Cotensor over topological spaces}\label{subsec: appendix tensor cotensor}	
The category of \reduced properads is tensored and cotensored over topological spaces. This means that, along with the bifunctor 
\[  [\ ,\ ] \colon \Galg \times \Galg^{op} \to \Top \]
providing an enrichment of the category of \reduced properads over topological spaces, there are two additional bifunctors 
\begin{align*}
\odot &\colon \Galg \times \Top \to \Galg, \mbox{  and }\\ 
(\_)^{\_} &\colon \Galg \times \Top^{op} \to \Galg 
\end{align*}
such that there are isomorphisms
\[ {\Galg}(P \odot Z, Q) \simeq {\Top} ( Z, [P,Q]) \simeq {\Galg}(P,Q^Z) ,\]
natural in all the variables involved. These bifunctors are constructed as follows:

The enrichment of topological \reduced properads over topological spaces is given by endowing the properad hom-spaces with the natural topology obtained by identifying them as subspaces of the hom-spaces of topological \reduced sequences. In other words, $[P,Q]$ is the space of \reduced properad maps form $P$ to $Q$ with the product compact-open topology. Equivalently, if $[\ ,\ ]^{\redTopSeq}$ denotes the enrichment of topological \reduced sequences over topological spaces, $[ P,Q ]$ is defined as the equalizer
\[ [ P ,Q ]  \to [ P , Q ]^{\redTopSeq} \rightrightarrows [\GG P , Q ]^{\redTopSeq} ,\]
where the first arrow is induced by pullback along the properad structure map $\GG P \to P$ and the second map is the composition $[P,Q]^{\redTopSeq} \to [\GG P, \GG Q]^{\redTopSeq} \to  [\GG P, Q]^{\redTopSeq}$

The operation $\odot$ can be constructed as 
\[ P \odot Z := coeq \Big[ \GG(\GG P \times Z) \rightrightarrows \GG(P \times Z) \Big] .\]
Here, for $X$ a topological \reduced sequence and $Z$ a topological space, $X \times Z$ denotes the topological \reduced-sequence given by component-wise product $\{ X(n_-,n_+) \times Z\}_{n_-,n_+}$. The coequalizer can be taken in either topological \reduced properads or topological \reduced sequences, since they both coincide in this case. The first arrow is induced from the properad structure map $\GG P \to P$, whereas the second arrow is induced from $\GG P \times Z \to \GG( P \times Z)$ using the universal property of free properads.

The operation $(\_)^{\_}$ can be constructed as follows: The underlying sequence of $P^Z$ is given by $P^Z=\{{\Top}(Z,P(n_-,n_+))\}_{n_-,n_+}$ and the properad structure is induced from that of $P$.\\

These operations are compatible with the model structure in the following sense:
\begin{lemma}[\cite{rezk1996spaces}]\label{complem}
	The following equivalent conditions are satisfied:
	\begin{enumerate}
		\item \label{lem1} If $i \colon Y \to Z$ is a cofibration of topological spaces and $j \colon P \to Q$ is a cofibration of \reduced properads, then \[P \odot Z \bigsqcup_{P \odot Y}^{\Galg} Q \odot Y \to Q \odot Z\] is a cofibration of \reduced properads. Moreover this map is a weak equivalence whenever $i$ or $j$ is.
		\item  \label{lem2} If $i \colon Y \to Z$ is a cofibration of topological spaces  and $j \colon P \to Q$ is a fibration of \reduced properads, then \[P^Z \to P^Y \times_{Q^Y} Q^Z\] is a fibration of properads. Moreover this map is a weak equivalence whenever $i$ or $j$ is.
		\item If $j \colon P \to Q$ is a cofibration and $k \colon R \to S$ a fibration of \reduced properads, then \[[Q,R] 	\to [P,R] \times_{[P,S]} [Q,S]\] is a fibration. Again, this map is a weak equivalence whenever $j$ or $k$ is.
	\end{enumerate}
\end{lemma}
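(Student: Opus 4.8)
The plan is to deduce all three statements from \eqref{lem1}, using that they are interderivable: the triple $\big(\odot,\,(\_)^{(\_)},\,[\_,\_]\big)$ is a two-variable adjunction $\Galg\times\Top\to\Galg$ (Section \ref{subsec: appendix tensor cotensor}), and for such an adjunction the pushout-product axiom for $\odot$, the pullback-cotensor axiom for $(\_)^{(\_)}$, and the pullback-hom axiom for $[\_,\_]$ are equivalent by transposing lifting problems across the three adjoints. So it suffices to show that for a cofibration $i\colon Y\to Z$ of spaces and a cofibration $j\colon P\to Q$ of \reduced properads, the map
\[
i\,\square\,j\colon\quad P\odot Z\ \sqcup^{\Galg}_{P\odot Y}\ Q\odot Y\ \longrightarrow\ Q\odot Z
\]
is a cofibration of \reduced properads, and an acyclic cofibration as soon as $i$ or $j$ is a weak equivalence.

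First I would reduce to generating cofibrations. Recall from \cite{berger2007resolution} that the model structure on $\Galg$ is transferred along the free functor $\GG\colon\redTopSeq\to\Galg$, so it is cofibrantly generated with generating cofibrations $\GG I$ and generating acyclic cofibrations $\GG J$, where $I,J$ generate the component-wise model structure on $\redTopSeq$; in particular the forgetful functor is right Quillen and $\GG$ is left Quillen. Since $(\_)\odot Z$ is left adjoint to $(\_)^Z$ and $P\odot(\_)$ is left adjoint to $[P,\_]$, the pushout-product $i\,\square\,j$ preserves pushouts, transfinite composites, and retracts in each of its two variables separately. Hence the class of $j$ for which $i\,\square\,j$ is a cofibration for every cofibration $i$ contains $\GG I$ and is closed under these operations, so it consists of all cofibrations; running the same argument with $\GG J$ in place of $\GG I$ disposes of the case in which $j$ is an acyclic cofibration. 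Thus I may assume $j=\GG(f)$ with $f$ a cofibration (resp. acyclic cofibration) of $\redTopSeq$, while keeping $i$ an arbitrary (acyclic) cofibration of spaces.

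For such $j$ the key point is a natural isomorphism $\GG(A)\odot Z\cong\GG(A\times Z)$ of \reduced properads: both sides corepresent the functor $Q\mapsto\Top\big(Z,\,[A,\mathrm{Forget}\,Q]^{\redTopSeq}\big)$, via the $\Top$-enriched free--forgetful adjunction $\GG\dashv\mathrm{Forget}$ together with the hom--tensor adjunction. Because $\GG$ is a left adjoint it preserves pushouts, so $i\,\square\,\GG(f)$ is identified with $\GG$ applied to the pushout-product $i\,\square\,f$ computed in $\redTopSeq$. Now $\redTopSeq$ is a power of $\Top$ with the component-wise model structure, so in each arity $i\,\square\,f$ is the pushout-product of $i$ with a cofibration of spaces; the pushout-product axiom for the cartesian model category $\Top$ makes this a cofibration of spaces, acyclic whenever $i$ or $f$ is. Therefore $i\,\square\,f$ is a cofibration (resp. acyclic cofibration) of $\redTopSeq$, and applying the left Quillen functor $\GG$ shows that $i\,\square\,\GG(f)$ is a cofibration (resp. acyclic cofibration) of $\Galg$. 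Feeding this back into the reduction of the previous paragraph proves \eqref{lem1}, and hence all three conditions. This is the argument of \cite{rezk1996spaces} for algebras over operads, transported to the monad $\GG$ (equivalently, the colored operad $\oprd$) governing \reduced properads.

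The step I expect to require the most care is the identification $\GG(A)\odot Z\cong\GG(A\times Z)$ \emph{as topological \reduced properads}, rather than merely as underlying \reduced sequences: this forces one to unwind the explicit definitions of $[\_,\_]$ and $\odot$ recorded in Section \ref{subsec: appendix tensor cotensor} and to check that the free--forgetful adjunction is genuinely $\Top$-enriched (so that the corepresentation arguments take place internally to $\Top$). The remaining delicate point is making the ``reduce to generators'' step airtight, which needs the smallness of the domains of the maps in $\GG I$ and $\GG J$; but this is exactly the admissibility of the monad $\GG$ already invoked in \cite{berger2007resolution} to transfer the model structure, so no new input is needed there.
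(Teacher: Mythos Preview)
Your argument is correct, but it takes a different route from the paper's. The paper proves \eqref{lem2} directly and then deduces \eqref{lem1} from it by adjunction, whereas you prove \eqref{lem1} directly (via reduction to generating cofibrations and the identification $\GG(A)\odot Z\cong\GG(A\times Z)$) and then invoke the equivalence of the three conditions.

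The paper's argument for \eqref{lem2} is shorter because everything on the cotensor side is created by the forgetful functor: fibrations and weak equivalences in $\Galg$ are defined underlying-wise, the cotensor $P^Z$ has underlying \reduced sequence $\{\Top(Z,P(n_-,n_+))\}$, and the fiber product $P^Y\times_{Q^Y}Q^Z$ coincides with the fiber product of underlying sequences. So the lifting problem for \eqref{lem2} transposes \emph{immediately} to the pushout-product axiom in $\redTopSeq$, with no need for cofibrant generation or the formula $\GG(A)\odot Z\cong\GG(A\times Z)$. Your approach, on the other hand, is the standard template from \cite{rezk1996spaces} for proving the pushout-product axiom in categories of algebras; it is more portable (it would work even if the cotensor were not so transparently computed on underlying objects) and makes explicit the role of cofibrant generation, at the cost of a bit more machinery. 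Both arguments ultimately rest on the same input: the cartesian pushout-product axiom in $\Top$, applied component-wise in $\redTopSeq$.
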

	\begin{proof}
	We explain the proof of (\ref{lem2}) and the implication (\ref{lem2}) $\Rightarrow$ (\ref{lem1}), since this is the only part we shall use.

	(\ref{lem2}): Note that we need to show $P^Z \to P^Y \times_{Q^Y} Q^Z$ is a fibration of underlying topological \reduced sequences. The fibered product $P^Y \times_{Q^Y} Q^Z$ coincides with the fibered product of the underlying topological \reduced sequences. Let $V \to W$ be an acyclic cofibration in topological \reduced sequences. We need to show that any square
\[	\begin{tikzcd}
		V \ar{r}\ar{d} & P^Z \ar{d}\\
		W \ar{r}& P^Y \times_{Q^Y} Q^Z
	\end{tikzcd}
\]
	admits a lift $W \to P^Z$ (in topological \reduced sequences). This is equivalent to the statement that any square
\[
	\begin{tikzcd}
		W \times Y \sqcup_{V \times Y} V \times Z \ar{r}\ar{d} &P \ar{d} \\
		W \times Z \ar{r} &Q
	\end{tikzcd}
\]
	admits a lift $W \times Z \to P$ (in topological \reduced sequences). But this follows from the fact that
	\begin{itemize}
		\item $P \to Q$ is a fibration of \reduced properads, and hence of topological \reduced sequences as well
		\item $Y \to Z$ is a cofibration of topological spaces and $V \to W$ is an acyclic cofibration of topological \reduced sequences, and hence $W \times Y \sqcup_{V \times Y} V \times Z \to W \times Z$ is an acyclic cofibration in topological \reduced sequences.
	\end{itemize}
	The conclusion when $i$ or $j$ is a weak equivalence follows by a similar argument, starting with a cofibration $V \to W$ which is not necessarily a weak equivalence.  
		This completes the proof of (\ref{lem2}).
		
	(\ref{lem2}) $\Rightarrow$ (\ref{lem1}): To show that $ P \odot Z \bigsqcup_{P \odot Y}^{\Galg} Q \odot Y \to Q \odot Z$ is a cofibration of topological \reduced properads, we show that for any acyclic fibration $E \to F$ in \reduced properads the square
\[
	\begin{tikzcd}
			P \odot Z \bigsqcup_{P \odot Y}^{\Galg}  Q \odot Y \ar{r}\ar{d} & E \ar{d}\\
			Q \odot Z \ar{r} & F
	\end{tikzcd}
\]
admits a lift $Q \odot Z \to E$ (in \reduced properads). This is equivalent to 
\[
	\begin{tikzcd}
	P \ar{r}\ar{d} & E^Z \ar{d}\\
	Q \ar{r} &F^Y \times_{F^Y} F^Z
	\end{tikzcd}
\]
admitting a lift $Q \to E^Z$ (in \reduced properads). But this is a consequence of (\ref{lem2}) combined with the fact that $P \to Q$ is a properad cofibration.\\
Again, the conclusion when $i$ or $j$ is a weak equivalence follows by a similar argument, starting with a fibration $E \to F$ which is not necessarily a weak equivalence.  
	\end{proof}
	

\subsection{Geometric realization}\label{subsec: appendix geometric realization}
Now, using this tensor product on the category of \reduced properads over topological spaces, we can define the geometric realization of a simplicial \reduced properad $\{P_\bullet\}_{\bullet \geq 0}$ by the usual formula:
\begin{equation}\label{eqn: appendix geomrelform}
|P_\bullet|^{\Galg} = \int_{\Delta}^{\Galg} P_n \odot \Delta^n := coeq^{\Galg} \Big[ \coprod_{\phi \colon [n] \to [m] \in \Delta}^{\Galg} P_m \odot \Delta^n\rightrightarrows \coprod_{[n] \in \Delta}^{\Galg} P_n \odot \Delta^n .\Big]
\end{equation}
Here $\Delta$ denotes the simplex category, and $\Delta^n$ is the standard $n$-simplex.	In the coequalizer, the first arrow is induced from the simplicial structure map $P^*(\phi) \colon P_m \to P_n \in \Galg$ corresponding to $\phi \colon [n] \to [m]$ and the second arrow is induced from the map $\Delta(\phi) \colon \Delta^n \to \Delta^m \in \Top$.

There is an alternate description of $|\ \ |^{\Galg}$ as an iterated pushout in terms of the so called `latching spaces': \\
Define the $n$th \emph{latching space} of $P_\bullet$ to be the coequalizer
\[ L_nP_\bullet := coeq^{\Galg} \Big[ \coprod_{0 \leq i < j \leq n}^{\Galg} P_{n-1} \rightrightarrows \coprod_{0 \leq i \leq n}^{\Galg} P_n \Big] .\]
The two arrows are defined as follows: on the $(i,j)$th summand, the first arrow is induced by the degeneracy map $s_i \colon P_{n-1} \to P_n$ into the $j$th summand and the second arrow is induced from $s_{j-1} \colon P_{n-1} \to P_n$ into the $i$th summand. Note that there is a canonical map
\[ L_{n}P_\bullet \to P_{n+1} .\]
$L_n P_\bullet$ can be thought of intuitively as ``the space of degenerate simplices in $P_{n+1}$''.\\
Define $|\ \ |^{\Galg}_{(n)}$ inductively as:
	\begin{itemize}
	\item $|P_\bullet|^{\Galg}_{(0)} =  P_0$, and
	\item $|P_\bullet|^{\Galg}_{(n+1)}$ is defined to be the pushout
	\begin{equation}\label{dig: appendix latchingdig}
		\begin{tikzcd}
			L_nP_\bullet \odot \Delta^{n+1} \bigsqcup_{L_nP_\bullet \odot \partial \Delta^{n+1}}^{\Galg} P_{n+1} \odot \partial \Delta^{n+1} \ar{r}\ar{d} & P_{n+1} \odot \Delta^{n+1} \ar{d}\\
			{|P_\bullet|}^{\Galg}_{(n)} \ar{r} & {|P_\bullet|}^{\Galg}_{(n+1)}\\
		\end{tikzcd}
	\end{equation}
	\end{itemize}

Then we have $|P_\bullet|^{\Galg} = \varinjlim_n |P_\bullet|^{\Galg}_{(n)}$.\\
Using this description we can now reformulate the cofibrancy condition on $|P_\bullet|^{\Galg}$ in terms of the pushouts \eqref{dig: appendix latchingdig}:
\begin{lemma}\label{lem: cofib in latching}
	If $L_nP_\bullet \to P_{n+1}$ is a cofibration of \reduced properads for every $n$, then $|P_\bullet|^{\Galg}$ is a cofibrant \reduced properad.
\end{lemma}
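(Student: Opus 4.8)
\textbf{Proof strategy for Lemma \ref{lem: cofib in latching}.}
The plan is to show that each map $|P_\bullet|^{\Galg}_{(n)} \to |P_\bullet|^{\Galg}_{(n+1)}$ is a cofibration of \reduced properads, so that the transfinite composite $|P_\bullet|^{\Galg}_{(0)} \to \varinjlim_n |P_\bullet|^{\Galg}_{(n)} = |P_\bullet|^{\Galg}$ is a cofibration; since $|P_\bullet|^{\Galg}_{(0)} = P_0$ and $P_0$ is assumed cofibrant (as $P_\bullet$ takes values in cofibrant \reduced properads in the intended application — more precisely we apply this with $P_\bullet = B_\bullet(\GG,\GG,P)$ where $P$ is cofibrant as a topological \reduced sequence, so each $P_n = \GG^{n+1}P$ is a free, hence cofibrant, properad), it follows that $|P_\bullet|^{\Galg}$ is cofibrant. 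Recall that cofibrations of \reduced properads are closed under pushout and transfinite composition (being the left class of a weak factorization system, by Remark \ref{rmk: cofib in model str}), so both reductions are legitimate.

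First I would fix $n$ and look at the defining pushout square \eqref{dig: appendix latchingdig}. By the closure of cofibrations under pushout, it suffices to show that the left-hand vertical map
\[ L_nP_\bullet \odot \Delta^{n+1} \bigsqcup_{L_nP_\bullet \odot \partial \Delta^{n+1}}^{\Galg} P_{n+1} \odot \partial \Delta^{n+1} \to P_{n+1} \odot \Delta^{n+1} \]
is a cofibration of \reduced properads. This is precisely the pushout-product of the two maps $L_nP_\bullet \to P_{n+1}$ (a cofibration of \reduced properads by hypothesis) and $\partial\Delta^{n+1} \hookrightarrow \Delta^{n+1}$ (a cofibration of topological spaces) with respect to the tensoring $\odot$. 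So the statement to invoke is exactly part \eqref{lem1} of Lemma \ref{complem}: given a cofibration $i \colon Y \to Z$ of spaces and a cofibration $j \colon P \to Q$ of \reduced properads, the induced map $P \odot Z \sqcup_{P\odot Y}^{\Galg} Q\odot Y \to Q \odot Z$ is a cofibration of \reduced properads. Applying this with $i = (\partial\Delta^{n+1}\hookrightarrow\Delta^{n+1})$ and $j = (L_nP_\bullet \to P_{n+1})$ gives the claim.

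Then I would assemble the pieces: each $|P_\bullet|^{\Galg}_{(n)} \to |P_\bullet|^{\Galg}_{(n+1)}$ is a cofibration by the pushout argument above, the composite $P_0 = |P_\bullet|^{\Galg}_{(0)} \to |P_\bullet|^{\Galg}$ is a cofibration as a (countable) transfinite composite of cofibrations, and $P_0$ is cofibrant; hence $|P_\bullet|^{\Galg}$ is cofibrant. Since there are no hard analytic inputs here — everything rests on Lemma \ref{complem}\eqref{lem1} and the formal closure properties of cofibrations — the only real content is bookkeeping. The one point that deserves care, and which I expect to be the main (mild) obstacle, is verifying that the latching-space filtration really does present the geometric realization as the stated iterated pushout in $\Galg$: one must check that $|P_\bullet|^{\Galg}$ defined via the coend formula \eqref{eqn: appendix geomrelform} agrees with $\varinjlim_n |P_\bullet|^{\Galg}_{(n)}$ built from the squares \eqref{dig: appendix latchingdig}. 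This is the standard skeletal filtration of a geometric realization (as for simplicial spaces), and it transports to $\Galg$ because $\odot$, being a left adjoint in each variable, commutes with the colimits involved; I would record this identification and then the proof concludes as above.
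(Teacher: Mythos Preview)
Your argument is essentially identical to the paper's: both reduce to showing the pushout-product map in diagram \eqref{dig: appendix latchingdig} is a cofibration and invoke Lemma \ref{complem}\eqref{lem1}. Your write-up is more explicit about the surrounding bookkeeping (closure of cofibrations under pushout and transfinite composition, and the need for $P_0$ to be cofibrant --- a hypothesis the lemma's statement omits but which holds in the intended application to $B_\bullet(\GG,\GG,P)$); the paper's proof leaves all of this implicit. One small slip: you call the displayed map the ``left-hand vertical map'' when it is in fact the upper horizontal arrow of \eqref{dig: appendix latchingdig}, but the formula you wrote is the correct one.
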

\begin{proof}
	It suffice to prove that the upper horizontal arrow in the diagram \ref{dig: appendix latchingdig} is a cofibration for all $n$. This follows from the compatibility statements from Lemma \ref{complem} since $\partial \Delta^{n+1} \hookrightarrow \Delta^{n+1} $ is a cofibration of topological spaces and $P \to Q$ is a cofibration of \reduced properads.
\end{proof}

We have a relative version of Lemma \ref{lem: cofib in latching}:
\begin{lemma}\label{lem: rel cofib in latching}
	If $P_{\bullet} \to Q_{\bullet}$ is a map of simplicial \reduced properads such that $L_nQ_\bullet \bigsqcup_{L_nP_\bullet}^{\Galg} P_{n+1} \to Q_{n+1}$ is a cofibration of properads for every $n$, then $|P_\bullet|^{\Galg} \to |Q_\bullet|^{\Galg}$ is a cofibration of \reduced properads.
\end{lemma}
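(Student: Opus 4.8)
The plan is to run the same skeletal–filtration argument as in the proof of Lemma \ref{lem: cofib in latching}, but carried out relative to the map $u\colon P_\bullet\to Q_\bullet$. Recall that $|P_\bullet|^{\Galg}=\varinjlim_n|P_\bullet|^{\Galg}_{(n)}$ and similarly for $Q_\bullet$, that $u$ induces compatible maps $|P_\bullet|^{\Galg}_{(n)}\to|Q_\bullet|^{\Galg}_{(n)}$ of skeletal stages, and (adopting the conventions $L_{-1}(-)=\varnothing$ and $|{-}|^{\Galg}_{(-1)}=\varnothing$, the initial \reduced properad) that the hypothesis is precisely the statement that the \emph{relative latching maps}
\[ \ell_n(u)\colon L_nQ_\bullet\bigsqcup^{\Galg}_{L_nP_\bullet}P_{n+1}\longrightarrow Q_{n+1} \]
are cofibrations of \reduced properads for every $n\geq -1$; for $n=-1$ this reduces to the assumption that $P_0\to Q_0$ is a cofibration. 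Since cofibrations in $\Galg$ are stable under pushout and transfinite composition, it suffices to exhibit $|P_\bullet|^{\Galg}\to|Q_\bullet|^{\Galg}$ as a transfinite composite of pushouts of maps built from the $\ell_n(u)$.

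To do this I would introduce the interpolating filtration $Z_n:=|Q_\bullet|^{\Galg}_{(n)}\bigsqcup^{\Galg}_{|P_\bullet|^{\Galg}_{(n)}}|P_\bullet|^{\Galg}$, so that $Z_{-1}=|P_\bullet|^{\Galg}$, $\varinjlim_n Z_n=|Q_\bullet|^{\Galg}$, and the composite $Z_{-1}\to\varinjlim_n Z_n$ is the map in question. The heart of the matter is the (standard, Reedy–type) identification: using the pasting law for pushouts one checks that $Z_n\to Z_{n+1}$ is a pushout of
\[ |Q_\bullet|^{\Galg}_{(n)}\bigsqcup^{\Galg}_{|P_\bullet|^{\Galg}_{(n)}}|P_\bullet|^{\Galg}_{(n+1)}\longrightarrow|Q_\bullet|^{\Galg}_{(n+1)}, \]
and then, unwinding the two pushout squares \eqref{dig: appendix latchingdig} for $P_\bullet$ and for $Q_\bullet$ and writing $\widetilde L_n:=L_nQ_\bullet\bigsqcup^{\Galg}_{L_nP_\bullet}P_{n+1}$ for the source of $\ell_n(u)$, that this last map is itself a pushout of the pushout–product of $\ell_n(u)$ with the boundary inclusion $\partial\Delta^{n+1}\hookrightarrow\Delta^{n+1}$, namely of the map
\[ \widetilde L_n\odot\Delta^{n+1}\bigsqcup^{\Galg}_{\widetilde L_n\odot\partial\Delta^{n+1}}Q_{n+1}\odot\partial\Delta^{n+1}\longrightarrow Q_{n+1}\odot\Delta^{n+1}. \]

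Finally, $\partial\Delta^{n+1}\hookrightarrow\Delta^{n+1}$ is a cofibration of topological spaces and $\ell_n(u)$ is a cofibration of \reduced properads by hypothesis, so Lemma \ref{complem}(1) shows the displayed pushout–product map is a cofibration of \reduced properads; hence each $Z_n\to Z_{n+1}$, being a pushout of it, is a cofibration, and therefore so is the transfinite composite $|P_\bullet|^{\Galg}\to|Q_\bullet|^{\Galg}$. The step I expect to demand the most care is the combinatorial bookkeeping behind the two displayed pushout identifications — sorting nondegenerate simplices of $Q_\bullet$ according to whether they already come from $P_\bullet$ or from a lower skeleton — but this is the routine relative analogue of the manipulations underlying the square \eqref{dig: appendix latchingdig}, and goes through by standard Reedy-category reasoning.
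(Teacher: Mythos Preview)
Your proposal is correct and follows essentially the same approach as the paper: both reduce to showing that $|P_\bullet|^{\Galg}_{(n+1)}\bigsqcup^{\Galg}_{|P_\bullet|^{\Galg}_{(n)}}|Q_\bullet|^{\Galg}_{(n)}\to|Q_\bullet|^{\Galg}_{(n+1)}$ is a cofibration, identify this (via pasting of the two latching squares \eqref{dig: appendix latchingdig}) as a pushout of the pushout--product of $\ell_n(u)$ with $\partial\Delta^{n+1}\hookrightarrow\Delta^{n+1}$, and conclude by Lemma~\ref{complem}\eqref{lem1}. Your packaging via the interpolating filtration $Z_n$ and explicit appeal to closure of cofibrations under pushout and transfinite composition is slightly more detailed than the paper's, but the substance is identical.
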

\begin{proof}
Using the filtrations on $|P_\bullet|$ and $|Q_\bullet|$ described by the diagram \eqref{dig: appendix latchingdig}, it suffices to prove that 
\begin{equation}\label{eq: rel bar pushout 1} 
	{|P_\bullet|}^{\Galg}_{(n+1)} \bigsqcup_{{|P_\bullet|}^{\Galg}_{(n)}}^{\Galg} {|Q_\bullet|}^{\Galg}_{(n)} \to {|Q_\bullet|}^{\Galg}_{(n+1)} 
\end{equation}
is a cofibration of \reduced properads. 
For convenience, set
\begin{align*}
	L^{\Delta}_nP_\bullet &:= L_nP_\bullet \odot \Delta^{n+1} \bigsqcup_{L_nP_\bullet \odot \partial \Delta^{n+1}}^{\Galg} P_{n+1} \odot \partial \Delta^{n+1} \mbox{, and }\\
	L^{\Delta}_nQ_\bullet &:= L_nQ_\bullet \odot \Delta^{n+1} \bigsqcup_{L_nQ_\bullet \odot \partial \Delta^{n+1}}^{\Galg} Q_{n+1} \odot \partial \Delta^{n+1}.
\end{align*} 
Then, considering the map of diagrams \eqref{dig: appendix latchingdig} induced by $P_\bullet \to Q_\bullet$, it can be proved that \eqref{eq: rel bar pushout 1} is equivalent to showing that 
\begin{equation}\label{eq: rel bar pushout 2}
	 L^{\Delta}_nQ_\bullet \bigsqcup_{L^{\Delta}_nP_\bullet}^{\Galg} P_{n+1} \odot \Delta^{n+1} \to Q_{n+1} \odot \Delta^{n+1} 
\end{equation}
is a cofibration of \reduced properads.
To prove \eqref{eq: rel bar pushout 2} observe that 
\[ L^{\Delta}_nQ_\bullet \bigsqcup_{L^{\Delta}_nP_\bullet}^{\Galg} P_{n+1} \odot \Delta^{n+1} = \left(L_n Q_\bullet \bigsqcup_{L_n P_\bullet}^{\Galg} P_{n+1}\right) \odot \Delta^{n+1} \coprod_{(L_n Q_\bullet \bigsqcup_{L_n P_\bullet}^{\Galg} P_{n+1}) \odot \partial \Delta^{n+1}}^{\Galg}  Q_{n} \odot  \partial \Delta^{n+1}\]
\eqref{eq: rel bar pushout 2} now follows by Lemma \ref{complem} from the hypothesis that $L_nQ_\bullet \sqcup_{L_nP_\bullet}^{\Galg} P_{n+1} \to Q_{n+1}$ is a cofibration.
\end{proof}


\subsection{Cofibrancy of the bar construction}\label{subsec: appendix cofib of bar}
We now provide the proof of Proposition \ref{prop: cofibrancy of bar construction}. Recall that for an \reduced properad $P$ its bar construction is a simplicial properad which we denote by $B_{\bullet}(\GG,\GG,P)$. 

Let us start by recalling the statement of Proposition \ref{prop: cofibrancy of bar construction}:
\begin{proposition}\label{prop: appendix cofibrancy of bar construction}
If the topological \reduced sequence underlying a \reduced properad $P$ is cofibrant, then $|B(\GG,\GG,P)|^{\Galg}$ is cofibrant in the category of \reduced properads.
\end{proposition}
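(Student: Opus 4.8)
The strategy is to apply the latching-space criterion for cofibrancy of geometric realizations established in Lemma~\ref{lem: cofib in latching}. By that lemma, it suffices to show that for the simplicial \reduced properad $P_\bullet = B_\bullet(\GG,\GG,P)$, the canonical map $L_n P_\bullet \to P_{n+1}$ is a cofibration of \reduced properads for every $n$. Here $P_{n+1} = \GG^{n+2}(P)$ is a \emph{free} \reduced properad (on the topological \reduced-sequence $\GG^{n+1}(P)$), and the point is that the inclusion of the subspace of degenerate simplices into $P_{n+1}$ should itself be a free map. Concretely, I would first identify $L_n P_\bullet$ explicitly: since all the degeneracy maps $s_i = \GG^{i+1}\eta\,\GG^{n-i}P$ come from applying the unit $\eta \colon \mathbbm 1 \to \GG$ at various spots, the latching object $L_n P_\bullet$ is the free \reduced properad $\GG(D_n)$ on a topological \reduced-subsequence $D_n \subset \GG^{n+1}(P)$ consisting of those "nested-graph" elements in which at least one nesting level is inserted by $\eta$ (i.e.\ is a corolla coming from the unit). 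This uses that $\GG$ preserves colimits of the relevant shape — more precisely, that $\GG$, being a coproduct of representables $P \mapsto P(G)$ which are themselves products, commutes with the coequalizer defining $L_n$; this is exactly the kind of verification already alluded to in Proposition~\ref{prop: geometric realizations iso} and I would invoke the techniques of \cite{mandell2019operads} here.

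With $L_n P_\bullet \cong \GG(D_n)$ and $P_{n+1} \cong \GG(\GG^{n+1}P)$, the map $L_n P_\bullet \to P_{n+1}$ is $\GG$ applied to the inclusion $D_n \hookrightarrow \GG^{n+1}(P)$ of topological \reduced-sequences. So the second step is: show that free maps $\GG(i)$ on cofibrations $i$ of topological \reduced-sequences are cofibrations of \reduced properads. Using the characterization of cofibrations of \reduced properads as maps with the left lifting property against acyclic fibrations (Remark~\ref{rmk: cofib in model str} and the remark after Corollary~\ref{thm: model str stmt}), together with the free/forgetful adjunction $\GG \dashv \mathrm{Forget}$, a lifting problem for $\GG(i)$ against an acyclic fibration $E \to F$ of \reduced properads transposes to a lifting problem for $i$ against the underlying acyclic fibration $\mathrm{Forget}(E)\to\mathrm{Forget}(F)$ of topological \reduced-sequences — which is solvable since $i$ is a cofibration there. (Equivalently: $\GG$ is left Quillen for the transferred model structure, so it preserves cofibrations; this is part of what \cite{berger2007resolution}, Theorem~2.1, gives.) Hence I need the third step: $D_n \hookrightarrow \GG^{n+1}(P)$ is a cofibration of topological \reduced-sequences. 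Granting that $P$ is cofibrant as a topological \reduced-sequence, each $\GG^k(P)$ is a coproduct (over isomorphism classes of \rda-graphs, iterated) of products of spaces $P(|in(v)|,|out(v)|)$; since cofibrant topological spaces and cofibrations are closed under finite products and arbitrary coproducts in the (cellular/$q$-) model structure on $\Top$, and since the degenerate locus $D_n$ is a \emph{subcoproduct} picked out by a combinatorial condition on nestings (the inclusion on each summand being either an identity or the inclusion of one factor of a product via a point — after noting $\GG$ has trivial stabilizers by Lemma~\ref{lem: sigma cofibrancy}), the inclusion $D_n \hookrightarrow \GG^{n+1}(P)$ is a cofibration. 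I would spell this out by an induction on $n$ parallel to the standard argument that the bar construction of a cofibrant algebra over a ($\Sigma$-cofibrant, here even $\Sigma$-free) colored operad is Reedy cofibrant.

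The expected main obstacle is the third step — verifying that the latching inclusion $D_n \hookrightarrow \GG^{n+1}(P)$ is a genuine cofibration of topological \reduced-sequences, not merely that source and target are cofibrant. The subtlety is bookkeeping: one must check that the combinatorics of "which nesting levels are degenerate" organizes $\GG^{n+1}(P)$ and $D_n$ as coproducts indexed compatibly, so that the inclusion is a disjoint union of cofibrations of the form $X \times \{\mathrm{pt}\} \hookrightarrow X \times K$ with $K$ a CW-complex (or rather: built from such by the pushout-product constructions of Lemma~\ref{complem}), and this requires care because graph substitution $\mu$ is not injective on the nose and the "nested graph = element of $\GG\GG P$" dictionary must be applied consistently. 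Once this is in place, Lemma~\ref{lem: cofib in latching} delivers the conclusion; and the relative statement Proposition~\ref{prop: rel cofibrancy of bar construction} follows by running the same argument with Lemma~\ref{lem: rel cofib in latching} in place of Lemma~\ref{lem: cofib in latching}, checking that $L_nQ_\bullet \sqcup_{L_nP_\bullet}^{\Galg} P_{n+1}\to Q_{n+1}$ is $\GG$ applied to a pushout-product of cofibrations of topological \reduced-sequences, hence again a cofibration.
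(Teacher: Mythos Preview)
Your approach is essentially the same as the paper's: reduce to Lemma~\ref{lem: cofib in latching}, use that $\GG$ (as a left adjoint) commutes with the colimit defining $L_n$ so that $L_nB_\bullet(\GG,\GG,P)\cong\GG(K_nP)$ for a topological \reduced-sequence $K_nP$, transpose along the free/forgetful adjunction so it suffices that $K_nP\hookrightarrow\GG^{n+1}P$ is a cofibration in $\redTopSeq$, and then check this last inclusion.

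One comment on your third step: you are making it harder than it is. The inclusion $K_nP\hookrightarrow\GG^{n+1}P$ is \emph{purely} a subcoproduct inclusion --- each degeneracy $s_i=\GG^{i+1}\eta\,\GG^{n-i}$ lands in the summands of $\GG^{n+1}P$ indexed by those $(n{+}1)$-nested graphs in which the $i$th nesting level is trivial (every region a corolla), and on those summands $s_i$ is a homeomorphism onto. So $K_nP=\bigcup_i s_i(\GG^nP)$ is the union of components over a \emph{subset} of the indexing set of nested graphs, and the inclusion on each summand of the target is either the identity or $\varnothing\hookrightarrow X$. There are no ``inclusion of one factor via a point'' maps and no pushout-product gymnastics needed; the cofibrancy hypothesis on $P$ is used only to ensure that the complementary summands $P(G)=\prod_v P(|in(v)|,|out(v)|)$ are cofibrant, so that $\varnothing\hookrightarrow P(G)$ is a cofibration. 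This is exactly the paper's argument.
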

\begin{proof}
The latching objects for the bar construction are given by 
\[ L_n B(\GG,\GG,P) = coeq^{\Galg} \Big[ \coprod_{0 \leq i < j \leq n}^{\Galg}\GG^{n}P \rightrightarrows \coprod_{0 \leq i \leq n}^{\Galg} \GG^{n+1}P \Big] .\]
The free $\GG$-algebra functor, being the left adjoint in the pair $\GG \colon \redTopSeq \leftrightarrows \Galg \colon$ Forget, commutes with coproudcts and coequalizer. Thus,
\[ L_nB(\GG,\GG,P) = \GG \Big( coeq \Big[ \coprod_{0 \leq i < j \leq n}\GG^{n-1}P \rightrightarrows \coprod_{0 \leq i \leq n}\GG^{n}P \Big] \Big) .\]
Set 
\[ K_nP := coeq \Big[ \coprod_{0 \leq i < j \leq n}\GG^{n-1}P \rightrightarrows \coprod_{0 \leq i \leq n}\GG^{n}P \Big] .\]
Then, to show that $L_nB(\GG,\GG,X) \to B_{n+1}(\GG,\GG,P) = \GG^{n+2}P$ is a cofibration of  \reduced properads, it suffices to show that $K_nP \to \GG^{n+1}P$ is a cofibration of topological \reduced sequences. But, it is not difficult to see that this map identifies $K_nP$ with the subspace of $\GG^{n+1}P$ given by $\bigcup_{0 \leq i \leq n} s_i(\GG^n P)$, where $s_i$ are the simplicial degeneracy maps.\\
Moreover, note that $\GG^{n+1}P$ is a disjoint union of spaces indexed by $(n+1)$-nested graphs and $s_i(\GG^nP)$ is the union corresponding to a subset of this indexing set. Since the topological \reduced sequence underlying $P$ is cofibrant, it follows that the inclusion $\bigcup_{0 \leq i \leq n}s_i(\GG^n P) \hookrightarrow \GG^{n+1}P$ is a cofibration of topological \reduced sequences. \end{proof}

A similar argument using Lemma \ref{lem: rel cofib in latching} gives the relative version of Proposition \ref{prop: appendix cofibrancy of bar construction}. We omit the details:
\begin{proposition}\label{prop: appendix rel cofibrancy of bar construction}
Let $P, Q$ be properads which are cofibrant as topological \reduced sequences and let $P \to Q$ be a map of \reduced properads such that the underlying map of topological \reduced sequences is a cofibration. Then, $|B(\GG,\GG,P)|^{\Galg} \to |B(\GG,\GG,Q)|^{\Galg}$ is a cofibration of properads.\\
\null \hfill \qedsymbol
\end{proposition}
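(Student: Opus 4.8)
The plan is to run the proof of Proposition~\ref{prop: appendix cofibrancy of bar construction} in a relative fashion, feeding it into the relative latching criterion of Lemma~\ref{lem: rel cofib in latching}. Applying that lemma to the map of simplicial \reduced properads $B_{\bullet}(\GG,\GG,P) \to B_{\bullet}(\GG,\GG,Q)$ induced by $P \to Q$, the task reduces to showing that for every $n \geq 0$ the relative latching map
\[ L_n B(\GG,\GG,Q) \bigsqcup^{\Galg}_{L_n B(\GG,\GG,P)} B_{n+1}(\GG,\GG,P) \longrightarrow B_{n+1}(\GG,\GG,Q) \]
is a cofibration of \reduced properads.

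First I would pass to \reduced sequences by exploiting freeness. As recalled in the proof of Proposition~\ref{prop: appendix cofibrancy of bar construction}, $L_n B(\GG,\GG,P) = \GG(K_n P)$ and $B_{n+1}(\GG,\GG,P) = \GG(\GG^{n+1}P)$, where $K_n P$ is the subobject $\bigcup_{0 \le i \le n} s_i(\GG^n P) \subseteq \GG^{n+1}P$ of degenerate simplices, the map $K_n P \to \GG^{n+1}P$ being the inclusion; the same holds for $Q$, naturally in the argument. Since $\GG$ is a left adjoint it preserves pushouts, so the displayed map is $\GG$ applied to
\[ K_n Q \bigsqcup_{K_n P} \GG^{n+1} P \longrightarrow \GG^{n+1} Q, \]
the pushout now being formed in $\redTopSeq$. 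Because $\GG$ also carries cofibrations of \reduced sequences to cofibrations of \reduced properads --- by the transfer underlying Corollary~\ref{thm: model str stmt} the generating cofibrations of \reduced properads are precisely $\GG$ applied to the generating cofibrations of $\redTopSeq$, and $\GG$, preserving colimits and retracts, therefore preserves all cofibrations --- it is enough to prove that this last map is a cofibration of topological \reduced sequences.

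Next I would unwind the combinatorics of $\GG^{n+1}$. Componentwise, $\GG^{n+1}P$ and $\GG^{n+1}Q$ are disjoint unions indexed by the \emph{same} set of $(n{+}1)$-nested \rda-graphs $G$, with summands $P(G) = \prod_{v \in V(G)} P(|in(v)|,|out(v)|)$ and $Q(G)$ respectively, and $\GG^{n+1}P \to \GG^{n+1}Q$ is the coproduct over $G$ of the product maps $\prod_{v}\big(P(|in(v)|,|out(v)|) \to Q(|in(v)|,|out(v)|)\big)$. The subobjects $K_n P$ and $K_n Q$ are indexed by the same subset $\mathcal D_n$ of (degenerate) nested graphs, so the pushout $K_n Q \sqcup_{K_n P} \GG^{n+1}P$ is the disjoint union of the $Q(G)$ with $G \in \mathcal D_n$ and of the $P(G)$ with $G \notin \mathcal D_n$, its map to $\GG^{n+1}Q$ being the identity on the summands indexed by $\mathcal D_n$ and the product map $P(G) \to Q(G)$ on the remaining ones. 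As a coproduct of cofibrations is a cofibration, the problem reduces to showing that each $P(G) \to Q(G)$ is a cofibration of spaces.

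The only non-formal input --- and the step I expect to be the real content --- is this last reduction, which is an instance of the pushout-product axiom for $\Top$ with its cartesian monoidal model structure. Since $P$ and $Q$ are cofibrant as \reduced sequences, each $\varnothing \to P(|in(v)|,|out(v)|)$ and $\varnothing \to Q(|in(v)|,|out(v)|)$ is a cofibration, and by hypothesis so is each $P(|in(v)|,|out(v)|) \to Q(|in(v)|,|out(v)|)$; iterating the pushout-product over the vertices of $G$ (which also uses that finite products of cofibrant spaces are cofibrant) then shows the finite product map $\prod_v P(|in(v)|,|out(v)|) \to \prod_v Q(|in(v)|,|out(v)|)$ is a cofibration. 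This is exactly where the cofibrancy of the \reduced sequences underlying $P$ and $Q$ --- and not merely the cofibrancy of the map between them --- is genuinely used. Everything else is formal bookkeeping: the commutation of the left adjoint $\GG$ with the pushouts in sight, and the observation that the nested-graph indexing sets for $P$ and $Q$ coincide, so that one may match up the degenerate parts $K_n P$ and $K_n Q$.
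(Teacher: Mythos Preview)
Your proposal is correct and follows exactly the route the paper intends: apply Lemma~\ref{lem: rel cofib in latching}, use that $\GG$ is a left adjoint to reduce the relative latching map to the sequence-level map $K_nQ \sqcup_{K_nP}\GG^{n+1}P \to \GG^{n+1}Q$, and then check this is a cofibration by analyzing the nested-graph indexing. The paper omits precisely these details, saying only that the argument is ``similar'' to the absolute case via Lemma~\ref{lem: rel cofib in latching}; your write-up is a faithful expansion of that sketch, with the pushout-product step correctly identifying where the cofibrancy hypotheses on $P$ and $Q$ as sequences are actually used.
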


\subsection{Comparison of geometric realizations in topological \reduced sequences and in topological \reduced properads}\label{subsec: appendix geometric realizations iso}\ \\
The aim of this section is to provide the proof of Proposition \ref{prop: geometric realizations iso}. In this section we follow the discussion in \cite[Section 7]{mandell2019operads}.\\
Let us start by recalling some notation: Let $P_{\bullet}$ be a simplicial topological \reduced properad. Denote by $|P_{\bullet}|^{\Galg}$ and $|P_{\bullet}|^{\redTopSeq}$ the geometric realizations of $P$ in the category of properads and the category of topological \reduced sequences, respectively. As a consequence of the fact that the geometric realization of simplicial spaces is a monoidal functor, it follows that $|P_{\bullet}|^{\redTopSeq}$ carries a natural properad structure induced from the structure maps of the simplicial properad $P_{\bullet}$.\\

Proposition \ref{prop: geometric realizations iso} asserts that the \reduced properads given by geometric realizations of $P_{\bullet}$ in the category of \reduced properads and in the category of topological \reduced sequences are isomorphic.

\begin{proof}[Proof of Proposition \ref{prop: geometric realizations iso}]
	Recall that 
	\[P_n \odot \Delta^n = coeq \Big[ \GG(\GG P_n \times \Delta^n) \rightrightarrows \GG(P_n \times \Delta^n) \Big]\]
	(where the coequalizers taken in \reduced properads and \reduced sequences coincide).\\
	 The maps $P_n \times \Delta^n \to |P_\bullet|^{\redTopSeq}$ induce properad maps $P_n \odot \Delta^n \to |P_\bullet|^{\redTopSeq}$. Moreover, these maps are compatible with the simplicial face and degeneracy maps, and hence we get a $\GG$-algebra map
\[ coeq \Big[ \coprod_{m,n}^{\Galg} P_m \odot \Delta^n \rightrightarrows \coprod_{n}^{\Galg} P_n \odot \Delta^n \Big] = |P_\bullet|^{\Galg} \to |P_\bullet|^{\redTopSeq} .\]
Now, notice that for properad $P$, 
\begin{equation}\label{eq: geom rel prop 1} |\GG P_\bullet|^{\redTopSeq} \simeq \GG|P_\bullet|^{\redTopSeq}.\end{equation}
This holds since $|\ \  |^{\redTopSeq}$, being a left adjoint ($|\ \ |^{\redTopSeq}\colon \redTopSeq \leftrightarrows \redTopSeq \colon (\_)^{\Delta^\bullet}$), commutes with colimits and as noted above it also commutes with finite products.

Furthermore, for any $X \in \redTopSeq$, we have 
\begin{equation}\label{eq: geom rel prop 2}|\GG X_\bullet|^{\Galg} \simeq \GG |X_\bullet|^{\redTopSeq}.\end{equation}
To see this, first note that for a topological \reduced sequence $X$
\begin{align*}
 \GG X_n \odot \Delta^n = coeq \Big[ \GG (\GG(\GG X_n) \times \Delta^n ) \rightrightarrows \GG(\GG X_n \times \Delta^n) \Big] \simeq \GG(X_n \times \Delta^n) .
\end{align*}
This implies that
\begin{align*}
\coprod_{m,n}^{\Galg} \GG X_n \odot \Delta^m &\simeq \GG \Big( \coprod_{m,n}X_n \times \Delta^m \Big), \mbox{ and } \\
 \coprod_{n}^{\Galg} \GG X_n \odot \Delta^n &\simeq \GG \Big( \coprod_{n} X_n \times \Delta^m \Big) .
\end{align*}
Since $\GG$ commutes with colimits, this gives us
\begin{equation*}\begin{split}
 coeq^{\Galg} \Big[ \coprod & \GG X_n \odot \Delta^m \rightrightarrows  \coprod \GG X_n \odot \Delta^n\Big] \simeq\\
 & \GG \Big( coeq \Big[ \coprod X_n \times \Delta^m \rightrightarrows  \coprod X_n \times \Delta^n \Big] \Big) .
\end{split}
\end{equation*}

Thus, it follows that $|\GG X_\bullet|^{\Galg} \simeq \GG |X_\bullet|^{\redTopSeq}$ for any topological \reduced sequence $X$.
Finally, for any simplicial properad $P_\bullet $
\begin{equation}\label{eq: geom rel prop 3}\GG(\GG P ) \rightrightarrows \GG P \to P \end{equation}
is a coequalizer diagram (in \reduced properads as well as in topological \reduced sequences). Since $|\ \ |^{\Galg}$ is a left adjoint ($|\ \ |^{\Galg} \colon s\Galg \leftrightarrows \Galg \colon (\_)^{\Delta^\bullet}$), it commutes with the coequalizer. Thus, using \eqref{eq: geom rel prop 3}, we have the following commutative diagram in the category of \reduced properads:
\[
	\begin{tikzcd}
		{|\GG(\GG P) |}^{\Galg} \ar[shift left]{r}\ar[shift right]{r} \ar{d} & {|\GG P |}^{\Galg} \ar{r}\ar{d} & {|P_\bullet|}^{\Galg} \ar[dashed]{d}\\
		\GG \GG {|P_\bullet|}^{\redTopSeq}\ar[shift left]{r}\ar[shift right]{r} & {\GG|P_\bullet|}^{\redTopSeq} \ar{r} &  {|P_\bullet|}^{\redTopSeq}
	\end{tikzcd}
\]
where both the rows are coequalizer diagrams mentioned  above. Further, using \eqref{eq: geom rel prop 1} and \eqref{eq: geom rel prop 2} it follows that the first two vertical maps are isomorphisms. It thus follows that the rightmost vertical arrow is also an isomorphism, proving the proposition.
\end{proof}


\subsection{Proof of Proposition \ref{compareproposition}}\label{subsec: app compareproposition}
Let us first recall the statement of Proposition \ref{compareproposition}:
\begin{proposition}\label{prop: appendix comparepropositiondig}
Let 
\begin{equation}\label{dig: appendix comparepropositiondig}
	\begin{tikzcd}
	P \ar{d} & R \ar{l}\ar{r}\ar{d} & Q \ar{d} \\
	P^{\prime}  & R^{\prime} \ar{l}\ar{r} & Q^{\prime} 
	\end{tikzcd}
\end{equation}
be a map of pushout diagrams of \reduced properads. If
	\begin{enumerate}
		\item each vertical arrow is a \hur weak-equivalence, and
		\item $R \to P$ , $R^{\prime} \to P^{\prime}$ are \hur cofibrations of topological \reduced sequences
	\end{enumerate}
	Then 
	\[ |B_\bullet P| \coprod^{\Galg}_{|B_\bullet R|} |B_\bullet Q| \to |B_\bullet P^{\prime}| \coprod^{\Galg}_{|B_\bullet R^{\prime}|}|B_\bullet P^{\prime}|\]
	is a \hur weak-equivalence.
\end{proposition}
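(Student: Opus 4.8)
The plan is to reduce the statement to a known fact about homotopy pushouts in a model category, namely that the homotopy pushout is invariant under weak equivalences of diagrams in which one leg is a cofibration, and to upgrade this to the Hurewicz (Str\o{}m) setting using the model structure alluded to in Remark \ref{rmk: hur weq cofib}. First I would make precise the model-categorical framework: by Remark \ref{rmk: hur weq cofib}, the category $\redTopSeq$ carries a model structure (induced from the Str\o{}m model structure on $\Top$) in which the weak equivalences are the \hur weak-equivalences and the cofibrations are the \hur cofibrations, and this transfers along the free-forgetful adjunction $\GG \colon \redTopSeq \leftrightarrows \Galg$ to a model structure on $\Galg$ with \hur weak-equivalences as weak equivalences. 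The key inputs from the body of the paper are then: (i) Corollary \ref{corr: cofibrancy of bar construction} and Corollary \ref{corr: rel cofibrancy of bar construction}, which say $|B_\bullet(-)|$ produces cofibrant resolutions and sends cofibrations of \reduced sequences (between objects cofibrant as \reduced sequences) to cofibrations of \reduced properads; (ii) Lemma \ref{lem: bar pushout}, which says that when the relevant legs are cofibrations and the objects are cofibrant, the strict pushout of bar resolutions computes the homotopy pushout; and (iii) the two-out-of-three / gluing lemmas for homotopy pushouts.

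The main subtlety is that in the Str\o{}m-type model structure \emph{every} object is cofibrant (this is the whole point of working with the Hurewicz structure: $\Top$ with Str\o{}m's model structure has all objects cofibrant and fibrant), so the cofibrancy-as-\reduced-sequence hypotheses needed to invoke the corollaries are automatically satisfied. Thus $|B_\bullet P|$, $|B_\bullet Q|$, $|B_\bullet R|$, and their primed versions are all cofibrant \reduced properads, and by Corollary \ref{corr: rel cofibrancy of bar construction} the maps $|B_\bullet R| \to |B_\bullet P|$ and $|B_\bullet R'| \to |B_\bullet P'|$ are cofibrations of \reduced properads (here hypothesis (2) that $R\to P$, $R'\to P'$ are \hur cofibrations is exactly what feeds the relative statement). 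Moreover, by the simplicial-deformation-retract property of the bar construction (Proposition \ref{prop: simplicial deformation retract}) together with the fact that geometric realization preserves \hur weak-equivalences, the vertical maps $|B_\bullet P| \to |B_\bullet P'|$, $|B_\bullet Q| \to |B_\bullet Q'|$, $|B_\bullet R| \to |B_\bullet R'|$ are \hur weak-equivalences — they fit into commuting squares with the weak equivalences $P \to P'$ etc. along the augmentations, and two-out-of-three applies.

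With these observations in place, I would finish as follows. We have a map of pushout diagrams of \reduced properads
\[
	\begin{tikzcd}
	|B_\bullet P| \ar{d} & |B_\bullet R| \ar{l}\ar{r}\ar{d} & |B_\bullet Q| \ar{d} \\
	|B_\bullet P'|  & |B_\bullet R'| \ar{l}\ar{r} & |B_\bullet Q'|
	\end{tikzcd}
\]
in which every vertical arrow is a \hur weak-equivalence, the left-pointing horizontal arrows are cofibrations of \reduced properads, and all six objects are cofibrant. The standard left-properness / gluing lemma for homotopy pushouts (applied in the model category $\Galg$ with the transferred Str\o{}m structure) then shows that the induced map on strict pushouts
\[ |B_\bullet P| \coprod^{\Galg}_{|B_\bullet R|} |B_\bullet Q| \to |B_\bullet P'| \coprod^{\Galg}_{|B_\bullet R'|}|B_\bullet Q'|\]
is a \hur weak-equivalence; indeed, with one leg a cofibration and all objects cofibrant, the strict pushout already computes the homotopy pushout on both rows, and a weak equivalence of diagrams induces a weak equivalence of homotopy pushouts. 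I expect the one genuine point requiring care — and the step I would single out as the main obstacle — is the verification that the transferred model structure on $\Galg$ actually exists (i.e. that the small-object-type argument or a cofibrant-generation-free transfer theorem applies to the Str\o{}m structure, which is not cofibrantly generated); here I would either cite the relevant transfer result for the Hurewicz structure or, more cheaply, avoid invoking a full model structure and instead argue directly that the strict pushout of the bar diagram is built from $|B_\bullet Q|$ by iterated pushout-product cofibrations (as in Lemma \ref{complem}) along the \hur cofibration $|B_\bullet R| \to |B_\bullet P|$, whence the comparison map is a filtered colimit of \hur weak-equivalences along \hur cofibrations and hence a \hur weak-equivalence. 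This latter route keeps everything inside the explicit combinatorics already set up in the appendix and sidesteps the delicate model-structure existence question entirely.
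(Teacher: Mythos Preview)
Your main approach has a genuine gap, which you yourself flag: the transfer of the Str\o{}m model structure along $\GG \colon \redTopSeq \leftrightarrows \Galg$ is not established anywhere in the paper, and since the Str\o{}m structure is not cofibrantly generated the usual transfer theorems (e.g.\ the one used for Corollary~\ref{thm: model str stmt}) do not apply. Without this, the appeals to Corollaries~\ref{corr: cofibrancy of bar construction} and~\ref{corr: rel cofibrancy of bar construction} and to the gluing lemma in $\Galg$ are not justified in the Hurewicz setting --- those corollaries are proved for the \emph{standard} model structure, and while the arguments plausibly adapt, this would need to be checked and is not what you cite. Your fallback suggestion (filtered colimit of pushout-products) points in a workable direction but is not concrete enough to constitute a proof.

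The paper avoids this difficulty entirely by never working model-theoretically in $\Galg$. Instead it uses the identification
\[
|B_\bullet P| \coprod^{\Galg}_{|B_\bullet R|} |B_\bullet Q| \;\simeq\; \big|\,\GG\big(\GG^{\bullet} P \coprod_{\GG^{\bullet} R} \GG^{\bullet} Q\big)\,\big|
\]
(equation~\eqref{barpushout}, obtained from commuting $|\ |$ and $\GG$ past colimits) to reduce the problem to comparing geometric realizations of two simplicial objects in $\redTopSeq$. It then equips simplicial topological \reduced sequences with the Reedy model structure induced by the Hurewicz structure on $\redTopSeq$, checks directly (via the nested-graph description of latching maps) that both simplicial objects are Reedy cofibrant, and invokes the fact that realization of Reedy-cofibrant objects preserves levelwise weak equivalences. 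Finally, the levelwise comparison reduces, after stripping off $\GG$, to the map of ordinary pushouts $\GG^{\bullet} P \coprod_{\GG^{\bullet} R} \GG^{\bullet} Q \to \GG^{\bullet} P' \coprod_{\GG^{\bullet} R'} \GG^{\bullet} Q'$ in $\redTopSeq$, which is a Hurewicz weak equivalence by left properness of the Str\o{}m structure on $\redTopSeq$ together with hypothesis~(2). Everything happens in $\redTopSeq$, where the Hurewicz model structure is unproblematic; no model structure on $\Galg$ is needed.
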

Note that here we continue to follow the convention mentioned in Notation \ref{notn: geom real}, namely $|\_|$ denotes the common geometric realization in topological \reduced  properads and topological \reduced sequences.

\subsubsection{\hur model structure on topological \reduced sequences}\label{subsec: app hur model str}
Before beginning the proof of Proposition \ref{compareproposition}, we start with a short discussion of the \hur model structure on topological \reduced sequences, alluded to in Remark \ref{rmk: hur weq cofib}:

There exists a model structure on the category $\Top$, called the \emph{\hur or Str\o{}m model structure}, in which the weak-equivalences, fibrations, and cofibrations are given, respectively, by homotopy equivalences, \hur fibrations, and \hur cofibrations (see \cite[Section 17.1]{may2011more}). There is an induced model structure on $\redTopSeq$ with weak-equivalences, fibrations, and cofibrations defined component-wise. We will refer to this as the \hur model structure on $\redTopSeq$.\\
With these definitions, the \hur weak equivalences and  cofibrations defined in Notation \ref{notn: hur weq cofib} correspond precisely to the weak-equivalences and cofibrations in the \hur model structure on topological \reduced sequence.

	\subsubsection{Proof of Proposition \ref{compareproposition}}
		Note that,
	\begin{align}
	\label{barpushout}	|B_\bullet P| \coprod_{|B_\bullet R|}^{\Galg}  |B_\bullet Q|  & \simeq \Big|B_\bullet P \coprod_{B_\bullet R}^{\Galg} B_\bullet Q\Big| \simeq  \Big|\GG^{\bullet+1}  P  \coprod_{\GG^{\bullet+1}  R}^{\Galg} \GG^{\bullet+1} Q \Big| \\
	\notag & \simeq  \Big| \GG \big(\GG^{\bullet}  P \coprod_{\GG^{\bullet}  R} \GG^{\bullet} Q \big) \Big|
	\end{align}
	Here the first equality holds since $|\ \ |$ is a left adjoint in the pair $| \ \ | \colon \leftrightarrows \Galg \colon (\_)^{\Delta^\bullet}$ and hence commutes with colimits. Similarly the last equality holds since $\GG$ is a left adjoint in the pair $\GG \colon \redTopSeq \leftrightarrows  \Galg \colon Forget$ and hence commutes with colimits.
	Similarly, 
	\[ |B_\bullet P^{\prime}| \coprod_{|B_\bullet P^{\prime}|}^{\Galg} |B_\bullet Q^{\prime}|  \simeq  \Big| \GG \big(\GG^{\bullet}  P^{\prime} \coprod_{\GG^{\bullet}  R^{\prime}} \GG^{\bullet} Q^{\prime} \big) \Big| \]
	For the rest of the proof, we shall consider the category of topological \reduced sequences equipped with the \hur model structure (see Section \ref{subsec: app hur model str} above). On the category of simplicial topological \reduced sequences we consider the Reedy model structure induced from the \hur model structure on topological \reduced sequences (see \cite{riehl2013theory} for details on Reedy model structures).
	
	Consider the simplicial topological \reduced sequence 
	\[\GG \big( \GG^{\bullet}  P \coprod_{\GG^{\bullet}  R} \GG^{\bullet} Q \big).\]
	Its $(n+1)$-simplices are given by a disjoint union of spaces indexed by certain $(n+1)$-nested \rda-graphs. As in the proof of Proposition \ref{prop: appendix cofibrancy of bar construction}, it can be seen that the latching map
	\begin{equation}\label{eq: latching eq hur} L_n \left( \GG \big( \GG^{\bullet}  P \coprod_{\GG^{\bullet}  R} \GG^{\bullet} Q \big) \right) \to \GG \big( \GG^{n+1}  P \coprod_{\GG^{n+1}  R} \GG^{n+1} Q \big) 
	\end{equation}
	identifies the latching space with the disjoint union of spaces corresponding to a subset of this indexing set. Using the fact that all topological \reduced sequences are cofibrant in the \hur model structure, it then follows that map \eqref{eq: latching eq hur} is a \hur cofibration as well. We conclude that 
	\[\GG \big( \GG^{\bullet}  P \coprod_{\GG^{\bullet}  R} \GG^{\bullet} Q \big)\]
	is a Reedy cofibrant simplicial topological \reduced sequence. Similarly, it follows that 	
	\[\GG \big(\GG^{\bullet}  P^{\prime} \coprod_{\GG^{\bullet}  R^{\prime}} \GG^{\bullet} Q^{\prime} \big)\]
	is also Reedy cofibrant.  Applying \cite[Corollary 10.6]{riehl2013theory}, it suffice to show that the map
	\[   \GG \big(\GG^{\bullet}  P \coprod_{\GG^{\bullet}  R} \GG^{\bullet} Q \big) \to  \GG \big(\GG^{\bullet}  P^{\prime} \coprod_{\GG^{\bullet}  R^{\prime}} \GG^{\bullet} Q^{\prime} \big)  \]
	is a Reedy weak-equivalence i.e. a level-wise \hur weak-equivalence. It follows from the definition of $\GG$ that it preserves \hur weak-equivalences and thus it suffices to verify that 
	\[ \GG^{\bullet}  P \coprod_{\GG^{\bullet}  R} \GG^{\bullet} Q  \to \GG^{\bullet}  P^{\prime} \coprod_{\GG^{\bullet}  R^{\prime}} \GG^{\bullet} Q^{\prime} \]
	is a \hur weak-equivalence.\\
	Applying $\GG^{\bullet}$ to the diagram \eqref{dig: appendix comparepropositiondig}, we get a diagram of topological \reduced sequences 
	\begin{equation}\label{app: comparepropositiondig 2}
		\begin{tikzcd}
		\GG^{\bullet}P \ar{d} & \GG^{\bullet}R \ar{l}\ar{r}\ar{d} & \GG^{\bullet}Q \ar{d} \\
		\GG^{\bullet}P^{\prime}  & \GG^{\bullet}R^{\prime} \ar{l}\ar{r} & \GG^{\bullet}Q^{\prime} 
		\end{tikzcd}
	\end{equation}
	satisfying the following conditions:
	\begin{itemize}
		\item the vertical maps are \hur weak equivalences, and
		\item $\GG^{\bullet}R \to \GG^{\bullet}P$ , $\GG^{\bullet}R^{\prime} \to \GG^{\bullet}P^{\prime}$ are \hur cofibrations of topological \reduced sequences.
	\end{itemize}
The \hur model structure on $\redTopSeq$ is left proper (see for example \cite[Proposition 15.4.2]{may2011more}) and hence the pushout of any map along a cofibration coincides with its homotopy pushout . In particular, the pushouts of both the rows in \eqref{app: comparepropositiondig 2} coincide with their homotopy pushouts. Since the vertical arrows in \eqref{app: comparepropositiondig 2} are \hur weak-equivalences, it then follows that the two pushouts are \hur weak-equivalent. This completes the proof of the proposition.\\
\null \hfill \qedsymbol

\printbibliography

@article{vallette2007koszul,
  title={A Koszul duality for props},
  author={Vallette, Bruno},
  journal={Transactions of the American Mathematical Society},
  volume={359},
  number={10},
  pages={4865--4943},
  year={2007}
}

@article{mandell2019operads,
  title={Operads and Operadic Algebras in Homotopy Theory},
  author={Mandell, Michael A},
  journal={arXiv preprint arXiv:1912.13047},
  year={2019}
}

@phdthesis{rezk1996spaces,
  title={Spaces of algebra structures and cohomology of operads},
  author={Rezk, Charles Waldo},
  year={1996},
  school={Massachusetts Institute of Technology}
}

@article{riehl2013theory,
  title={The theory and practice of Reedy categories},
  author={Riehl, Emily and Verity, Dominic},
  journal={arXiv preprint arXiv:1304.6871},
  year={2013}
}

@incollection{may1990weak,
  title={Weak equivalences and quasifibrations},
  author={May, J Peter},
  booktitle={Groups of self-equivalences and related topics},
  pages={91--101},
  year={1990},
  publisher={Springer}
}

@article{kontsevich2006notes,
  title={Notes on A-infinity algebras, A-infinity categories and non-commutative geometry. I},
  author={Kontsevich, Maxim and Soibelman, Yan},
  journal={arXiv preprint math/0606241},
  year={2006}
}

@article{costello2007topological,
  title={Topological conformal field theories and Calabi--Yau categories},
  author={Costello, Kevin},
  journal={Advances in Mathematics},
  volume={210},
  number={1},
  pages={165--214},
  year={2007},
  publisher={Elsevier}
}

@article{costello2009partition,
  title={The partition function of a topological field theory},
  author={Costello, Kevin},
  journal={Journal of Topology},
  volume={2},
  number={4},
  pages={779--822},
  year={2009},
  publisher={Wiley Online Library}
}

@article{caldararu2020categorical,
  title={Categorical enumerative invariants, I: String vertices},
  author={Caldararu, Andrei and Costello, Kevin and Tu, Junwu},
  journal={arXiv preprint arXiv:2009.06673},
  year={2020}
}

@article{drummond2013minimal,
  title={The minimal model for the Batalin--Vilkovisky operad},
  author={Drummond-Cole, Gabriel C and Vallette, Bruno},
  journal={Selecta Mathematica},
  volume={19},
  number={1},
  pages={1--47},
  year={2013},
  publisher={Springer}
}

@article{drummond2014homotopically,
  title={Homotopically trivializing the circle in the framed little disks},
  author={Drummond-Cole, Gabriel C},
  journal={Journal of Topology},
  volume={7},
  number={3},
  pages={641--676},
  year={2014},
  publisher={Oxford University Press}
}

@article{khoroshkin2013hypercommutative,
  title={Hypercommutative operad as a homotopy quotient of BV},
  author={Khoroshkin, Anton and Markarian, Nikita and Shadrin, Sergey},
  journal={Communications in Mathematical Physics},
  volume={322},
  number={3},
  pages={697--729},
  year={2013},
  publisher={Springer}
}

@article{oancea2020deligne,
  title={The Deligne-Mumford operad as a trivialization of the circle action},
  author={Oancea, Alexandru and Vaintrob, Dmitry},
  journal={arXiv preprint arXiv:2002.10734},
  year={2020}
}

@article{kontsevich1smooth,
  title={Maxim Kontsevich, Alex Takeda, and Yiannis Vlassopoulos},
  author={Kontsevich, Maxim and Takeda, Alex and Vlassopoulos, Yiannis},
  journal={In preparation},
}

@article{seidel2006biased,
  title={A biased view of symplectic cohomology},
  author={Seidel, Paul},
  journal={Current developments in mathematics},
  volume={2006},
  number={1},
  pages={211--254},
  year={2006},
  publisher={International Press of Boston}
}

@article{ritter2013topological,
  title={Topological quantum field theory structure on symplectic cohomology},
  author={Ritter, Alexander F},
  journal={Journal of Topology},
  volume={6},
  number={2},
  pages={391--489},
  year={2013},
  publisher={Wiley Online Library}
}

@article{pavlov2018admissibility,
  title={Admissibility and rectification of colored symmetric operads},
  author={Pavlov, Dmitri and Scholbach, Jakob},
  journal={Journal of Topology},
  volume={11},
  number={3},
  pages={559--601},
  year={2018},
  publisher={Wiley Online Library}
}

@article{chu2020rectification,
  title={On rectification and enrichment of infinity properads},
  author={Chu, Hongyi and Hackney, Philip},
  journal={arXiv preprint arXiv:2007.00634},
  year={2020}
}

@article{lurie2016higher,
  title={Higher algebra (2017)},
  author={Lurie, Jacob},
  journal={Preprint, available at http://www. math. harvard. edu/\~{} lurie},
  year={2016}
}

@book{lurie2009higher,
  title={Higher topos theory (am-170)},
  author={Lurie, Jacob},
  year={2009},
  publisher={Princeton University Press}
}

@book{hackney2015infinity,
  title={Infinity properads and infinity wheeled properads},
  author={Hackney, Philip and Robertson, Marcy and Yau, Donald and Yau, Donald Ying},
  volume={2147},
  year={2015},
  publisher={Springer}
}

@article{hackney2018simplicial,
  title={A simplicial model for infinity properads},
  author={Hackney, Philip and Robertson, Marcy and Yau, Donald},
  journal={Higher Structures},
  volume={1},
  number={1},
  year={2018}
}

@article{hollander2008homotopy,
  title={A homotopy theory for stacks},
  author={Hollander, Sharon},
  journal={Israel Journal of Mathematics},
  volume={163},
  number={1},
  pages={93--124},
  year={2008},
  publisher={Springer}
}

@article{coyne2016singular,
  title={Singular chains on topological stacks, I},
  author={Coyne, Thomas and Noohi, Behrang},
  journal={Advances in Mathematics},
  volume={303},
  pages={1190--1235},
  year={2016},
  publisher={Elsevier}
}

@article{markl2008operads,
  title={Operads and PROPS},
  author={Markl, Martin},
  journal={Handbook of algebra},
  volume={5},
  pages={87--140},
  year={2008},
  publisher={Elsevier}
}

@misc{rezchikov2020generalizations,
  title={Generalizations of Hodge-de-Rham degeneration for Fukaya categories},
  author={Rezchikov, Semon},
  year={2020},
  publisher={preparation}
}

@article{kontsevich1994gromov,
  title={Gromov-Witten classes, quantum cohomology, and enumerative geometry},
  author={Kontsevich, Maxim and Manin, Yu},
  journal={Communications in Mathematical Physics},
  volume={164},
  number={3},
  pages={525--562},
  year={1994},
  publisher={Springer}
}

@article{siebert1996gromov,
  title={Gromov-Witten invariants of general symplectic manifolds},
  author={Siebert, Bernd},
  journal={arXiv preprint dg-ga/9608005},
  year={1996}
}

@book{may2011more,
  title={More concise algebraic topology: localization, completion, and model categories},
  author={May, J Peter and Ponto, Kate},
  year={2011},
  publisher={University of Chicago Press}
}

@article{milnor1959spaces,
  title={On spaces having the homotopy type of a CW-complex},
  author={Milnor, John},
  journal={Transactions of the American Mathematical Society},
  volume={90},
  number={2},
  pages={272--280},
  year={1959},
  publisher={JSTOR}
}

@article{kontsevich2008xi,
  title={XI Solomon Lefschetz memorial lecture series: Hodge structures in non-commutative geometry.(notes by Ernesto Lupercio)},
  author={Kontsevich, Maxim},
  journal={arXiv preprint arXiv:0801.4760},
  year={2008}
}

@incollection{kontsevich2008notes,
  title={Notes on A-infinity algebras, A-infinity categories and non-commutative geometry},
  author={Kontsevich, Maxim and Soibelman, Yan},
  booktitle={Homological mirror symmetry},
  pages={1--67},
  year={2008},
  publisher={Springer}
}

@article{berger2007resolution,
  title={Resolution of coloured operads and rectification of homotopy algebras},
  author={Berger, Clemens and Moerdijk, Ieke},
  journal={Contemporary Mathematics},
  volume={431},
  pages={31--58},
  year={2007},
  publisher={Providence, RI: American Mathematical Society}
}

@article{noohi2012homotopy,
  title={Homotopy types of topological stacks},
  author={Noohi, Behrang},
  journal={Advances in Mathematics},
  volume={230},
  number={4-6},
  pages={2014--2047},
  year={2012},
  publisher={Elsevier}
}

@book{cisinski2019higher,
  title={Higher categories and homotopical algebra},
  author={Cisinski, Denis-Charles},
  volume={180},
  year={2019},
  publisher={Cambridge University Press}
}

@article {liu2020moduli,
author = {Liu, C.C. Melissa},
title = {Moduli of $J$-holomorphic curves with Lagrangian boundary conditions and open Gromov-Witten invariants for an $S^1$-Equivariant pair},
journal = {Journal of the Iranian Mathematical Society},
volume = {1},
number = {1},
pages = {5-95},
year  = {2020},
publisher = {Iranian Mathematical Society}
}

@book{riehl2014categorical,
  title={Categorical homotopy theory},
  author={Riehl, Emily},
  volume={24},
  year={2014},
  publisher={Cambridge University Press}
}

@book{may2006geometry,
  title={The geometry of iterated loop spaces},
  author={May, J Peter},
  volume={271},
  year={2006},
  publisher={Springer}
}

@article{efimov2020categorical,
  title={Categorical smooth compactifications and generalized Hodge-to-de Rham degeneration},
  author={Efimov, Alexander I},
  journal={Inventiones mathematicae},
  volume={222},
  number={2},
  pages={667--694},
  year={2020},
  publisher={Springer}
}
\end{document}